\newcommand{\R}{\ensuremath{\mathbb R}}
\renewcommand{\div}{\operatorname{div}}
\newcommand{\al}{{\bar\alpha}}
\DeclareMathOperator{\curl}{curl}
\DeclarePairedDelimiter\abs\lvert\rvert
\DeclarePairedDelimiter\norm\lVert\rVert
\numberwithin{equation}{chapter}
\newtheorem{theorem}{Theorem}[section]
\newtheorem{lemma}[theorem]{Lemma}
\newtheorem{proposition}[theorem]{Proposition}
\newtheorem{corollary}[theorem]{Corollary}
\newtheorem{claim}[theorem]{Claim}
\theoremstyle{definition}
\newtheorem{definition}[theorem]{Definition}
\newtheorem{remark}[theorem]{Remark}
\title{Instability and nonuniqueness for the $2d$ Euler equations in vorticity form, after~M.~Vishik}
\author[Albritton]{Dallas Albritton}
\address[Dallas Albritton]{School of Mathematics, Institute for Advanced Study, 1 Einstein Dr., Princeton NJ 08540, USA}
\email{dallas.albritton@ias.edu}
\thanks{DA was supported by NSF Postdoctoral Fellowship Grant No. 2002023 and Simons Foundation Grant No.\ 816048.}
\author[Bru{\'e}]{Elia Bru\'e}
\address[Elia Bru\'e]{School of Mathematics, Institute for Advanced Study, 1 Einstein Dr., Princeton NJ 08540, USA}
\email{elia.brue@ias.edu}
\thanks{EB was supported by the Giorgio and Elena Petronio Fellowship}
\author[Colombo]{Maria Colombo}
\address[Maria Colombo]{Institute of Mathematics, EPFL SB, Station 8, CH-1015 Lausanne, Switzerland}
\email{maria.colombo@epfl.ch}
\thanks{MC was supported by the SNSF Grant 182565.}
\author[De Lellis]{Camillo De Lellis}
\address[Camillo De Lellis]{School of Mathematics, Institute for Advanced Study, 1 Einstein Dr., Princeton NJ 08540, USA}
\email{camillo.delellis@ias.edu}
\thanks{CDL was supported by the NSF under Grant No.
DMS-1946175.}
\author[Giri]{Vikram Giri}
\address[Vikram Giri]{Department of Mathematics, Princeton University, Washington Rd., Princeton, NJ 08544, USA}
\email{vgiri@math.princeton.edu}
\thanks{VG was supported by the NSF under Grant No. DMS-FRG-1854344.}
\author[Janisch]{Maximilian Janisch}
\address[Maximilian Janisch]{Institut für Mathematik, Universität Zürich, Winterthurerstrasse 190, 8057 Zürich, Switzerland}
\email{mail@maximilianjanisch.com}
\author[Kwon]{Hyunju Kwon}
\address[Hyunju Kwon]{School of Mathematics, Institute for Advanced Study, 1 Einstein Dr., Princeton NJ 08540, USA}
\email{hkwon@ias.edu}
\thanks{HK was supported by the NSF under Grant No. DMS-1926686.}
\begin{document}

\maketitle

\tableofcontents



\chapter{Introduction}

In these notes we will consider the Euler equations in the $2$-dimensional space in vorticity formulation\index{Euler equations@Euler equations}, which are given by
\begin{equation}\label{e:Euler}
\left\{
\begin{array}{ll}
&\partial_t \omega + (v\cdot \nabla) \omega = f\\ \\
& v (\cdot, t) = K_2 * \omega (\cdot, t)
\end{array}
\right.
\end{equation}
where $K_2$ is the usual $2$-dimensional Biot-Savart kernel and $f$ is a given external vorticity source, which we often call the force. \index{Biot-Savart kernel@Biot-Savart kernel}\index{aalf@$f$}\index{aalK_2@$K_2$}\index{external force@external force}\index{force, external@force, external}
$v$ is the velocity field, and it is a function defined on a space-time domain of type $\mathbb R^2 \times [0, T]$. By the Biot-Savart law we have $\omega = \curl v = \partial_{x_2} v_1 - \partial_{x_1} v_2 = \nabla\times v$\index{vorticity@vorticity}\index{velocity@velocity}\index{aagz@$\omega$}\index{aalv@$v$}.

\index{Cauchy problem@Cauchy problem}
We will study the Cauchy problem for \eqref{e:Euler} with initial data 
\begin{equation}\label{e:Cauchy}
\omega (\cdot, 0) = \omega_0\, 
\end{equation}
on the domain $\mathbb R^2 \times [0,\infty[$
under the assumptions that
\begin{itemize}
    \item[(i)] $\omega_0\in L^1\cap L^p$ for some $p>2$ and $v_0=K_2* \omega_0 \in L^2$;
    \item[(ii)] $f\in L^1 ([0,T], L^1\cap L^p)$ and $K_2*f\in L^1 ([0,T], L^2)$ for every $T<\infty$.
\end{itemize}
In particular we understand solutions $\omega$ in the usual sense of distributions, namely,
\begin{equation}\label{e:distrib}
\int_0^T \int_{\R^2} [\omega (\partial_t \phi + K_2* \omega \cdot \nabla \phi) + f \phi]\, dx\, dt
= - \int_{\R^2} \phi (x,0)\, \omega_0 (x)\, dx
\end{equation}
for every smooth test function $\phi\in C^\infty_c (\mathbb R^2 \times [0,T[)$.\index{Solution, weak} In view of (i)-(ii) and standard energy estimates we will restrict our attention to weak solutions which satisfy the following bounds:
\begin{itemize}
    \item[(a)] $\omega \in L^\infty ([0,T], L^1\cap L^p)$ and $v\in L^\infty ([0,T], L^2)$ for every $T<\infty$.
\end{itemize}
The purpose of these notes is to give a proof of the following:

\begin{theorem}\label{thm:main}
For every $p\in ]2, \infty[$ there is a triple $\omega_0, v_0$, and $f$ satisfying (i)-(ii)
with the property that there are uncountably many solutions $(\omega, v)$ of \eqref{e:Euler} and \eqref{e:Cauchy} on $\mathbb R^2\times [0, \infty [$ which satisfy the bound (a). Moreover, $\omega_0$ can be chosen to vanish identically.
\end{theorem}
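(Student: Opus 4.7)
The plan is to follow the strategy of Vishik: build a linearly unstable radial vortex, embed it into a self-similar Euler flow whose trace at $t=0$ vanishes, and leverage the unstable mode to produce an uncountable family of solutions sharing the same external force and the same (trivial) initial datum. The first and most delicate task is to construct a smooth, compactly supported, radial vorticity $\bar\eta:\mathbb R^2\to\mathbb R$, with associated velocity $\bar u=K_2*\bar\eta$, such that the Euler linearization
\[
L_{\bar u}\Omega := -(\bar u\cdot\nabla)\Omega - ((K_2*\Omega)\cdot\nabla)\bar\eta
\]
on $L^p(\mathbb R^2)$ admits an eigenvalue $z_0\in\mathbb C$ with $\Re z_0>0$. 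Decomposing $L_{\bar u}$ into angular Fourier modes $e^{im\theta}\psi(r)$ reduces this eigenvalue problem to a family of Rayleigh-type ODEs, and a careful perturbation/bifurcation argument within a parametric family of profiles produces such an unstable mode for some $m\ge 1$.

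With $\bar\eta$ in hand, fix $\beta>p/2$ and pass to self-similar variables $\xi=x/t^\beta$, $\tau=\log t$, seeking solutions of the form $\omega(x,t)=t^{-1}\Omega(\xi,\tau)$. The time-independent profile $\Omega(\xi,\tau)\equiv\bar\eta(\xi)$ yields the classical solution $\bar\omega(x,t)=t^{-1}\bar\eta(x/t^\beta)$ of \eqref{e:Euler} on $\mathbb R^2\times(0,\infty)$ with explicit self-similar force $f(x,t)=t^{-2}F(x/t^\beta)$, where $F(\xi)=-\bar\eta-\beta\,\xi\cdot\nabla\bar\eta$ (using that $(K_2*\bar\eta)\cdot\nabla\bar\eta=0$ for a radial profile); a direct scaling calculation shows that $(\bar\omega,f)$ satisfies (i)-(ii) and (a), and that $\bar\omega(\cdot,t)\to 0$ in $\mathcal D'(\mathbb R^2)$ as $t\to 0^+$, so $\bar\omega$ extends to a weak solution of \eqref{e:Euler}-\eqref{e:Cauchy} on $[0,\infty)$ with $\omega_0\equiv 0$. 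This is the first solution in the family; all further solutions are produced as nonlinear perturbations of $\bar\omega$ driven by the instability.

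In the self-similar frame the linearization at $\bar\eta$ reads $L_{ss}:=L_{\bar u}+1+\beta\,\xi\cdot\nabla$. Because $\bar\eta$ has compact support, $L_{\bar u}$ is a relatively compact perturbation of the dilation generator $1+\beta\,\xi\cdot\nabla$, whose spectrum on $L^p$ is the vertical line $\{1-2\beta/p+is:s\in\mathbb R\}$, contained in the open left half-plane for $\beta>p/2$. Hence the essential spectrum of $L_{ss}$ lies in that half-plane, and a homotopy/bifurcation argument (after first amplifying $\bar\eta\mapsto\lambda\bar\eta$, which rescales $z_0\mapsto\lambda z_0$ and so allows $\Re z_0$ to be pushed arbitrarily far to the right) promotes the unstable eigenvalue $z_0$ of $L_{\bar u}$ to an unstable eigenvalue $z$ of $L_{ss}$ with eigenfunction $\tilde\phi\in L^p$.

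Writing $\omega=\bar\omega+u$ and passing to self-similar variables, the perturbation $U$ satisfies $\partial_\tau U=L_{ss}U+\mathcal N(U)$ for a quadratic nonlinearity $\mathcal N$. A nonlinear fixed-point / unstable-manifold construction, carried out in suitable weighted $L^p$-based spaces in the spirit of Jia--{\v S}ver{\'a}k and Albritton--Bru{\'e}--Colombo for Navier--Stokes, then produces for every sufficiently negative $\tau_0\in\mathbb R$ a global trajectory $U_{\tau_0}$ with $U_{\tau_0}(\tau)\sim e^{z(\tau-\tau_0)}\tilde\phi$ as $\tau\to-\infty$. Unravelled to the original variables, each $U_{\tau_0}$ yields a distinct weak solution of \eqref{e:Euler} on $\mathbb R^2\times[0,\infty)$ satisfying (a), sharing the force $f$ and the trivial initial datum with $\bar\omega$; varying $\tau_0$ produces an uncountable family. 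The principal difficulty is Step 1: the subsequent spectral-perturbation and nonlinear-bifurcation analyses follow a by-now standard template, but constructing the unstable vortex requires a genuinely nontrivial analysis of the Rayleigh-type ODE and is the heart of the proof.
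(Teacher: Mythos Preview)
Your strategy is the paper's (Vishik's): unstable radial vortex, self-similar coordinates, spectral perturbation to produce an unstable eigenvalue of $L_{ss}$, then a nonlinear construction indexed by a real parameter. But several technical choices diverge from the paper and one claim is wrong as stated. The paper's profile $\bar\Omega(x)=g(|x|)$ is \emph{not} compactly supported: it has the power-law tail $g(r)=r^{-\bar\alpha}$ for $r\ge 2$, and this decay enters essentially into the Rayleigh-equation analysis (the class $\mathscr{C}$ of Definition~\ref{d:class-C}, the self-adjoint operators $L_a,L_b$, and the bifurcation from neutral modes in Chapter~\ref{chapter:linearpartii} all use it). To restore $v\in L^2$ the paper truncates the background in \emph{physical} coordinates via a cutoff $\chi(|x|)$ (see~\eqref{e:tilde-v}), so the force has a cutoff contribution in addition to the self-similar one. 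A compactly supported profile would require a different Rayleigh analysis and also $\int\bar\eta=0$ (else $K_2*\bar\eta\notin L^2$); neither point is addressed.

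More seriously, the assertion that ``$L_{\bar u}$ is a relatively compact perturbation of $1+\beta\,\xi\cdot\nabla$'' is incorrect: the transport piece $(\bar u\cdot\nabla)\Omega=\zeta(r)\,\partial_\theta\Omega$ is an angular derivative, and the domain of the radial dilation $\xi\cdot\nabla=r\partial_r$ gives no control on $\partial_\theta$, so no compactness follows. The paper instead works on the $m$-fold symmetric subspace $L^2_m$ (also needed to make sense of $K_2*\Omega$ for $\Omega\in L^2$, cf.\ Lemma~\ref{l:extension}) and decomposes $L_{ss}=(1-\tfrac1\alpha)+S_2+\beta\mathscr{K}$, where $S_2$ absorbs \emph{both} the dilation and the transport by $\bar V$ and is skew-adjoint on $L^2$ (hence has spectrum on $i\mathbb R$); only the nonlocal piece $\mathscr{K}(\Omega)=-(K_2*\Omega\cdot\nabla)\bar\Omega$ is compact (Proposition~\ref{p:abstract}, Corollary~\ref{c:structural}). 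The large-$\beta$ perturbation then runs between $L_\beta=\beta^{-1}S_2+\mathscr{K}$ and $L_{st}=S_1+\mathscr{K}$. Finally, rather than invoking an abstract unstable-manifold theorem, the paper constructs the solutions by solving Yudovich-class Cauchy problems from times $t_k=e^{-k}$ with data perturbed by $\varepsilon$ times the eigenmode, proving uniform-in-$k$ estimates in the norm $\|\Omega\|_X=\|\Omega\|_{L^2}+\||x|\nabla\Omega\|_{L^2}+\|\nabla\Omega\|_{L^4}$ (semigroup/Duhamel for the $L^2$ baseline, energy estimates for angular then radial derivatives), and passing to the limit; the uncountable family is indexed by the amplitude $\varepsilon\in\mathbb R$, which is equivalent to your $\tau_0$ up to reparametrization.
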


In fact, the $f$ given by the proof is smooth and compactly supported on any closed interval of time $[\varepsilon, T]\subset ]0, \infty[$. Moreover, a closer inspection of the argument reveals that any of the solutions $(\omega, v)$ enjoy bounds on the $W^{1,4}_\textrm{loc}$ norm of $\omega (t, \cdot)$, and good decay properties at infinity, whenever $t$ is positive (and obviously such estimates degenerate as $t\downarrow 0$). In particular $v$ belongs to $C^1_\textrm{loc} (\mathbb R^2\times ]0, \infty[)$. It is not difficult to modify the arguments detailed in these notes to produce examples which have even more regularity and better decay for positive times, but we do not pursue the issue here.

\begin{remark}\label{r:bounded}\label{R:BOUNDED}
Recall that
\begin{equation}\label{e:bound-on-Biot-Savart}
\|K_2* \omega (\cdot, t)\|_{L^\infty}\leq C (p) (\|\omega (\cdot, t)\|_{L^1} + \|\omega (\cdot, t)\|_{L^p})
\end{equation}
whenever $p>2$ (cf. the Appendix for the proof). Therefore we conclude that each solution $v$ in Theorem \ref{thm:main} is bounded on $\mathbb R^2\times [0,T]$ for every positive $T$.
\end{remark}

The above groundbreaking result was proved by Vishik in the two papers \cite{Vishik1} and \cite{Vishik2} (upon which these notes are heavily based) and answers a long-standing open question in the PDE theory of the incompressible Euler equations, as it shows that it is impossible to extend to the $L^p$ scale the following classical uniqueness result of Yudovich. \index{Yudovich uniqueness Theorem@Yudovich uniqueness theorem}

\begin{theorem}\label{thm:Yudo}\label{THM:YUDO}
Consider a strictly positive $T$, an initial vorticity $\omega_0 \in L^1\cap L^\infty$ with $v_0=K_2*\omega_0 \in L^2$ and an external force $f\in L^1 ([0,T]; L^1\cap L^\infty)$ with $K_2*f\in L^1 ([0,T]; L^2)$. Then there is a unique solution $\omega$ of \eqref{e:Euler} and \eqref{e:Cauchy} on $\mathbb R^2\times [0, T]$ satisfying the estimates $\omega \in L^\infty ([0,T], L^1\cap L^\infty)$ and $v = K_2* \omega \in L^\infty ([0,T], L^2)$.
\end{theorem}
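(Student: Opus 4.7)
\emph{Strategy.} My plan is to prove existence by mollification and a compactness argument, and uniqueness by Yudovich's $L^p$ energy estimate, letting $p\to\infty$.

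\emph{Existence.} I would mollify $(\omega_0,f)$ in space: the smoothed, rapidly-decaying data admit unique smooth classical solutions $(\omega_\varepsilon,v_\varepsilon)$ on $\mathbb R^2\times[0,T]$ by standard 2d Euler theory. Smooth vorticity is transported (modulo the force) by the divergence-free flow of $v_\varepsilon$, so along characteristics $\|\omega_\varepsilon(t)\|_{L^q}\le \|\omega_0*\rho_\varepsilon\|_{L^q}+\int_0^t\|(f*\rho_\varepsilon)(s)\|_{L^q}\,ds$ for $q\in\{1,\infty\}$, uniformly in $\varepsilon$. Together with the Biot-Savart bound \eqref{e:bound-on-Biot-Savart} this gives a uniform $L^\infty_{t,x}$ bound on $v_\varepsilon$, and the log-Lipschitz spatial modulus of continuity of $v_\varepsilon$ (inherited from $\|\omega_\varepsilon\|_{L^1\cap L^\infty}$) is also uniform in $\varepsilon$. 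Arzel\`a-Ascoli then yields a locally uniform subsequential limit $v$, and $\omega_\varepsilon$ converges weakly-$*$ in $L^\infty_t(L^1\cap L^\infty_x)$ to some $\omega$. The strong convergence of $v_\varepsilon$ together with the weak-$*$ convergence of $\omega_\varepsilon$ is enough to pass to the limit in the nonlinear term, so \eqref{e:distrib} is inherited by $(\omega,v)$.

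\emph{Uniqueness.} Let $(\omega_i,v_i)$, $i=1,2$, be two solutions with the same data and set $u=v_1-v_2$, $w=\omega_1-\omega_2$. At the velocity level the difference satisfies $\partial_t u+(v_1\cdot\nabla)u+(u\cdot\nabla)v_2+\nabla q=0$ with $\mathrm{div}\,u=0$. Pairing with $u$, using $\mathrm{div}\,v_1=0$ to discard the transport term and incompressibility to discard the pressure, I obtain
$$\tfrac12\tfrac{d}{dt}\|u\|_{L^2}^2=-\int u\cdot(u\cdot\nabla)v_2\,dx.$$
The key input is the Calder\'on-Zygmund bound $\|\nabla v_2\|_{L^p}\le Cp\,\|\omega_2\|_{L^p}$ for $p\in[2,\infty)$ with $C$ independent of $p$, combined with the uniform control $\|\omega_2\|_{L^p}\le \|\omega_2\|_{L^1}+\|\omega_2\|_{L^\infty}$ and the uniform bound $\|u\|_{L^\infty}\le M$ from Remark~\ref{r:bounded}. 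H\"older and the interpolation $\|u\|_{L^{2p/(p-1)}}\le \|u\|_{L^2}^{(p-1)/p}\|u\|_{L^\infty}^{1/p}$ yield, with $y(t):=\|u(t)\|_{L^2}^2$, the differential inequality
$$y'(t)\le C p\, y(t)^{1-1/p}.$$
Since $y(0)=0$, elementary ODE comparison gives $y(t)\le (Ct)^p$, and for $t<1/C$ sending $p\to\infty$ forces $y\equiv 0$. Because the constant $C$ depends only on the global bounds of $\omega_1,\omega_2$, iterating over time intervals of length $<1/C$ propagates $\omega_1\equiv\omega_2$ on all of $[0,T]$.

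\emph{Main obstacle.} The delicate point is the borderline failure of Calder\'on-Zygmund at $L^\infty$: $\nabla K_2$ does not map $L^\infty$ to $L^\infty$, and one only has the log-Lipschitz regularity of $v$, quantitatively encoded by the linear $p$-growth $\|\nabla v\|_{L^p}\lesssim p\,\|\omega\|_{L^p}$. The scheme closes only because the factor $p$ multiplying the right-hand side of the differential inequality is compensated by the interpolation exponent $1/p$ acting on the uniformly bounded factor $\|u\|_{L^\infty}^{1/p}$, and even then one obtains only \emph{local-in-time} uniqueness that must then be iterated. It is precisely this borderline character that the construction of Theorem~\ref{thm:main} exploits to break uniqueness once $L^\infty$ is weakened to $L^p$.
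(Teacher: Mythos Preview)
Your proposal is correct and follows essentially the same approach as the paper: mollify and pass to the limit for existence, and for uniqueness derive the energy inequality $\dot E \le p M E^{1-1/p}$ via the sharp Calder\'on--Zygmund constant and the $L^\infty$ bound on the velocity difference, then send $p\to\infty$ and iterate in time. The only cosmetic difference is in the compactness step for existence: you invoke the uniform log-Lipschitz modulus of $v_\varepsilon$ and Arzel\`a--Ascoli directly on the velocities, whereas the paper routes through weak-$*$ compactness of the vorticities in $C([0,T];L^q_w)$ together with Rellich for the velocities (its Proposition~\ref{p:convergence}); both are standard and lead to the same conclusion.
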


The above theorem in a bounded domain was originally proven by Yudovich in 1963 \cite{Yudovich1963}, who also proved a somewhat more technical statement on unbounded domains. We have not been able to find an exact reference for the statement above (cf. for instance \cite[Theorem 8.2]{MajdaBertozzi} and the paragraph right afterwards, where the authors point out the validity of the Theorem in the case of $f=0$). We therefore give a detailed proof in the appendix for the reader's convenience. When $f=0$, Theorem~\ref{thm:Yudo} has been extended to the Yudovich space of functions whose $L^p$ norms are allowed to grow moderately as $p \to +\infty$~\cite{yudovich1995uniqueness} and borderline Besov spaces containing $\textrm{BMO}$~\cite{VISHIK1999}.

\begin{remark}\label{r:A-priori-estimates}
We recall that the solution of Theorem~\ref{thm:Yudo} satisfies a set of important a priori estimates, which can be justified using the uniqueness part and a simple approximation procedure. Indeed if $(\omega, v)$ is a smooth solution of \eqref{e:Euler}, then the method of characteristics shows that, for every $t$, there exists a family of volume-preserving diffeomorphisms $T_s:\R^2\to\R^2, s\in[0, t]$, such that \begin{equation*}
        \omega(x, t)=\omega_0(T_0 x) + \int_0^t f(T_s x, s)\,\mathrm ds.
    \end{equation*}
    Therefore, since volume-preserving diffeomorphisms preserve all $L^q$ norms, we get, for all $q\in[1,\infty]$,
    \begin{equation*}
        \norm{\omega(\cdot, t)}_{L^q}\le\norm{\omega_0}_{L^q}+\int_0^t \norm{f(\cdot, s)}_{L^q}\,\mathrm ds.
    \end{equation*}
    Furthermore, a usual integration by parts argument, as seen in \cite[Lemma 1.1]{Yudovich1963}, shows that $v$ satisfies the estimate
    \begin{equation*}
        \norm{v(\cdot, t)}_{L^2}\le\norm{v_0}_{L^2}+\int_0^t \norm{K_2* f(\cdot,s)}_{L^2}\,\mathrm ds.
    \end{equation*}
\end{remark}

\begin{remark}\label{r:well-defined}
Recall that the Biot-Savart kernel is given by the formula
\begin{equation}\label{e:Biot-Savart}
K_2 (x_1, x_2) = \frac{x^\perp}{2\pi |x|^2} =  \frac{1}{2\pi |x|^2} (-x_2, x_1)\, .
\end{equation}
In particular, while $K_2\not \in L^p$ for any $p$, it can be easily broken into 
\begin{equation}\label{e:decomposition-Biot-Savart-Kernel}
K_2 = K_2 \mathbf{1}_{B_1} + K_2 \mathbf{1}_{B_1^c}\, ,
\end{equation}
where $B_1$ denotes the unit ball around $0$.
Observe that $K_2 \mathbf{1}_{B_1} \in L^q$ for every $q\in [1,2[$ and $K_2 \mathbf{1}_{B_1^c}\in L^r$ for every $r\in ]2, \infty]$. Under the assumption that $\omega \in L^{2-\delta}$ for some positive $\delta >0$, this decomposition allows us to define the convolution $K_2* \omega$ as $(K_2 \mathbf{1}_{B_1}) * \omega 
+ (K_2 \mathbf{1}_{B_1^c}) * \omega$, where each separate summand makes sense as Lebesgue integrals thanks to Young's convolution inequality.\footnote{Young's convolution inequality states that, if $g_1\in L^{p_1}$ and $g_2\in L^{p_2}$ with $1\leq \frac{1}{p_1} + \frac{1}{p_2} \leq 2$, then $g_1 (y-\cdot) g_2 (\cdot)$ belongs to $L^1$ for a.e. $y$ and $g_1* g_2\in L^r$ for $\frac{1}{r}=\frac{1}{p_1} + \frac{1}{p_2} -1$.}

On the other hand we caution the reader that, for general $\omega\in L^2$, $K_2*\omega$ may not be well-defined. More precisely, if we denote by $\mathscr{S}$\index{aalSscript@$\mathscr{S}$} the \index{Schwartz space@Schwartz space $\mathscr{S}$} Schwartz space of rapidly decaying smooth functions and by $\mathscr{S}'$\index{aalSscript'@$\mathscr{S}'$} the space of tempered distributions \index{tempered distribution@tempered distribution}\index{space of tempered distributions@space $\mathscr{S}'$ of tempered distributions} (endowed, respectively, with their classical Fr\'echet and weak topologies), it can be shown that there is no continuous extension of the operator $\mathscr{S} \ni \omega \mapsto K_2 *\omega\in \mathscr{S}'$ to a continuous operator from $L^2$  to $\mathscr{S}'$, cf. Remark~\ref{r:Camillo_dumb}. 

This fact creates some technical issues in many arguments where we will indeed need to consider a suitable continuous extension of the operator $\omega \mapsto K_2*\omega$ to {\em some} closed linear subspace of $L^2$, namely, $m$-fold rotationally symmetric functions in $L^2$ (for some integer $m\geq 2$). Such an extension will be shown to exist thanks to some special structural properties of the subspace.  
\end{remark}

\section{Idea of the proof}\label{sec:ideaproof}

We now describe, briefly, the rough idea of and motivation for the proof. An extensive description of the proof with precise statements can be found in Chapter~\ref{chapter:general}, which breaks down the whole argument into three separate (and independent) parts. The subsequent three chapters are then dedicated to the detailed proofs.

First, we recall two essential features of the two-dimensional Euler equations:
\begin{enumerate}
    \item \emph{Steady states}. The two-dimensional Euler equations possess a large class of explicit, radially symmetric steady states called \emph{vortices}:
\index{aagzbar@$\bar\omega$}
    \begin{equation}
        \label{eq:vorticesdef}
        \bar{\omega}(x) = g(|x|), \quad \bar{v}(x) = \zeta(|x|) x^\perp.
    \end{equation}
    \item \emph{Scaling symmetry}. The Euler equations possess a two-parameter scaling symmetry: If $(\omega,v)$ is a solution of~\eqref{e:Euler} with vorticity forcing $f$, and $\lambda, \mu > 0$, then
\begin{equation}
	\omega_{\lambda,\mu}(x,t) = \mu \omega(\lambda x, \mu t), \quad v_{\lambda,\mu}(x,t) = \frac{\mu}{\lambda} v(\lambda x, \mu t),
\end{equation}
define a solution with vorticity forcing\textbf{}
\begin{equation}
    f_{\lambda,\mu}(x,t) = \mu^2 f(\lambda x, \mu t).
\end{equation}
The scaling symmetry corresponds to the physical dimensions
\begin{equation}
[x] = L,\quad [t] = T, \quad [v] = \frac{L}{T}, \quad [\omega] = \frac{1}{T}, \quad \text{ and } \quad  [f] = \frac{1}{T^2}.
\end{equation}
\end{enumerate}

We now elaborate on the above two features:

\smallskip

\emph{1. Unstable vortices}. The stability analysis of shear flows $u = (b(y),0)$ and vortices~\eqref{eq:vorticesdef} is classical, with seminal contributions due to Rayleigh~\cite{Rayleigh1879}, Kelvin~\cite{Thomson1880},  Orr~\cite{Orr}, and many others. The linearized Euler equations around the background vortex $\bar{\omega}$ are
\begin{equation}
    \label{eq:linearizedeulerintro}
    \partial_t \omega - L_\textrm{st} \omega := 
    \partial_t \omega + \zeta(r) \partial_\theta \omega + (v \cdot e_r) g'(r) = 0, \quad v = K_2 \ast \omega.
\end{equation}
Consider the eigenvalue problem associated to the linearized operator $L_\textrm{st}$. It suffices to consider $\psi = e^{im\theta} \psi_m(|x|)$, $m \geq 0$, the stream function associated to a vorticity perturbation $\omega$ (that is, $\Delta \psi = \omega$). It is convenient to pass to an exponential variable $s = \log r$ and define $\phi(s) = \psi_m(e^s)$; $A(s) = e^s g'(e^s)$ ($r \; \times$ the radial derivative of the background vorticity); and $\Xi(s) = \zeta(e^s)$ (the differential rotation). The eigenvalue problem for $L_\textrm{st}$, with eigenvalue $\lambda = -imz$, can be rewritten as
\begin{equation}
    \label{eq:theeigenvaluequation}
    \left( \Xi(s) - z \right) \left( \frac{d^2}{ds^2}- m^2 \right) \phi - A(s) \phi = 0.
\end{equation}
This is \emph{Rayleigh's stability equation}. The eigenvalue $\lambda$ is unstable when $\textrm{Im}(z) > 0$, in which case we can divide by $\Xi - z$ and analyze a steady Schr{\"o}dinger equation. It is possible to understand~\eqref{eq:theeigenvaluequation} well enough to design vortices for which the corresponding linear operator has an unstable eigenfunction. The unstable modes are found to be bifurcating from neutral modes. For shear flows, this analysis goes back to Tollmien~\cite{Tollmien}, who calculated the necessary relationship between $z$ and $m$ for such a bifurcation curve to exist. The problem was treated rigorously by Z.~Lin~\cite{LinSIMA2003} for shear flows in a bounded channel and $\R^2$ and rotating flows in an annulus. We recognize also the important work of Faddeev~\cite{faddeev1973theory}. For those interested in hydrodynamic stability more generally, see the classic monograph~\cite{DrazinReid}. Chapter~4 therein concerns the stability of shear flows, including Rayleigh's criterion and a sketch of Tollmien's idea. See, additionally,~\cite{LinCC} and~\cite[Chapter 2]{Schmid2001}.

The case of vortices on $\R^2$, which is the crucial one for the purposes of these notes, was treated by Vishik in~\cite{Vishik2}, see Chapter~\ref{chapter:linearpartii} below. In the cases relevant to these notes, $L_\textrm{st}$ has at least one unstable eigenvalue $\lambda$. While the latter could well be real, for the sake of our argument let us assume that it is a complex number $\lambda= a_0 + b_0 i$ ($a_0, b_0 > 0$) and let $\bar\lambda = a_0- b_0 i$ be its complex conjugate. If we denote by $\eta$ and $\bar{\eta}$ two corresponding (nontrivial) eigenfunctions, it can be checked that they are {\em not} radially symmetric.

With the unstable modes in hand, one may seek a trajectory on the \emph{unstable manifold} associated to $\lambda$ and $\bar{\lambda}$. For example, one such trajectory may look like
\begin{equation}
    \omega = \bar{\omega} + \omega^\textrm{lin} + o(e^{a_0 t}),
\end{equation}
where $\omega^\textrm{lin} = \textrm{Re}(e^{\lambda t} \eta)$ is a solution of the linearized Euler equations~\eqref{eq:linearizedeulerintro}. These solutions converge to $\bar{\omega}$ exponentially in backward time.
Hence, we expect that certain unstable vortices exhibit a kind of \emph{non-uniqueness at time $t = -\infty$} and moreover break the radial symmetry. The existence of unstable manifolds associated to a general class of Euler flows in dimension $n \geq 2$ was demonstrated by Lin and Zeng~\cite{LinZengCPAM2013,LinZengCorrigendum2014}. There is a substantial mathematical literature on the nonlinear instability of Euler flows, see~\cite{friedlanderstraussvishikearly,FriedlanderHoward,friedlanderstraussvishik,bardosguostrauss,friedlandervishik,linnonlinear}.

\smallskip

\emph{2. Self-similar solutions}. It is natural to consider solutions invariant under the scaling symmetry and, in particular, it is natural to consider those self-similar solutions which live exactly at the desired integrability. If we fix a relationship $L^\alpha \sim T$ in the scaling symmetries, the similarity variables are
\begin{equation}\label{e:self-similar-scaling}
    \xi = \frac{x}{t^{\frac{1}{\alpha}}}, \quad \tau = \log t
\end{equation}
\begin{equation}
    v(x,t) = \frac{1}{t^{1-\frac{1}{\alpha}}} V(\xi, \tau), \quad \omega(x,t) = \frac{1}{t} \Omega(\xi, \tau).
\end{equation}
{We may regard the logarithmic time as $\tau = \log (t/t_0)$, so that $t$ is non-dimension- alized according to a fixed reference time $t_0 = 1$.} Notice that physical time $t=0$ corresponds to logarithmic time $\tau = -\infty$. The function $\Omega$ is known as the \emph{profile}.
The Euler equations, without force, in similarity variables are
\begin{equation}
    \label{eq:similarityvareulereqns}
    \left\{\begin{array}{ll}
    &\partial_\tau \Omega - \big( 1 + \frac{\xi}{\alpha} \cdot \nabla_\xi \big) \Omega + V \cdot \nabla_\xi \Omega = 0\\ \\
    & V= K_2* \Omega \, .
    \end{array}\right.
\end{equation}
Profiles $\Omega$ satisfying $\| \Omega(\cdot,\tau) \|_{L^p} = O(1)$ as $\tau \to -\infty$ satisfy $\| \omega(\cdot,t) \|_{L^p} = O(t^{-1+\frac{2}{\alpha p}})$ as $t \to 0^+$, and similarly in the weak $L^p$ norms. Hence, the Lebesgue and weak Lebesgue norms with $p = 2/\alpha$ would be $O(1)$ in either variables. To show sharpness of the Yudovich class, we consider $0 < \alpha \ll 1$.

\bigskip

The route to non-uniqueness through unstable vortices and self-similar solutions is as follows: Suppose that $\bar{\Omega}$ is an unstable steady state of the similarity variable Euler equations~\eqref{eq:similarityvareulereqns} (in particular, $\bar{\omega}(x,t) = t^{-1} \bar{\Omega}(\xi)$ is a self-similar solution of the usual Euler equations). Find a trajectory $\Omega$ on the unstable manifold associated to $\bar{\Omega}$. In similarity variables, the steady state $\bar{\Omega}$ will be ``non-unique at minus infinity", which corresponds to non-uniqueness at time $t=0$ in the physical variables.

One natural class of background profiles $\bar{\Omega}$ consists of  \emph{power-law vortices} $\bar{\omega} = \beta |x|^{-\alpha}$, $\beta \in \R$, which are simultaneously steady solutions and self-similar solutions without force. At present, we do not know whether the above strategy can be implemented with power-law vortices.

Instead, we choose a smooth vortex profile $g(|x|)$, with power-law decay as $|x| \to +\infty$, which is unstable for the Euler dynamics. Our background will be the self-similar solution with profile $\bar{\Omega} = g(|\xi|)$, which solves the Euler equations \emph{with a self-similar force}. This profile may be considered a well-designed smoothing of a power-law vortex. When the background is large, it is reasonable to expect that the additional term in the similarity variable Euler equations~\eqref{eq:similarityvareulereqns} can be treated perturbatively, so that $g(|\xi|)$ will also be unstable for the  similarity variable Euler dynamics. This heuristic is justified in Chapter~\ref{chapter:linearparti}.

In order to ensure that the solutions have finite energy, we also truncate the background velocity at distance $O(1)$ in physical space. This generates a different force. The truncation's contribution to the force is smooth and heuristically does not destroy the non-uniqueness, which can be thought of as ``emerging" from the singularity at the space-time origin. Our precise Ansatz is~\eqref{e:Ansatz-1}, which is the heart of the nonlinear part of these notes.

\section{Differences with Vishik's work}

While we follow the strategy of Vishik in~\cite{Vishik1,Vishik2}, we deviate from his proof in some ways. We start by listing two changes which, although rather minor, affect the presentation substantially.
\begin{enumerate}
    \item We decouple the parameter $\alpha$ in~\eqref{e:self-similar-scaling}  governing the self-similar scaling from the decay rate $\bar{\alpha}$ of the smooth profile $g$ at infinity. In \cite{Vishik1} these two parameters are equal; however, it is rather obvious that the argument goes through as long as $\alpha \leq \bar \alpha$. If we then choose $\alpha < \bar \alpha$ the resulting solution has zero initial data. This is a very minor remark, but it showcases the primary role played by the forcing $f$ in the equation.
    \item  Strictly speaking Vishik's Ansatz for the ``background solution'' is in fact different from our Ansatz (even taking into account the truncation at infinity). The interested reader might compare \eqref{e:tilde-v} and \eqref{e:curl-tilde-v} with \cite[(6.3)]{Vishik1}. Note in particular that the coordinates used in \cite{Vishik1} are not really \eqref{e:self-similar-scaling} but rather a more complicated variant. Moreover, Vishik's Ansatz contains a parameter $\varepsilon$, whose precise role is perhaps not initially transparent, and which is ultimately scaled away in~\cite[Chapter 9]{Vishik1}. This obscures that the whole approach hinges on finding a solution $\Omega$ of a truncated version of \eqref{eq:similarityvareulereqns}
    asymptotic to the unstable manifold of the steady state $\bar \Omega$ at $-\infty$.
    In our case, $\Omega$ is constructed by solving appropriate initial value problems for the truncated version of \eqref{eq:similarityvareulereqns} at negative times $-k$ and then taking their limit; this plays the role of Vishik's parameter~$\varepsilon$.
\end{enumerate}

We next list two more ways in which our notes deviate from \cite{Vishik1,Vishik2}. These differences are much more substantial. 
\begin{enumerate}
    \item[(3)] The crucial nonlinear estimate in the proof of Theorem \ref{thm:main} (cf. \eqref{e:asymptotic-in-t} and the more refined version \eqref{e:asymptotic-in-t-2}), which shows that the solution $\Omega$ is asymptotic, at minus infinity, to an unstable solution of the linearized equation, is proved in a rather different way. In particular our argument is completely Eulerian and based on energy estimates, while a portion of Vishik's proof relies in a crucial way on the Lagrangian formulation of the equation. The approach introduced here is exploited by the first and third author in~\cite{AC}, and we believe it might be useful in other contexts.
    \item[(4)] Another technical, but crucial, difference, concerns the simplicity of the unstable eigenvalue $\eta$. While Vishik claims such simplicity in \cite{Vishik2}, {the argument given in the latter reference is actually incomplete. After we pointed out the gap to him, he provided a clever way to fill it in \cite{Vishik3}}. These notes point out that such simplicity is not really needed in the nonlinear part of the analysis: in fact a much weaker linear analysis than the complete one carried in \cite{Vishik2} {is already enough to close the argument for Theorem \ref{thm:main}. However, for completeness and for the interested readers, we include in Appendix~\ref{a:better} the necessary additional arguments needed to conclude the more precise description of \cite{Vishik2}.}
\end{enumerate}

\section{Further remarks}\label{s:final-remarks}

Recently, Bressan, Murray, and Shen investigated in \cite{BressanAposteriori,BressanSelfSimilar} a different non-uniqueness scenario for~\eqref{e:Euler} which would demonstrate sharpness of the Yudovich class without a force. The scenario therein, partially inspired by the works of Elling~\cite{EllingAlgebraicSpiral,EllingSelfSimilar}, is also based on self-similarity and symmetry breaking but follows a different route.

Self-similarity and symmetry breaking moreover play a central role in the work of Jia, {\v S}ver{\'a}k, and Guillod~\cite{JiaSverakInventiones,JiaSverakIllposed,guillod2017numerical} on the conjectural non-uniqueness of weak Leray-Hopf solutions of the Navier-Stokes equations. One crucial difficulty in~\cite{JiaSverakIllposed}, compared to Vishik's approach, is that the self-similar solutions in~\cite{JiaSverakIllposed} are far from explicit. Therefore, the spectral condition therein seems difficult to verify analytically, although it has been  checked with non-rigorous numerics in~\cite{guillod2017numerical}. The work~\cite{JiaSverakIllposed} already contains a version of the unstable manifold approach, see p. 3759--3760, and a truncation to finite energy.

At present, the above two programs, while very intriguing and highly suggestive, require a significant numerical component not present in Vishik's approach. On the other hand, at present, Vishik's approach includes a forcing term absent from the above two programs, whose primary role is showcased by the fact that the initial data can be taken to be zero.

\bigskip


After the completion of this manuscript, the first three authors demonstrated  non-uniqueness of Leray solutions to the forced $3d$ Navier-Stokes equations. 
The strategy in~\cite{DallasEliaMariaNavierStokes} was inspired by the connection between instability and non-uniqueness recognized in~\cite{JiaSverakIllposed}, Vishik's papers~\cite{Vishik1,Vishik2}, and the present work. One of the main difficulties in~\cite{DallasEliaMariaNavierStokes} is the construction of a smooth, spatially decaying, and unstable (more specifically, due to an unstable eigenvalue) steady state of the forced $3d$ Euler equations. 
It is constructed as a vortex ring whose cross section is a small perturbation of the $2d$ vortex of Chapter~\ref{chapter:linearpartii} below. In this way, the work~\cite{DallasEliaMariaNavierStokes} relies crucially on the present work. Two spectral perturbation arguments are employed to ensure that (i) the vortex ring inherits the instability of the $2d$ vortex, at least when its aspect ratio is small, and (ii) the singularly perturbed operator with a Laplacian inherits the instability of the inviscid operator. Once the viscous instability is known, the non-linear argument is actually more standard in the presence of a Laplacian, due to the parabolic smoothing effect. In this direction, recent progress includes (a) non-uniqueness in bounded domains~\cite{DallasEliaMariaGluing} and (b) a quasilinear approach to incorporating the viscosity, exhibited in the hypodissipative $2d$ Navier-Stokes equations~\cite{AC}.

\bigskip

Much of the recent progress on non-uniqueness of the Euler equations has been driven by Onsager's conjecture, which was solved in~\cite{IsettOnsager}. With Theorem~\ref{thm:main} in hand, we can now summarize the situation for the Euler equations {in dimension three as follows:} 

\begin{itemize}[leftmargin=*]
    \item $\alpha \in (1,2)$: (\emph{Local well-posedness and energy conservation}) For each solenoidal $u_0 \in C^{\alpha}{(\mathbb{T}^3)}$ and force $f \in L^1(]0,T[;C^\alpha{(\mathbb{T}^3)})$, there exists $T' \in ]0,T[$ and a unique local-in-time solution $u \in L^\infty(]0,T'[;C^\alpha{(\mathbb{T}^3)})$. The solution $u$ depends continuously
    \footnote{The continuous dependence is more subtle for quasilinear equations than semilinear equations, and  uniform continuity is not guaranteed in the regularity class in which the solutions are found, see the discussion in~\cite{taonotes}. One can see this at the level of the equation for the difference of two solutions $u^{(1)}$ and $u^{(2)}$: One of the solutions becomes the ``background" and, hence, loses a derivative. One way to recover the continuous dependence stated above is to compare the above two solutions with initial data $u_0^{(1)}$, $u_0^{(2)}$ and forcing terms $f^{(1)}$, $f^{(2)}$ to approximate solutions $u^{(1),\varepsilon}$, $u^{(2),\varepsilon}$ with mollified initial data $u_0^{(1),\varepsilon}$, $u_0^{(2),\varepsilon}$ and mollified forcing terms $f^{(1),\varepsilon}$, $f^{(2),\varepsilon}$. One then estimates $\| u^{(1)} - u^{(2)} \| \leq \| u^{(1)} - u^{(1),\varepsilon} \| + \| u^{(1),\varepsilon} - u^{(2),\varepsilon} \| + \| u^{(2),\varepsilon} - u^{(2)} \|$. The approximate solutions, which are more regular, are allowed to lose derivatives in a controlled way.} 
    in the above class on its initial data and forcing term. Moreover, the solution $u$ conserves energy.
        \item $1/3 < \alpha < 1$: (\emph{Non-uniqueness and energy conservation}) There exist a time $T > 0$, a force $f \in L^1(]0,T[;{L^2 \cap C^\alpha(\R^2 \times \mathbb{T})})$, and two distinct weak solutions {$u_1,u_2 \in L^\infty(]0,T[;L^2 \cap C^\alpha(\R^2 \times \mathbb{T}))$} to the Euler equations with zero initial data and force $f$. For any $T>0$, any weak solution $u$ in the space $L^\infty(]0,T[;{L^2 \cap C^\alpha(\R^2 \times \mathbb{T})})$ with forcing in the above class conserve energy~\cite{constantinetiti}.
    \item $0 < \alpha < 1/3$: (\emph{Non-uniqueness and anomalous dissipation}) There exist a time $T>0$ and two distinct admissible weak solutions (see~\cite{OnsagerAdmissible}) $u_1,u_2 \in L^\infty(]0,T[;C^\alpha{(\mathbb{T}^3)})$ to the Euler equations with the same initial data and zero force and which moreover dissipate energy.
\end{itemize}

While we are not aware of the first two statements with force in the literature, the proofs are easy adaptations of those with zero force. In order to obtain the non-uniqueness statement in the region $1/3 < \alpha < 1$, one can {extend the non-unique solutions on $\R^2$ to be constant in the $x_3$ direction.}
The borderline cases may be sensitive to the function spaces in question. For example, the three-dimensional Euler equations are ill-posed in $C^k$, $k \geq 1$~\cite{bougainillposed}. Furthermore, of the above statements, only the negative direction of Onsager's conjecture is open in $n=2$. 


\cleardoublepage

\chapter{General strategy: background field and self-similar coordinates}
\label{chapter:general}

In this chapter, we give a detailed outline of the proof of Theorem~\ref{thm:main}. The last section contains a dependency tree which schematizes how the argument is subdivided into several intermediate statements, and which is supposed to help the reader navigate the different parts of this book. The chapter begins by describing the structure of the initial velocity field and the external body force. 
We then state Theorem~\ref{thm:main2}, a more precise version of Theorem~\ref{thm:main}, whose proof is broken into two main parts. The first part is devoted to constructing a linearly unstable vortex, see Theorem~\ref{thm:spectral} for the precise statement. 
The second part deals with the construction of nonlinear unstable trajectories, and it is outlined in Section~\ref{s:nonlinear}.

\section{The initial velocity and the force}

First of all, the initial velocity $v_0$ of Theorem \ref{thm:main} will have the following structure\index{aaga@$\alpha$}\index{aagb@$\beta$}
\begin{equation}\label{e:v_0}
v_0 (x) =
\begin{cases}
\beta (2-\alpha)^{-1} |x|^{-\alpha} \chi (\lvert x\rvert) x^\perp\;\; &\mbox{if }\al =\alpha\\
0   &\mbox{if }\al>\alpha
\end{cases}
\end{equation}
where $0<\alpha\leq \al<1$, $\chi$ is a smooth cut-off function, compactly supported in $\mathbb R$ and identically $1$ on the interval $[-1,1]$, and $\beta$ is a sufficiently large constant (whose choice will depend on $\alpha$). For simplicity we will assume that $\chi$ takes values in $[0,1]$ and it is monotone non-increasing on $[0, \infty[$, even though none of these conditions play a significant role.

A direct computation gives $\div v_0 = 0$. 
The corresponding $\omega_0$ is then given by\index{aagz_0@$\omega_0$}\index{aalv_0@$v_0$}\index{aagx@$\chi$}
\begin{equation}\label{e:omega_0}
\omega_0 (x) = 
\curl v_0 (x) =
\begin{cases}
\beta \left[ |x|^{-\alpha} \chi (|x|) + (2-\alpha)^{-1} \chi' (|x|) |x|^{1-\alpha}\right] \;\;&\mbox{if }\al=\alpha\\
0 &\mbox{if }\al>\alpha
\end{cases}
\end{equation}
and the relation $v_0 = K_2*\omega_0$ comes from standard Calder{\'o}n-Zygmund theory (since $\textrm{div}\, v_0 =0$, $\curl v_0=\omega_0$ and $v_0$ is compactly supported).
 $\al\in ]0,1[$ is chosen depending on $p$ in Theorem \ref{thm:main}, so that $\al p < 2$: in the rest of the notes we assume that $p$, $\al$, and $\alpha$ are fixed. In particular it follows from the definition that $\omega_0\in L^1\cap L^p$ and that $v_0 \in L^1 \cap L^\infty$. 
 
Next, the function $|x|^{-\al}$ will be appropriately smoothed to a (radial) function 
\begin{equation}\label{e:def-bar-Omega}
\bar \Omega (x) = g (|x|)
\end{equation}
\index{aalg@$g$}\index{aagZbar@$\bar \Omega$}
such that:
\begin{align}
&g \in C^\infty ([0,R]) \qquad \qquad &\forall R>0\, ,\\
&g (r) = r^{-\al} \qquad\qquad &\mbox{for $r\geq 2$,}\label{e:decay-at-infinity}\\ 
&g(r) = g(0) + \frac{g''(0)}{2} r^2\qquad \qquad &\mbox{for $r$ in a neighborhood of $0$.}\label{e:g-constant-around-0}
\end{align} 
This smoothing will be carefully chosen so as to achieve some particular properties, whose proof will take a good portion of the notes (we remark however that while a sufficient degree of smoothness and the decay \eqref{e:decay-at-infinity} play an important role, the condition \eqref{e:g-constant-around-0} is just technical and its role is to simplify some arguments). We next define the function $\bar V (x)$\index{aagf@$\zeta$} \index{aalVbar@$\bar V$} as
 \begin{equation}\label{e:def-barV}
 \bar V (x) = \zeta (|x|) x^\perp\, ,
 \end{equation}
where $\zeta$ is
\begin{equation}\label{e:def-zeta}
    \zeta(r) = \frac{1}{r^2}\int_0^r \rho g(\rho)\,\mathrm d\rho\, .
\end{equation}

\begin{remark}\label{r:well-defined-2} Observe that under our assumptions $\bar \Omega\in L^q(\R^2)$ for every $q>\frac{2}{\al}$, but it does not belong to any $L^q(\R^2)$ with $q\leq \frac{2}{\al}$. Since when $p\geq 2$ the condition $\al p <2$ implies $\al < 1$, we cannot appeal to Young's Theorem as in Remark \ref{r:well-defined} to define $K_2* \bar\Omega$. 

Nonetheless, $\bar V$ can be understood as a natural definition of $K_2* \bar\Omega$ for radial distributions of vorticity which are in $L^1_\textrm{loc}$. Indeed observe first that $\textrm{div}\, \bar V=0$ and $\textrm{curl}\, \bar V = \bar\Omega$, and notice also that $\bar V$ would decay at infinity like $|x|^{-1}$ if $\bar\Omega$ were compactly supported. This shows that $\bar V$ would indeed coincide with $K_2*\bar \Omega$ for compactly supported radial vorticities. Since we can approximate $\bar \Omega$ with $\bar\Omega_N := \bar\Omega \mathbf{1}_{B_N}$, passing into the limit in the corresponding formulas for $K_2* \bar\Omega_N$ we would achieve $\bar V$. 

Note also that in the remaining computations what really matters are the identities $\textrm{div}\,  \bar V = 0$ and $\textrm{curl}\, \bar V = \bar \Omega$ and so regarding $\bar V$ as $K_2* \bar\Omega$ only simplifies our terminology and notation.
\end{remark}

The force $f$ will then be defined in such a way that $\tilde \omega$, the curl of the velocity \index{aalf@$f$}\index{aalvtilde@$\tilde{v}$}\index{aagztilde@$\tilde{\omega}$}
\begin{equation}\label{e:tilde-v}
\tilde v (x, t) = \beta t^{1/\alpha-1} \bar V \Big(\frac{x}{t^{1/\alpha}}\Big) \chi (|x|)\, ,
\end{equation}
is a solution of \eqref{e:Euler}. In particular, since $(\tilde v\cdot\nabla)\tilde \omega=0$, the force $f$ is given by the explicit formula
\begin{equation}\label{e:def-f}
f (x,t) = \partial_t \tilde{\omega} (x,t)\, .    
\end{equation}
With this choice a simple computation, left to the reader, shows that $\tilde{\omega}$ solves \eqref{e:Euler} with initial data $\omega_0$. Note in passing that, although as pointed out in Remark \ref{r:well-defined-2} there is not enough summability to make sense of the identity $K_2* \bar \Omega = \bar V$ by using standard Lebesgue integration, the relation $K_2* \tilde\omega = \tilde{v}$ is made obvious by $\textrm{div}\, \tilde{v} =0$, $\textrm{curl}\, \tilde{v} = \tilde{\omega}$, and the boundedness of the supports of both $\tilde{\omega}$ and $\tilde{v}$.

The pair $(\tilde{\omega}, \tilde{v})$ is one of the solutions claimed to exist in Theorem \ref{thm:main}. The remaining ones will be described as a one-parameter family $(\omega_\varepsilon, v_\varepsilon)$ for a nonzero choice of the parameter $\varepsilon$, while $(\tilde{\omega}, \tilde{v})$ will correspond to the choice $\varepsilon =0$. We will however stick to the notation $(\tilde\omega, \tilde v)$ to avoid confusions with the initial data. 

It remains to check that $f$ belongs to the functional spaces claimed in Theorem \ref{thm:main}. 

\begin{lemma}\label{lem:Curl of tilde v}
    $\tilde\omega$ is a smooth function on $\{t>0\}$ which satisfies, for all $t>0$ and $x\in\R^2$,
    \begin{equation}\label{e:curl-tilde-v}
        \tilde \omega (x, t) = \beta t^{-1} \bar \Omega \bigg(\frac{x}{t^{1/\alpha}}\bigg) \chi (|x|) + \beta t^{-1} \zeta \bigg(\frac{|x|}{t^{1/\alpha}}\bigg) |x|\chi' (|x|)\, ,
    \end{equation}
while the external force $f$ and $\partial_t \tilde{v} = K_2*f$ belong, respectively, to the spaces $L^1([0,T]; L^1 \cap L^p)$ and $L^1 ([0,T], L^2)$ for every positive $T$. 

Likewise $\tilde\omega \in L^\infty ([0,T], L^1\cap L^p)$ and $\tilde{v} \in L^\infty ([0,T], L^2)$. 
\end{lemma}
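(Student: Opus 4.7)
The plan is to first derive the explicit formula \eqref{e:curl-tilde-v} by a direct product-rule computation starting from $\tilde v$, and then to estimate all four objects ($\tilde\omega$, $\tilde v$, $f = \partial_t\tilde\omega$, $K_2\ast f = \partial_t\tilde v$) via a self-similar change of variables $\xi = x/t^{1/\alpha}$ combined with the far-field identity $g(r) = r^{-\al}$ for $r\geq 2$. Smoothness of $\tilde\omega$ on $\{t>0\}$ is immediate from the formula since $g$, $\zeta$, $\chi$ are smooth and the composition $(x,t)\mapsto x/t^{1/\alpha}$ is smooth for $t>0$. For the formula itself, using $\bar V(y) = \zeta(|y|)y^\perp$ and $(x/t^{1/\alpha})^\perp = t^{-1/\alpha}x^\perp$, one rewrites $\tilde v(x,t) = \beta t^{-1}\zeta(|x|/t^{1/\alpha})\chi(|x|)x^\perp$. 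Expanding the curl by the product rule and using $x\cdot\nabla[\zeta(|x|/t^{1/\alpha})] = (|x|/t^{1/\alpha})\zeta'(|x|/t^{1/\alpha})$, $x\cdot\nabla\chi(|x|) = |x|\chi'(|x|)$, together with the defining identity $r\zeta'(r) + 2\zeta(r) = g(r)$ (obtained by differentiating $r^2\zeta(r) = \int_0^r\rho g(\rho)\,d\rho$), produces the two summands on the right-hand side of \eqref{e:curl-tilde-v}.

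For the $L^\infty_t$-bounds I would change variables to $\xi = x/t^{1/\alpha}$ in each summand and split the $\xi$-integral into the inner region $|\xi|\leq 2$, where $g$ and $\zeta$ are bounded, and the tail $|\xi|\geq 2$, where $g(|\xi|) = |\xi|^{-\al}$ and $\zeta(|\xi|)$ behaves like a sum of $|\xi|^{-2}$ and $|\xi|^{-\al}$ terms. A routine polar-coordinate computation, using $\al q < 2$ for the worst case $q = p$, yields a scaling of the form $\|\tilde\omega(\cdot,t)\|_{L^q}\lesssim t^{-1+\al/\alpha}$ for $q\in\{1,p\}$, which is bounded on $[0,T]$ because $\al\geq\alpha$. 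The summand with $\chi'$ is supported on an annulus bounded away from the origin, on which $\zeta(|x|/t^{1/\alpha})\lesssim t^{\al/\alpha}$ uniformly as $t\to 0^+$, contributing the same scaling. The identical change of variables applied to $\tilde v$ gives $\|\tilde v(\cdot,t)\|_{L^2}\lesssim t^{-1+\al/\alpha}$.

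For the $L^1_t$-bounds on $\partial_t\tilde\omega$ and $\partial_t\tilde v$, differentiation naively costs one factor of $t^{-1}$, producing $\|\partial_t\tilde\omega\|_{L^q}\lesssim t^{-2+\al/\alpha}$, which fails to be $L^1_t$-integrable in the critical case $\al = \alpha$. The saving observation is that on the tail $|x|\geq 2t^{1/\alpha}$ one has $t^{-1}g(|x|/t^{1/\alpha}) = t^{-1+\al/\alpha}|x|^{-\al}$, and an analogous expansion $t^{-1}\zeta(|x|/t^{1/\alpha}) = D\, t^{-1+2/\alpha}|x|^{-2} + E\, t^{-1+\al/\alpha}|x|^{-\al}$ for explicit constants $D$, $E$ obtained from the definition of $\zeta$. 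When $\al = \alpha$ the exponent $-1+\al/\alpha$ vanishes, so these tail contributions are \emph{time-independent} and $\partial_t$ kills them identically; when $\al > \alpha$ the exponent is positive, so $\partial_t$ produces $O(t^{-2+\al/\alpha})$ with exponent strictly above $-1$, hence $L^1_t$-integrable. The remaining contribution to $\partial_t\tilde\omega$ comes from the inner region $\{|x|\leq 2t^{1/\alpha}\}$, where $g$, $g'$ (hence $\zeta$, $\zeta'$) are bounded; the pointwise bound $|\partial_t\tilde\omega|\lesssim t^{-2}$ combined with the area bound $|\{|x|\leq 2t^{1/\alpha}\}|\lesssim t^{2/\alpha}$ gives $\|\partial_t\tilde\omega(\cdot,t)\|_{L^q}\lesssim t^{-2+2/(\alpha q)}$, which is $L^1_t$-integrable whenever $\alpha q < 2$; this is automatic for $q = 1$ since $\alpha<1$, and is the standing assumption for $q = p$. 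Carrying an extra factor of $|x|$ through the same analysis yields $\partial_t\tilde v\in L^1([0,T];L^2)$.

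The main obstacle is the critical case $\al = \alpha$: naive self-similar scaling suggests a non-integrable $t^{-1}$ blow-up of the force, and the proof hinges on the exact matching between the tail self-similar profile and the power-law initial datum $v_0$, which makes the tail part of $\tilde\omega$ and $\tilde v$ independent of $t$ and removes the dangerous contribution. The noncritical case $\al > \alpha$ and the near-origin contribution in either case are routine scaling computations.
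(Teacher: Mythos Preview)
Your approach is essentially the same as the paper's: both derive \eqref{e:curl-tilde-v} by the product rule and the identity $r\zeta'+2\zeta=g$, split $\tilde\omega$ and $f=\partial_t\tilde\omega$ into an inner piece supported on $\{|x|\le 2t^{1/\alpha}\}$ and a tail piece on which $g(|x|/t^{1/\alpha})=t^{\bar\alpha/\alpha}|x|^{-\bar\alpha}$, and identify the critical-case cancellation (when $\bar\alpha=\alpha$ the tail contributions are time-independent, so $\partial_t$ annihilates them) as the reason the naive $t^{-1}$ loss is avoided. The paper carries this out via the explicit three-term decomposition $f=f_1+f_2+f_3$ and reaches the same inner-region exponent $t^{-2+2/(\alpha q)}$ you obtain.

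Two small points where the paper is more careful than your sketch. First, the standing hypothesis is $\bar\alpha p<2$; the condition $\alpha p<2$ you invoke for the $L^p$ integrability of the inner piece follows from $\alpha\le\bar\alpha$, so you should say so rather than call it the standing assumption. Second, your one-line treatment of $\partial_t\tilde v\in L^1_tL^2_x$ (``carrying an extra factor of $|x|$'') glosses over a logarithm: the $|x|^{-2}$ part of $\zeta$ in the tail, multiplied by $x^\perp\chi$, has $L^2$ norm over $\{2t^{1/\alpha}\le|x|\le\bar R\}$ of order $|\ln t|^{1/2}$, giving $\|\partial_t\tilde v(\cdot,t)\|_{L^2}\lesssim t^{-2+2/\alpha}|\ln t|^{1/2}$ for that piece. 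This is still bounded because $\alpha<1$, but it is not literally ``the same analysis'' as for $\partial_t\tilde\omega$, and the paper does write it out.
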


We end the section with a proof of the lemma, while we resume our explanation of the overall approach to Theorem \ref{thm:main} in the next section.

\begin{proof} The formula \eqref{e:curl-tilde-v} is a simple computation. From it we also conclude that $\tilde\omega = \curl\tilde v$ is a smooth function on $\{t>0\}$ and hence differentiable in all variables.
Observe next that $|\bar V (x)|\leq C |x|^{1-\al}$  and we can thus estimate $|\tilde{v} (x,t)|\leq C {t^{\frac{\al}{\alpha}-1}}|x|^{1-\al}$. Since its spatial support is contained in $\textrm{spt}\, (\chi)$, we conclude that $\tilde v$ is bounded and belongs to $L^\infty ([0,{T}], L^2)$ {for any $T>0$.}

Using that $\bar \Omega (x) = |x|^{-\al}=g(\abs x)$ for $|x|\geq 2$, we write
	\begin{align*}
	\tilde{\omega} (x,t)	= & \beta t^{-1} g \left(\frac{|x|}{t^{1/\alpha}}\right) \chi (|x|) \mathbf{1}_{\{|x|\leq 2 t^{1/\alpha}\}}
	+ \beta { t^{\frac{\al}{\alpha}-1}}|x|^{-\al} \chi (|x|) \mathbf{1}_{\{|x|> 2t^{1/\alpha}\}} \\ 
	&+ \beta t^{-1}\zeta \left(\frac{|x|}{t^{1/\alpha}}\right) |x|\chi' (|x|)\, .
	\end{align*}
	In particular, recalling that $|\bar \Omega (x)|\leq C|x|^{-\al}$ and $\zeta (|x|) |x| \leq C |x|^{1-\al}$ we easily see that 
	\begin{align}
	\|\tilde\omega (\cdot, t)\|_{L^1} &\leq C \int_{\{|x|\in \textrm{spt}\, (\chi)\}} { t^{\frac{\al}{\alpha}-1}}|x|^{-\al} \, dx + C \int_{\{|x|\in \textrm{spt}\, (\chi')\}} { t^{\frac{\al}{\alpha}-1}}|x|^{1-\al}\, dx\, ,\\
	\|\tilde{\omega} (\cdot, t)\|_{L^p}^p &\leq C \int_{\{|x|\in \textrm{spt}\, (\chi)\}} { t^{\left(\frac{\al}{\alpha} -1\right)p}} |x|^{-p \al} \, dx\nonumber\\
 &\qquad\qquad + C \int_{\{|x|\in \textrm{spt}\, (\chi')\}} { t^{\left(\frac{\al}{\alpha} -1\right)p}}|x|^{p-p\al}\, dx\, .
	\end{align}
	This implies immediately that $\tilde\omega \in L^\infty ([0,{ T]}, L^1\cap L^p)$ {for any $T>0$ }, given that $\al p <2$ (and hence $|x|^{-\al p}$ is locally integrable). 
	
	We now differentiate in time in the open regions $\{|x|< 2t^{1/\alpha}\}$ and $\{|x| > 2t^{1/\alpha}\}$ separately to achieve
	\begin{align}
	f (x,t) = & - \beta \left(t^{-2} g \left(\frac{|x|}{t^{1/\alpha}}\right)
	+ \frac{1}{\alpha}{t^{-2-1/\alpha}} |x| g' \left(\frac{|x|}{t^{1/\alpha}}\right)\right) \chi (|x|) \mathbf{1}_{\{|x|\leq 2 t^{1/\alpha}\}}\nonumber\\
	& {+ \beta \left(\frac{\al}{\alpha}-1\right)t^{\frac{\al}{\alpha}-2} |x|^{-\al}\chi(|x|)\mathbf{1}_{\{|x|>2t^{1/\alpha}\}}
	}\nonumber\\
	& - \beta \left(t^{-2} \zeta \left(\frac{|x|}{t^{1/\alpha}}\right) +\frac{1}{\alpha} t^{-2-1/\alpha} \zeta' \left(\frac{|x|}{t^{1/\alpha}}\right) |x|\right) |x|\chi' (|x|)\nonumber\\
	=:& f_1 (x,t) + f_2 (x,t){ + f_3(x,t)}\, .\label{e:f1-f2}
	\end{align}
 Since we will only estimate integral norms of $f$, its values on $\{|x|= 2t^{1/\alpha}\}$ are of no importance. However, given that $f$ is in fact smooth over the whole domain $\{t>0\}$, we can infer the validity of the formula \eqref{e:f1-f2} for every point $x\in \{|x|= 2t^{1/\alpha}\}$ by approximating it with a sequence of points in $\{|x|< 2t^{1/\alpha}\}$ and passing to the limit in the corresponding expressions.

	We wish to prove that $f\in L^1 ([0,T], L^1\cap L^p)$. On the other hand, since for any $T_0>0$ both $f_1+f_2$ and $f_3$ are smooth and have compact support on $\mathbb R^2\times [T_0, T]$, it suffices to show that $f\in L^1 ([0,T_0], L^1\cap L^p)$ for a sufficiently small $T_0$. Recalling that $|g (|x|)| + |g' (|x|)||x| \leq C |x|^{-\al}$, we can then bound
	\begin{equation}\label{e:bound-f1}
	|f_1 (x,t)|\leq C {t^{-2+\frac{\al}{\alpha}}} |x|^{-\al} \mathbf{1}_{|x|\leq 2 t^{1/\alpha}} \qquad
	\mbox{for all $0<t<T_0$ and all $x$}. 
	\end{equation}
	Thus
	\begin{align}
	\|f_1\|_{L^1 (\mathbb R^2\times [0,T_0])} &\leq C \int_0^{T_0} t^{2/\alpha -2}\, dt\, , < \infty\\
	\|f_1\|_{L^1 ([0,T_0];L^p( \mathbb R^2))} &\leq C \int_0^{T_0} t^{2/(\alpha p) -2}\, dt < \infty\, ,
	\end{align}
	where the condition $2> \al p$ entered precisely in the finiteness of the latter integral. Coming to the second term,
	we observe that it vanishes when $\bar\alpha = \alpha$. When $\alpha < \bar\alpha$, since $\chi$ is compactly supported in $\mathbb R$, we get
	\begin{align*}
	 \|f_2\|_{L^1(\mathbb R^2\times [0,T_0])} &\leq C \int_0^{T_0} t^{\frac{\al}{\alpha}-2}(1+t^{\frac 1{\alpha}(2-\al)}) dt<+\infty\\
	 \|f_2\|_{L^1([0,T_0];L^p(\mathbb R^2))}
	 &\leq \int_0^{T_0} t^{\frac{\al}{\alpha}-2}(1+t^{\frac p{\alpha}(2-\al)})^{1/p} dt <+\infty
	 \, .
	\end{align*}
	The last term can be computed explicitly as
	\begin{align*}
	\zeta (r) &= \frac{C}{r^2} + \frac{1}{r^2} \int_2^r \rho^{1-\al}\,\mathrm d\rho = a r^{-2} +  b r^{-\al} & \text{for all } r \geq 2\, ,
	\shortintertext{where $a$ and $b$ are two fixed constants. Likewise}
		\zeta'(r)&= -2 a r^{-3} -\al b r^{-\al-1}\qquad &\text{for all } r \geq 2\, .
	\end{align*}
	Recall that $\chi' (|x|)=0$ for $|x|\leq 1$.
    Therefore, for $t\leq T_0$ sufficiently small, the functions $\zeta$ and $\zeta'$ are computed on $|x| t^{-1/\alpha} \geq 2$ in the formula for $f_3$ (cf. \eqref{e:f1-f2}). Thus, {
    \[
    f_3 (x,t) 
    =-\beta t^{-2}\Big(
    \Big(1-\frac 2{\al}\Big)a t^{\frac 2{\alpha}}|x|^{-1} + b\Big(1-\frac{\al}{\alpha}\Big)t^{\frac{\al}{\alpha}} |x|^{1-\al}
    \Big)\chi'(|x|)
    \]}%
    In particular $f_3$ has compact support. Since $\alpha <1$ the function
    \[
    -\beta t^{-2}
    \Big(1-\frac 2{\al}\Big)a t^{\frac 2{\alpha}}|x|^{-1} \chi'(|x|)\, ,
    \]
    is bounded, and thus belongs to
    $L^1 ([0,T_0], L^1\cap L^p)$. As for the second summand, it vanishes if $\alpha = \bar \alpha$, while its $L^p$ norm at time $t$ can be bounded by $C t^{-2+\frac{\bar \alpha}{\alpha}}$ if $\bar\alpha > \alpha$. The latter function however belongs to $L^1 ([0,T_0])$.
    
    Observe next that, since for every positive $t$ the function $f (\cdot, t)$ is smooth and compactly supported, $K_2* f (\cdot, t)$ is the unique divergence-free vector field which belongs to $L^1$ and such that its curl gives $f (\cdot, t)$. Hence, since $f (\cdot, t) = \curl \partial_t \tilde{v} (\cdot, t)$ and $\partial_t \tilde{v} (\cdot, t)$ is smooth and compactly supported, we necessarily have $K_2 * f (\cdot, t) = \partial_t \tilde{v} (\cdot, t)$. It remains to show that $\partial_t \tilde{v} \in L^1 ([0,T]; L^2)$ for every positive $T$. To that end we compute%
    \[
    \tilde{v} (x,t) = \beta t^{1/\alpha-1} \bar{V} \left(\frac{x}{t^{1/\alpha}}\right) \chi (|x|)
    = \beta t^{-1} \zeta \left(\frac{|x|}{t^{1/\alpha}}\right) x^\perp \chi (|x|) 
    \]
    \[
    \partial_t \tilde{v} (x,t ) = - \beta t^{-2} \chi (|x|) x^\perp \left(\zeta \left( \frac{|x|}{t^{1/\alpha}}\right) +\frac{1}{\alpha} \frac{|x|}{t^{1/\alpha}} \zeta' \left(\frac{|x|}{t^{1/\alpha}} \right)\right)\, .
    \]
    In order to compute the $L^2$ norm of $\partial_t \tilde{v} (\cdot, t)$ we break the space into two regions as in the computations above. In the region $\{|x|\leq 2 t^{1/\alpha}\}$ we use that $|\zeta| + |g|+ |\zeta'|$ are bounded to compute
    \[
    \int_{|x|\leq 2 t^{1/\alpha}} |\partial_t \tilde{v} (x,t)|^2\, dx \leq C t^{-4} \int_{|x|\leq t^{1/\alpha}} |x|^2\,dx \leq C t^{4/\alpha -4}\, ,
    \]
    which is a bounded function on $[0,1]$. On $\{|x|\geq t^{1/\alpha}\}$ we observe that the function can be explicitly computed as
    \[
    -\beta t^{-2} \chi (|x|) x^\perp \Big(a\Big(1-\frac{2}{\alpha}\Big) t^{2/\alpha} |x|^{-2} + b \Big(1-\frac{\bar \alpha}{\alpha}\Big) t^{\frac{\bar\alpha}{\alpha}}|x|^{-\bar \alpha}\Big)\, .
    \]
    If we let $\bar R>0$ be such that the support of $\chi$ is contained in $B_{\bar R}$, we use polar coordinates to estimate
    \[
    \int_{|x|\geq 2 t^{1/\alpha}} |\partial_t \tilde{v} (x,t)|^2\, dx \leq C t^{-4+4/\alpha} \int_{2t^{1/\alpha}}^{\bar R} \frac{d\rho}{\rho} + C |\alpha - \bar \alpha| t^{2\frac{\bar \alpha}{\alpha}-4}\, .
    \]
    We can therefore estimate the $L^2$ norm of $\partial_t \tilde{v}$ at time $t$ by
    \[
    \|\partial_t \tilde{v} (\cdot, t)\|_{L^2} \leq C + C |\alpha - \bar \alpha| t^{\frac{\bar \alpha}{\alpha} -2}\, .
    \]
    When $\alpha = \bar \alpha$ we conclude that the $L^2$ norm of $\partial_t \tilde{v}$ is bounded, while for $\bar\alpha > \alpha$ the function $t\mapsto t^{\frac{\bar \alpha}{\alpha} -2}$ belongs to $L^1 ([0,T])$.%
\end{proof}

\section{The infinitely many solutions}

We next give a more precise statement leading to Theorem \ref{thm:main} as a corollary.

\begin{theorem}\label{thm:main2}
Let $p\in ]2, \infty[$ be given and let $\alpha$ {and $\al$} be any positive number such that {$\alpha \leq \al$} and $\al p <2$. For an appropriate choice of the smooth function $\bar \Omega$ and of a positive constant $\beta$ as in the previous section, we can find, additionally:
\begin{itemize}
    \item[(a)] a suitable nonzero function $\eta\in (L^1\cap H^2) (\mathbb R^2; \mathbb C)$ with $K_2 * \eta\in L^2(\R^2; \mathbb C^2)$, \index{aagh@$\eta$}
    \item[(b)] a real number $b_0$ and a positive number $a_0>0$, \index{aalazero@$a_0$}\index{aalbzero@$b_0$}
\end{itemize}
with the following property.

Consider $\omega_0$, $v_0$, $\tilde{v}$, $\tilde\omega = \curl \tilde{v}$, and $f$ as defined in \eqref{e:v_0},\eqref{e:omega_0}, \eqref{e:tilde-v}, and \eqref{e:def-f}. Then for every $\varepsilon\in \R$ there is a solution $\omega_\varepsilon$ of \eqref{e:Euler} with initial data $\omega_0$ such that \index{aage@$\varepsilon$}\index{aagzepsilon@$\omega_\varepsilon$}
\begin{enumerate}[(i)]
    \item\label{item:1-omega in L infinity L1 Lp} $\omega_\varepsilon \in L^\infty ([0,T], L^1\cap L^p )$ for every $T>0$;
    \item\label{item:2-v in L infinity L2} $v_\varepsilon := K_2 * \omega_\varepsilon \in L^\infty ([0,T], L^2)$ for every $T>0$;
    \item\label{item:3-eigenvalue bound} as $t\to0$,
\begin{equation}\label{e:asymptotic-in-t}
\|\omega_\varepsilon (\cdot, t) - \tilde\omega (\cdot, t) - \varepsilon t^{a_0-1} \operatorname{Re} (t^{i b_0} \eta (t^{-1/\alpha} \cdot))\|_{L^2(\R^2)} = o (t^{a_0 +1/\alpha -1})\, ;
\end{equation}
\item\label{e:Camillo-is-silly} if $b_0=0$, then $\eta$ is real-valued. 
\end{enumerate}
\end{theorem}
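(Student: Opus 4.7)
The plan is to work in the self-similar variables \eqref{e:self-similar-scaling}, in which the background $\tilde{\omega}$ of \eqref{e:tilde-v} becomes a time-dependent profile $\bar{\Omega}_{\rm bg}(\xi,\tau)$ converging, as $\tau\to-\infty$, to the radial steady state $\beta\bar{\Omega}(\xi)$ of \eqref{eq:similarityvareulereqns} (the discrepancy between the two being supported on the annulus $e^{\tau/\alpha}|\xi|\gtrsim 1$ produced by the truncation $\chi$ and therefore exponentially small). Constructing $\omega_\varepsilon$ is then equivalent to constructing, for each $\varepsilon\in\R$, a similarity-variable solution $\Omega_\varepsilon$ on $(-\infty,\tau_0]$ whose perturbation $\Omega_\varepsilon-\bar{\Omega}_{\rm bg}$ is asymptotic, in $L^2(\R^2)$ and with error $o(e^{a_0\tau})$, to $\varepsilon\operatorname{Re}(e^{\lambda\tau}\eta(\xi))$ with $\lambda=a_0+ib_0$. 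Undoing the scaling via $\omega_\varepsilon(x,t)=t^{-1}\Omega_\varepsilon(t^{-1/\alpha}x,\log t)$ then produces \eqref{e:asymptotic-in-t} automatically, the factor $t^{a_0+1/\alpha-1}$ arising as $t^{-1}\cdot t^{1/\alpha}\cdot o(e^{a_0\tau})$.

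The linear step (the content of Theorem~\ref{thm:spectral} in the later chapters) selects $g$ and a sufficiently large $\beta$ so that the steady Rayleigh operator $L_{\rm st}$ of \eqref{eq:linearizedeulerintro} has an unstable eigenvalue. Because the corresponding eigenfunction lives on angular modes $e^{im\theta}$ with $m\geq 1$, a perturbation argument treating the additional self-similar term $1+\tfrac{\xi}{\alpha}\cdot\nabla_\xi$ as lower order (for $\beta$ large) transfers the instability to the similarity-variable operator $L_{\rm ss}$ and produces an eigenvalue of maximal real part $\lambda=a_0+ib_0$ with $a_0>0$ and eigenfunction $\eta\in L^1\cap H^2$. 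The $m$-fold symmetry of $\eta$ is exactly what makes $K_2*\eta\in L^2$ well-defined in the sense described in Remark~\ref{r:well-defined}, and since $L_{\rm ss}$ has real coefficients one can choose $\eta$ real whenever $b_0=0$, yielding item \eqref{e:Camillo-is-silly}.

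For the nonlinear step, I would approximate by solving, for each large integer $k$, the Cauchy problem for \eqref{eq:similarityvareulereqns} (with the truncated self-similar force) on $[-k,\tau_0]$ with initial datum $\bar{\Omega}_{\rm bg}(\cdot,-k)+\varepsilon\operatorname{Re}(e^{-k\lambda}\eta)$. Since at each finite $\tau$ the background remains in $L^1\cap L^\infty$, Yudovich theory in similarity variables produces a global solution $\Omega_k$. Decomposing $\Omega_k=\bar{\Omega}_{\rm bg}+\varepsilon\operatorname{Re}(e^{\lambda\tau}\eta)+R_k$, the remainder satisfies
\begin{equation*}
\partial_\tau R_k-L_{\rm ss} R_k=\mathcal{N}(R_k,\tau)+\mathcal{F}(\tau),\qquad R_k(-k)=0,
\end{equation*}
where $\mathcal{N}$ collects the quadratic self-interactions of $R_k$, its interactions with the linear mode $\varepsilon\operatorname{Re}(e^{\lambda\tau}\eta)$, and the self-interaction of the linear mode, while $\mathcal{F}(\tau)$ records the mismatch $\bar{\Omega}_{\rm bg}(\tau)-\beta\bar{\Omega}$ (exponentially small in $\tau$). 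An $L^2$-energy estimate adapted to $L_{\rm ss}$, exploiting the spectral gap around $\lambda$ and the fact that every term in $\mathcal{N}(R_k,\tau)+\mathcal{F}(\tau)$ is either quadratic in a quantity expected to be $O(e^{a_0\tau})$ or exponentially smaller, then yields a differential inequality of the form $\tfrac{d}{d\tau}\|R_k\|_{L^2}^2\leq (2a_0+\delta)\|R_k\|_{L^2}^2+e^{(2a_0+\gamma)\tau}$ for some $\gamma>0$. Gronwall combined with the vanishing initial datum produces a uniform bound $\|R_k(\tau)\|_{L^2}=o(e^{a_0\tau})$ independent of $k$. The main obstacle is precisely this energy estimate: $L_{\rm ss}$ is not self-adjoint and its semigroup growth rate must be pinned exactly to $a_0$, which is what forces the purely Eulerian, energy-based approach flagged as difference~(3) of the introduction.

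A compactness argument (Aubin--Lions on compact $\tau$ intervals, using uniform $L^1\cap L^p$ bounds from Remark~\ref{r:A-priori-estimates} transported to similarity variables) extracts a limiting $\Omega_\varepsilon$ satisfying the same asymptotic bound; undoing the scaling delivers $\omega_\varepsilon$ on $(0,e^{\tau_0}]$ with properties \eqref{item:1-omega in L infinity L1 Lp}--\eqref{item:2-v in L infinity L2} and the asymptotic \eqref{e:asymptotic-in-t}. For $t\geq e^{\tau_0}$ the vorticity is in $L^1\cap L^\infty$ and the force in the class of Theorem~\ref{thm:Yudo}, so the solution continues globally by Yudovich. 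Distinct values of $\varepsilon$ yield genuinely distinct solutions because the leading asymptotic $\varepsilon\operatorname{Re}(t^{ib_0}\eta(t^{-1/\alpha}\cdot))$ is determined by $\varepsilon$ and $\eta\neq 0$.
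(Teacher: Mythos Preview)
Your outline tracks the paper's architecture closely---same approximation scheme (Yudovich solutions launched at $\tau=-k$ with data $\bar\Omega_{\rm bg}+\varepsilon\operatorname{Re}(e^{-k\lambda}\eta)$), same decomposition $\Omega_k=\bar\Omega_{\rm bg}+\varepsilon\operatorname{Re}(e^{\lambda\tau}\eta)+R_k$, same compactness passage to the limit, and the linear input is exactly Theorem~\ref{thm:spectral}. The gap is in the nonlinear estimate on $R_k$.

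You propose to close a differential inequality of the form $\tfrac{d}{d\tau}\|R_k\|_{L^2}^2\le(2a_0+\delta)\|R_k\|_{L^2}^2+e^{(2a_0+\gamma)\tau}$ via an ``$L^2$-energy estimate adapted to $L_{\rm ss}$.'' This does not work, for two independent reasons. First, a direct energy identity gives $\langle L_{\rm ss}R,R\rangle=(1-\tfrac1\alpha)\|R\|_{L^2}^2+\beta\langle\mathscr{K}R,R\rangle$ (the transport part $S_2$ is skew-adjoint), so the best linear rate you can extract this way is $1-\tfrac1\alpha+\beta\|\mathscr{K}\|_O$, which is in general much larger than the spectral abscissa $a_0$; the sharp rate $a_0$ is only available through the semigroup bound of Theorem~\ref{t:group} and Duhamel. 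Second, and more fundamentally, the quadratic term $(K_2*R_k)\cdot\nabla R_k$ in $\mathcal{N}$ suffers a genuine loss of derivative: its $L^2$ norm cannot be controlled by any power of $\|R_k\|_{L^2}$ alone, so neither an energy estimate nor a Duhamel estimate closes in pure $L^2$. A secondary issue is the truncation remainder $\bar\Omega_{\rm bg}-\beta\bar\Omega$: it is not pointwise small, only supported far out (near $|\xi|\sim e^{-\tau/\alpha}$), and the cross-term $(V_r\cdot\nabla)R_k$ it generates involves a coefficient growing like $|\xi|^{1-\bar\alpha}$, so you again need spatial decay of $\nabla R_k$, not just its $L^2$ norm.

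The paper's fix (Theorem~\ref{thm:main4} via Lemma~\ref{l:final-estimates}) is to close in the stronger norm $\|\Omega\|_X=\|\Omega\|_{L^2}+\||x|\nabla\Omega\|_{L^2}+\|\nabla\Omega\|_{L^4}$. The $L^2$ piece is estimated by Duhamel with the semigroup growth $a_0+\varepsilon$; the right-hand side of Duhamel is now finite because the derivative and weighted-derivative norms in $X$ control the nonlinear forcing. The derivative norms are then propagated by genuine energy estimates (multiplying the differentiated equation and integrating), where the dangerous commutator with the background transport is handled by differentiating first in $\theta$ and only afterwards in $r$, and the term $(K_2*R_k)\cdot\nabla\bar\Omega$ (which is the one responsible for instability) is bounded using the already-improved $L^2$ baseline. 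Your sketch has all the right surrounding pieces but is missing this two-tier structure; a single $L^2$ estimate cannot absorb the derivative loss.
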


Observe that, by a simple computation,
\begin{align*}
\| t^{a_0-1} \operatorname{Re} (t^{ib_0} \eta (t^{-1/\alpha} \cdot))\|_{L^2} = t^{a_0 +1/\alpha -1} \|\operatorname{Re} (t^{ib_0} \eta)\|_{L^2}\, ,
\end{align*}
and thus it follows from \eqref{e:asymptotic-in-t} that
\begin{equation}\label{eq:difference-of-the-omega}
\limsup_{t\downarrow 0} t^{1-1/\alpha - a_0} \|\omega_{\varepsilon} (\cdot, t) - \omega_{\bar\varepsilon} (\cdot, t)\|_{L^2} \geq |\varepsilon - \bar\varepsilon| \max_{\theta\in[0,2\pi]}\| \operatorname{Re} (e^{i\theta} \eta)\|_{L^2}\,
\end{equation}
(note that in the last conclusion we need (iv) if $b_0=0$). 
Since $\|\eta\|_{L^2} >0$, we conclude that the solutions $\omega_\varepsilon$ described in Theorem \ref{thm:main2} must be all distinct.

For each fixed $\varepsilon$, the solution $\omega_\varepsilon$ will be achieved as a limit of a suitable sequence of approximations $\omega_{\varepsilon, k}$\index{aagzepsilonk@$\omega_{\varepsilon, k}$} in the following way. After fixing a sequence of positive times $t_k$\index{aaltk@$t_k$} converging to $0$, which for convenience are chosen to be $t_k := e^{-k}$, we solve the following Cauchy problem for the Euler equations in vorticity formulation
\begin{equation}\label{e:Euler-later-times}
\left\{
\begin{array}{ll}
& \partial_t \omega_{\varepsilon,k} + ((K_2* \omega_{\varepsilon,k})\cdot \nabla) \omega_{\varepsilon,k} = f  \\ \\
& \omega_{\varepsilon, k} (\cdot, t_k) =  \tilde\omega (\cdot, t_k) + \varepsilon t_k^{a_0-1}  \operatorname{Re} (t_k^{ib_0} \eta (t_k^{-1/\alpha}\cdot))\, .
\end{array}\right.
\end{equation}
Observe that, since $t_k$ is positive, the initial data $\omega_{\varepsilon, k} (\cdot, t_k)$ belongs to $L^1\cap L^\infty$, while the initial velocity defining $v_{k, \varepsilon}:= K_2 * \omega_{\varepsilon, k} (\cdot, t_k)$ belongs to $L^2$. 

Since $K_2 * f \in L^1 ([0,T], L^2)$ for every $T$, we can apply the classical theorem of Yudovich (namely, Theorem \ref{thm:Yudo} and Remark \ref{r:A-priori-estimates}) to conclude that

\begin{corollary}\label{c:omega_k_epsilon}
For every $k$, $\varepsilon$, and every $T$ there exists a unique solution $\omega_{\varepsilon, k}$ of \eqref{e:Euler-later-times} with the property that $\omega_{\varepsilon , k} \in L^\infty ([t_k, T], L^1\cap L^\infty)$ and $v_{\varepsilon, k}\in L^\infty ([t_k, T], L^2)$ for every positive $T$. Moreover, we have the following bounds for every $t$
\begin{align}
\|\omega_{\varepsilon, k} (\cdot, t)\|_{L^1} \leq &\|\omega_{\varepsilon, k} (\cdot, t_k)\|_{L^1} + 
\int_{t_k}^t \|f (\cdot, s)\|_{L^1}\,\mathrm ds\\
\|\omega_{\varepsilon, k} (\cdot, t)\|_{L^p} \leq &\|\omega_{\varepsilon, k} (\cdot, t_k)\|_{L^p} + 
\int_{t_k}^t \|f (\cdot, s)\|_{L^p}\,\mathrm ds \label{e:omega_Lp_estimate}\\
\|v_{\varepsilon, k} (\cdot, t)\|_{L^2}\leq &\|v_{\varepsilon, k} (\cdot, t_k)\|_{L^2} +
 \int_{t_k}^t \|K_2* f (\cdot, s)\|_{L^2}\,\mathrm ds\, .
\end{align}
\end{corollary}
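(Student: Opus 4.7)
The plan is to reduce the statement directly to Yudovich's theorem (Theorem \ref{thm:Yudo}) applied on the time interval $[t_k, T]$, after verifying that both the initial data at time $t_k$ and the forcing term satisfy its hypotheses; the a priori estimates will then follow from Remark \ref{r:A-priori-estimates}.

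First I would check that $\omega_{\varepsilon, k}(\cdot, t_k) \in L^1 \cap L^\infty$ and $v_{\varepsilon, k}(\cdot, t_k) \in L^2$. For the background piece $\tilde\omega(\cdot, t_k)$, this is an immediate consequence of Lemma \ref{lem:Curl of tilde v}: because $\chi$ is compactly supported, the explicit formula \eqref{e:curl-tilde-v} shows that $\tilde\omega(\cdot, t_k)$ is smooth and compactly supported, and the formula for $\tilde v$ likewise gives $\tilde v(\cdot, t_k) \in L^2$. For the perturbation $\varepsilon t_k^{a_0-1} \operatorname{Re}(t_k^{ib_0} \eta(t_k^{-1/\alpha} \cdot))$, the hypothesis (a) of Theorem \ref{thm:main2} gives $\eta \in L^1 \cap H^2$, so the Sobolev embedding $H^2(\R^2) \hookrightarrow L^\infty(\R^2)$ yields $\eta \in L^1 \cap L^\infty$; a simple change of variables then gives the rescaled function in $L^1 \cap L^\infty$ as well. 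The contribution to the velocity is controlled by the assumption $K_2 * \eta \in L^2$, again with a change of variables.

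Next I would verify the hypotheses on the force. Lemma \ref{lem:Curl of tilde v} already gives $f \in L^1([0,T]; L^1 \cap L^p)$ and $K_2 * f \in L^1([0,T]; L^2)$, but for Yudovich we need the stronger $L^\infty$ bound on $f$. The key observation is that on the strip $[t_k, T]$ (with $t_k > 0$), the function $f$ is smooth and spatially compactly supported: indeed, inspecting \eqref{e:f1-f2}, the supports of $f_1, f_2, f_3$ are all contained in $\operatorname{spt}(\chi)$, and on $[t_k, T]$ the rescaled arguments $|x|/t^{1/\alpha}$ stay in a bounded set. So $f \in L^\infty([t_k, T]; L^1 \cap L^\infty) \subset L^1([t_k, T]; L^1 \cap L^\infty)$. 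The condition $K_2 * f \in L^1([t_k, T]; L^2)$ is inherited from Lemma \ref{lem:Curl of tilde v}.

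With these hypotheses verified, Theorem \ref{thm:Yudo}, applied with initial time $t_k$ in place of $0$, delivers a unique solution $\omega_{\varepsilon, k} \in L^\infty([t_k, T]; L^1 \cap L^\infty)$ with $v_{\varepsilon, k} \in L^\infty([t_k, T]; L^2)$ for every $T$. The three displayed a priori inequalities then follow verbatim from Remark \ref{r:A-priori-estimates}: the two inequalities for $\|\omega_{\varepsilon, k}(\cdot, t)\|_{L^q}$ with $q = 1, p$ come from the transport representation along the volume-preserving flow plus Duhamel, and the energy estimate for $v_{\varepsilon, k}$ comes from the integration-by-parts computation of \cite[Lemma 1.1]{Yudovich1963}. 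I do not anticipate any serious obstacle; the only point requiring slight care is the scaling of the perturbation term $\eta(t_k^{-1/\alpha}\cdot)$, but since $t_k > 0$ is a fixed positive time the rescaling is harmless for the purposes of verifying the hypotheses of Yudovich's theorem.
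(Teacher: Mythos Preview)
Your proposal is correct and follows exactly the paper's approach: the paper's proof of this corollary is literally the sentence preceding it, which says to apply Theorem~\ref{thm:Yudo} and Remark~\ref{r:A-priori-estimates}, and you have simply supplied the verifications (initial data in $L^1\cap L^\infty$ with $L^2$ velocity, force smooth and compactly supported on $[t_k,T]$) that the paper leaves implicit. One minor wording issue: the reason $f$ is bounded and compactly supported on $[t_k,T]$ is not that the rescaled arguments $|x|/t^{1/\alpha}$ stay bounded, but rather that each piece $f_1,f_2,f_3$ in \eqref{e:f1-f2} carries either a factor of $\chi$, $\chi'$, or the indicator $\mathbf 1_{\{|x|\le 2t^{1/\alpha}\}}$, all of which confine the spatial support uniformly for $t\in[t_k,T]$; but this does not affect the validity of your argument.
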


Next, observe that we can easily bound $\|\omega_{\varepsilon, k} (\cdot, t_k)\|_{L^1}$, $\|\omega_{\varepsilon, k} (\cdot, t_k)\|_{L^p}$, and $\|v_{\varepsilon, k} (\cdot, t_k)\|_{L^2}$ independently of $k$, for each fixed $\varepsilon$. We then conclude (for $\varepsilon$ fixed)
\begin{equation}\label{e:uniform_bound}
\sup_{k\in \mathbb N} \sup_{t\in [t_k, T]}
\left(\|\omega_{\varepsilon, k} (\cdot, t)\|_{L^1} + \|\omega_{\varepsilon, k} (\cdot, t)\|_{L^p} + \|v_{\varepsilon, k} (\cdot, t)\|_{L^2}
\right) < \infty\, .
\end{equation}
In turn we can use \eqref{e:uniform_bound} to conclude that, for each fixed $\varepsilon$, a subsequence of $\omega_{\varepsilon, k}$ converges to a solution $\omega_\varepsilon$ of \eqref{e:Euler} which satisfies the conclusions \ref{item:1-omega in L infinity L1 Lp} and \ref{item:2-v in L infinity L2} of Theorem \ref{thm:main2}. 

\begin{proposition}\label{p:convergence}\label{P:CONVERGENCE}
Assume $p, \alpha, \al, \omega_0, v_0, \tilde\omega, \tilde{v}, f, a_0, b_0$, and $\bar \eta$ are as in Theorem \ref{thm:main2} and let $\omega_{\varepsilon, k}$ be as in Corollary \ref{c:omega_k_epsilon}. Then, for every fixed $\varepsilon$, there is a subsequence, not relabeled, with the property that $\omega_{\varepsilon, k}$ converges (uniformly in $C ([0,T], L^q_w)$ for every positive $T$ and every $1< q\leq p$, where $L^q_w$ denotes the space $L^q$ endowed with the weak topology) to a solution $\omega_\varepsilon$ of \eqref{e:Euler} on $[0, \infty[$ with initial data $\omega_0$ and satisfying the bounds \ref{item:1-omega in L infinity L1 Lp} and \ref{item:2-v in L infinity L2} of Theorem \ref{thm:main2}.   
\end{proposition}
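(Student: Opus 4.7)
The plan is a standard compactness argument. First, I would extend each $\omega_{\varepsilon, k}$ to $[0, T]$ by setting $\omega_{\varepsilon, k}(\cdot, t) = \omega_{\varepsilon, k}(\cdot, t_k)$ for $t \in [0, t_k)$. Since $\omega_{\varepsilon, k}(\cdot, t_k)$ and $v_{\varepsilon, k}(\cdot, t_k)$ are bounded uniformly in $k$ in $L^1 \cap L^p$ and $L^2$ respectively, the bounds \eqref{e:uniform_bound} extend to all of $[0, T]$. Remark~\ref{r:bounded} then gives a uniform bound on $\|v_{\varepsilon, k}\|_{L^\infty_{t, x}}$, and Calder\'on-Zygmund theory (exploiting $\div v_{\varepsilon, k} = 0$, $\curl v_{\varepsilon, k} = \omega_{\varepsilon, k} \in L^p$) yields a uniform bound on $\nabla v_{\varepsilon, k}$ in $L^\infty_t L^p_x$; equivalently, $v_{\varepsilon, k}$ is uniformly bounded in $L^\infty_t W^{1, p}_{\mathrm{loc}, x}$.

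Second, I would use the equation on $[t_k, T]$ to obtain time equicontinuity. For $\phi \in C^\infty_c(\R^2)$, testing \eqref{e:Euler-later-times} against $\phi$ gives
$$\left| \frac{d}{dt} \langle \omega_{\varepsilon, k}(\cdot, t), \phi \rangle \right| \leq \|\omega_{\varepsilon, k}\|_{L^p} \|v_{\varepsilon, k}\|_{L^\infty} \|\nabla \phi\|_{L^{p'}} + \|f(\cdot, t)\|_{L^1} \|\phi\|_{L^\infty},$$
which is bounded in $L^1([0, T])$ uniformly in $k$ (the function is constant on $[0, t_k]$). A diagonal Arzel\`a-Ascoli argument over a countable dense subset of $L^{p'}(\R^2)$ delivers a subsequence along which $\omega_{\varepsilon, k} \to \omega_\varepsilon$ in $C([0, T]; L^p_w)$, and interpolation with the uniform $L^1$ bound upgrades this to $C([0, T]; L^q_w)$ for every $q \in (1, p]$. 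For the velocity, observe that, since $p > 2$, Morrey's embedding $W^{1, p}_{\mathrm{loc}} \hookrightarrow C^{0, 1 - 2/p}_{\mathrm{loc}}$ combined with Arzel\`a-Ascoli makes $\{v_{\varepsilon, k}(\cdot, t)\}_k$ precompact in $C^0_{\mathrm{loc}}$ for each fixed $t$; weak $L^p$ convergence of $\omega_{\varepsilon, k}(\cdot, t)$ identifies $v_\varepsilon(\cdot, t) := K_2 * \omega_\varepsilon(\cdot, t)$ as the sole accumulation point, so $v_{\varepsilon, k}(\cdot, t) \to v_\varepsilon(\cdot, t)$ locally uniformly in $x$ for every $t \in [0, T]$.

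Finally, I would pass to the limit in the weak formulation
$$\int_{t_k}^T \int_{\R^2} [\omega_{\varepsilon, k}(\partial_t \phi + v_{\varepsilon, k} \cdot \nabla \phi) + f \phi] \, dx \, dt = -\int_{\R^2} \phi(x, t_k) \omega_{\varepsilon, k}(x, t_k) \, dx$$
for $\phi \in C^\infty_c(\R^2 \times [0, T[)$. The contribution from $[0, t_k]$, bounded by $C t_k + \int_0^{t_k} \|f(\cdot, s)\|_{L^1} \, ds$, vanishes as $k \to \infty$. The linear terms pass by weak convergence; the nonlinear term, split as $\int \omega_{\varepsilon, k}(v_{\varepsilon, k} - v_\varepsilon) \cdot \nabla \phi + \int \omega_{\varepsilon, k} v_\varepsilon \cdot \nabla \phi$, passes by dominated convergence in $t$ (using pointwise-in-$t$ local uniform convergence of $v_{\varepsilon, k}$, the uniform $L^\infty$-bound, and the $L^1$-bound on $\omega_{\varepsilon, k}$) for the first piece, and by weak convergence of $\omega_{\varepsilon, k}$ against the compactly supported $L^{p'}$-function $v_\varepsilon \cdot \nabla \phi$ for the second. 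For the right-hand side, $\omega_{\varepsilon, k}(\cdot, t_k) = \tilde\omega(\cdot, t_k) + \varepsilon t_k^{a_0 - 1} \operatorname{Re}(t_k^{ib_0} \eta(t_k^{-1/\alpha} \cdot))$: the second summand has $L^2$ norm $O(t_k^{a_0 + 1/\alpha - 1}) \to 0$ since $a_0 > 0$ and $\alpha < 1$, while $\tilde\omega(\cdot, t_k) \rightharpoonup \omega_0$ in $L^p$ by pointwise a.e.\ convergence off the shrinking set $\{|x| \leq 2 t_k^{1/\alpha}\}$ (read off \eqref{e:curl-tilde-v}) combined with the uniform $L^p$-bound of Lemma~\ref{lem:Curl of tilde v}. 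The main obstacle is the passage to the limit in the nonlinear term, which requires local strong convergence of the velocity; it is resolved cleanly by the Morrey gain afforded by $p > 2$, which is unavailable in lower integrability scales.
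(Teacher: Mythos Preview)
Your argument is correct and follows the same overall compactness scheme as the paper: uniform bounds, equicontinuity of $t\mapsto\langle\omega_{\varepsilon,k}(\cdot,t),\phi\rangle$, Arzel\`a--Ascoli in $C([0,T];L^q_w)$, and passage to the limit in the weak formulation with the nonlinear term split exactly as you do.

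The one genuine difference is in how strong convergence of the velocity is obtained. The paper bounds $\nabla v_{\varepsilon,k}$ in $L^2$ via Calder\'on--Zygmund and invokes Rellich's theorem to get $v_{\varepsilon,k}(\cdot,t)\to v_\varepsilon(\cdot,t)$ in $L^r(B_R)$ for every $r<\infty$, then upgrades to $L^1([0,T];L^r(B_R))$ by dominated convergence. You instead exploit $p>2$ to push Calder\'on--Zygmund to $L^p$, apply Morrey's embedding $W^{1,p}_{\mathrm{loc}}\hookrightarrow C^{0,1-2/p}_{\mathrm{loc}}$, and obtain \emph{local uniform} convergence of the velocity at each time. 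Your route is arguably cleaner here and pairs naturally with the $L^1$ bound on $\omega_{\varepsilon,k}$ when estimating the first piece of the nonlinear term; the paper's $L^2$-Rellich route is more robust in that it does not rely on $p>2$. Either way the limit passage goes through, and your handling of the initial data (weak convergence of $\tilde\omega(\cdot,t_k)$ to $\omega_0$, vanishing of the $\eta$-perturbation in $L^2$) is a correct alternative to the paper's claim of strong $L^1$ convergence.
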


The proof uses classical convergence theorems and we give it in the appendix for the reader's convenience. The real difficulty in the proof of Theorem \ref{thm:main2} is to ensure that the bound (iii) holds. This is reduced to the derivation of suitable estimates on $\omega_{\varepsilon, k}$, which we detail in the following statement.

\begin{theorem}\label{thm:main3}
Assume $p, \alpha, \al$ are as in Theorem \ref{thm:main2} and fix $\varepsilon >0$. For an appropriate choice of $\bar \Omega$ and $\beta$ there is a triple $\eta$, $a_0$, and $b_0$ as in Theorem \ref{thm:main2} and three positive constants $T_0, \delta_0$, and $C$ with the property that
\begin{equation}\label{e:asymptotic-in-t-2}
\|\omega_{\varepsilon,k} (\cdot, t) - \tilde\omega (\cdot, t) - \varepsilon t^{a_0-1} \textrm{Re}\, (t^{ib_0} \eta (t^{-1/\alpha} \cdot))\|_{L^2} \leq C t^{a_0+1/\alpha-1+\delta_0} \;\; \forall t\in [t_k, T_0]\, .
\end{equation}
\end{theorem}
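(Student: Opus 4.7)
I would pass to the self-similar variables $\xi=t^{-1/\alpha}x$, $\tau=\log t$, in which the rescaled vorticity $\Omega_{\varepsilon,k}(\xi,\tau):=t\,\omega_{\varepsilon,k}(t^{1/\alpha}\xi,t)$ satisfies the similarity-variable Euler equations~\eqref{eq:similarityvareulereqns} with a source term coming from $f$. Introduce the decomposition
\[
\Omega_{\varepsilon,k}=\tilde\Omega+\Omega^{\mathrm{lin}}+R_{\varepsilon,k},\qquad \tilde\Omega(\xi,\tau):=t\,\tilde\omega(t^{1/\alpha}\xi,t),\qquad \Omega^{\mathrm{lin}}(\xi,\tau):=\varepsilon\operatorname{Re}\bigl(e^{\lambda\tau}\eta(\xi)\bigr),
\]
with $\lambda:=a_0+ib_0$. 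A direct unravelling of the rescaling shows that \eqref{e:asymptotic-in-t-2} is equivalent to
\[
\|R_{\varepsilon,k}(\cdot,\tau)\|_{L^2(\R^2)}\le C\, e^{(a_0+\delta_0)\tau},\qquad \tau\in[-k,\log T_0],
\]
and that the special initial condition in~\eqref{e:Euler-later-times} is designed precisely so that $R_{\varepsilon,k}(\cdot,-k)\equiv 0$. Choosing $T_0\le 1$ once and for all, the inequality $e^{2a_0\tau}\le e^{(a_0+\delta_0)\tau}$ holds throughout the window, so it suffices to obtain, uniformly in $k$, a bound of the form $\|R_{\varepsilon,k}(\tau)\|_{L^2}\lesssim e^{2a_0\tau}$ and any $\delta_0\in(0,a_0)$ will do.

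Next I would derive the remainder equation by subtracting from the equation for $\Omega_{\varepsilon,k}$ both the equation satisfied by $\tilde\Omega$ (automatic, since $\tilde\omega$ solves Euler with forcing $f$ by construction) and the linear equation $\partial_\tau\Omega^{\mathrm{lin}}=L_{\mathrm{ss}}\Omega^{\mathrm{lin}}$, where $L_{\mathrm{ss}}$ is the linearization of the similarity-variable Euler equations around the untruncated steady profile $\beta\bar\Omega$ and $\eta$ is an unstable eigenfunction of $L_{\mathrm{ss}}$ with eigenvalue $\lambda$, as supplied by the spectral theory of Chapter~\ref{chapter:linearparti}. The result takes the schematic form
\[
\partial_\tau R_{\varepsilon,k}-L_{\mathrm{ss}}R_{\varepsilon,k}=N(\Omega^{\mathrm{lin}},R_{\varepsilon,k})+E_k(\tau),
\]
where $N$ collects the four quadratic transport terms $-V^{\mathrm{lin}}\cdot\nabla\Omega^{\mathrm{lin}}-V^{\mathrm{lin}}\cdot\nabla R-V_R\cdot\nabla\Omega^{\mathrm{lin}}-V_R\cdot\nabla R$ (with $V_R:=K_2*R$ and analogously for $V^{\mathrm{lin}}$), and $E_k$ is the defect created by the mismatch between the truncated profile $\tilde\Omega$ and the idealized profile $\beta\bar\Omega$ around which one linearizes. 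The key geometric fact is that, in similarity variables, this mismatch is supported in the annulus $e^{\tau/\alpha}|\xi|\sim 1$, and a short calculation using the decay of $g$ at infinity gives $\|E_k(\tau)\|_{L^2}\lesssim e^{(\bar\alpha-1)\tau/\alpha}$, which is exponentially small as $\tau\to-\infty$ because $\bar\alpha<1$.

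I would then close the estimate by a bootstrap built on a linear propagator bound for $L_{\mathrm{ss}}$. Restricting to the closed subspace of $m$-fold rotationally symmetric functions in $L^2(\R^2)$ where $K_2*$ is continuous (cf.\ Remark~\ref{r:well-defined}), the spectral analysis yields a semigroup bound $\|e^{\sigma L_{\mathrm{ss}}}\|_{L^2\to L^2}\le M e^{a_\star \sigma}$ for some $a_\star\in[a_0,2a_0)$, shrinking $\delta_0$ if necessary. Since $R_{\varepsilon,k}(-k)=0$, Duhamel's formula gives
\[
R_{\varepsilon,k}(\tau)=\int_{-k}^{\tau}e^{(\tau-\sigma)L_{\mathrm{ss}}}\bigl(N(\sigma)+E_k(\sigma)\bigr)\,d\sigma.
\]
The dominant source is $-V^{\mathrm{lin}}\cdot\nabla\Omega^{\mathrm{lin}}$, of size $\lesssim\varepsilon^2 e^{2a_0\sigma}$; the mixed terms involving $R$ are absorbed by a standard continuity/bootstrap argument, crucially exploiting the cancellations $\langle V_R\cdot\nabla R,R\rangle=0=\langle V^{\mathrm{lin}}\cdot\nabla R,R\rangle$ from divergence-freeness (which allow one to rephrase the bootstrap via a pure $L^2$ energy identity and dispense with Vishik's Lagrangian framework). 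Integration against $e^{a_\star(\tau-\sigma)}$ and the condition $a_\star<2a_0$ produce $\|R_{\varepsilon,k}(\tau)\|_{L^2}\lesssim e^{2a_0\tau}$ uniformly in $k$, which is the desired bound.

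The main obstacle is the linear step: since $L_{\mathrm{ss}}$ is strongly non-normal, naive $L^2$ energy estimates would only yield growth bounds governed by the numerical range of the symmetric part of $L_{\mathrm{ss}}$, which is much larger than $a_0$. Obtaining a propagator bound with rate arbitrarily close to the spectral abscissa is therefore the analytic heart of the linear chapters, and it is what genuinely forces the restriction to the rotationally symmetric subspace on which $K_2*$ and hence $L_{\mathrm{ss}}$ are defined in the first place. Once that propagator bound is in hand, the nonlinear bootstrap outlined above is comparatively soft, and the cancellation structure of the advective nonlinearity is what allows the entire argument to be carried out in Eulerian variables.
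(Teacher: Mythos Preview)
Your overall architecture (self-similar variables, the decomposition $\Omega_{\varepsilon,k}=\tilde\Omega+\Omega^{\mathrm{lin}}+R$, the equation for $R$, Duhamel against the semigroup generated by $L_{\mathrm{ss}}$) matches the paper. But there is a genuine gap in the nonlinear closure, and it is exactly the point the paper isolates as the main difficulty.

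The issue is derivative loss. In the Duhamel representation you must estimate the $L^2$ norm of the \emph{forcing}, and the forcing contains transport terms such as $V^{\mathrm{lin}}\cdot\nabla R$, $V_R\cdot\nabla\Omega^{\mathrm{lin}}$, $V_R\cdot\nabla R$, and also $V_r\cdot\nabla R$ (from the mismatch $\tilde\Omega-\beta\bar\Omega$, which you have folded partly into $E_k$ and partly left out). Each of these needs control of $\nabla R$, not just of $R$. The divergence-free cancellations $\langle V^{\mathrm{lin}}\cdot\nabla R,R\rangle=0$ and $\langle V_R\cdot\nabla R,R\rangle=0$ are identities obtained by testing against $R$; they do nothing for $\|V^{\mathrm{lin}}\cdot\nabla R\|_{L^2}$, which is what Duhamel requires. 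You cannot simultaneously invoke the semigroup bound (which is the only place the sharp rate $a_0$ enters, and which you correctly note cannot come from a naive energy estimate because of non-normality) and the cancellation structure (which only appears in an energy estimate). This is precisely why the paper cannot close in $L^2$ alone: see the discussion at the end of Section~2.5 and the introduction to Chapter~5.

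The paper's resolution is to bootstrap in the stronger norm $\|\Omega\|_X=\|\Omega\|_{L^2}+\||x|\nabla\Omega\|_{L^2}+\|\nabla\Omega\|_{L^4}$ (Definition~\ref{d:X}). The $L^2$ piece is propagated by Duhamel and the semigroup bound of Theorem~\ref{t:group}; the derivative pieces are propagated by genuine energy estimates (Section~5.4), where the transport cancellations \emph{do} help and where the dangerous term $(K_2*R)\cdot\nabla\bar\Omega$ gains a derivative and feeds back on the improved $L^2$ bound. The weighted piece $\||x|\nabla\Omega\|_{L^2}$ is there specifically to handle $(V_r\cdot\nabla)R$: since $V_r$ is supported near $|\xi|\sim e^{-\tau/\alpha}$ and grows like $|\xi|^{1-\bar\alpha}$, a plain $\|\nabla R\|_{L^2}$ bound is not enough. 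A further structural point you would hit is that differentiating the transport equation in Cartesian coordinates produces $(\partial_i\bar V)\cdot\nabla R$ with no smallness; the paper avoids this by estimating the angular derivative first and the radial derivative second. Finally, your stated rate $\|E_k\|_{L^2}\lesssim e^{(\bar\alpha-1)\tau/\alpha}$ has the wrong sign: with $\bar\alpha<1$ and $\tau\to-\infty$ this blows up. The actual truncation-error terms all carry a factor $e^{a_0\tau}$ from $\Omega^{\mathrm{lin}}$ and an additional positive-exponent gain (cf.\ the estimates of $\mathscr{F}_7,\mathscr{F}_8$ in Section~5.3).
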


It is then obvious that the final conclusion \ref{item:3-eigenvalue bound} of Theorem \ref{thm:main2} is a consequence of the more precise estimate \eqref{e:asymptotic-in-t-2} on the approximations $\omega_{\varepsilon,k}$. The rest of these lecture notes are thus devoted to the proof of Theorem \ref{thm:main3} and we will start in the next section by breaking it into two main parts. 

\section{Logarithmic time scale and main Ansatz}

\index{similarity-variables@Similarity variables}\index{aagZ@$\Omega$}\index{aagt@$\tau$}\index{aago@$\xi$}
First of all, we will change variables and unknowns of the Euler equations (in vorticity formulation) in a way which will be convenient for many computations. Given a solution $\omega$ of \eqref{e:Euler} on $\R^2\times [T_0, T_1]$ with $0\leq T_0 \leq T_1$, we introduce a new function $\Omega$ on $\mathbb R^2 \times [\ln T_0, \ln T_1]$ with the following transformation. We set $\tau=\ln t$, $\xi=x t^{-1/\alpha}$ and
\begin{equation}\label{e:omega->Omega}
\Omega (\xi, \tau) \coloneqq e^{\tau} \omega (e^{\tau/\alpha} \xi, e^\tau)\, ,  
\end{equation}
which in turn results in
\begin{equation}\label{e:Omega->omega}
\omega (x, t) = t^{-1} \Omega (t^{-1/\alpha} x, \ln t)\, .
\end{equation}
Observe that, if $v (\cdot, t) = K_2 * \omega (\cdot, t)$ and $V( \cdot, \tau) = K_2 * \Omega (\cdot, \tau)$, we can derive similar transformation rules for the velocities as \index{aalV@$V$}
\begin{align}
V (\xi, \tau) &= e^{\tau (1-1/\alpha)} v(e^{\tau/\alpha} \xi, e^\tau)\label{e:v->V}\, ,\\
v (x,t) &= t^{-1+1/\alpha} V (t^{-1/\alpha} x, \ln t)\label{e:V-t>v}\, .
\end{align}
Likewise, we have an analogous transformation rule for the force $f$, which results in \index{aalF@$F$}
\begin{align}
F (\xi, \tau) &= e^{2\tau} f (e^{\tau/\alpha} \xi, e^\tau)\, ,\label{e:f->F}\\
f (x,t) &= t^{-2} F (t^{-1/\alpha} x, \ln t)\label{e:F->f}\, .
\end{align}
In order to improve the readability of our arguments, throughout the rest of the notes we will use the overall convention that, given some object related to the Euler equations in the ``original system of coordinates'', the corresponding object after applying the transformations above will be denoted with the same letter in capital case.

\begin{remark}
	Note that the naming of $\bar V$ and $\bar\Omega$ is somewhat of an exception to this convention, since $(\bar\Omega, \bar V)$ is a solution of \eqref{e:Euler} in Eulerian variables. However, if you ``force them to be functions of $\xi$,'' which is how they will be used in the non-linear part, then they solve the Euler equations in self-similar variables with forcing (see \eqref{e:Euler-transformed}).
\end{remark}

Straightforward computations allow then to pass from \eqref{e:Euler} to an equation for the new unknown $\Omega$ in the new coordinates. More precisely, we have the following

\begin{lemma}\label{l:coordinates-change}
Let $p>2$ and $\infty \geq T_1 > T_0\geq 0$. Then $\omega\in L^\infty_{\text{loc}} (]T_0, T_1[; L^1\cap L^p)$ and $v (\cdot, t) = K_2* \omega (\cdot, t)$ satisfy 
\begin{equation}\label{e:Euler-again}
\partial_t \omega + (v \cdot \nabla) \omega = f\, ,
\end{equation}
if and only if $\Omega$ and $V (\cdot, t) = K_2 * \Omega (\cdot, t)$ satisfy
\begin{equation}\label{e:Euler-transformed}
\partial_\tau \Omega - \Big(1 + \frac{\xi}{\alpha}\cdot \nabla\Big) \Omega + (V\cdot \nabla) \Omega = F\, . 
\end{equation}
\end{lemma}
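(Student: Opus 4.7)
The plan is to verify the equivalence by direct calculation, exploiting two facts: the map $(\xi,\tau)\mapsto(e^{\tau/\alpha}\xi,e^\tau)$ is a smooth diffeomorphism between $\mathbb R^2\times\mathbb R$ and $\mathbb R^2\times\,]0,\infty[$, and the Biot-Savart kernel $K_2$ is homogeneous of degree $-1$. Since the transformation is invertible with smooth (and also smooth in the inverse direction) data, it suffices to check one implication; the reverse is obtained by the same computation.

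First, I would check that the Biot-Savart relation is preserved, that is, $V(\cdot,\tau)=K_2*\Omega(\cdot,\tau)$ whenever $v(\cdot,t)=K_2*\omega(\cdot,t)$. Substituting \eqref{e:omega->Omega} into the convolution, changing variable $y=e^{\tau/\alpha}\eta$, and using $K_2(e^{-\tau/\alpha}z)=e^{\tau/\alpha}K_2(z)$, one computes
\begin{equation*}
(K_2*\Omega)(\xi,\tau)=e^{\tau(1-1/\alpha)}\int_{\mathbb R^2}K_2(e^{\tau/\alpha}\xi-y)\,\omega(y,e^\tau)\,dy=e^{\tau(1-1/\alpha)}v(e^{\tau/\alpha}\xi,e^\tau)=V(\xi,\tau),
\end{equation*}
which matches \eqref{e:v->V}. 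Note that since $\omega(\cdot,t)\in L^1\cap L^p$ with $p>2$, $K_2*\omega$ is meaningful by Remark \ref{r:well-defined} and \eqref{e:bound-on-Biot-Savart}; the same bound transfers to $\Omega$, making $K_2*\Omega$ pointwise defined as well.

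Second, I apply the chain rule to $\omega(x,t)=t^{-1}\Omega(t^{-1/\alpha}x,\ln t)$. Differentiating in $t$ gives, after collecting the three contributions coming from the $t^{-1}$ prefactor, the $t^{-1/\alpha}$ rescaling of the spatial variable, and the $\ln t$ in the temporal variable,
\begin{equation*}
\partial_t\omega(x,t)=t^{-2}\Big[\partial_\tau\Omega-\Big(1+\tfrac{\xi}{\alpha}\cdot\nabla_\xi\Big)\Omega\Big],
\end{equation*}
while $\nabla_x\omega=t^{-1-1/\alpha}\nabla_\xi\Omega$. Using \eqref{e:V-t>v} to get $v\cdot\nabla_x\omega=t^{-2}\,V\cdot\nabla_\xi\Omega$, and \eqref{e:F->f} to get $f=t^{-2}F$, the equation \eqref{e:Euler-again} becomes, after multiplying by $t^2$, exactly \eqref{e:Euler-transformed}.

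Finally, a word on the distributional interpretation: since the hypothesis is merely $\omega\in L^\infty_{\text{loc}}(]T_0,T_1[;L^1\cap L^p)$, both equations are understood in the sense of distributions, which one can handle by rewriting the nonlinearity as $\div(v\omega)$ (using $\operatorname{div}v=0$) or, alternatively, by transforming test functions directly via $\phi(x,t)\mapsto\Phi(\xi,\tau):=\phi(e^{\tau/\alpha}\xi,e^\tau)$ and tracking the Jacobian $dx\,dt=e^{(2/\alpha+1)\tau}\,d\xi\,d\tau$. Either way the computation is entirely mechanical; the only thing to watch is the bookkeeping of exponents, for which the dimensional considerations in Section \ref{sec:ideaproof} provide a helpful sanity check. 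I expect no substantive obstacle.
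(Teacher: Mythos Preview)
Your proposal is correct and follows exactly the route the paper implicitly takes: the paper does not write out a proof at all, merely stating before the lemma that ``straightforward computations allow then to pass from \eqref{e:Euler} to an equation for the new unknown $\Omega$.'' Your chain-rule computation and the verification that the Biot--Savart law is preserved under the self-similar rescaling (via the degree $-1$ homogeneity of $K_2$) are precisely the details one would fill in, and your remark on handling the distributional formulation via test-function transformation is appropriate.
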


We next observe that, due to the structural assumptions on $\tilde \omega$ and $\tilde v$, the corresponding fields $\tilde \Omega$ and $\tilde V$ can be expressed in the following way: \index{aalVtilde@$\tilde V$}\index{aagZtilde@$\tilde\Omega$}
\begin{align}
\tilde{V} (\xi, \tau) &= \beta \bar V (\xi) \chi (e^{\tau/\alpha} |\xi|)\, ,\label{e:tildeV}\\
\tilde{\Omega} (\xi, \tau) &= \beta \bar \Omega (\xi) \chi (e^{\tau/\alpha} |\xi|) + \beta \zeta (|\xi|) 
\chi' (e^{\tau/\alpha} |\xi|) e^{\tau/\alpha} |\xi|\, \label{e:tildeOmega}.
\end{align}
Observe that, for every fixed compact set $K$ there is a sufficiently negative $-T (K)$ with the property that
\begin{itemize}
    \item $\chi (e^{\tau/\alpha} |\cdot|)= 1$ and $\chi' (e^{\tau/\alpha} \cdot) = 0$ on $K$ whenever $\tau \leq - T (K)$.
\end{itemize}
Since in order to prove Theorem \ref{thm:main} we are in fact interested in very small times $t$, which in turn correspond to very negative $\tau$, it is natural to consider $\tilde\Omega$ and $\tilde{V}$ as perturbations of $\beta \bar \Omega$ and $\beta \bar V$. We will therefore introduce the notation 
\begin{align}
\tilde \Omega &= \beta \bar \Omega + \Omega_r\, ,\\
\tilde V & = \beta \bar V + V_r := \beta \bar V + K_2* \Omega_r\, .
\end{align}
We are thus lead to the following Ansatz for $\Omega_{\varepsilon,k} (\xi, \tau) = e^{\tau} \omega_{\varepsilon ,k} (e^{\tau/\alpha} \xi, e^\tau)$:
\begin{equation}\label{e:Ansatz-1}
\Omega_{\varepsilon, k} (\xi, \tau) = \beta \bar \Omega (\xi) + \Omega_r (\xi, \tau) + \varepsilon e^{\tau a_0} \textrm{Re}\, (e^{i\tau b_0} \eta (\xi)) + \Omega_{\text{per}, k} (\xi, \tau)\, .
\end{equation}
The careful reader will notice that indeed the function $\Omega_{\text{per},k}$ depends upon the parameter $\varepsilon$ as well, but since such dependence will not really play a significant role in our discussion, in order to keep our notation simple, we will always omit it. \index{aagZr@$\Omega_r$}\index{aagZepsilonk@$\Omega_{\varepsilon, k}$}\index{aagZperk@$\Omega_{\text{per}, k}$}\index{aalVr@$V_r$}

We are next ready to complete our Ansatz by prescribing one fundamental property of the function $\eta$. 
We first introduce the integro-differential operator \index{aalLss@$L_{\text{ss}}$}\index{Self-similar operator}
\begin{equation}\label{e:Lss}
L_{\text{ss}} (\Omega) := \Big(1+\frac{\xi}{\alpha} \cdot \nabla\Big) \Omega - \beta (\bar V \cdot \nabla) \Omega - \beta ((K_2* \Omega)\cdot \nabla) \bar \Omega\, .
\end{equation}
We will then prescribe that $\eta$ is an eigenfunction of $L_{\text{ss}}$ with eigenvalue $z_0 = a_0 + ib_0$, namely, \index{aalz0@$z_0$}
\begin{equation}\label{e:Ansatz-2}
L_{\text{ss}} (\eta) = z_0 \eta\, . 
\end{equation}
Observe in particular that, since $L_{\text{ss}}$ is a real operator (i.e. $L_{\text{ss}} (\eta)$ is real-valued when $\eta$ is real-valued, cf. Section \ref{s:abstract-operators}), the complex conjugate $\bar \eta$ is an eigenfunction of $L_{\text{ss}}$ with eigenvalue $\bar z_0$, so that, in particular, the function
\begin{equation}\label{e:Omega_lin}
\Omega_{\text{lin}} (\xi, \tau) := \varepsilon e^{a_0 \tau} \textrm{Re}\, (e^{i b_0 \tau} \eta (\xi)) 
= \frac{\varepsilon}{2} (e^{z_0 \tau} \eta (\xi) + e^{\bar z_0 \tau} \bar \eta (\xi))
\end{equation}
satisfies the linear evolution equation
\begin{equation}\label{e:evolution_of_Omega_lin}
\partial_\tau \Omega_{\text{lin}} - L_{\text{ss}} (\Omega_{\text{lin}})=0\, .
\end{equation}
The relevance of our choice will become clear from the discussion of Section \ref{s:nonlinear}. The point is that \eqref{e:evolution_of_Omega_lin} is close to the linearization of Euler (in the new system of coordinates) around $\tilde{\Omega}$. The ``true linearization'' would be given by \eqref{e:evolution_of_Omega_lin} if we were to substitute $\bar \Omega$ and $\bar V$ in \eqref{e:Lss} with $\tilde{\Omega}$ and $\tilde{V}$. Since however the pair $(\tilde \Omega, \tilde{V})$ is well approximated by $(\bar \Omega, \bar V)$ for very negative times, we will show that \eqref{e:evolution_of_Omega_lin} drives indeed the evolution of $\Omega_{\varepsilon,k}-\tilde{\Omega}$ up to an error term (i.e. $\Omega_{\text{per},k}$) which is smaller than $\Omega_{\text{lin}}$.

\section{Linear theory}

We will look for the eigenfunction $\eta$ in a particular subspace of $L^2$. More precisely for every 
$m\in \mathbb N\setminus \{0\}$ we denote by $L^2_m$ the set of those elements $\vartheta \in L^2 (\mathbb R^2, \mathbb C)$ which are $m$-fold symmetric, i.e., denoting by $R_\theta\colon \mathbb R^2\to \mathbb R^2$ the counterclockwise rotation of angle $\theta$ around the origin, \index{rotational-symmetry@Rotationally symmetric function space}\index{aalL2m@$L^2_m$}
they satisfy the condition
\begin{align*}
\vartheta &= \vartheta \circ R_{2\pi/m}\, .
\end{align*}
In particular, $L^2_m$ is a closed subspace of $L^2 (\mathbb R^2, \mathbb C)$. Note however that the term ``$m$-fold symmetric'' is somewhat misleading when $m=1$: in that case the transformation $R_{2\pi/m} = R_{2\pi}$ is the identity and in particular $L^2_1 = L^2 (\mathbb R^2, \mathbb C)$. Indeed we will look for $\eta$ in $L^2_m$ for a sufficiently large $m\geq 2$.

An important technical detail is that, while the operator $L^2 \cap \mathscr{S} \ni \omega \mapsto K_2* \omega \in \mathscr{S}'$ {\em cannot} be extended continuously to the whole $L^2$ (cf. Remark \ref{r:Camillo_dumb}), for $m\geq 2$ it {\em can} be extended to a continuous operator from $L^2_m$ into $\mathscr{S}'$: this is the content of the following lemma.

\begin{lemma}\label{l:extension}\label{L:EXTENSION}
For every $m\geq 2$ there is a unique continuous operator $T\colon L^2_m \to \mathscr{S}'$ with the following properties:
\begin{itemize}
    \item[(a)] If $\vartheta\in \mathscr{S}$, then $T (\vartheta) = K_2*\vartheta$ (in the sense of distributions);
    \item[(b)] There is $C>0$ such that for every $\vartheta \in L^2_m$, there is  $v=v(\vartheta)\in W^{1,2}_{\text{loc}}$ with
    \begin{itemize}
        \item[(b1)] $R^{-1} \|v\|_{L^2 (B_R)} + \|Dv\|_{L^2 (B_R)} \leq C\|\vartheta\|_{L^2 (\mathbb R^2)}$ for all $R>0$;
        \item[(b2)] $\textrm{div}\, v =0$ and $\langle T(\vartheta), \varphi\rangle = \int v\cdot \varphi$ for every test function $\varphi \in \mathscr{S}$.
    \end{itemize}
\end{itemize}
\end{lemma}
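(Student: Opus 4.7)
The plan is to establish the local estimate (b1) first on the dense subspace $L^2_m \cap \mathscr{S}$ and then extend $T$ to all of $L^2_m$ by continuity. For $\vartheta \in L^2_m \cap \mathscr{S}$ I would set $v := K_2 * \vartheta$; since $\vartheta$ is Schwartz the convolution is classically defined, $v$ is smooth, divergence-free, and has curl equal to $\vartheta$, so the candidate operator is $T(\vartheta) := K_2*\vartheta$ on this subspace and (a) is automatic. The gradient estimate $\|Dv\|_{L^2} \leq C\|\vartheta\|_{L^2}$ is standard Calder\'on--Zygmund: each component of $Dv$ is given by a zero-order Fourier multiplier with symbol of the form $\xi_j \xi_i^\perp/|\xi|^2$ applied to $\vartheta$.

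The heart of the argument is the local $L^2$ bound $R^{-1}\|v\|_{L^2(B_R)} \leq C\|\vartheta\|_{L^2}$, and this is where $m$-fold symmetry is essential. Using the rotational identity $K_2(R_\theta z) = R_\theta K_2(z)$ together with the $m$-fold symmetry of $\vartheta$, one checks that $v$ inherits the vector-field symmetry $v(R_{2\pi/m} x) = R_{2\pi/m} v(x)$. Changing variables $x = R_{2\pi/m} y$ in the mean $v_{B_R} := |B_R|^{-1}\int_{B_R} v$ then gives $v_{B_R} = R_{2\pi/m} v_{B_R}$; since $m \geq 2$, the rotation $R_{2\pi/m}$ has no nonzero fixed vector and hence $v_{B_R} = 0$. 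Poincar\'e's inequality then yields $\|v\|_{L^2(B_R)} \leq CR\|Dv\|_{L^2(B_R)} \leq CR\|\vartheta\|_{L^2}$, completing (b1) on $L^2_m \cap \mathscr{S}$.

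To pass to all of $L^2_m$, I would approximate $\vartheta \in L^2_m$ by $\vartheta_n \in L^2_m \cap \mathscr{S}$ (mollification followed by the averaging $\vartheta \mapsto m^{-1}\sum_{j=0}^{m-1} \vartheta \circ R_{2\pi j/m}$, which preserves symmetry and commutes with mollification). By (b1) applied to differences, the $v_n$ are Cauchy in $W^{1,2}(B_R)$ for every $R$, producing a limit $v \in W^{1,2}_{\mathrm{loc}}$ that still satisfies (b1) and is divergence-free distributionally. I then define $T(\vartheta)$ by $\langle T(\vartheta),\varphi\rangle := \int v\cdot\varphi$ for $\varphi \in \mathscr{S}$; absolute convergence of the integral and continuity of $T: L^2_m \to \mathscr{S}'$ both follow from a dyadic decomposition combining (b1) with the rapid decay of $\varphi$. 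Uniqueness of $T$ is immediate, since any two continuous extensions of $\vartheta \mapsto K_2*\vartheta$ must agree on the dense subset $L^2_m \cap \mathscr{S}$. The main obstacle is the one just resolved: $K_2 \notin L^2_{\mathrm{loc}}$, so $K_2*\vartheta$ is not defined a priori for general $\vartheta \in L^2$, and it is the $m$-fold symmetry with $m \geq 2$ that forces the ball averages of $v$ to vanish, thereby unlocking the Poincar\'e/Calder\'on--Zygmund package.
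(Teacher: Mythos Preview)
Your proposal is correct and follows essentially the same route as the paper: establish the zero-mean property $\int_{B_R} v = 0$ from $m$-fold symmetry (the paper does this via the stream function $h = K*\vartheta$, you do it directly from the rotational equivariance $K_2(R_\theta z) = R_\theta K_2(z)$, but the mechanism is identical), combine with the Calder\'on--Zygmund bound on $Dv$ and Poincar\'e to get (b1) on $L^2_m\cap\mathscr{S}$, and then extend by density. You spell out the approximation and continuity steps more explicitly than the paper, which simply asserts that (b1) on the dense subspace ``is then enough to infer the remaining conclusions.''
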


From now on the operator $T$ will still be denoted by $K_2*$ and the function $v$ will be denoted by $K_2*\omega$. Observe also that, if $\hat\Omega$ is an $L^2_{\text{loc}}$ function such that $\|\hat\Omega\|_{L^2 (B_R)}$ grows polynomially in $R$, the integration of a Schwartz function times $v \hat\Omega$ is a well defined tempered distribution. In the rest of the notes, any time that we write a product $\hat\Omega K_2 * \vartheta $ for an element $\vartheta\in L^2_m$ and an $L^2_{\text{loc}}$ function $\hat\Omega$ we will always implicitly assume that 
$\|\hat\Omega\|_{L^2 (B_R)}$ grows at most polynomially in $R$ and that the product is understoood as a well-defined element of $\mathscr{S}'$.
The relevance of this discussion is that, for $m\geq 2$, we can now consider the operator $L_{\text{ss}}$ as a closed, densely defined unbounded operator on $L^2_m$. We let
 \index{aalLss@$L_{\text{ss}}$}\index{Self-similar operator}
\begin{equation}\label{e:def-Lss-formal}
L_{\text{ss}} (\Omega) = \left(1- {\textstyle{\frac{2}{\alpha}}}\right) \Omega - \textrm{div}\, \big(\big(-{\textstyle{\frac{\xi}{\alpha}}} + \beta \bar V\big) \Omega\big) - \beta ( K_2*\Omega \cdot \nabla) \bar\Omega\, 
\end{equation}
and its domain is 
\begin{equation}\label{e:D(Lss)-formal}
D_m (L_{\text{ss}}) =\{\Omega\in L^2_m : L_{\text{ss}} (\Omega)\in L^2_m\}\, .
\end{equation}
When $\Omega\in \mathscr{S}$ it can be readily checked that $L_{\text{ss}}$ as defined in \eqref{e:def-Lss-formal} coincides with \eqref{e:Lss}. 

The definition makes obvious that $L_{\text{ss}}$ is a closed and densely defined unbounded operator over $L^2_m$. We will later show that $\Omega \mapsto (K_2*\Omega \cdot \nabla) \bar \Omega$ is in fact a compact operator from $L^2_m$ into $L^2_m$ and therefore we have
\begin{equation}\label{e:D(Lss)-formal-2}
D_m (L_{\text{ss}}) := \big\{\Omega\in L^2_m \colon \textrm{div} \big(\beta \bar V\Omega-     {\textstyle{\frac{\xi}{\alpha}}}\Omega\big)\, \in L^2_m\big\}\, .
\end{equation}
From now on, having fixed $m\geq 2$ and regarding $L_{\text{ss}}$ as an unbounded, closed, and densely defined operator in the sense given above, the spectrum \index{spectrum@spectrum} \index{aalspec@$\textrm{spec}_m$} $\textrm{spec}_m\, (L_{\text{ss}})$ on $L^2_m$ is defined as the (closed) set which is the complement of the {\em resolvent set} of $L_{\text{ss}}$, \index{resolvent@resolvent} the latter being the (open) set of $z_0 \in \mathbb C$ such that $L_{\text{ss}}-z_0$ has a bounded inverse $(L_{\text{ss}}-z_0)^{-1} \colon L^2_m \to L^2_m$. (The textbook definition would require the inverse to take values in $D_m (L_{\text{ss}})$; note however that this is a consequence of our very definition of $D_m (L_{\text{ss}})$.)

The choice of $\eta$ will then be defined by the following theorem which summarizes a quite delicate spectral analysis.

\begin{theorem}\label{thm:spectral}\label{THM:SPECTRAL}
For an appropriate choice of $\bar \Omega$ there is an integer $m\geq 2$ with the following property. For every positive $\bar a>0$, if $\beta$ is chosen appropriately large, then there is $\eta\in L^2_m\setminus \{0\}$ and $z_0=a_0+ib_0$ such that:
\begin{itemize}
    \item[(i)] $a_0 \geq \bar a$ and $L_{\text{ss}} (\eta) = z_0 \eta$;
    \item[(ii)] For any $z \in \textrm{spec}_m\, (L_{\text{ss}})$ we have $\textrm{Re}\, z\leq a_0$;
    \item[(iii)] If $b_0=0$, then $\eta$ is real valued;
    \item[(iv)] There is $k\geq 1$ integer and $e:\mathbb R^+\to \mathbb C$ such that $\eta (x) = e (r) e^{ikm \theta}$ if $b_0\neq 0$ and $\eta (x) = \textrm{Re}\, (e(r) e^{ikm\theta})$ if $b_0= 0$;
    \item[(v)] $\eta\in L^1 \cap H^2$ and $K_2*\eta\in L^2$
\end{itemize}
\end{theorem}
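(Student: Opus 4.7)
The plan is to reduce the spectral problem for $L_{\textrm{ss}}$ to one for the linearized Euler operator $L_{\textrm{st}} := -\bar V\cdot\nabla - (K_2*\cdot)\cdot\nabla\bar\Omega$, and then transfer the resulting instability by a large-$\beta$ perturbation. Writing
\[
L_{\textrm{ss}} = A + \beta L_{\textrm{st}}, \qquad A := I + \tfrac{\xi}{\alpha}\cdot\nabla,
\]
one sees that $\beta^{-1} L_{\textrm{ss}}$ formally approaches $L_{\textrm{st}}$ as $\beta\to\infty$, so each isolated unstable eigenvalue $\lambda$ of $L_{\textrm{st}}$ should produce an eigenvalue of $L_{\textrm{ss}}$ near $\beta\lambda$, whose real part can be made arbitrarily large (in particular $\geq \bar a$) by taking $\beta$ sufficiently large.

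First I would restrict to the subspace $L^2_m$ with $m\geq 2$ (so that Lemma~\ref{l:extension} makes the Biot-Savart convolution sensible) and decompose further into angular sectors $e^{ikm\theta}$. On such a sector the eigenvalue equation $L_{\textrm{st}}\eta = \lambda \eta$ collapses to Rayleigh's stability equation~\eqref{eq:theeigenvaluequation}, and I would design $g$ so that this ODE admits a neutral mode at some $(k,m)$. A Tollmien/Z.~Lin-type bifurcation argument (this is the content of Chapter~\ref{chapter:linearpartii}, following~\cite{LinSIMA2003}) then produces a nearby unstable eigenvalue $\lambda$ of $L_{\textrm{st}}$ with eigenfunction $e_0(r)e^{ikm\theta}$.

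Second, I would show that the coupling term $\Omega\mapsto (K_2*\Omega\cdot\nabla)\bar\Omega$ is compact on $L^2_m$ (using $g'\in L^\infty$ with decay, the Biot-Savart bounds of Lemma~\ref{l:extension}, and Rellich compactness), so that by Weyl's theorem the essential spectrum of $L_{\textrm{ss}}$ equals that of the pure transport operator $A - \beta\bar V\cdot\nabla$. A characteristics computation then confines this essential spectrum to a vertical strip $|\textrm{Re}\,z|\leq C_0$ with $C_0$ \emph{independent} of $\beta$. For $\beta$ so large that $\beta\,\textrm{Re}(\lambda) \gg \max(\bar a, C_0)$, a Riesz-projector argument on a contour surrounding $\beta\lambda$, based on uniform resolvent estimates for $L_{\textrm{ss}}$ away from the essential strip, produces an eigenvalue $z_0$ of $L_{\textrm{ss}}$ with $\textrm{Re}(z_0)\geq \bar a$; picking $z_0$ to maximize $\textrm{Re}$ among unstable eigenvalues yields (i)--(ii). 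Item (iv) follows because $L_{\textrm{ss}}$ commutes with rotations, so its generalized eigenspaces split into pure angular modes and I may choose an eigenfunction in one of them; (iii) is obtained when $b_0=0$ by replacing $\eta$ with its real part (still an eigenfunction since $L_{\textrm{ss}}$ is real); (v) is a bootstrap using the eigenvalue equation, Lemma~\ref{l:extension}, and the explicit radial ODE on each angular mode to extract decay and $H^2$ regularity.

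The main obstacle, I expect, is the rigorous large-$\beta$ perturbation step. Since $A$ is an unbounded first-order perturbation of $\beta L_{\textrm{st}}$, classical Kato-type stability theorems do not apply directly; one must instead establish uniform-in-$\beta$ resolvent estimates for $L_{\textrm{ss}}$ outside the essential strip by exploiting compactness of the coupling term, and then argue spectral persistence via a contour integral of $(L_{\textrm{ss}}-z)^{-1}$. A secondary subtlety is ensuring that the \emph{maximally} unstable eigenvalue is selected, as required for (ii), rather than merely some unstable eigenvalue: one must rule out other eigenvalues sliding in from the essential strip as $\beta$ varies, and verify that the supremum of $\textrm{Re}\,z$ over unstable eigenvalues is attained.
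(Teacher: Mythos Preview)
Your proposal is correct and follows essentially the same route as the paper. The paper's execution of the large-$\beta$ step is slightly sharper than what you sketch: rather than invoking a ``characteristics computation'' to confine the essential spectrum to a strip, it observes that $L_{\textrm{ss}}-(1-\tfrac{1}{\alpha})=S_2+\beta\mathscr{K}$ where $S_2$ is \emph{skew-adjoint} and $\mathscr{K}$ compact, and then works with the rescaled operator $L_\beta:=\beta^{-1}S_2+\mathscr{K}$. Skew-adjointness gives $\|(z-\beta^{-1}S_2)^{-1}\|\leq|\textrm{Re}\,z|^{-1}$ uniformly in $\beta$, and the factorization $(z-L_\beta)=(z-\beta^{-1}S_2)\bigl(I-(z-\beta^{-1}S_2)^{-1}\mathscr{K}\bigr)$ reduces the resolvent convergence to showing that $(z-\beta^{-1}S_2)^{-1}\mathscr{K}\to(z-S_1)^{-1}\mathscr{K}$ in \emph{operator norm}; this follows because strong convergence of the resolvents (via the semigroup exponential formula) is upgraded to norm convergence after composition with a compact operator. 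Your ``secondary subtlety'' about eigenvalues sliding in is handled by a separate lemma (Lemma~\ref{l:three}) showing that $\textrm{spec}(L_\beta)\cap\{|\textrm{Re}\,z|\geq\varepsilon\}$ is contained in a fixed ball for all $\beta\geq 1$, so the supremum in (ii) is over a finite set. The items (iii)--(v) are dispatched exactly as you describe.
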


In fact we will prove some more properties of $\eta$, namely, suitable regularity and decay at infinity, but these are effects of the eigenvalue equation and will be addressed later. 

The proof of Theorem \ref{thm:spectral} will be split in two chapters. In the first one we regard $L_{\text{ss}}$ as perturbation of a simpler operator $L_{\text{st}}$, which is obtained from $L_{\text{ss}}$ by ignoring the $(1+\frac{\xi}{\alpha}\cdot \nabla)$ part: the intuition behind neglecting this term is that the remaining part of the operator $L_{\text{ss}}$ is multiplied by the constant $\beta$, which will be chosen appropriately large. The second chapter will be dedicated to proving a theorem analogous to Theorem \ref{thm:spectral} for the operator $L_{\text{st}}$. The analysis will heavily take advantage of an appropriate splitting of $L^2_m$ as a direct sum of invariant subspaces of $L_{\text{st}}$. The latter are obtained by expanding in Fourier series the trace of any element of $L^2_m$ on the unit circle. In each of these invariant subspaces the spectrum of $L_{\text{st}}$ can be related to the spectrum of a suitable second order differential operator in a single real variable.

\section{Nonlinear theory}\label{s:nonlinear}

The linear theory will then be used to show Theorem \ref{thm:main3}. In fact, given the decomposition introduced in \eqref{e:Ansatz-1}, we can now formulate a yet more precise statement from which we conclude Theorem \ref{thm:main3} as a corollary. 

\begin{theorem}\label{thm:main4}
Let $p$, $\alpha$, and $\al$ be as in Theorem \ref{thm:main2} and assume $\bar a$ is sufficiently large. Let $\bar \Omega$, $\eta$, $a_0$, and $b_0$ be as in Theorem \ref{thm:spectral} and for every $\varepsilon \in \mathbb R$, $k\in \mathbb N$ consider the solutions $\omega_{\varepsilon,k}$ of \eqref{e:Euler-later-times} and $\Omega_{\varepsilon,k} (\xi, \tau) = e^\tau \omega_{\varepsilon,k} (e^{\tau/\alpha} \xi, e^\tau)$. If we define $\Omega_{\text{per},k}$ through \eqref{e:Ansatz-1}, then there are $\tau_0 = \tau_0 (\varepsilon)$ and $\delta_0>0$, independent of $k$, such that
\begin{equation}\label{e:H2-estimate}
\|\Omega_{\text{per}, k} (\cdot, \tau)\|_{L^2} \leq e^{\tau (a_0+\delta_0)} \qquad\qquad\qquad \forall \tau\leq \tau_0\, . 
\end{equation}
\end{theorem}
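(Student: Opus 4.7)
The plan is to decompose $\Omega_{\varepsilon,k}= \tilde\Omega + \Omega_{\text{lin}} + \Omega_{\text{per},k}$ and $V_{\varepsilon,k}=\tilde V + V_{\text{lin}} + V_{\text{per},k}$, derive the evolution equation satisfied by $\Omega_{\text{per},k}$, and run a bootstrap energy estimate combining the spectral bound of Theorem~\ref{thm:spectral}(ii) with a careful accounting of the sources.

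\textbf{Step 1 (Error equation).} Subtracting from~\eqref{e:Euler-transformed} the equation satisfied by $\tilde\Omega$ and using that $\partial_\tau\Omega_{\text{lin}}=L_{\text{ss}}(\Omega_{\text{lin}})$ by~\eqref{e:evolution_of_Omega_lin}, I would derive
\[
\partial_\tau\Omega_{\text{per},k}-L_{\text{ss}}(\Omega_{\text{per},k})=E_k+N_k,\qquad \Omega_{\text{per},k}(\cdot,-k)=0,
\]
where the zero initial datum at $\tau=-k$ is precisely what the definition~\eqref{e:Euler-later-times} of $\omega_{\varepsilon,k}$ is designed to furnish. The term $E_k$ gathers all contributions supported on $|\xi|\gtrsim e^{-\tau/\alpha}$, i.e., where $\tilde\Omega-\beta\bar\Omega=\Omega_r\neq 0$, such as $(V_r\cdot\nabla)\Omega_{\text{lin}}$ and $(V_{\text{lin}}\cdot\nabla)\Omega_r$ (and the analogues with $\Omega_{\text{per},k}$ in place of $\Omega_{\text{lin}}$); $N_k$ collects the remaining genuinely quadratic contributions, including the main driver $-(V_{\text{lin}}\cdot\nabla)\Omega_{\text{lin}}$ of $L^2$-size $\varepsilon^2 e^{2a_0\tau}$.

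\textbf{Step 2 (Semigroup bound).} The key technical step is to upgrade the spectral bound $\operatorname{Re}z\leq a_0$ of Theorem~\ref{thm:spectral}(ii) to a semigroup estimate
\[
\|e^{\sigma L_{\text{ss}}}\|_{L^2_m\to L^2_m}\leq C_\delta\, e^{(a_0+\delta)\sigma},\qquad\sigma\geq 0,
\]
for each $\delta\in(0,a_0)$. Since this implication is \emph{not} automatic for unbounded generators, one has to exploit the structure of $L_{\text{ss}}$ as a transport operator perturbed by the compact map $\Omega\mapsto(K_2\ast\Omega\cdot\nabla)\bar\Omega$ (whose compactness on $L^2_m$ is the content of the remark following~\eqref{e:D(Lss)-formal}), thereby obtaining uniform resolvent bounds on half-planes $\{\operatorname{Re}z>a_0+\delta\}$ and concluding via a Gearhart--Pr\"uss type argument.

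\textbf{Step 3 (Duhamel and bootstrap).} Duhamel's formula gives $\Omega_{\text{per},k}(\tau)=\int_{-k}^\tau e^{(\tau-s)L_{\text{ss}}}(E_k+N_k)(s)\,ds$. Assume a bootstrap $\|\Omega_{\text{per},k}(\cdot,s)\|_{L^2}\leq e^{(a_0+\delta_0)s}$ on $[-k,\tau]$. The main source bounds are $\|(V_{\text{lin}}\cdot\nabla)\Omega_{\text{lin}}\|_{L^2}\lesssim\varepsilon^2 e^{2a_0 s}$ (from Biot--Savart and $\eta\in H^2$) and $\|E_k\|_{L^2}\lesssim\varepsilon\, e^{(a_0+c)s}+e^{(a_0+\delta_0+c)s}$ for some $c=c(\al,\alpha)>0$, the gain $e^{cs}$ coming from the polynomial decay of $\eta,\nabla\eta$ (and similarly of $\Omega_r,V_r$) at infinity, which absorbs the restriction to $|\xi|\gtrsim e^{-s/\alpha}$. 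The remaining cross terms in $\Omega_{\text{per},k}$ are handled using the $L^1\cap L^p$ propagation from Corollary~\ref{c:omega_k_epsilon} and the Biot--Savart estimate of Remark~\ref{r:bounded}, while pure transport terms drop out of the $L^2$ energy identity by divergence-freeness of the velocity. Plugging these estimates into Duhamel yields $\|\Omega_{\text{per},k}(\tau)\|_{L^2}\lesssim e^{(a_0+\min(a_0,c)-\delta)\tau}$, and choosing $\delta<\delta_0<\min(a_0,c)$ and $\tau_0=\tau_0(\varepsilon)$ sufficiently negative to absorb the multiplicative constant strictly improves the bootstrap, proving~\eqref{e:H2-estimate}. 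The principal obstacle is genuinely Step~2; a secondary subtlety is that $V_{\text{per},k}$ is only defined locally via Lemma~\ref{l:extension}, which forces one to use the decay of $\Omega_{\text{lin}},\Omega_r$ and their gradients to give meaning to all cross products appearing in $E_k$ and $N_k$.
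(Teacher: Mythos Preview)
Your overall architecture (error equation, semigroup bound, Duhamel-based bootstrap) matches the paper's, and your Step~2 is essentially Theorem~\ref{t:group}, which the paper proves via the decomposition of $L_{\text{ss}}$ as skew-adjoint plus compact and the classical essential growth bound argument. That step is in fact rather standard and not the principal obstacle.

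The genuine gap is in Step~3. You propose to close the bootstrap purely in $L^2$, but the Duhamel forcing contains the terms
\[
(V_{\text{lin}}\cdot\nabla)\Omega_{\text{per},k},\qquad (V_r\cdot\nabla)\Omega_{\text{per},k},\qquad (V_{\text{per},k}\cdot\nabla)\Omega_{\text{per},k},
\]
whose $L^2$ norms require control of $\nabla\Omega_{\text{per},k}$, which your bootstrap hypothesis does not provide. Your remark that ``pure transport terms drop out of the $L^2$ energy identity by divergence-freeness'' is true for a direct energy estimate on the full equation, but you are using Duhamel with the semigroup $e^{\tau L_{\text{ss}}}$: there the transport terms sit on the right-hand side and must be estimated in $L^2$. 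If instead you switch to a direct energy identity to kill the transport terms, you lose the sharp spectral information---the term $-\beta(K_2*\Omega_{\text{per},k}\cdot\nabla)\bar\Omega$ inside $L_{\text{ss}}$ then produces growth at rate $a_0$ rather than $a_0+\delta_0$, and the bootstrap does not improve. The $L^1\cap L^p$ bounds from Corollary~\ref{c:omega_k_epsilon} do not supply the missing derivative either. The second term above is particularly delicate: since $|V_r(\xi,\tau)|\sim|\xi|^{1-\bar\alpha}$ on its support $\{|\xi|\gtrsim e^{-\tau/\alpha}\}$, even with derivative control one needs a \emph{weighted} bound on $\nabla\Omega_{\text{per},k}$.

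The paper resolves this by running the bootstrap in the stronger norm
\[
\|\Omega\|_X = \|\Omega\|_{L^2} + \||x|\nabla\Omega\|_{L^2} + \|\nabla\Omega\|_{L^4}
\]
(Definition~\ref{d:X}). The baseline $L^2$ estimate is obtained via Duhamel and the semigroup bound, using the $X$-norm bootstrap to control the derivative-loss terms above (this is the content of \eqref{e:F-1}--\eqref{e:F-5}). The derivative parts of the $X$ norm are then recovered by direct energy estimates, for which the transport cancellation does apply; the potentially destabilizing term $(K_2*\Omega_{\text{per},k}\cdot\nabla)\bar\Omega$ is controlled there using the already-improved $L^2$ bound, and a further structural observation (differentiating first in $\theta$, then in $r$) handles the commutator with $\bar V$. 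This interplay between the two methods---semigroup for $L^2$, energy for derivatives---is precisely what is missing from your outline.
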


\eqref{e:asymptotic-in-t-2} is a simple consequence of \eqref{e:H2-estimate} after translating it back to the original coordinates. In order to give a feeling for why \eqref{e:H2-estimate} holds we will detail the equation that $\Omega_{\text{per}, k}$ satisfies.
First of all subtracting the equation satisfied by $\tilde{\Omega}$ from the one satisfied by $\Omega_{\varepsilon, k}$ we achieve
\begin{align*}
&\partial_\tau \Omega_{\text{lin}} + \partial_\tau \Omega_{\text{per},k} - \left(1+{\textstyle{\frac{\xi}{\alpha}}}\cdot \nabla\right) \Omega_{\text{lin}}
-\left(1+{\textstyle{\frac{\xi}{\alpha}}}\cdot \nabla\right) \Omega_{\text{per}, k} \\
+ & (\tilde{V} \cdot \nabla) \Omega_{\text{lin}} + V_{\text{lin}}\cdot \nabla \tilde{\Omega} + (\tilde{V}\cdot \nabla ) \Omega_{\text{per},k} + (V_{\text{per},k}\cdot \nabla) \tilde{\Omega} + (V_{\text{lin}}\cdot \nabla) \Omega_{\text{per}, k}\\
+ & (V_{\text{per}, k} \cdot \nabla) \Omega_{\text{lin}}
+ (V_{\text{lin}}\cdot \nabla) \Omega_{\text{lin}} + (V_{\text{per},k}\cdot \nabla) \Omega_{\text{per}, k} = 0\, ,
\end{align*}
where we have used the convention $\tilde{V}=K_2*\tilde\Omega$, $V_{\text{per},k} = K_2* \Omega_{\text{per},k}$, and $V_{\text{lin}}= K_2* \Omega_{\text{lin}}$. Next recall that $\tilde{\Omega}=\beta\bar\Omega + \Omega_r$ and recall also the definition of $L_{\text{ss}}$ in \eqref{e:Lss} and the fact that $\partial_\tau \Omega_{\text{lin}} - L_{\text{ss}} (\Omega_{\text{lin}})= 0$. In particular formally we reach
\begin{align}
& (\partial_{\tau} - L_{\text{ss}}) \Omega_{\text{per}, k} + ((V_{\text{lin}}+V_r)\cdot \nabla) \Omega_{\text{per},k} + (V_{\text{per},k} \cdot \nabla) (\Omega_{\text{lin}} + \Omega_r)\nonumber\\
&\qquad+ (V_{\text{per},k}\cdot \nabla) \Omega_{\text{per},k}
= -(V_{\text{lin}}\cdot \nabla) \Omega_{\text{lin}} - (V_r\cdot \nabla) \Omega_{\text{lin}} - (V_{\text{lin}}\cdot \nabla) \Omega_r\, ,\label{e:master}
\end{align}
which must be supplemented with the initial condition 
\[
\Omega_{\text{per},k} (\cdot, -k)= 0\, .
\]
In fact, in order to justify \eqref{e:master} we need to show that $\Omega_{\text{per},k} (\cdot, \tau)\in L^2_m$ for every $\tau$, which is the content of the following elementary lemma. 

\begin{lemma}\label{l:evolution-in-L2m}
The function $\Omega_{\text{per},k} (\cdot, \tau)$ belongs to $L^2_m$ for every $\tau$.
\end{lemma}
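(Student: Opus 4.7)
The plan is to rewrite $\Omega_{\text{per},k}$ as the difference
\begin{equation*}
\Omega_{\text{per},k}(\cdot,\tau) = \Omega_{\varepsilon,k}(\cdot,\tau) - \tilde\Omega(\cdot,\tau) - \Omega_{\text{lin}}(\cdot,\tau),
\end{equation*}
which follows from the Ansatz \eqref{e:Ansatz-1} together with $\tilde\Omega = \beta\bar\Omega + \Omega_r$, and then to check separately that this combination is (i) an element of $L^2(\R^2)$ at each $\tau \in [-k,\infty[$ and (ii) $m$-fold rotationally symmetric. Note that, although the summands $\beta\bar\Omega$ and $\Omega_r$ fail to be in $L^2$ individually (because of the slow decay $|\xi|^{-\bar\alpha}$ with $\bar\alpha<1$), their sum $\tilde\Omega$ is compactly supported in $\xi$ thanks to the cutoff $\chi(e^{\tau/\alpha}|\xi|)$ appearing in \eqref{e:tildeOmega}, so handling the regrouped quantity is the natural thing to do.

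For the $L^2$ bound, I would argue as follows. First, $\tilde\Omega(\cdot,\tau)$ is compactly supported and pointwise bounded by $C|\xi|^{-\bar\alpha}$ on its support, which is square-integrable in the plane since $2\bar\alpha<2$; hence $\tilde\Omega(\cdot,\tau)\in L^2$. Second, by Theorem~\ref{thm:spectral}(v), $\eta\in L^2_m$, so $\Omega_{\text{lin}}(\cdot,\tau)\in L^2$ for every $\tau$. Finally, the scaling \eqref{e:omega->Omega} gives
\begin{equation*}
\|\Omega_{\varepsilon,k}(\cdot,\tau)\|_{L^2} = e^{\tau(1-1/\alpha)}\|\omega_{\varepsilon,k}(\cdot,e^\tau)\|_{L^2},
\end{equation*}
and the Yudovich bounds from Corollary~\ref{c:omega_k_epsilon} ensure that $\omega_{\varepsilon,k}(\cdot,t)\in L^1\cap L^\infty \subset L^2$ for every $t\in[t_k,T]$, making the right-hand side finite. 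Subtracting gives $\Omega_{\text{per},k}(\cdot,\tau)\in L^2$.

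For the symmetry, the key point is to invoke uniqueness in the Yudovich class (Theorem~\ref{thm:Yudo}) for the Cauchy problem \eqref{e:Euler-later-times}. The forcing $f$ is radial (it equals $\partial_t\tilde\omega$ by \eqref{e:def-f}, and $\tilde\omega$ is radial by \eqref{e:curl-tilde-v}), and the initial datum $\omega_{\varepsilon,k}(\cdot,t_k) = \tilde\omega(\cdot,t_k) + \varepsilon t_k^{a_0-1}\text{Re}(t_k^{ib_0}\eta(t_k^{-1/\alpha}\cdot))$ is $m$-fold symmetric thanks to the radiality of $\tilde\omega$ and the $m$-fold symmetry of $\eta$ given in Theorem~\ref{thm:spectral}(iv). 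Denoting by $R$ the rotation by $2\pi/m$, the equivariance of the Biot--Savart kernel under rotations, $K_2(Rx)=R\,K_2(x)$, implies that if $(\omega_{\varepsilon,k},v_{\varepsilon,k})$ is a Yudovich solution with datum and force as above, then so is $(\omega_{\varepsilon,k}\circ R^{-1},\,R\, v_{\varepsilon,k}\circ R^{-1})$; by uniqueness these solutions coincide, so $\omega_{\varepsilon,k}(\cdot,t)$ is $m$-fold symmetric for every $t\in[t_k,T]$, and therefore so is $\Omega_{\varepsilon,k}(\cdot,\tau)$. Since $\tilde\Omega$ is radial and $\Omega_{\text{lin}}(\cdot,\tau)$ is $m$-fold symmetric because $\eta\in L^2_m$, the difference $\Omega_{\text{per},k}(\cdot,\tau)$ lies in $L^2_m$ as claimed.

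I do not expect a serious obstacle: the main thing to keep straight is that $\beta\bar\Omega$ and $\Omega_r$ must be bundled into $\tilde\Omega$ before arguing $L^2$-integrability, since neither summand is individually square-integrable.
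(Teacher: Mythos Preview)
Your proposal is correct and follows essentially the same approach as the paper: decompose $\Omega_{\text{per},k} = \Omega_{\varepsilon,k} - \tilde\Omega - \Omega_{\text{lin}}$, and then use Yudovich uniqueness for the rotated solution to propagate the $m$-fold symmetry of the data and force to $\omega_{\varepsilon,k}$ (and hence $\Omega_{\varepsilon,k}$). Your treatment is in fact slightly more detailed than the paper's, since you also spell out the $L^2$ integrability (in particular the need to bundle $\beta\bar\Omega+\Omega_r$ into the compactly supported $\tilde\Omega$), whereas the paper leaves that part implicit and focuses only on the symmetry.
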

\begin{proof}
It suffices to prove that $\omega_{\varepsilon, k} (\cdot, t)$ is $m$-fold symmetric, since the transformation rule then implies that $\Omega_{\varepsilon,k} (\cdot, \tau)$ is $m$-fold symmetric and $\Omega_{\text{per}, k} (\cdot, \tau)$ is obtained from the latter by subtracting $e^{a_0\tau} \textrm{Re} (e^{ib_0\tau} \eta) + \tilde{\Omega} (\cdot, \tau)$, which is also $m$-fold symmetric. In order to show that $\omega_{\varepsilon, k}$ is $m$-fold symmetric just consider that $\omega_{\varepsilon, k} (R_{2\pi/m} (\cdot), \tau)$ solves \eqref{e:Euler-later-times} because both the forcing term and the initial data are invariant under a rotation of $\frac{2\pi}{m}$ (and the Euler equations are rotationally invariant). Then the uniqueness part of Yudovich's statement implies $\omega_{\varepsilon, k} (\cdot, t) = \omega_{\varepsilon, k} (R_{2\pi/m} (\cdot), t)$.
\end{proof}
 
We proceed with our discussion and observe that $V_{\text{lin}} + V_r$ and $\Omega_{\text{lin}}+\Omega_r$ are both ``small'' in appropriate sense for sufficiently negative times, while, because of the initial condition being $0$ at $-k$, for some time after $-k$ we expect that the quadratic nonlinearity $(V_{\text{per},k}\cdot \nabla) \Omega_{\text{per},k}$ will not contribute much to the growth of $\Omega_{\text{per}, k} (\cdot, \tau)$. Schematically, we can break \eqref{e:master} as 
\begin{align}
& (\partial_{\tau} - L_{\text{ss}}) \Omega_{\text{per}, k} + \underbrace{((V_{\text{lin}}+V_r)\cdot \nabla) \Omega_{\text{per},k} + (V_{\text{per},k} \cdot \nabla) (\Omega_{\text{lin}} + \Omega_r)}_{\mbox{small linear terms}}\nonumber\\
&\qquad + \underbrace{(V_{\text{per},k}\cdot \nabla) \Omega_{\text{per},k}}_{\mbox{quadratic term}}
= \underbrace{-(V_{\text{lin}}\cdot \nabla) \Omega_{\text{lin}} - (V_r\cdot \nabla) \Omega_{\text{lin}} - (V_{\text{lin}}\cdot \nabla) \Omega_r}_{\mbox{forcing term } \mathscr{F}}\, ,\label{e:master-schematics}
\end{align}
In particular we can hope that the growth of $\Omega_{\text{per},k} (\cdot, \tau)$ is comparable to that of the solution of the following ``forced'' linear problem
\begin{equation}\label{e:master-linear}
(\partial_{\tau} - L_{\text{ss}}) \Omega = \mathscr{F}\, .
\end{equation}
Observe that we know that $\Omega_{\text{lin}} (\cdot, \tau)$ and $V_{\text{lin}} (\cdot, \tau)$ decay like $e^{a_0 \tau}$. We can then expect to gain a slightly faster exponential decay for $\mathscr{F} (\cdot, \tau)$ because of the smallness of $V_r$ and $\Omega_r$. On the other hand from Theorem \ref{thm:spectral} we expect that the semigroup generated by $L_{\text{ss}}$ enjoys growth estimates of type $e^{a_0\tau}$ on $L^2_m$ (this will be rigorously justified using classical results in the theory of strongly continuous semigroups). We then wish to show, using the Duhamel's formula for the semigroup $e^{\tau L_{\text{ss}}}$, that the growth of $\Omega_{\text{per},k}$ is bounded by $e^{a_0\tau} (e^{\delta_0 \tau} - e^{-\delta_0 k})$ for some positive $\delta_0$ for some time $\tau$ after the initial $-k$: the crucial point will be to show that the latter bound is valid for $\tau$ up until a ``universal'' time $\tau_0$, independent of $k$.

Even though intuitively sound, this approach will require several delicate arguments, explained in the final chapter of the notes. In particular:
\begin{itemize}
    \item we will need to show that the quadratic term $(V_{\text{per},k}\cdot \nabla) \Omega_{\text{per},k}$ is small up to some time $\tau_0$ independent of $k$, in spite of the fact that there is a ``loss of derivative'' in it (and thus we cannot directly close an implicit Gronwall argument using the Duhamel formula and the semigroup estimate for $L_{\text{ss}}$);
\item The terms $\Omega_r$ and $V_r$ are not really negligible in absolute terms, but rather, for very negative times, they are supported in a region in space which goes towards spatial $\infty$. 
\end{itemize}
The first issue will be solved by closing the estimates in a space of more regular functions, which contains $L^2$ and embeds in $L^\infty$ (in fact $L^2\cap W^{1,4}$): the bound on the growth of the $L^2$ norm will be achieved through the semigroup estimate for $L_{\text{ss}}$ via Duhamel, while the bound of the first derivative will be achieved through an energy estimate, which will profit from the $L^2$ one. The second point by restricting further the functional space in which we will close to estimates for $\Omega_{\text{per}, k}$. We will require an appropriate decay of the derivative of the solutions, more precisely we will require that the latter belong to $L^2 (|x|^2\,\mathrm dx)$. Of course in order to use this strategy we will need to show that the initial perturbation $\eta$ belongs to the correct space of functions.

\newpage
\section{Dependency tree} The following tree schematizes the various intermediate statetements that lead to the main result of this text, Theorem \ref{thm:main}. The dashed lines signify improvements of the intermediate statements which are not needed to obtain Theorem~\ref{thm:main}. We have included them because they deliver interesting additional information and because they are the original statements of Vishik, see \cite{Vishik1, Vishik2, Vishik3}. 

\vspace{1cm}
\begin{tikzpicture}
\tikzset{level distance=40pt}
\tikzset{level 1/.style={sibling distance=-50pt}}
\begin{scope}
\Tree%
[.{Main result, Theorem \ref{thm:main}}
  {Proposition \ref{p:convergence}} 
  [.{Theorem \ref{thm:main2}} 
    [.{Theorem \ref{thm:main3}}
      [.{Linear Theory}
        [.{Theorem \ref{thm:spectral}}
          [.\node(a2){Theorem \ref{thm:spectral2}};  
            [.\node(a1){Theorem \ref{thm:spectral3}};
              {Theorem \ref{thm:spectral5}}
            ]
          ]
        ]
      ]
      [.{Non-linear Theory}
        [.{Theorem \ref{thm:main4}} 
          [.{Theorem \ref{t:group}}  
            {Corollary \ref{c:structural}}
          ]
          {Proposition \ref{p:X-bounds}}
          [.{Lemma \ref{l:final-estimates}}
            {Lemma \ref{l:initial-bound}}
          ]
        ]
      ]
    ]
  ]
  {Formula \eqref{eq:difference-of-the-omega}} 
]
\end{scope}
\begin{scope}[xshift=1in,yshift=-4.45in]
\Tree%
[.\node(b1){Remark \ref{r:better2}};
  [.\node(c1){Theorem \ref{thm:spectral-stronger-2}};
    {Theorem \ref{thm:Vishikversion}}
  ]
]
\end{scope}
\begin{scope}[xshift=-1in,yshift=-4.3in]
\Tree%
[.\node(b2){Remark \ref{r:better}}; 
  \node(c2){Theorem \ref{thm:spectral-stronger}};
]
\end{scope}
\begin{scope}[dashed]
\draw  (a1) -- (b1); 
\draw  (a2) .. controls (-5.5,-7) and (-5.5,-11) .. (b2);
\end{scope}
\begin{scope}
\draw (c2) -- (c1);
\end{scope}
\end{tikzpicture}


\chapter{Linear theory: Part I}
\label{chapter:linearparti}

 In this chapter, we will reduce Theorem \ref{thm:spectral} to an analogous spectral result for another differential operator, and we will also show an important corollary of Theorem \ref{thm:spectral} concerning the semigroup that it generates. We start by giving the two relevant statements, but we will need first to introduce some notation and terminology. 
 
 First of all, in the rest of the chapter we will always assume that the positive integer $m$ is at least $2$. We then introduce a new (closed and densely defined) operator on $L^2_m$, which we will denote by $L_{\text{st}}$. The operator is defined by\index{aalLst@$L_{\text{st}}$}
 \begin{equation}
 L_{\text{st}} (\Omega) = - \text{div}\, (\bar V \Omega) - (K_2*\Omega \cdot \nabla) \bar \Omega\,  
 \end{equation}
 and (recalling that the operator $\Omega\mapsto (K_2* \Omega\cdot \nabla) \bar \Omega$ is bounded and compact, as  will be shown below) its domain in $L^2_m$ is given by
 \begin{equation}
D_m (L_{\text{st}}) = \{\Omega\in L^2_m : \textrm{div}\, (\bar V \Omega)\in L^2\}\, .
 \end{equation}
The key underlying idea behind the introduction of $L_{\text{st}}$ is that we can write $L_{\text{ss}}$ as
\[
L_{\text{ss}} = \big(1+{\textstyle{\frac{\xi}{\alpha}}}\cdot \nabla\big) + \beta L_{\text{st}} \, 
\]
and since $\beta$ will be chosen very large, we will basically study the spectrum of $L_{\text{ss}}$ as a perturbation of the spectrum of $\beta L_{\text{st}}$. In particular Theorem \ref{thm:spectral} will be derived from a more precise spectral analysis of $L_{\text{st}}$. Before coming to it, we split the space $L^2_m$ into an appropriate infinite sum of closed orthogonal subspaces. 

First of all, if we fix an element $\vartheta\in L^2 (\mathbb R^2)$ and we introduce the polar coordinates $(\theta, r)$ through $x= r (\cos \theta , \sin \theta)$, we can then use the Fourier expansion to write
\begin{equation}\label{e:Fourier}
\vartheta (x) =\sum_{k\in \mathbb Z} a_k (r) e^{ik\theta}\, 
\end{equation}
where
\[
a_k (r) := \frac{1}{2\pi} \int_0^{2\pi} \vartheta(r \cos(\theta),r\sin(\theta)) e^{-ik\theta}\,\mathrm d\theta .
\]
By Plancherel's formula,
\[
\|\vartheta\|_{L^2 (\R^2)}^2 = 2\pi \sum_{k\in \mathbb Z} \|a_k\|^2_{L^2 (\R^+, r\,\mathrm dr)}\, .
\]
In particular it will be convenient to introduce the subspaces\index{aalUk@$U_k$}
\begin{equation}
U_k :=\{f(r) e^{ik\theta} : f \in L^2 (\mathbb R^+, r\,\mathrm dr)\}\, .    
\end{equation}
Each $U_k$ is a closed subspace of $L^2$, distinct $U_k$'s are orthogonal to each other and moreover
\begin{equation}\label{e:Fourier-2}
L^2_m = \bigoplus_{k\in \mathbb Z} U_{km}\, .
\end{equation}

Each $U_{mk}$ is an invariant space of $L_{\text{st}}$, and it can be easily checked that $U_{mk}\subset D_m (L_{\text{st}})$ and that indeed the restriction of $L_{\text{st}}$ to $U_{mk}$ is a bounded operator. Following the same convention as for $L_{\text{ss}}$ we will denote by \index{spectrum@spectrum}\index{aalspec@$\textrm{spec}_m$} $\textrm{spec}_m\, (L_{\text{st}})$ the spectrum of $L_{\text{st}}$ on $L^2_m$.

\begin{theorem}\label{thm:spectral2}\label{THM:SPECTRAL2}
For every $m\geq 2$ and every $\bar\Omega$
we have
\begin{itemize}
    \item[(a)] each $z_i\in \textrm{spec}_m\, (L_{\text{st}})\cap \{z: \textrm{Re} \,z\, \neq 0\}$ belongs to the discrete spectrum and if $\textrm{Im}\, (z_i)=0$, then there is a nontrivial real eigenfunction relative to $z_i$.
\end{itemize}
Moreover, for an appropriate choice of $\bar\Omega$ there is an integer $m\geq 2$ such that:
\begin{itemize}
\item[(b)] $\textrm{spec}_m\, (L_{\text{st}})\cap \{z: \textrm{Re}\, z >0\}$ is nonempty.
\end{itemize}
\end{theorem}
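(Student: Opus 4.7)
For part (a), the plan is to exploit that $L_{\textrm{st}} = L_0 + K$, where $L_0\Omega := -\bar V\cdot\nabla\Omega$ and $K\Omega := -(K_2*\Omega\cdot\nabla)\bar\Omega$. Since $\bar V = \zeta(|x|)\,x^\perp$, in polar coordinates $\bar V\cdot\nabla = \zeta(r)\partial_\theta$, so on each invariant subspace $U_{km}$ the operator $L_0$ acts as multiplication by $-ikm\zeta(r)$; hence $L_0$ is normal, $\textrm{spec}_m\,(L_0)\subset i\R$, and $\|(L_0-z)^{-1}\|\leq 1/|\textrm{Re}\,z|$ for every $z$ with $\textrm{Re}\,z\neq 0$. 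First I would verify that $K\colon L^2_m\to L^2_m$ is compact: by Lemma \ref{l:extension}, $K_2*\Omega$ admits a representative with $\|\nabla(K_2*\Omega)\|_{L^2(B_R)}\lesssim \|\Omega\|_{L^2}$ uniformly in $R$, and combined with the decay $|\nabla\bar\Omega(x)|\lesssim |x|^{-\bar\alpha-1}$ at infinity (from \eqref{e:decay-at-infinity}) and Rellich--Kondrachov this gives compactness of $K$.

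With this in hand, for any $z$ with $\textrm{Re}\,z\neq 0$ one writes $L_{\textrm{st}}-z = (L_0-z)\bigl(I + (L_0-z)^{-1}K\bigr)$, with $(L_0-z)^{-1}K$ compact. For $|\textrm{Re}\,z|$ large the second factor is invertible by Neumann series, so the resolvent set of $L_{\textrm{st}}$ meets each open half-plane $\{\pm\textrm{Re}\,z>0\}$. The analytic Fredholm theorem applied to the meromorphic family $z\mapsto \bigl(I+(L_0-z)^{-1}K\bigr)^{-1}$ then yields that $\textrm{spec}_m\,(L_{\textrm{st}})\cap\{\textrm{Re}\,z\neq 0\}$ consists of isolated eigenvalues of finite algebraic multiplicity, i.e.\ discrete spectrum. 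Reality of eigenfunctions when $\textrm{Im}\,z_i = 0$ is immediate: $L_{\textrm{st}}$ has real coefficients and commutes with complex conjugation, so if $\eta$ is an eigenvector for a real eigenvalue then so is $\overline\eta$, and at least one of $\textrm{Re}\,\eta$, $\textrm{Im}\,\eta$ is a nonzero real eigenvector.

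For part (b), which is the main obstacle and will occupy Chapter~\ref{chapter:linearpartii}, my plan is to work inside a single angular subspace $U_m$ and reduce the eigenvalue equation $L_{\textrm{st}}\eta = -imz\,\eta$ for $\eta = f(r)e^{im\theta}$ to Rayleigh's stability equation
\[
(\Xi(s)-z)\Bigl(\tfrac{d^2}{ds^2}-m^2\Bigr)\phi - A(s)\phi = 0,\qquad s=\log r,
\]
for the logarithmic stream function $\phi$, with $\Xi(s)=\zeta(e^s)$ and $A(s)=e^s g'(e^s)$ as already sketched in Section~\ref{sec:ideaproof}. An unstable eigenvalue of $L_{\textrm{st}}$ on $U_m$ corresponds to a nontrivial solution of this equation with $\textrm{Im}\,z>0$ satisfying the appropriate decay at $s=\pm\infty$. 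The strategy is the Tollmien--Lin--Vishik one: design $g$ so that the Schr\"odinger-type operator obtained after dividing by $\Xi-z$ admits a neutral mode at some real $z_*$, and then use a winding-number/index argument on the Wronskian of the two Rayleigh solutions decaying at $\pm\infty$ to track how the count of unstable eigenvalues varies as the profile is perturbed, forcing the appearance of one with $\textrm{Im}\,z>0$ for an appropriate choice of $m\geq 2$. The truly delicate point is the explicit construction of a profile $g$ (together with a wavenumber $m$) for which a nondegenerate neutral mode exists and the index jump can be certified rigorously; this is essentially Vishik's construction from \cite{Vishik2}.
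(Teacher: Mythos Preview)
Your proposal is correct and follows essentially the same approach as the paper: part (a) is proved via the decomposition $L_{\textrm{st}}=S_1+\mathscr{K}$ with $S_1$ skew-adjoint (your $L_0$) and $\mathscr{K}$ compact (your $K$), invoking analytic Fredholm theory exactly as the paper does in Corollary~\ref{c:structural}; part (b) is deferred to Chapter~\ref{chapter:linearpartii} and carried out along the Tollmien--Lin--Vishik lines you describe. The only minor difference is that the paper's perturbation from the neutral mode proceeds through a Lyapunov--Schmidt reduction and Rouch\'e's theorem on a scalar function $H(h,z)$ (Lemma~\ref{l:will-apply-Rouche}) rather than a Wronskian winding-number argument, but this is a variation within the same overall strategy.
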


\begin{remark}\label{r:better}\label{R:BETTER}
The theorem stated above contains the minimal amount of information that we need to complete the proof of Theorem \ref{thm:main2}. 
We can however infer some additional conclusions with more work. To this end, we {recall that in the case of an isolated point $z$ in the spectrum of a closed, densely defined operator $A$, the Riesz projector is defined as 
    \[
    \frac{1}{2\pi i} \int_\gamma (w -A)^{-1}\, dw
    \]
    for any simple closed rectifiable contour $\gamma$ bounding a closed disk $D$ with $D \cap \textrm{spec}\, (A) = \{z\}$. For an element of the discrete spectrum the Riesz projector has finite rank (the algebraic multiplicity of the eigenvalue \index{algebraic multiplicity@algebraic multiplicity} $z$).}

In the framework of Theorem \ref{thm:main2}, we can show that 
\begin{itemize}
   \item[(c)] $m$ can be chosen so that, in addition to (b), $\textrm{spec}_m\, (L_{\text{st}})\cap \{z: \textrm{Re}\, z >0\}$ is finite and the image of the Riesz projector
   \index{Riesz projector}\index{aalPz@$P_z$}
    $P_{z}$ of $L_{\text{st}}$ relative to each $z\in \textrm{spec}_m\, (L_{\text{st}})\cap \{z: \textrm{Re}\, z >0\}$ is contained in $U_m\cup U_{-m}$.
    \end{itemize}
Since this property is not needed to prove Theorem~\ref{thm:main2} we defer its proof to Appendix~\ref{a:better}. 
\end{remark}

In \cite{Vishik2} Vishik claims the following greatly improved statement.

\begin{theorem}\label{thm:spectral-stronger}\label{THM:SPECTRAL-STRONGER}
For a suitable $\bar \Omega$:
\begin{itemize}
    \item[(c')] $m$ can be chosen so that, in addition to  (b) and (c), $\textrm{spec}_m\, (L_{\text{st}})\cap \{z: \textrm{Re}\, z >0\}\cap U_m$ consists of a single element, with algebraic multiplicity $1$ in $U_m$. \index{algebraic multiplicity@algebraic multiplicity}
\end{itemize}
\end{theorem}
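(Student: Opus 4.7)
The plan is to reduce the eigenvalue problem for $L_{\text{st}}$ on $U_m$ to a scalar second-order ODE (Rayleigh's stability equation) and to analyze the algebraic multiplicity of $z_0$ via an Evans/Wronskian function. First I would exploit that $U_m$ is invariant under $L_{\text{st}}$: for $\omega = f(r) e^{im\theta} \in U_m$ with stream function $\psi = \varphi(r) e^{im\theta}$, a direct computation using $\bar V \cdot \nabla = \zeta(r) \partial_\theta$ and $v_r = (im/r)\varphi(r) e^{im\theta}$ yields
\begin{equation*}
L_{\text{st}} \omega \;=\; \left[ -im\,\zeta(r)\, f(r) \;-\; \frac{im}{r}\,\varphi(r)\, g'(r) \right] e^{im\theta}.
\end{equation*}
Setting the eigenvalue to be $\lambda = -imz$, passing to the logarithmic variable $s = \log r$, and writing $\phi(s) = \varphi(e^s)$, $\Xi(s) = \zeta(e^s)$, $A(s) = e^s g'(e^s)$, the eigenvalue equation $L_{\text{st}}\omega = \lambda \omega$ becomes precisely Rayleigh's stability equation~\eqref{eq:theeigenvaluequation}. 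The condition $\omega \in L^2_m$ becomes regularity of $\phi$ at $s \to -\infty$ (so that $\omega$ is regular at the origin) and decay at $s \to +\infty$.

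Next I would introduce the Evans function. For each $z$ with $\textrm{Re}(z) > 0$, the factor $\Xi(s) - z$ is nowhere vanishing on $\mathbb{R}$ (since $\Xi$ is real-valued), so Rayleigh's equation is a regular second-order linear ODE depending analytically on $z$. Standard ODE perturbation theory produces a unique (up to scalar) solution $\phi_-(\cdot, z)$ with the correct behavior at $s \to -\infty$, and similarly $\phi_+(\cdot, z)$ at $s \to +\infty$, both analytic in $z$. I define
\begin{equation*}
D(z) \;:=\; W\bigl(\phi_-(\cdot,z),\,\phi_+(\cdot,z)\bigr),
\end{equation*}
which is analytic on the right half-plane. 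Then $z$ is an eigenvalue of $L_{\text{st}}|_{U_m}$ iff $D(z) = 0$, and by the Gohberg--Sigal theory of analytic operator-valued functions (applied to the bounded operator $L_{\text{st}}|_{U_m} - z$ with its compact-perturbation structure), the algebraic multiplicity of $z_0$ -- defined through the trace of the Riesz projector $P_{z_0}|_{U_m}$ -- coincides with the order of vanishing of $D$ at $z_0$. Uniqueness of the zero in $\{\textrm{Re}\, z > 0\} \cap U_m$ would follow by choosing $\bar \Omega$ and $m$ so that only one branch of the Tollmien neutral curve crosses into the unstable half-plane; this is arranged together with the choices already made for Theorem~\ref{thm:spectral2}.

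The heart of the proof, and the main obstacle, is to establish $D'(z_0) \neq 0$. Differentiating Rayleigh's equation in $z$ and using the constancy of the Wronskian in $s$ leads to a formal expression of the form
\begin{equation*}
D'(z_0) \;=\; c \int_{-\infty}^{+\infty} \frac{A(s)\,\phi_0(s)^2}{(\Xi(s) - z_0)^2}\, ds, \qquad c \neq 0,
\end{equation*}
for a normalized eigenfunction $\phi_0$. The technical difficulty is that near the Tollmien bifurcation the imaginary part of $z_0$ is small and the denominator $(\Xi - z_0)^{-2}$ becomes nearly singular on the would-be critical layer $\{\Xi(s) = \textrm{Re}(z_0)\}$, so the integral must be interpreted with care via contour deformation and a Frobenius-type analysis of the solutions there. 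I would overcome this by exploiting the fact that $z_0$ was constructed as a \emph{transverse} crossing of the neutral curve under a one-parameter deformation (the Tollmien bifurcation being nondegenerate for the chosen $\bar\Omega$), so that the derivative of the bifurcation equation with respect to the deformation parameter provides precisely the non-vanishing needed; this transversality is a condition on the profile $g$ that can be guaranteed simultaneously with the properties of Theorem~\ref{thm:spectral2}. Combining this with the Gohberg--Sigal identification of multiplicities yields algebraic multiplicity $1$ in $U_m$, which is exactly (c'). This is essentially the gap in \cite{Vishik2} filled in \cite{Vishik3}, and I expect the rigorous justification of the transversality computation near the critical layer to be the principal obstacle.
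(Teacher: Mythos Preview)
Your Evans-function framework is a legitimate alternative to the paper's route, and the two arrive at the same object. The paper works instead through the adjoint: it constructs an adjoint eigenfunction $\beta_h$ of $\mathcal{L}_{m_a-h}^\star$ and observes that if the algebraic multiplicity exceeded $1$ then the eigenfunction $\alpha_h$ would lie in the range of $(m_a-h)^{-1}\mathcal{L}_{m_a-h}-z_h$, forcing $\langle\alpha_h,\beta_h\rangle_{\mathcal{H}^e}=0$. The pairing is then computed to be
\[
\langle\alpha_h,\beta_h\rangle_{\mathcal{H}^e}\;=\;\int_{\mathbb{R}}\frac{A(t)}{(\Xi(t)-z_h)^2}\,\varphi_h(t)^2\,dt,
\]
which is precisely (up to a nonzero factor) your $D'(z_h)$. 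So the two frameworks are equivalent and you have correctly located the crux of the argument.

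Where your proposal is too loose is the justification that this integral is nonzero. Appealing to ``transversality of the Tollmien bifurcation'' does not close the gap: the transversality established for existence (the coefficient $c(a)$ with $\textrm{Im}\,c(a)>0$ in Lemma~\ref{l:will-apply-Rouche}) lives at the neutral limit $z_0=\Xi(a)$ on the real axis, where your Evans function is not a priori defined, and the Rouch\'e argument there was applied to the Lyapunov--Schmidt function $H$, whose order of vanishing is not identified with the algebraic multiplicity. This is exactly the step missing in \cite{Vishik2}. The paper's fix (Lemma~\ref{l:aggiunto}, following \cite{Vishik3}) is neither contour deformation nor Frobenius analysis but a direct real-variable singular-limit computation: one lets $h\to 0$, shows $\varphi_h\to\varphi$ with $\varphi(a)\neq 0$, rescales $t=t(h)+sh$ near the incipient critical layer, and computes
\[
\lim_{h\to 0}\,\textrm{Im}\int_{\mathbb{R}}\frac{A}{(\Xi-z_h)^2}\varphi_h^2\,dt\;=\;\frac{2A'(a)\,\varphi(a)^2}{d(a)\,\Xi'(a)^2}\int_{\mathbb{R}}\frac{s^2}{(1+s^2)^2}\,ds\;\neq\;0,
\]
where $d(a)=\textrm{Im}\,c(a)$. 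The bifurcation data $d(a)$ does enter the answer, so your instinct is right, but making this precise requires carrying out that explicit limit rather than invoking transversality abstractly; the computation (splitting the integrand into three pieces $I_h+II_h+III_h$ and using the identity $\int A|\varphi_h|^2/|\Xi-z_h|^2=0$ from \eqref{e:imaginary-trick} to control the dangerous piece) is the actual content of the proof.
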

Since the spectrum of $L_{\text{st}}$ is invariant under complex conjugation (b), (c), and (c') imply that $\textrm{spec}_m\, (L_{\text{st}})\cap \{\textrm{Re}\, z>0\}$ consists either of a single real eigenvalue or of two complex conjugate eigenvalues. In the first case, the algebraic and geometric multiplicity \index{geometric multiplicity@geometric multiplicity} of the eigenvalue is $2$ and the space of eigenfunctions has a basis consisting of an element of $U_m$ and its complex conjugate in $U_{-m}$. In the second case the two eigenvalues $z$ and $\bar z$ have algebraic multiplicity $1$ and their eigenspaces are generated, respectively, by an element of $U_m$ and its complex conjugate in $U_{-m}$.
The argument given in \cite{Vishik2} for (c') is however not complete. Vishik provided later (\cite{Vishik3}) a way to close the gap. In Appendix~\ref{a:better} we will give a proof of Theorem \ref{thm:spectral-stronger} along his lines.

\medskip

In this chapter we also derive an important consequence of Theorem \ref{thm:spectral} for the semigroup generated by $L_{\text{ss}}$.\index{Semigroup}

\begin{theorem}\label{t:group}\label{T:GROUP}
For every $m\geq 2$, $L_{\text{ss}}$ is the generator of a strongly continuous semigroup on $L^2_m$ which will be denoted by $e^{\tau L_{\text{ss}}}$\index{aalEtauLss@$e^{\tau L_{\text{ss}}}$}, and the growth bound $\omega (L_{\text{ss}})$ of $e^{\tau L_{\text{ss}}}$ equals\index{Semigroup, growth bound}
\[
a_0 := \sup \{\textrm{Re}\, z_0 : z_0\in \textrm{spec}_m (L_{\text{ss}})\}\, <\infty\, 
\]
if $a_0 \geq 1-\frac{1}{\alpha}$.
In other words, for every $\delta>0$, there is a constant $M (\delta)$ with the property that
\begin{equation}\label{e:growth-bound}
\left\|e^{\tau L_{\text{ss}}} \Omega\right\|_{L^2} \leq M (\delta) e^{(a_0 +\delta) \tau} \|\Omega\|_{L^2}
\qquad \qquad \forall \tau\geq 0,\, \forall \Omega\in L^2_m\, .
\end{equation}
\end{theorem}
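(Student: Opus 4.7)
The plan is to write $L_{\text{ss}} = A_0 + K$, where the transport part
\[
A_0 \Omega := \bigl(1 + \tfrac{\xi}{\alpha}\cdot\nabla\bigr)\Omega - \beta(\bar V\cdot \nabla)\Omega
\]
can be integrated by characteristics and the remainder $K\Omega := -\beta(K_2*\Omega \cdot \nabla)\bar\Omega$ should be shown to be a compact operator on $L^2_m$ (here using the extension of Lemma \ref{L:EXTENSION} and the decay of $\nabla\bar\Omega$, cf.\ \eqref{e:decay-at-infinity}, since this fact is already anticipated in the discussion after \eqref{e:D(Lss)-formal}). Granted compactness of $K$, the bounded perturbation theorem immediately yields that $L_{\text{ss}}$ generates a $C_0$-semigroup on $L^2_m$ as soon as $A_0$ does.

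First I would study $A_0$. The ODE $\dot X(\tau) = -X(\tau)/\alpha + \beta \bar V(X(\tau))$ has a global smooth flow $\Phi_\tau$; since $\bar V$ is tangential to circles we have $|\Phi_\tau(y)| = e^{-\tau/\alpha}|y|$, and since the drift has divergence $-2/\alpha$ the Jacobian is constant and equal to $e^{-2\tau/\alpha}$. Along characteristics the equation reads $\frac{d}{d\tau}\Omega(\Phi_\tau y,\tau) = \Omega(\Phi_\tau y,\tau)$, so an explicit formula for $e^{\tau A_0}$ follows; changing variables gives
\[
\|e^{\tau A_0}\Omega\|_{L^2}^2 = \int e^{2\tau}|\Omega(y)|^2 e^{-2\tau/\alpha}\,dy = e^{2\tau(1-1/\alpha)}\|\Omega\|_{L^2}^2.
\]
Preservation of $L^2_m$ is automatic since $\bar V$ is radial and $\Phi_\tau$ commutes with rotations. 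Hence $A_0$ generates a $C_0$-semigroup with growth bound exactly $1-1/\alpha$, and by bounded perturbation so does $L_{\text{ss}}$.

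Next, I would exploit the Duhamel identity
\[
e^{\tau L_{\text{ss}}} = e^{\tau A_0} + \int_0^\tau e^{(\tau-s)L_{\text{ss}}} K\, e^{sA_0}\,ds,
\]
and show that the right-hand integral is compact on $L^2_m$ for every $\tau\geq 0$: the integrand is compact (composition of compact with bounded), and approximation by Riemann sums transfers compactness to the Bochner integral. Consequently, by the classical theorem of Nussbaum (or the Weyl-type theorem for the essential spectral radius), the essential spectral radius of $e^{\tau L_{\text{ss}}}$ agrees with that of $e^{\tau A_0}$, giving $\omega_{\text{ess}}(L_{\text{ss}}) = 1-1/\alpha$. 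Above this value the spectrum of $L_{\text{ss}}$ consists only of isolated eigenvalues of finite algebraic multiplicity, so the standard spectral mapping / spectral decomposition theorem for $C_0$-semigroups yields
\[
\omega(L_{\text{ss}}) = \max\bigl(\omega_{\text{ess}}(L_{\text{ss}}),\, s(L_{\text{ss}})\bigr) = \max\bigl(1-\tfrac{1}{\alpha},\, a_0\bigr) = a_0,
\]
where the last equality uses the hypothesis $a_0 \geq 1 - 1/\alpha$. The growth estimate \eqref{e:growth-bound} is then the quantitative restatement of $\omega(L_{\text{ss}}) = a_0$.

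The main technical obstacle I anticipate is two-fold. First, the compactness of $K$: since $K_2*\Omega$ is only defined through the ad hoc extension of Lemma \ref{L:EXTENSION} (which gives only local $W^{1,2}$ control, without global $L^2$ control of $K_2*\Omega$), compactness of $K$ from $L^2_m$ to $L^2_m$ must be obtained by combining this interior regularity with the rapid decay of $\nabla\bar\Omega$ (which produces the needed tightness at infinity), and then a Rellich--Kondrachov argument inside balls. Second, the invocation of the spectral mapping theorem in this non-analytic, non-compact semigroup setting is delicate and requires precisely the compactness of $e^{\tau L_{\text{ss}}}-e^{\tau A_0}$ established above, rather than compactness of $e^{\tau L_{\text{ss}}}$ itself (which cannot hold since $e^{\tau A_0}$ is invertible).
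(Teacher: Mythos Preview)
Your proposal is correct and follows essentially the same strategy as the paper: decompose $L_{\text{ss}}$ into a transport part plus the compact operator $\beta\mathscr{K}$, observe that the essential growth bound is unchanged by compact perturbations, and compute the growth of the transport semigroup to be $1-\tfrac{1}{\alpha}$. The decomposition $L_{\text{ss}}=A_0+K$ you write down is exactly the paper's $L_{\text{ss}}=(1-\tfrac{1}{\alpha})+S_2+\beta\mathscr{K}$, since $A_0=(1-\tfrac{1}{\alpha})+S_2$ and $K=\beta\mathscr{K}$.

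The only noteworthy difference is in execution. The paper observes that $S_2$ is \emph{skew-adjoint}, so $e^{\tau S_2}$ is unitary and the growth bound of $e^{\tau((1-1/\alpha)+S_2)}$ is immediately $1-\tfrac{1}{\alpha}$; generation then follows from a Hille--Yosida/resolvent bound, and the essential growth bound statement is a direct citation of \cite[Proposition~2.12, Chapter~IV]{EngelNagel}. You instead integrate the characteristic ODE explicitly and compute the $L^2$ norm by the change-of-variables formula, then prove compactness of $e^{\tau L_{\text{ss}}}-e^{\tau A_0}$ by hand via Duhamel before invoking Nussbaum. Your route is more concrete and self-contained; the paper's route is shorter because the skew-adjoint structure makes both the generation and the growth computation one-liners. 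The compactness of $\mathscr{K}$ that you flag as the main technical point is indeed proved carefully in the paper (Proposition~\ref{p:abstract}(ii)) using exactly the ingredients you anticipate: the local $W^{1,2}$ bound from Lemma~\ref{l:extension} combined with the decay $|\nabla\bar\Omega(\xi)|\lesssim|\xi|^{-1-\bar\alpha}$ for tightness, then Rellich on balls.
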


\section{Preliminaries}\label{s:abstract-operators}

In this section we start with some preliminaries which will take advantage of several structural properties of the operators $L_{\text{ss}}$ and $L_{\text{st}}$. First of all we decompose $L_{\text{st}}$ as
\begin{equation}\label{e:decompo_L_st}
L_{\text{st}} = S_1 + \mathscr{K}\, ,
\end{equation}\index{aalKscr@$\mathscr{K}$}\index{aalS1@$S_1$}where
\begin{align}
S_1 (\Omega)&:= - \textrm{div}\, (\bar V \Omega)\label{e:S1}\\
\mathscr{K} (\Omega) &:= - (K_2*\Omega \cdot \nabla) \bar \Omega \label{e:compatto}\, .
\end{align}
Hence we introduce the operator\index{aalS2@$S_2$}
\begin{equation}
S_2 (\Omega) := \textrm{div} \left(\left(\frac{\xi}{\alpha} - \beta \bar V\right) \Omega\right) - \frac{\Omega}{\alpha}\, ,
\end{equation}
so that we can decompose $L_{\text{ss}}$ as 
\begin{equation}
L_{\text{ss}} = \left(1-\frac{1}{\alpha}\right) + S_2 + \beta \mathscr{K}\, .
\end{equation}
The domains of the various operators $A$ involved are always understood as $D_m (A):= \{\Omega : A(\Omega)\in L^2\}$.

Finally, we introduce the real Hilbert spaces $L^2_m (\mathbb R)$ and $U_j (\mathbb R)$ by setting\index{aalL2mR@$L^2_m(\R)$}\index{aalUkR@$U_k(\R)$}
\begin{align}
L^2_m (\mathbb R) &:= \{\textrm{Re}\, \Omega : \Omega \in L^2_m\}\, 
\end{align}
and, for $j>0$ natural,
\begin{equation}
U_j (\mathbb R) :=\{\textrm{Re}\, \Omega: \Omega \in U_j\}\, .
\end{equation}
Observe that while clearly $L^2_m (\mathbb R)$ is a real subspace of $L^2_m$, $U_j (\mathbb R)$ is a real subspace of $U_j \oplus U_{-j}$. 

As it is customary, $L^2_m (\mathbb R)$ and its real vector subspaces are endowed with the inner product\index{Rotationally symmetric function space, inner product}
\begin{equation}
\langle \Omega, \Xi\rangle_{\mathbb R} = \int \Omega\, \Xi\, ,
\end{equation}
while $L^2_m$ and its complex vector subspaces are endowed with the Hermitation product
\begin{equation}
\langle \Omega, \Xi\rangle_{\mathbb C} = \int (\textrm{Re}\, \Omega\, \textrm{Re}\, \Xi +
\textrm{Im}\, \Omega\, \textrm{Im}\, \Xi) + i \int (\textrm{Im}\, \Omega\, \textrm{Re}\, \Xi - \textrm{Re} \, \Omega\, \textrm{Im}\, \Xi)\, .
\end{equation}
We will omit the subscripts from $\langle \cdot, \cdot \rangle$ when the underlying field is clear from the context. The following proposition details the important structural properties of the various operators. A closed unbounded operator $A$ on $L^2_m$ will be called \emph{real} if its restriction $A_{\mathbb R}$ to $L^2_m (\mathbb R)$ is a closed, densely defined operator with domain $D_m (A) \cap L^2_m (\mathbb R)$ such that $A(\Omega)\in L^2_m(\mathbb R)$ for all $\Omega\in D_m(A)\cap L^2_m(\mathbb R)$.

\begin{proposition}\label{p:abstract}
\begin{itemize}
    \item[(i)] The operators $\mathscr{K}$, $S_1$ and $S_2$ are all real operators.
    \item[(ii)] $\mathscr{K}$ is bounded and compact. More precisely there is a sequence of finite dimensional vector spaces $V_n \subset C^\infty_c (\mathbb R^2,\mathbb C)\cap L^2_m$ with the property that, if $P_n$ denotes the orthogonal projection onto $V_n$, then
    \begin{equation}\label{e:explicit-approx}
    \lim_{n\to\infty} \|\mathscr{K} - P_n\circ \mathscr{K}\|_O = 0\, ,
    \end{equation}
    where $\|\cdot\|_O$ denotes the operator norm. 
    \item[(iii)] $S_1$ and $S_2$ are skew-adjoint.
    \item[(iv)] $D_m (L_{\text{st}}) = D_m (S_1)$ and $D_m (L_{\text{ss}}) = D_m (S_2)$.
    \item[(v)] $U_{km}$ is an invariant subspace of $S_1, S_2, \mathscr{K}, L_{\text{st}}, L_{\text{ss}}$.
    \item[(vi)] The restrictions of $S_1$ and $L_{\text{st}}$ to each $U_{km}$ are bounded operators.
\end{itemize}
\end{proposition}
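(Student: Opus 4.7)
\emph{Plan of proof.} I will split the six claims into those that need no information about $\mathscr{K}$ beyond its definition — namely (v), (i), (iii) — and those that require its boundedness, namely (iv) and (vi), treating the compactness claim (ii) in between.

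Item (v) is a direct computation in polar coordinates: writing $\bar V = \zeta(r)x^\perp$ gives $\bar V\cdot\nabla = \zeta(r)\partial_\theta$, $\xi\cdot\nabla = r\partial_r$, and since $\bar\Omega$ is radial and $K_2$ commutes with rotations, each of $S_1$, $S_2$, $\mathscr{K}$ sends a mode $f(r)e^{ikm\theta}\in U_{km}$ back into $U_{km}$, hence so do $L_{\text{st}}$ and $L_{\text{ss}}$. For (i), $\bar\Omega$, $\bar V$, $\xi/\alpha$ are real-valued and the Biot--Savart kernel has real entries, so all three operators commute with complex conjugation; one then verifies that their restrictions to $L^2_m(\mathbb{R})$ remain closed and densely defined by symmetrization of $C^\infty_c$-test functions. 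For (iii), I invoke Stone's theorem: since $\bar V$ is globally Lipschitz, divergence-free, and rotationally equivariant, its flow $\Phi^{(1)}_t$ is complete, volume-preserving, and commutes with $R_{2\pi/m}$, so the pullback $U^{(1)}_t\Omega(x) := \Omega(\Phi^{(1)}_{-t}x)$ defines a strongly continuous unitary group on $L^2_m$ whose generator on the core $C^\infty_c\cap L^2_m$ is $-\bar V\cdot\nabla = S_1$. For $S_2$, the drift $b = \xi/\alpha-\beta\bar V$ is globally Lipschitz with constant divergence $2/\alpha$, and its flow $\Phi^{(2)}_t$ has Jacobian $e^{2t/\alpha}$; the weighted forward pullback $U^{(2)}_t\Omega(x) := e^{t/\alpha}\Omega(\Phi^{(2)}_tx)$ is then unitary on $L^2_m$ (the weight exactly compensates the Jacobian in the change of variables), with generator $\frac{1}{\alpha}+b\cdot\nabla = S_2$.

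For (ii), write $\mathscr{K}(\Omega) = -v\cdot\nabla\bar\Omega$ with $v = K_2*\Omega$. The gradient $\nabla\bar\Omega(x) = g'(|x|)x/|x|$ is bounded near $0$ and decays like $|x|^{-\bar\alpha-1}$ at infinity, while Lemma~\ref{l:extension} provides $\|v\|_{L^2(B_R)}\le CR\|\Omega\|_{L^2}$. A dyadic decomposition $\R^2 = B_1\cup\bigcup_{k\ge 0}(B_{2^{k+1}}\setminus B_{2^k})$ yields
\[
\|v\cdot\nabla\bar\Omega\|_{L^2(B_{2^{k+1}}\setminus B_{2^k})}\le C\,2^{k+1}\cdot 2^{-k(\bar\alpha+1)}\|\Omega\|_{L^2} = C\,2^{-k\bar\alpha}\|\Omega\|_{L^2},
\]
whose $\ell^2$-sum converges since $\bar\alpha>0$, giving $\|\mathscr{K}\Omega\|_{L^2}\le C\|\Omega\|_{L^2}$. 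For compactness, let $\mathscr{K}_R$ denote $\mathscr{K}$ with $\nabla\bar\Omega$ replaced by $\mathbf{1}_{B_R}\nabla\bar\Omega$: the tail estimate above forces $\mathscr{K}_R\to\mathscr{K}$ in operator norm, while on $B_R$ the map $\Omega\mapsto v|_{B_R}\in W^{1,2}(B_R)$ is bounded by Lemma~\ref{l:extension}, hence compact into $L^2(B_R)$ by Rellich, and multiplication by the bounded $\nabla\bar\Omega|_{B_R}$ preserves compactness, so each $\mathscr{K}_R$ is compact. For~\eqref{e:explicit-approx}, pick any increasing sequence $V_n\subset C^\infty_c(\R^2,\mathbb{C})\cap L^2_m$ with $\overline{\bigcup_n V_n} = L^2_m$ (obtained by symmetrizing the standard mollified truncations of a countable dense subset of $L^2$); since $P_n\to I$ strongly and $\mathscr{K}$ is compact, the convergence $P_n\mathscr{K}\to\mathscr{K}$ upgrades automatically to operator norm.

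Items (iv) and (vi) follow once (ii) is in hand. For (iv), since $\mathscr{K}$ is bounded with full domain $L^2_m$, the equivalence $L_{\text{st}}\Omega\in L^2_m\iff S_1\Omega\in L^2_m$ gives $D_m(L_{\text{st}}) = D_m(S_1)$; the same argument applied to $L_{\text{ss}} = (1-1/\alpha)+S_2+\beta\mathscr{K}$ gives $D_m(L_{\text{ss}}) = D_m(S_2)$. For (vi), the polar-coordinate computation of step (v) shows that on $U_{km}$ the action of $S_1$ is $f(r)e^{ikm\theta}\mapsto -ikm\,\zeta(r)f(r)e^{ikm\theta}$; since $\zeta$ is smooth with $\zeta(0) = g(0)/2$ and $|\zeta(r)|\lesssim r^{-\bar\alpha}$ at infinity, multiplication by $\zeta$ is bounded on $L^2(\R^+,r\,dr)$, so $S_1|_{U_{km}}$ is bounded, and adding the bounded $\mathscr{K}$ yields boundedness of $L_{\text{st}}|_{U_{km}}$. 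The main technical obstacle lies in (ii), where one must carefully balance the at-most-linear growth of $v = K_2*\Omega$ against the polynomial decay of $\nabla\bar\Omega$; a secondary subtlety arises in (iii), in identifying the Stone-theorem generator with $S_j$ on the full distributional domain $D_m(S_j)$ rather than only on $C^\infty_c\cap L^2_m$, which requires showing that the latter is a core in the graph norm.
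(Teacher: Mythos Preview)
Your proposal is correct and, for the main technical content (item (ii)), follows essentially the same route as the paper: the dyadic decomposition exploiting $\|v\|_{L^2(B_R)}\lesssim R\|\Omega\|_{L^2}$ against the $|x|^{-\bar\alpha-1}$ decay of $\nabla\bar\Omega$, followed by truncation plus Rellich, is exactly what the paper does. Two points of genuine difference are worth noting.

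First, for the explicit approximation \eqref{e:explicit-approx} you take a \emph{universal} increasing sequence $V_n\subset C^\infty_c\cap L^2_m$ with dense union and invoke the general fact that strong convergence $P_n\to I$ composed with a compact operator upgrades to norm convergence. The paper instead constructs $V_n$ in a $\mathscr{K}$-dependent way: it first picks a finite-rank approximant $L_N$ of $\mathscr{K}$, projects onto its range $W_N$, and only then perturbs an orthonormal basis of $W_N$ into $C^\infty_c$. Your argument is cleaner and yields a single sequence $(V_n)$ that works simultaneously for every compact operator on $L^2_m$; the paper's construction is more hands-on but gives no extra information.

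Second, the paper leaves (i), (iii)--(vi) entirely to the reader, whereas you supply concrete arguments. Your Stone-theorem route for (iii) via the transport flows is a clean way to obtain skew-adjointness (not merely skew-symmetry) in one stroke. The core issue you flag at the end is real but can be sidestepped: once you check that the Stone generator $A$ satisfies $A\subset S_j$ (by testing against $C^\infty_c$) and that $S_j$ is skew-symmetric on its full domain, the maximality of $A=-A^*$ forces $S_j=A$ without ever needing $C^\infty_c\cap L^2_m$ to be a core for $S_j$.
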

\begin{proof} The verification of (i), (iii), (iv), (v), and (vi) are all simple and left therefore to the reader. We thus come to (ii) and prove the compactness of the operator $\mathscr{K}$. Recalling Lemma \ref{l:extension}, for every $\Omega \in L^2_m$ we can write the tempered distribution $\mathscr{K} (\Omega)$ as 
\begin{equation}
\mathscr{K} (\Omega) = \nabla \bar \Omega \cdot V    
\end{equation}
where $V=V(\Omega)$ is a $W^{1,2}_{\text{loc}}$ function with the properties that
\begin{equation}\label{e:stima-W12}
R^{-1} \|V \|_{L^2 (B_R)} + \|DV\|_{L^2 (B_R)} \leq C \|\Omega\|_{L^2} \qquad \forall R \geq 0\, .   \end{equation}
Since $|\nabla \bar \Omega (\xi)| \leq C |\xi|^{-\al -1}$ for $|\xi|\geq 1$, whenever $R\geq 1$ we can estimate
\begin{align*}
\|\mathscr{K} (\Omega)\|^2_{L^2 (B_R^c)} &= \sum_{j=0}^\infty \|\mathscr{K} (\Omega)\|_{L^2 (B_{2^{j+1} R}\setminus B_{2^j R})}^2 
\\&
\leq C R^{-2-2\al} \sum_{j=0}^\infty 2^{-2 (1+\al) j} \|V\|^2_{L^2 (B_{2^{j+1} R})}\\
&\leq C R^{-2-2\al} \sum_{j=0}^\infty 2^{-2(1+\al) j} 2^{2j+2} R^2 \|\Omega\|_{L^2}^2
\leq C R^{-2\al} \|\Omega\|_{L^2}^2\, .
\end{align*}
This shows at the same time that
\begin{itemize}
    \item $\mathscr{K}$ is a bounded operator;
    \item If we introduce the operators
    \begin{equation}
    \Omega \mapsto \mathscr{K}_N (\Omega) := \mathscr{K} (\Omega) \mathbf{1}_{B_N}\, ,    
    \end{equation}
    then $\|\mathscr{K}_N- \mathscr{K}\|_{O}\to 0$.
\end{itemize}
Since the uniform limit of compact operators is a compact operator, it suffices to show that each $\mathscr{K}_N$ is a compact operator. This is however an obvious consequence of \eqref{e:stima-W12} and the compact embedding of $W^{1,2} (B_N)$ into $L^2 (B_N)$.

As for the remainder of the statement (ii), by the classical characterization of compact operators on a Hilbert space, for every $\varepsilon > 0$ there is a finite-rank linear map $L_N$ such that $\|\mathscr{K} - L_N\|_O \leq \frac{\varepsilon}{4}$. If we denote by $W_N$ the image of $L_N$ and by $Q_N$ the orthogonal projection onto it, given that $Q_N \circ L_N = L_N$ we can estimate
\[
\|Q_N \circ \mathscr{K} - \mathscr{K}\|_O \leq \|Q_N \circ \mathscr{K} - Q_N \circ L_N\|_O
+ \|L_N - \mathscr{K}\|_O \leq 2 \|L_N - \mathscr{K}\|_0 \leq \frac{\varepsilon}{2}\, .
\]
Fix next an orthonormal base $\{w_1, \ldots, w_N\}$ of $W_N$ and, using the density of $C^\infty_c (\mathbb R^2)$, approximate each element $w_i$ in the base with $v_i\in C^\infty_c (\mathbb R^2, \mathbb C)$. This can be done for instance convolving $w_i$ with a smooth radial kernel and multiplying by a suitable cut-off function. If the $v_i$'s are taken sufficiently close to $w_i$, the orthogonal projection $P_N$ onto $V_N = \textrm{span}\, (v_1, \ldots , v_N)$ satisfies $\|Q_N-P_N\|_O \leq \frac{\varepsilon}{2\|\mathscr{K}\|_O}$ and thus 
\[
\|\mathscr{K} - P_N \circ \mathscr{K}\|_O \leq \|\mathscr{K} - Q_N \circ \mathscr{K}\|_O + \|P_N-Q_N\|_O \|\mathscr{K}\|_O  \leq \varepsilon\, . \qedhere
\]
\end{proof}

\section{Proof of Theorem \ref{t:group} and proof of Theorem \ref{thm:spectral2}(a)}\label{subsect:Proof-of-spectral2-and-group}

The above structural facts allow us to gain some important consequences as simple corollaries of classical results in spectral theory, which we gather in the next statement. Observe in particular that the statement (a) of Theorem \ref{thm:spectral2} follows from it.

In what follows we take the definition of essential spectrum of an operator as given in \cite{EngelNagel}. We caution the reader that other authors use different definitions; at any rate the main conclusion about the essential spectra of the operators $L_{\text{ss}}$ and $L_{\text{st}}$ in Corollary \ref{c:structural} below depends only upon the property that the essential and discrete spectra are disjoint (which is common to all the different definitions used in the literature). 

\begin{corollary}\label{c:structural}
The essential spectrum of $L_{\text{st}}$ and the essential spectrum of $L_{\text{ss}} - \left(1-\frac{1}{\alpha}\right)$ are contained in the imaginary axis, while the remaining part of the spectrum is contained in the discrete spectrum. In particular, every $z\in \textrm{spec}_m (L_{\text{st}})$ (resp. $z\in \textrm{spec}_m (L_{\text{ss}})$) with nonzero real part (resp. real part different from $1-\frac{1}{\alpha}$) has the following properties.
\begin{itemize}
    \item[(i)] $z$ is isolated in $\textrm{spec}_m (L_{\text{st}})$ (resp. $\textrm{spec}_m (L_{\text{ss}})$);
    \item[(ii)] There is at least one nontrivial $\Omega$ such that $L_{\text{st}} (\Omega) = z \Omega$ (resp. $L_{\text{ss}} (\Omega) = z \Omega$) and if $\textrm{Im}\, (z)=0$, then $\Omega$ can be chosen to be real-valued; 
    \item[(iii)] The Riesz projection $P_z$ has finite rank; 
    \item[(iv)] $\textrm{Im}\, (P_z) = \bigoplus_{k\in \mathbb Z\setminus \{0\}} (\textrm{Im}\, (P_z)\cap U_{km})$ and in particular the set $\textrm{Im}\, (P_z) \cap U_{km}$ is empty for all but a finite number of $k$'s and it is nonempty for at least one $k$.  
\end{itemize}
Moreover, Theorem \ref{t:group} holds. 
\end{corollary}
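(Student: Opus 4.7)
The plan is to reduce everything to the decompositions
\[
L_{\text{st}} = S_1 + \mathscr{K}, \qquad L_{\text{ss}} = \big(1-{\textstyle\frac{1}{\alpha}}\big) I + S_2 + \beta \mathscr{K}
\]
introduced in Section \ref{s:abstract-operators} and to exploit the structural properties of Proposition \ref{p:abstract}. Since $S_1$ and $S_2$ are skew-adjoint, their spectra lie in $i\mathbb{R}$; combined with the compactness of $\mathscr{K}$ and the classical stability of the essential spectrum of a closed, densely defined operator under compact perturbation, this gives that the essential spectra of $L_{\text{st}}$ and of $L_{\text{ss}} - (1-1/\alpha) I$ are contained in $i\mathbb{R}$. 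By the definition of the discrete spectrum adopted in \cite{EngelNagel}, every $z$ in the spectrum that lies off the corresponding critical line is then an isolated point of the discrete spectrum, which gives (i), the existence part of (ii), and (iii) at once. The real-valuedness claim in (ii) when $\textrm{Im}\,z = 0$ follows from Proposition \ref{p:abstract}(i): decomposing a complex eigenfunction at a real eigenvalue into real and imaginary parts produces two real eigenfunctions at the same eigenvalue, and at least one is nontrivial.

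For (iv), Proposition \ref{p:abstract}(v) states that each $U_{km}$ is an invariant subspace of $L$ (where $L$ denotes either $L_{\text{st}}$ or $L_{\text{ss}}$). Consequently the orthogonal projector $\Pi_k$ onto $U_{km}$ commutes with $(w - L)^{-1}$ for every $w$ in the resolvent set, hence with the Riesz projector $P_z = \tfrac{1}{2\pi i}\oint_\gamma (w-L)^{-1}\,dw$. Since $\textrm{Im}(P_z)$ is finite-dimensional by (iii), this yields the orthogonal decomposition
\[
\textrm{Im}(P_z) = \bigoplus_{k \in \mathbb Z} \big(\textrm{Im}(P_z) \cap U_{km}\big),
\]
with only finitely many nontrivial summands, and at least one nontrivial summand since $P_z \neq 0$. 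To exclude $k = 0$, I will verify that on the radial subspace $U_0$ both $\bar V \cdot \nabla$ and $(K_2 * \cdot) \cdot \nabla \bar\Omega$ act as the zero operator: indeed, $\bar V$ is purely tangential by \eqref{e:def-barV} and, by Remark \ref{r:well-defined-2}, $K_2 * \Omega$ is likewise tangential when $\Omega$ is radial, while $\nabla \Omega$ and $\nabla \bar\Omega$ are radial on $U_0$. Hence $L_{\text{st}}|_{U_0} \equiv 0$ and $L_{\text{ss}}|_{U_0} = I + \tfrac{1}{\alpha}\xi \cdot \nabla$. Passing to logarithmic radial coordinates $s = \log |\xi|$ via the isometry $U_0 \ni \phi \mapsto \psi(s) := r\phi(r)|_{r = e^s}$, the latter operator conjugates to $\psi \mapsto (1-1/\alpha)\psi + \tfrac{1}{\alpha}\psi'$ on $L^2(\mathbb R, ds)$, whose spectrum is the vertical line $\{\textrm{Re}\,z = 1 - 1/\alpha\}$. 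Thus no eigenvalue with real part off the critical line can have a nonzero component in $U_0$.

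It remains to deduce Theorem \ref{t:group}. By Stone's theorem, $S_2$ generates a unitary $C_0$-group on $L^2_m$; the bounded-perturbation theorem then yields that $L_{\text{ss}} = S_2 + \big((1-1/\alpha)I + \beta\mathscr{K}\big)$ generates a $C_0$-semigroup $e^{\tau L_{\text{ss}}}$. For the growth bound I invoke two standard facts from the theory of $C_0$-semigroups on Hilbert spaces (see e.g.~\cite{EngelNagel}). First, since $\mathscr{K}$ is compact, the essential growth bound is preserved under the perturbation $L_{\text{ss}} \rightsquigarrow L_{\text{ss}} - \beta\mathscr{K} = (1-1/\alpha)I + S_2$; the latter generates the semigroup $e^{(1-1/\alpha)\tau} e^{\tau S_2}$, which is unitary modulo the scalar, so its essential growth bound equals $1 - 1/\alpha$. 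Hence $\omega_{\mathrm{ess}}(L_{\text{ss}}) = 1-1/\alpha$. Second, the Hilbert-space identity $\omega_0(A) = \max\big(\omega_{\mathrm{ess}}(A), s(A)\big)$, together with $s(L_{\text{ss}}) = a_0$ and the hypothesis $a_0 \geq 1-1/\alpha$, forces $\omega_0(L_{\text{ss}}) = a_0$, which is equivalent to \eqref{e:growth-bound}. The main obstacle I anticipate is the careful deployment of these two abstract semigroup-theoretic statements, since the rest of the argument is purely structural.
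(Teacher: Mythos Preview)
Your proposal is correct and follows essentially the same route as the paper: compact perturbation of a skew-adjoint operator for (i)--(iii), commutation of the Riesz projector with the projections onto the $U_{km}$ for (iv), and the Engel--Nagel identity $\omega_0 = \max(\omega_{\text{ess}}, s)$ together with invariance of $\omega_{\text{ess}}$ under compact perturbations for the growth bound. Two minor remarks: the paper establishes semigroup generation via a direct resolvent bound rather than Stone's theorem plus bounded perturbation (equivalent in this setting), and your explicit treatment of the $k=0$ case in (iv)---computing $L_{\text{ss}}|_{U_0}$ in logarithmic coordinates---actually fills in a detail that the paper leaves implicit.
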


\begin{proof} The points (i)-(iii) are consequence of classical theory, but we present briefly their proofs referring to \cite{Kato}. We also outline another approach based on the analytic Fredholm theorem. Observe that addition of a constant multiple $c$ of the identity only shifts the spectrum (and its properties) by the constant $c$. The statements for $L_{\text{ss}}$ are thus reduced to similar statements for $S_2+\beta \mathscr{K}$. 
Next since the arguments for $L_{\text{st}} = S_1 + \mathscr{K}$ only use the skew-adjointness of $S_1$ and the compactness of $\mathscr{K}$, they apply verbatim to $S_2+\beta\mathscr{K}$. We thus only argue for $L_{\text{st}}$. First of all observe that, since $S_1$ is skew-adjoint, its spectrum is contained in the imaginary axis. In particular, for every $z$ with $\textrm{Re}\, z \neq 0$ the operator $S_1-z$ is invertible and thus Fredholm with Fredholm index $0$. Hence by \cite[Theorem 5.26, Chapter IV]{Kato}, $L_{\text{st}}-z= S_1-z +\mathscr{K}$ is as well Fredholm and has index $0$. By \cite[Theorem 5.31, Chapter IV]{Kato} there is a discrete set $\Sigma \subset \{z: \textrm{Re}\, z\neq 0\}$ with the property that the dimension of the kernel (which equals that of the cokernel) of $L_{\text{st}} -z$  is constant on the open sets $\{\textrm{Re}\, z > 0\}\setminus \Sigma$ and $\{\textrm{Re}\, z<0\}\setminus \Sigma$. Since, for every $z$ such that $|\textrm{Re}\, z|> \|\mathscr{K}\|_O$, we know that $L_{\text{st}}-z$ has a bounded inverse from the Neumann series, the kernel (and cokernel) of $L_{\text{st}}-z$ equals $\{0\}$ on $\{\textrm{Re}\, z \neq 0\}\setminus \Sigma$. From \cite[Theorem 5.28, Chapter IV]{Kato} it then follows that $\Sigma$ is a subset of the discrete spectrum of $L_{\text{st}}$. Obviously the essential spectrum must be contained in the imaginary axis.  

An alernative approach is based on the analytic Fredholm theorem, see \cite[Theorem 7.92]{RenRogBook}. We apply such theorem to the family of compact operators on $L^2_m$ defined by 
$$z \to B(z)= (S_1 -z)^{-1} \circ \mathscr{K},$$ parametrized by 
$z\in \{\Re z>0\}$.
Observe that $(S_1 -z)$ is invertible if $\Re z>0$ and given by the formula 
$$(S_1 -z)^{-1} \Omega = \int_0^\infty e^{-sz} \Omega(\mathbf{X}_s)\, ds,$$
where $\mathbf{X}_s$ is the flow of the vector field $V$. Such formula also implies the existence of the complex derivative in $z$ of the operator. 
The theorem guarantees that either $(I-B(z))^{-1}$ does not exist for any $z\in \{\Re z>0\}$,  or  $(I-B(z))^{-1}$ exist for any $z\in \{\Re z>0\}$, apart from a discrete set. The first case can be discarded in our situation because for $z$ with large real part we know the operator is invertible.

In order to show (iv), denote by $P_k$ the orthogonal projection onto $U_{km}$ and observe that, since $L_{\text{st}} \circ P_k = P_k \circ L_{\text{st}}$, 
\begin{equation}\label{e:commute}
P_z \circ P_k = \frac{1}{2\pi i} \int_\gamma \frac{1}{w-L_{\text{st}}}\circ P_k \, dw 
= \frac{1}{2\pi i} \int P_k \circ \left(\frac{1}{w-L_{\text{st}}}\right)\, dw = P_k \circ P_z\, .
\end{equation}
Writing 
\begin{equation}\label{e:splitting}
P_z = \sum_k P_z\circ P_k
\end{equation}
and observing that the commutation \eqref{e:commute} gives the orthogonality of the images of the $P_z\circ P_k$, since $\textrm{Im}\, (P_z)$ is finite dimensional, we conclude that the sum is finite, i.e. that $P_z\circ P_k =0$ for all but finitely many $k$'s. Moreover, since $P_z^2 = P_z$ and $P_z$ equals the identity on $\textrm{Im}\, (P_z)$, we see immediately that $U_{km} \cap \textrm{Im}\, (P_z) = \textrm{Im}\, (P_z\circ P_k)$.

We now come to the proof of Theorem \ref{t:group}.
We have already shown that, if $\textrm{Re}\, \lambda$ is large enough, then $\lambda$ belongs to the resolvent of $L_{\text{ss}}$, which shows that $a_0 < \infty$. Next, observe that $L_{\text{ss}}$ generates a strongly continuous group if and only if $S_2+\beta \mathscr{K}$ does. On the other hand, using the skew-adjointness of $S_2$, we conclude that, if $\textrm{Re}\, z > \beta \|\mathscr{K}\|_O$, then $z$ is in the resolvent of $S_2+\beta \mathscr{K}$ and 
\[
\|(S_2+\beta \mathscr{K} - z)^{-1}\|_O \leq \frac{1}{\textrm{Re}\, z - \beta \|\mathscr{K}\|_O}\, .
\]
Therefore we can apply \cite[Corollary 3.6, Chapter II]{EngelNagel} to conclude that $S_2+\beta \mathscr{K}$ generates a strongly continuous semigroup. Since the same argument applies to $-S_2-\beta \mathscr{K}$, we actually conclude that indeed the operator generates a strongly continuous group. 

Next we invoke \cite[Corollary 2.11, Chapter IV]{EngelNagel} that characterizes the growth bound $\omega_0 (L_{\text{ss}})$ of the semigroup $e^{tL_{\text{ss}}}$ as 
\[
\omega_0 (L_{\text{ss}}) = \max \{\omega_{\text{ess}} (L_{\text{ss}}), a_0\}\, ,
\]
where $\omega_{\text{ess}}$ is the essential growth bound of \cite[Definition 2.9, Chapter IV]{EngelNagel}. By \cite[Proposition 2.12, Chapter IV]{EngelNagel}, $\omega_{\text{ess}} (L_{\text{ss}})$ equals $\omega_{\text{ess}} (1-\frac{1}{\alpha} + S_2)$ and, since $e^{\tau S_2}$ is a unitary operator, the growth bound of $e^{(1-1/\alpha +S_2)\tau}$ equals $1-\frac{1}{\alpha}$, from which we conclude that $\omega_{\text{ess}} (1-\frac{1}{\alpha} + S_2)\leq 1-\frac{1}{\alpha}$. In particular we infer that if $a_0\geq 1-\frac{1}{\alpha}$, then $\omega_0 (L_{\text{ss}})=a_0$.  
\end{proof}

\section{Proof of Theorem \ref{thm:spectral}: preliminary lemmas}

In this and the next section we will derive Theorem \ref{thm:spectral} from Theorem \ref{thm:spectral2}. It is convenient to introduce the following operator:
\begin{equation}\label{e:Lbeta}
L_\beta : = \frac{1}{\beta} \left(L_{\text{ss}} - \left(1-{\textstyle{\frac{1}{\alpha}}}\right)\right)
= \frac{1}{\beta} S_2 + \mathscr{K} \, .
\end{equation}
In particular
\begin{equation}\label{e:Lbeta-2}
L_\beta (\Omega) = \frac{1}{\beta}\left[\textrm{div}\, \left(\frac{\xi}{\alpha} \Omega\right)- \frac{\Omega}{\alpha}\right] + L_{st} (\Omega)\, .    
\end{equation}
Clearly the spectrum of $L_{\text{ss}}$ can be easily computed from the spectrum of $L_\beta$. The upshot of this section and the next section is that, as $\beta \to \infty$, the spectrum of $L_\beta$ converges to that of $L_{\text{st}}$ in a rather strong sense.

In this section we state two preliminary lemmas. We will use extensively the notation $P_V$ for the orthogonal projection onto some closed subspace $V$ of $L^2_m$.

\begin{lemma}\label{l:two}
Let $H= L^2_m, U_{km}, U_{-km}$, or any closed invariant subspace common to $L_{st}$ and all the $L_\beta$.
For every compact set $K\subset \mathbb C \setminus (i \mathbb R \cup \textrm{spec}_m (L_{\text{st}} \circ P_H))$, there is $\beta_0 (K)$ such that $K\subset \mathbb C \setminus (i\mathbb R \cup \textrm{spec}_m (L_\beta \circ P_H))$ for $\beta \geq \beta_0 (K)$. Moreover, 
\begin{equation}\label{e:op_norm_est}
\sup_{\beta \geq \beta_0 (K)} \sup_{z\in K} \|(L_\beta \circ P_H - z)^{-1}\|_O < \infty\, 
\end{equation}
and 
$(L_\beta \circ P_H -z)^{-1}$ converges strongly to $(L_{\text{st}}\circ P_H -z)^{-1}$ for every $z\in K$, namely,
\begin{equation}\label{e:strong_convergence}
\lim_{\beta\to \infty} \|(L_\beta \circ P_H -z)^{-1} (w) - (L_{\text{st}}\circ P_H -z)^{-1} (w)\| = 0\, \qquad \forall w\in L^2_m\, .
\end{equation}
\end{lemma}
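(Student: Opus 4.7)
The plan is to reduce invertibility of $L_\beta - z$ on $H$ to invertibility of a compact perturbation of the identity, and to exploit the skew-adjointness of $S_2$ together with the compactness of $\mathscr{K}$. Writing $L_\beta = \tfrac{1}{\beta} S_2 + \mathscr{K}$ as in \eqref{e:Lbeta} and using that $\tfrac{1}{\beta} S_2$ is skew-adjoint, for every $z$ with $\textrm{Re}\, z \neq 0$ one has $\|(\tfrac{1}{\beta} S_2 - z)^{-1}\|_O \leq |\textrm{Re}\, z|^{-1}$ together with the factorization
\[
L_\beta - z = \Big(\tfrac{1}{\beta} S_2 - z\Big)\Big[I + \Big(\tfrac{1}{\beta} S_2 - z\Big)^{-1} \mathscr{K}\Big]\, ,
\]
and an analogous identity for $L_{\text{st}}$ with $S_1$ in place of $\tfrac{1}{\beta} S_2$. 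Because $H$ is a common closed invariant subspace of the skew-adjoint operators $S_1$ and $S_2$, $P_H$ commutes with $S_1$, $S_2$, hence also with $\mathscr{K} = L_{\text{st}} - S_1$ and with the resolvents of $\tfrac{1}{\beta} S_2$; the factorizations therefore descend to $H$, and on $H^\perp$ the operator $L_\beta \circ P_H - z$ acts as $-z\,\textrm{Id}$, where there is nothing to prove.

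The first main step is strong resolvent convergence $(\tfrac{1}{\beta} S_2 - z)^{-1} \to (S_1 - z)^{-1}$ on $L^2_m$ as $\beta \to \infty$, for every fixed $z$ with nonzero real part. On the common dense core $C^\infty_c(\R^2) \cap L^2_m$ a direct computation gives
\[
\tfrac{1}{\beta} S_2 - S_1 = \tfrac{1}{\beta}\Big[\textrm{div}\Big(\tfrac{\xi}{\alpha}\,\cdot\,\Big) - \tfrac{1}{\alpha}\Big]\, ,
\]
which tends to $0$ in $L^2$ as $\beta \to \infty$ on any compactly supported smooth function; that $C^\infty_c(\R^2) \cap L^2_m$ is a core for $S_1$ follows from its invariance under the unitary group generated by $S_1$ (the flow of $\bar V$ is rotational, hence preserves compact support). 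Since $S_1$ and $\tfrac{1}{\beta} S_2$ generate unitary groups (Proposition~\ref{p:abstract}(iii)), the classical Trotter--Kato theorem, e.g.~\cite[Chapter III, Theorem 4.8]{EngelNagel}, upgrades this to strong resolvent convergence at every $z$ with $\textrm{Re}\, z \neq 0$.

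The second main step upgrades strong convergence to operator-norm convergence of the composition with $\mathscr{K}$, uniformly in $z \in K$. Compactness of $\mathscr{K}$ sends the unit ball of $L^2_m$ to a precompact set, and pointwise strong convergence of a uniformly bounded family of operators is automatically uniform on precompact sets; this yields $\big\|[(\tfrac{1}{\beta} S_2 - z)^{-1} - (S_1 - z)^{-1}]\,\mathscr{K}\big\|_O \to 0$ at every fixed $z \notin i\R$. Uniformity over the compact set $K$ follows by an $\varepsilon$-net argument: the resolvent identity combined with the uniform bound $|\textrm{Re}\, z|^{-1} \leq C_K$ on $K$ gives an equi-Lipschitz estimate in $z$, independent of $\beta$, so it suffices to verify the convergence at finitely many points of $K$.

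Finally, by the hypothesis $K \cap \textrm{spec}_m(L_{\text{st}} \circ P_H) = \emptyset$, the operator $I + (S_1 - z)^{-1} \mathscr{K}$ is invertible on $H$ for every $z \in K$, and by compactness of $K$ together with the holomorphy of the resolvent of $L_{\text{st}} \circ P_H$ off its spectrum, the family of inverses is uniformly bounded on $K$. The operator-norm approximation from the previous step then makes $I + (\tfrac{1}{\beta} S_2 - z)^{-1} \mathscr{K}$ invertible on $H$ for all $\beta \geq \beta_0(K)$ and all $z \in K$, with uniformly bounded inverses; combined with the factorization, this yields \eqref{e:op_norm_est}, while \eqref{e:strong_convergence} follows by composing the strong convergence of $(\tfrac{1}{\beta} S_2 - z)^{-1}$ from step one with the operator-norm convergence of $[I + (\tfrac{1}{\beta} S_2 - z)^{-1} \mathscr{K}]^{-1}$ on $H$. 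I expect the main obstacle to be the uniform-in-$z$ operator-norm approximation in the second step, which is the crucial bridge converting pointwise strong convergence of skew-adjoint resolvents into a Fredholm-type invertibility statement inherited from $L_{\text{st}}$.
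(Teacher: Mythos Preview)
Your proposal is correct and follows essentially the same architecture as the paper's proof: factor $L_\beta - z$ through $(\tfrac{1}{\beta}S_2 - z)$ and a compact perturbation of the identity, establish strong resolvent convergence $(\tfrac{1}{\beta}S_2 - z)^{-1} \to (S_1 - z)^{-1}$, upgrade to operator-norm convergence after composing with $\mathscr{K}$, and then push invertibility across via Fredholm theory, with uniformity in $z$ coming from the resolvent-identity Lipschitz estimate. The only differences are in the implementation of the first two steps: the paper obtains strong resolvent convergence directly from the Laplace-transform representation $(z-\tfrac{1}{\beta}S_2)^{-1}w = \int_0^\infty e^{-z\tau} e^{\tau\beta^{-1}S_2}w\,d\tau$ and dominated convergence (rather than invoking Trotter--Kato), and it gets the operator-norm convergence of $(\cdot)^{-1}\circ\mathscr{K}$ via the explicit finite-rank approximation $P_N\circ\mathscr{K}$ from Proposition~\ref{p:abstract}(ii) (rather than the abstract fact that strong convergence of a uniformly bounded family is uniform on precompact sets). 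Both routes are standard and interchangeable; your version is slightly more abstract, the paper's slightly more self-contained.
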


\begin{lemma}\label{l:three}
For every $\varepsilon >0$ there is a $R=R (\varepsilon)$ such that
\begin{equation}\label{e:exclude_large_eigenvalue}
\textrm{spec}_m (L_\beta) \cap \{z : |z|\geq R, |\textrm{Re}\, z|\geq \varepsilon\} = \emptyset \qquad
\forall \beta \geq 1\, .
\end{equation}
\end{lemma}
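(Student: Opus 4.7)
The plan is to write $L_\beta = \tilde{S}_\beta + \mathscr{K}$ as skew-adjoint plus compact, reduce the eigenvalue equation by a finite-rank approximation of $\mathscr{K}$, and extract decay in $|z|$ from the resolvent of $\tilde{S}_\beta$ applied to fixed smooth test functions, keeping careful track of the uniformity in $\beta \geq 1$. First I would unfold \eqref{e:Lbeta} into
$$L_\beta = \tilde{S}_\beta + \mathscr{K}, \qquad \tilde{S}_\beta := \frac{1}{\beta}T + S_1, \qquad T\Omega := \frac{1}{\alpha}\Omega + \frac{\xi}{\alpha}\cdot\nabla\Omega,$$
and observe that $T$ and $S_1$ are skew-adjoint on $L^2_m$ (for $T$, by integration by parts using $\textrm{div}(\xi/\alpha) = 2/\alpha$; for $S_1$, by Proposition~\ref{p:abstract}(iii)). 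Hence $\tilde{S}_\beta$ is skew-adjoint and $\|(\tilde{S}_\beta - z)^{-1}\|_O \leq 1/|\textrm{Re}\, z|$ uniformly in $\beta$. Since $\mathscr{K}$ is compact (Proposition~\ref{p:abstract}(ii)), $L_\beta - z$ is a compact perturbation of an invertible operator on $\{|\textrm{Re}\, z|\geq\varepsilon\}$, hence Fredholm of index zero; any $z\in\textrm{spec}_m(L_\beta)$ in that region is therefore an eigenvalue with a unit-norm eigenfunction satisfying
$$\Omega = -(\tilde{S}_\beta - z)^{-1}\mathscr{K}\Omega. \qquad (\star)$$

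Next I would invoke \eqref{e:explicit-approx} to fix $N$ so large that $\|\mathscr{K} - P_N\circ\mathscr{K}\|_O < \varepsilon/4$, with range $V_N \subset C^\infty_c(\mathbb{R}^2,\mathbb{C})$. The error contribution in $(\star)$ has $L^2$-norm at most $(1/\varepsilon)(\varepsilon/4) = 1/4$, so we must have $\|(\tilde{S}_\beta - z)^{-1}P_N\mathscr{K}\Omega\|_{L^2} \geq 3/4$. On the other hand, for each $v$ in a fixed orthonormal basis of $V_N$, the identity
$$(\tilde{S}_\beta - z)^{-1} v = \frac{1}{z}\bigl((\tilde{S}_\beta - z)^{-1}\tilde{S}_\beta v - v\bigr),$$
valid because $v\in C^\infty_c\subset D(\tilde{S}_\beta)$, together with $\|(\tilde{S}_\beta - z)^{-1}\|_O \leq 1/\varepsilon$, gives $\|(\tilde{S}_\beta - z)^{-1} v\|_{L^2} \leq |z|^{-1}\bigl(\|\tilde{S}_\beta v\|_{L^2}/\varepsilon + \|v\|_{L^2}\bigr)$. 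Crucially, for $\beta \geq 1$ one has $\|\tilde{S}_\beta v\|_{L^2} \leq \|T v\|_{L^2} + \|S_1 v\|_{L^2}$, a bound depending only on $v$. Expanding $P_N\mathscr{K}\Omega$ in the basis with coefficients controlled by $\|\mathscr{K}\|_O$ via Cauchy--Schwarz then yields $\|(\tilde{S}_\beta - z)^{-1}P_N\mathscr{K}\Omega\|_{L^2} \leq C(\varepsilon, V_N, \mathscr{K})/|z|$ independently of $\beta$; choosing $R(\varepsilon)$ large enough to make this strictly less than $3/4$ contradicts the lower bound and concludes the proof.

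The hard part, and the reason the lemma is not immediate from Lemma~\ref{l:two}, is that the skew-adjoint part $\tilde{S}_\beta$ itself depends on $\beta$: the bare estimate $\|(\tilde{S}_\beta - z)^{-1}\|_O \leq 1/\varepsilon$ carries no decay in $|z|$, so one cannot directly make $(\tilde{S}_\beta - z)^{-1}\mathscr{K}\Omega$ small as $|z|\to\infty$ by operator-norm considerations alone. The decay must be extracted by commuting $\tilde{S}_\beta$ onto a fixed compactly supported target vector and exploiting $1/\beta \leq 1$ to neutralize the $\beta$-dependent piece $T/\beta$. It is precisely this interplay between the compactness of $\mathscr{K}$ (which reduces the problem to a finite-dimensional family of smooth targets) and the mild $\beta$-dependence of $\tilde{S}_\beta$ on such targets that delivers the uniform-in-$\beta$ conclusion.
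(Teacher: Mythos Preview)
Your proof is correct and follows the same overall strategy as the paper: write $L_\beta = \beta^{-1}S_2 + \mathscr{K}$ with $\beta^{-1}S_2$ skew-adjoint (your $\tilde S_\beta$ is exactly $\beta^{-1}S_2$), use the finite-rank approximation of $\mathscr{K}$ from Proposition~\ref{p:abstract}(ii) to reduce matters to fixed $C^\infty_c$ targets, and then show $\|(\beta^{-1}S_2 - z)^{-1}v\|_{L^2}$ is small for $|z|$ large, uniformly in $\beta\geq 1$.

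The difference lies entirely in how that last estimate is obtained. The paper uses the Laplace representation $(z-\beta^{-1}S_2)^{-1}w = \int_0^\infty e^{-(z-\beta^{-1}S_2)\tau}w\,d\tau$, splits at a large time $T$, and integrates by parts in~$\tau$; the uniformity in $\beta$ then comes from a finite-time bound on $\beta^{-1}S_2\circ e^{\beta^{-1}S_2\tau}w$ for smooth compactly supported $w$, justified via properties of the underlying transport flow. Your route is more direct: the algebraic identity $(\tilde S_\beta - z)^{-1}v = z^{-1}\bigl((\tilde S_\beta - z)^{-1}\tilde S_\beta v - v\bigr)$, valid because $v\in C^\infty_c\subset D(\tilde S_\beta)$, immediately gives the $|z|^{-1}$ decay, and the uniformity in $\beta\geq 1$ follows simply from $\|\tilde S_\beta v\|_{L^2}\leq \|Tv\|_{L^2}+\|S_1 v\|_{L^2}$. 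This bypasses the semigroup machinery and the transport-equation argument entirely, at the cost of relying on the Fredholm reduction to eigenvalues (which the paper also has available via Corollary~\ref{c:structural}).
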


\begin{proof}[Proof of Lemma \ref{l:two}] The proof is analogous for all $H$ and we will thus show it for $H=L^2_m$. Fix first $z$ such that $\textrm{Re}\, z \neq 0$ and recalling that $z- \beta^{-1} S_2$ is invertible, we write
\begin{equation}\label{e:invert_L_beta-z}
z- L_\beta = (z- \beta^{-1} S_2) (1 - (z-\beta^{-1} S_2)^{-1} \circ \mathscr{K})\, .
\end{equation}

\medskip

\textbf{Step 1} The operators $(\beta^{-1} S_2 -z)^{-1}$ enjoy the bound
\begin{equation}\label{e:uniform-bound-inverse-beta}
\|(z-\beta^{-1} S_2)^{-1}\|_O \leq |\textrm{Re}\, z|^{-1}    
\end{equation}
because $\beta^{-1} S_2$ are skew-adjoint. We claim that $(z-\beta^{-1} S_2)^{-1}$ converges strongly to $(z-S_1)^{-1}$ for $\beta \to \infty$. For a family of operators with a uniform bound on the operator norm, it suffices to show strong convergence of $(z-\beta^{-1} S_2)^{-1} w$ for a (strongly) dense subset. 

Without loss of generality we can assume $\textrm{Re}\, z >0$. Recalling that $\beta^{-1} S_2$ generates a strongly continuous unitary semigroup, we can use the formula
\begin{equation}\label{e:exponential_formula}
(z-\beta^{-1} S_2)^{-1} (w) = \int_0^\infty e^{-(z-\beta^{-1} S_2)\tau} (w)\, d\tau\, .
\end{equation}
Next observe that $\|e^{\beta^{-1} S_2\tau}\|_O=1$. Moreover if $w\in \mathscr{S}$, $e^{\beta^{-1}S_2 \tau} w$ is the solution of a transport equation with a locally bounded and smooth coefficient and initial data $w$. We can thus pass into the limit in $\beta\to \infty$ and conclude that $e^{\beta^{-1} S_2 \tau} w$ converges strongly in $L^2$ to $e^{S_1\tau} w$. We can thus use the dominated convergence theorem in \eqref{e:exponential_formula} to conclude that $(z-\beta^{-1} S_2)^{-1} (w)$ converges to $(z-S_1)^{-1} (w)$ strongly in $L^2$. Since $\mathscr{S}$ is strongly dense, this concludes our proof.

\medskip

\textbf{ Step 2} We next show that $(z-\beta^{-1} S_2)^{-1} \circ \mathscr{K}$ converges in the operator norm to 
$(z-S_1)^{-1} \circ \mathscr{K}$. Indeed using Proposition \ref{p:abstract} we can find a sequence of finite rank projections $P_N$ such that $P_N \circ \mathscr{K}$ converges to $\mathscr{K}$ in operator norm. From Step 1 it suffices to show that $(z-\beta^{-1} S_2)^{-1} \circ P_N \circ \mathscr{K}$ converges to $(z-S_1)^{-1} \circ P_N \circ \mathscr{K}$ in operator norm for each $N$. But clearly $(z-\beta^{-1} S_2)^{-1} \circ P_N$ is a finite rank operator and for finite rank operators the norm convergence is equivalent to strong convergence. The latter has been proved in Step 1.

\medskip

\textbf{ Step 3} Fix $z$ which is outside the spectrum of $L_{st}$. Because of Step 2 we conclude that
\[
(1- (z-\beta^{-1} S_2)^{-1} \circ \mathscr{K}) \to (1- (z-S_1)^{-1}\circ \mathscr{K})
\]
in the operator norm. Observe that $1-(z-S_1)^{-1}\circ \mathscr{K}$ is a compact perturbation of the identity. As such it is a Fredholm operator with index $0$ and thus invertible if and only if its kernel is trivial. Its kernel is given by $w$ which satisfy 
\[
 z w  - S_1 (w)- \mathscr{K} (w) = 0\, ,
\]
i.e. it is the kernel of $z-(S_1+\mathscr{K}) = z- L_{\text{st}}$, which we have assumed to be trivial since $z$ is not in the spectrum of $L_{\text{st}}$. Thus $(1-(z-S_1)^{-1} \circ \mathscr{K})$ is invertible and hence, because of the operator convergence, so is $(1-(z-\beta^{-1} S_2)^{-1}\circ \mathscr{K})$ for any sufficiently large $\beta$. Hence, by \eqref{e:invert_L_beta-z} so is $z-L_\beta$.

\medskip

\textbf{ Step 4} The inverse $(z- L_\beta)^{-1}$ is given explicitly by the formula
\begin{equation}\label{e:inversion_formula}
(z-L_\beta)^{-1} = (1-(z-\beta^{-1} S_2)^{-1} \circ \mathscr{K})^{-1} (z-\beta^{-1} S_2)^{-1} \, .
\end{equation}
Since $1-(z-S_2)^{-1} \circ \mathscr{K}$ converges to $1- (z-S_1)^{-1} \circ \mathscr{K}$ in the operator norm, their inverses converge as well in the operator norm. Since the composition of strongly convergent operators with norm convergent operators is strongly convergent, we conclude that $(z-L_\beta)^{-1}$ converges strongly to the operator 
\[
(1- (z-S_1)^{-1} \circ \mathscr{K})^{-1}  (z-S_1)^{-1} = (z-L_{\text{st}})^{-1} \, .
\]

\medskip

\textbf{ Step 5} Choose now a compact set $K\subset \mathbb C \setminus (i \mathbb R \cup \textrm{spec}_m (L_{\text{st}}))$. 
Recall first that 
\[
K \ni z \mapsto (z-S_1)^{-1} 
\]
is continuous in the operator norm. Thus $K\ni z \mapsto (1- (z-S_1)^{-1} \circ \mathscr{K})$ is continuous in the operator norm. We have already proved in Step 3 that such operators converge, pointwise for $z$ fixed, for $\beta \to +\infty$, and one can prove with the same argument the continuity in $\beta$ for $z$ fixed.

 We claim that $K\times [0,1]\ni (z, \beta) \mapsto (1-(z-\beta^{-1} S_2)^{-1} \circ \mathscr{K})$ is also continuous in the operator norm and in order to show this we will prove the uniform continuity in $z$ once we fix $\beta$, with an estimate which is independent of $\beta$. We first write 
\begin{align*}
& \|(1-(z - \beta^{-1} S_2)^{-1}\circ \mathcal{K}) - (1- (z'- \beta^{-1} S_2)^{-1} \circ \mathcal{K})\|_O\\
\leq & \|(z - \beta^{-1} S_2)^{-1} - (z'-\beta^{-1} S_2)^{-1}\|_O \|\mathcal{K}\|_O \, .
\end{align*}
Hence we compute
\begin{equation*}
    \begin{split}
        (z - &\beta^{-1} S_2)^{-1} - (z'-\beta^{-1} S_2)^{-1} 
\\&= (z - \beta^{-1} S_2)^{-1} \circ ((z' - \beta^{-1} S_2)-(z-\beta^{-1} S_2)) \circ (z'-\beta^{-1} S_2)^{-1}
    \end{split}
\end{equation*}
and use \eqref{e:uniform-bound-inverse-beta} to estimate
\begin{align*}
    \|(z - \beta^{-1} S_2)^{-1}& - (z'-\beta^{-1} S_2)^{-1}\|_O 
    \\&\leq |z-z'| \|(z - \beta^{-1} S_2)^{-1}\|_O \|(z'-\beta^{-1} S_2)^{-1}\|_O
\\&\leq \frac{|z-z'|}{| \textrm{Re}\, z| |\textrm{Re}\, z'|}\, .
\end{align*}
Since the space of invertible operators is open in the norm topology, this implies the existence of a $\beta_0>0$ such that $K\times [\beta_0, \infty] \ni (z, \beta) \mapsto (1-(z-\beta^{-1} S_2)^{-1}\circ \mathscr{K})^{-1}$ is well defined and continuous. Thus, for $\beta \geq \\beta_0$ we conclude that $1-(z-\beta^{-1} S_2)^{-1}\circ \mathscr{K}$ is invertible and the norm of its inverse is bounded by a constant $C$ independent of $\beta$ and $z\in K$. By \eqref{e:inversion_formula} and \eqref{e:uniform-bound-inverse-beta}, we infer that in the same range of $z$ and $\beta$ the norm of the operators $(z-L_\beta)^{-1}$ enjoy a uniform bound.
\end{proof}

\begin{proof}[Proof of Lemma \ref{l:three}]
We show \eqref{e:exclude_large_eigenvalue} for $\textrm{Re}\, z \geq \varepsilon$ replacing $|\textrm{Re}\, z|\geq \varepsilon$, as the argument for the complex lower half-plane is entirely analogous. 

Using \eqref{e:invert_L_beta-z}, we wish to show that there is $R = R (\varepsilon)$ such that the operator
\[
1 - (z-\beta^{-1} S_2)^{-1} \circ \mathscr{K}
\]
is invertible for all $\beta \geq 1$ and $z$ such that $|z|\geq R$ and $\textrm{Re}\, z \geq \varepsilon$. 
This will follow after showing that, for $\beta$ and $z$ in the same range
\begin{equation}\label{e:small}
\|(z-\beta^{-1} S_2)^{-1} \circ \mathscr{K}\|_O \leq \frac{1}{2}\, . 
\end{equation}
By \eqref{e:uniform-bound-inverse-beta}, we can use Proposition \ref{p:abstract} to reduce \eqref{e:small} to the claim
\begin{equation}\label{e:small-2}
\|(z-\beta^{-1} S_2)^{-1} \circ P_V \circ \mathscr{K}\|_O \leq \frac{1}{4}\, ,
\end{equation}
where $P_V$ is the projection onto an appropriately chosen finite-dimensional space $V\subset C^\infty_c$. If $N$ is the dimension of the space and $w_1, \ldots , w_N$ an orthonormal base, it suffices to show that
\begin{equation}\label{e:small-3}
\|(z-\beta^{-1} S_2)^{-1} (w_i)\|_{L^2} \leq \frac{1}{4N} \qquad \forall i\, .    
\end{equation}
We argue for one of them and set $w=w_i$. The goal is thus to show
\eqref{e:small-3} provided $|z|\geq R$ for some large enough $R$. We use again \eqref{e:exponential_formula} and write 
\[
(z-\beta^{-1} S_2)^{-1} (w) = \underbrace{\int_0^T e^{-(z-\beta^{-1} S_2) \tau} (w)\, d\tau}_{=:(A)} + \underbrace{\int_T^\infty e^{-(z-\beta^{-1} S_2) \tau} (w)\, d\tau}_{=:(B)}\, .
\]
We first observe that
\[
\|(B)\| \leq \int_T^\infty e^{-\varepsilon \tau}\, d\tau \leq \frac{e^{-\varepsilon T}}{\varepsilon}\, .
\]
Thus, choosing $T$ sufficiently large we achieve $\|(B)\| \leq \frac{1}{8N}$. 
Having fixed $T$ we integrate by parts in the integral defining (A) to get
\begin{align*}
(A)   
= \underbrace{\frac{w - e^{- (z-\beta^{-1} S_2) T} (w)}{z}}_{=: (A1)} 
+ \underbrace{\frac{1}{z} \int_0^T e^{-z} \beta^{-1} S_2 \circ e^{\beta^{-1} S_2 \tau} (w)\, d\tau}_{=: (A2)}\, .
\end{align*}
First of all we can bound
\[
\|(A1)\| \leq \frac{1+ e^{-T\varepsilon}}{|z|}\leq \frac{2}{R}\, .
\]
As for the second term, observe that $[0,T]\ni \tau \mapsto e^{\beta^{-1} S_2 \tau} (w)$ is the solution of a transport equation with smooth coefficients and smooth and compactly supported initial data, considered over a finite interval of time. Hence the support of the solution is compact and the solution is smooth. Moreover, the operators $\beta^{-1} S_2$ are first-order differential operators with coefficients which are smooth and whose derivatives are all bounded. In particular 
\[
\max_{\tau\in [0,T]} \|\beta^{-1} S_2 \circ e^{\beta^{-1} S_2 \tau} (w)\| \leq C
\]
for a constant $C$ depending on $w$ and $T$ but not on $\beta$, in particular we can estimate
\[
\|(A2)\|\leq \frac{C (T)}{R}
\]
Since the choice of $T$ has already been given, we can now choose $R$ large enough to conclude $\|(A)\|\leq \frac{1}{8N}$ as desired.
\end{proof}

\section{Proof of Theorem \ref{thm:spectral}: conclusion}

First of all observe that $z\in \textrm{spec}_m (L_\beta)$ if and only if $\beta z + 1-\frac{1}{\alpha}\in \textrm{spec}_m (L_{\text{ss}})$. Thus, in order to prove Theorem \ref{thm:spectral} it suffices to find $\beta_0$ and $c_0$ positive such that:
\begin{itemize}
\item[(P)] If $\beta \geq \beta_0$, then $\textrm{spec}_m (L_\beta)$ contains an element $z$ with $\textrm{Re}\, z \geq c_0$ such that $\textrm{Re}\, z = \max \{\textrm{Re}\, w : w\in \textrm{spec}_m\, (L_\beta)\}$.
\end{itemize}
Observe indeed that using the fact that the $U_{km}$ are invariant subspaces of $L_{\text{ss}}$, $\beta z + 1-\frac{1}{\alpha}$ have an eigenfunction $\vartheta$ which belongs to one of them, and we can assume that $k\geq 1$ by possibly passing to the complex conjugate $\bar z$. If $z$ is not real, we then set $\eta=\vartheta$ and the latter has the properties claimed in Theorem \ref{thm:spectral}. If $z$ is real it then follows that the real and imaginary part of $\vartheta$ are both eigenfunctions and upon multiplying by $i$ we can assume that the real part of $\vartheta$ is nonzero. We can thus set $\eta= \textrm{Re}\, \vartheta$ as the eigenfunction of Theorem \ref{thm:spectral}. $\eta$ then satisfies the requirements (i)-(iv) of the theorem. In order to complete the proof we however also need to show the estimates in (v). Those and several other properties of $\eta$ will be concluded from the eigenvalue equation that it satisfies and are addressed later in Lemma \ref{l:pointwise}.

We will split the proof in two parts, namely, we will show separately that
\begin{itemize}
    \item[(P1)] There are $\beta_1, c_0 >0$ such that  $\textrm{spec}_m (L_\beta)\cap \{\textrm{Re}\, z \geq c_0\}\neq \emptyset$ for all $\beta \geq \beta_1$.
    \item[(P2)] If $\beta \geq \beta_0:= \max \{\beta_1, 1\}$, then  $\sup \{\textrm{Re}\, w : w\in \textrm{spec}\, (L_\beta)\}$ is attained.
\end{itemize}

\medskip

\textbf{ Proof of (P1).} We fix $z\in \textrm{spec}\, (L_{\text{st}})$ with positive real part and we set $2c_0:= \textrm{Re}\, z$. We then fix a contour $\gamma \subset B_\varepsilon (z)$ which:
\begin{itemize}
\item it is a simple smooth curve;
\item it encloses $z$ and no other portion of the spectrum of $L_{\text{st}}$;
\item it does not intersect the spectrum of $L_{\text{st}}$;
\item it is contained in $\{w: \textrm{Re}\, w \geq c_0\}$. 
\end{itemize}
By the Riesz formula we know that
\[
P_z = \frac{1}{2\pi i} \int_\gamma (w-L_{\text{st}})^{-1}\, dw
\]
is a projection onto a subspace which contains all eigenfunctions of $L_{\text{st}}$ relative to the eigevanlue $z$. In particular this projection is nontrivial. By Lemma \ref{l:two} for all sufficiently large $\beta$ the curve $\gamma$ is not contained in the spectrum of $L_\beta$ and we can thus define
\[
P_{z,\beta} = \frac{1}{2\pi i} \int_\gamma (w-L_\beta)^{-1}\, dw\, .
\]
If $\gamma$ does not enclose any element of the spectrum of $L_\beta$, then $P_{z, \beta} = 0$. On the other hand, by Lemma \ref{l:two} and the dominated convergence theorem,
\[
P_{z,\beta} (u) \to P_z (u)
\]
strongly for every $u$. That is, the operators $P_{z,\beta}$ converge strongly to the operator $P_z$. If for a sequence $\beta_k\to \infty$ the operators $P_{z,\beta_k}$ where trivial, then $P_z$ would be trivial too. Since this is excluded, we conclude that the curve $\gamma$ encloses some elements of the spectrum of $L_\beta$ for all $\beta$ large enough. Each such element has real part not smaller than $c_0$.

\medskip

\textbf{ Proof of (P2).} Set $\varepsilon := c_0$ and apply Lemma \ref{l:three} to find $R>0$ such that
$\textrm{spec}_m (L_\beta)\setminus \overline{B}_R$ is contained in $\{w: \textrm{Re}\, w < c_0\}$. In particular, if $\beta \geq \max\{\beta_1, 1\}$, then the eigenvalue $z$ found in the previous step belongs to $\overline{B}_R$ and thus
\[
\sup\, \{\textrm{Re}\, w : w\in \textrm{spec}\, (L_\beta)\} =
\sup\, \{w: \textrm{Re}\, w \geq c_0, |w|\leq R\}\cap \textrm{spec}\, (L_\beta) \, .
\]
However, since $\textrm{spec}\, (L_\beta)\cap \{w: \textrm{Re}\, w \neq 0\}$ belongs to the discrete spectrum, the set $\{w: \textrm{Re}\, w \geq c_0, |w|\leq R\}\cap \textrm{spec}\, (L_\beta)$ is finite. 

\chapter{Linear theory: Part II}
\label{chapter:linearpartii}\label{sect:Proof-spectral-2}

This chapter is devoted to proving Theorem \ref{thm:spectral2}. Because of the discussions in the previous chapter, considering the decomposition 
\[
L^2_m = \bigoplus_{k\in \mathbb Z} U_{km}\, ,
\]
the statement of Theorem \ref{thm:spectral2} can be reduced to the study of the spectra of the restrictions $L_{\text{st}}|_{U_{km}}$ of the operator $L_{\text{st}}$ to the invariant subspaces $U_{km}$. \index{spectrum@spectrum}\index{aalspec()@$\textrm{spec}_m (\cdot, U_j)$}
For this reason we introduce the notation $\textrm{spec}\, (L_{\text{st}}, U_j)$ for the spectrum of the operator $L_{\text{st}}|_{U_j}$, understood as an operator from $U_j$ to $U_j$. The following is a very simple observation. 

\begin{lemma}
The restriction of the operator $L_{\text{st}}$ to the radial functions $U_0$ is identically $0$. Moreover, $z\in \textrm{spec}\, (L_{\text{st}}, U_j)$ if and only if $\bar z \in \textrm{spec}\, (L_{\text{st}}, U_{-j})$. 
\end{lemma}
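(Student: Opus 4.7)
The argument splits naturally into the two claims. For the first assertion, the key observation is that the background velocity $\bar V(x)=\zeta(|x|)x^\perp$ and the Biot–Savart image of any radial vorticity are both purely tangential, so they kill radial gradients. More precisely, I will first note that $\operatorname{div}\bar V=0$ by direct computation (the two partial derivatives cancel), so that $-\operatorname{div}(\bar V\Omega)=-\bar V\cdot\nabla\Omega$. Then, for $\Omega=f(r)\in U_0$, one has $\nabla\Omega=f'(r)x/r$, which is parallel to $x$; since $\bar V$ is parallel to $x^\perp$, the inner product vanishes. For the compact term $\mathscr K(\Omega)=-(K_2\ast\Omega\cdot\nabla)\bar\Omega$, I use that $\bar\Omega=g(|x|)$ is radial, so $\nabla\bar\Omega=g'(r)x/r$ is again radial. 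Moreover, for radial $\Omega\in U_0$, the induced velocity $K_2\ast\Omega$ is of the form $h(|x|)x^\perp$ (this is the standard Biot–Savart formula for radial vorticity, which also identifies $h$ via the analogue of \eqref{e:def-zeta}, and in particular is consistent with Lemma~\ref{l:extension} applied to the $m=1$, $U_0$ case — here no symmetry is really needed since the integral is absolutely convergent for $\Omega\in L^2_{\mathrm{loc}}\cap L^1$, but for the purpose of the proof one may simply approximate by compactly supported radial functions). Thus $K_2\ast\Omega\cdot\nabla\bar\Omega=0$ as well, and $L_{\text{st}}(\Omega)=0$ on $U_0$.

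For the second assertion, I will exploit the reality of $L_{\text{st}}$ established in Proposition~\ref{p:abstract}(i). Introduce the complex conjugation map $C\colon L^2_m\to L^2_m$, $C(\Omega)=\overline{\Omega}$, which is an antilinear isometric involution. A direct inspection of the Fourier decomposition $\Omega=\sum_k a_k(r)e^{ik\theta}$ shows that $C$ maps $U_j$ bijectively onto $U_{-j}$, and in particular $C$ preserves the domain $D_m(L_{\text{st}})$. Since $L_{\text{st}}$ is a real operator, it commutes with $C$: $C\circ L_{\text{st}}=L_{\text{st}}\circ C$ on $D_m(L_{\text{st}})$. Combined with the antilinearity identity $C\circ (zI)\circ C^{-1}=\bar z\,I$, this gives
\begin{equation*}
C\circ (L_{\text{st}}-z)\circ C^{-1}=L_{\text{st}}-\bar z.
\end{equation*}

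Therefore $L_{\text{st}}-z\colon U_j\to U_j$ is a bijection with bounded inverse if and only if $L_{\text{st}}-\bar z\colon U_{-j}\to U_{-j}$ is, which is exactly the equivalence $z\in\textrm{spec}\,(L_{\text{st}},U_j)\iff\bar z\in\textrm{spec}\,(L_{\text{st}},U_{-j})$. The only non-routine point in the proof is making sure that the Biot–Savart formula for radial vorticity really produces a tangential velocity in the distributional sense used in Lemma~\ref{l:extension}; this is essentially automatic because radial $L^2_{\mathrm{loc}}$ data fall inside the framework of Remark~\ref{r:well-defined-2}, and both terms in $L_{\text{st}}$ can be checked on the dense subset of smooth compactly supported radial functions before extending by continuity.
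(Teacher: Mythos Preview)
Your proof is correct. The paper actually leaves this lemma unproved, calling it ``a very simple observation,'' so there is nothing to compare against; your argument is precisely the natural one the authors had in mind. One small clarification: when you invoke Lemma~\ref{l:extension} for radial $\Omega$, you should note that $U_0\subset L^2_m$ for the ambient $m\ge 2$ (radial functions are trivially $m$-fold symmetric), so the lemma applies directly without needing to appeal to the $m=1$ case or to $L^1$ integrability; the resulting velocity is then $\nabla^\perp h$ for a radial stream function $h$, hence tangential, as you use.
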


We will then focus on proving the following statement, which is slightly stronger than what we need to infer Theorem \ref{thm:spectral2}.

\begin{theorem}\label{thm:spectral3}
For a suitable choice of $\bar \Omega$, there is an integer $m_0\geq 2$ such that
$\textrm{spec}\, (L_{\text{st}}, U_{m_0}) \cap \{\textrm{Re}\, z > 0\}$ is nonempty and $\textrm{spec}\, (L_{\text{st}}, U_{m_0})\cap \{\textrm{Re}\, z \geq \bar a\}$ is finite for every positive $\bar a$.
\end{theorem}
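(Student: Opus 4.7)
The plan is to reduce the eigenvalue problem on each invariant subspace $U_m$ to Rayleigh's stability equation, then to design the profile $\bar\Omega$ so that for some $m_0 \geq 2$ the resulting ODE admits an unstable mode; the finiteness part is a soft consequence of Theorem \ref{thm:spectral2}(a), already established in Section \ref{subsect:Proof-of-spectral2-and-group}. Concretely, on $U_m$ an element $\Omega = f(r)e^{im\theta}$ is paired via $\Delta\Psi = \Omega$ with a stream function $\Psi = \phi_m(r)e^{im\theta}$. Passing to the logarithmic radial variable $s = \log r$ and setting $\phi(s) = \phi_m(e^s)$, $\Xi(s) = \zeta(e^s)$, $A(s) = e^s g'(e^s)$, the eigenvalue equation $L_{\text{st}}\Omega = -imc\,\Omega$ becomes Rayleigh's equation
\begin{equation*}
(\Xi(s) - c)\bigl(\phi''(s) - m^2 \phi(s)\bigr) - A(s)\phi(s) = 0,
\end{equation*}
with decay conditions at $s = \pm\infty$ corresponding to $f\in L^2(r\,dr)$; thus $\textrm{spec}\,(L_{\text{st}}, U_m)\cap\{\textrm{Re}\, z > 0\}$ is nonempty if and only if a nontrivial decaying solution of Rayleigh's equation exists for some $c$ with $\textrm{Im}\, c > 0$.

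Following Tollmien and Z.~Lin, as adapted to $\R^2$ vortices by Vishik \cite{Vishik2}, I would construct a neutral mode and then bifurcate it into the upper half-plane. Choose a radius $r_* = e^{s_*}$ at which $A(s_*) = 0$ (Rayleigh's inflection-point condition, which guarantees that the potential $A/(\Xi - c_*)$ extends continuously across the critical layer when $c_* = \Xi(s_*)$), and design $g$ so that the associated Schr\"odinger-type operator
\begin{equation*}
\mathcal{L}_{m_0} := -\frac{d^2}{ds^2} + m_0^2 + \frac{A(s)}{\Xi(s) - c_*}
\end{equation*}
has a nontrivial $L^2(\R)$ kernel for some integer $m_0 \geq 2$. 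This can be arranged by taking the profile sufficiently concentrated, so that the effective potential well supports at least one bound state, in the spirit of Sturm's oscillation theory. Having produced a neutral mode at $c_*$, apply Tollmien's expansion: resolve the critical-layer singularity by a contour deformation in complex $c$, compute the leading-order dispersion relation (with its characteristic logarithmic term), and use a strict sign condition involving $A''(s_*)/\Xi'(s_*)$ to continue the branch into $\{\textrm{Im}\, c > 0\}$. This yields an unstable eigenvalue $z_0 = -im_0 c_0$ of $L_{\text{st}}|_{U_{m_0}}$.

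For the finiteness statement, Proposition \ref{p:abstract}(vi) says that $L_{\text{st}}|_{U_{m_0}}$ is bounded, so its spectrum is compact, and by Theorem \ref{thm:spectral2}(a) every spectral point off the imaginary axis is an isolated eigenvalue of finite algebraic multiplicity with possible accumulation only on $i\R$; since $\{\textrm{Re}\, z \geq \bar a\}$ is separated from $i\R$, only finitely many such eigenvalues lie inside. I expect the main obstacle to be the simultaneous realization, by a single smooth profile $g$ with the decay \eqref{e:decay-at-infinity}, of (i) an inflection point producing a genuine $L^2$ neutral mode, (ii) a Fuchsian regularization of the critical layer sufficient to define the Tollmien dispersion function on both sides of the real axis, and (iii) a strict sign condition ensuring the neutral branch bifurcates into the unstable half-plane. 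Balancing these three constraints on an explicit family of vortices---which is essentially what occupies \cite{Vishik2}---is the technical heart of the linear analysis.
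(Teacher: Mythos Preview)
Your reduction to Rayleigh's equation, the finiteness argument via boundedness of $L_{\text{st}}|_{U_{m_0}}$ and Corollary~\ref{c:structural}, and the overall Tollmien strategy (neutral mode plus bifurcation) all match the paper. There is, however, a genuine gap in how you propose to land on an \emph{integer} wavenumber.

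You want to tune $g$ so that the Schr\"odinger operator $-\frac{d^2}{ds^2}+m_0^2+\frac{A}{\Xi-c_*}$ has a nontrivial $L^2$ kernel at an integer $m_0$, i.e.\ so that the neutral wavenumber $m_a=\sqrt{\lambda_a}$ \emph{equals} $m_0$, and then bifurcate in $c$ at fixed $m_0$. But the Tollmien dispersion relation (cf.\ Lemma~\ref{l:will-apply-Rouche}) reads
\[
H(h,z)=1-2m_a h + c(a)(z-z_0)+o(|h|+|z-z_0|),
\]
so at $h=0$ (i.e.\ at $m=m_a$) the only nearby root is $z=z_0=\Xi(a)\in\mathbb R$: there is \emph{no} unstable eigenvalue at the neutral wavenumber itself. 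The branch of unstable modes lives at $m=m_a-h$ with $h>0$ strictly, and $\textrm{Im}\,z\sim h$; this is exactly the content of Proposition~\ref{p:5-7} and the shape of Figure~\ref{fig:figeval}. Thus ``bifurcate in $c$ at fixed integer $m_0=m_a$'' does not produce instability.

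The paper's resolution is to treat $m$ as a \emph{continuous} real parameter throughout: first bifurcate to obtain $\mathscr{U}_{m_a-h}\neq\emptyset$ for small $h>0$ (Proposition~\ref{p:5-7}); then run a global continuation argument---the set $\{m>1:\mathscr{U}_m\neq\emptyset\}$ is open and closed in $]1,\infty[\setminus\{m_a,m_b\}$ (Lemma~\ref{l:almost-final-2})---to fill the whole interval $]m_b,m_a[$ (Proposition~\ref{p:almost-final}). Only \emph{after} this does one choose the profile: by Lemma~\ref{l:bottom} one can make $\lambda_a$ arbitrarily large, and an intermediate-value argument along a one-parameter family $\Xi_\sigma$ (Proposition~\ref{p:final}) then places an integer $m_0\geq2$ strictly inside $]m_b,m_a[$. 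Note also that this continuation step needs a classification of \emph{all} neutral limiting modes (Proposition~\ref{p:3+4}), which is why the paper works with profiles having exactly \emph{two} zeros of $A$ (the class $\mathscr{C}$), not a single inflection point.
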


\begin{remark}\label{r:better2}\label{R:BETTER2} As it is the case for Theorem \ref{thm:spectral2} we can deepen our analysis and prove the following stronger statement:
\begin{itemize}
    \item[(i)] For a suitable choice of $m_0$, in addition to the conclusion of Theorem \ref{thm:spectral3} we have $\textrm{spec}\, (L_{\text{st}}, U_m) \subset i \mathbb R$ for every $m> m_0$.
\end{itemize}
This will be done in Appendix \ref{a:better}, where we will also show how conclusion (c) of Remark \ref{r:better} follows from it.
\end{remark}

Note that in \cite{Vishik2} Vishik claims the following stronger statement.

\begin{theorem}\label{thm:spectral-stronger-2}\label{THM:SPECTRAL-STRONGER-2}
For a suitable choice of $m_0$, in addition to the conclusion of Theorem \ref{thm:spectral3} and to Remark \ref{r:better2}(i), we have also
\begin{itemize}
    \item[(ii)] $\textrm{spec}\, (L_{\text{st}}, U_{m_0}) \cap \{\textrm{Re}\, z > 0\}$ consists of a single eigenvalue with algebraic multiplicity $1$.\index{algebraic multiplicity@algebraic multiplicity}
\end{itemize}
\end{theorem}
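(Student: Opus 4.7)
The plan is to reduce the spectral problem on the invariant subspace $U_{m_0}$ to a scalar second-order ODE and then analyze its unstable eigenvalues via an Evans-function count. First I would write any candidate eigenfunction as $\Omega = f(r)e^{im_0 \theta} \in U_{m_0}$, introduce the associated stream function $\psi = e^{im_0\theta}\psi_{m_0}(r)$ with $\Delta \psi = \Omega$, and pass to the logarithmic radial coordinate $s = \log r$, $\phi(s) = \psi_{m_0}(e^s)$. A direct computation (the same reduction that leads to \eqref{eq:theeigenvaluequation}) turns $L_{\text{st}}(\Omega) = z \Omega$ into Rayleigh's stability equation
\begin{equation*}
(\Xi(s) - \lambda)(\phi'' - m_0^2 \phi) - A(s)\, \phi = 0,
\end{equation*}
where $\lambda = iz/m_0$, $\Xi(s) = \zeta(e^s)$, and $A(s) = e^{s} g'(e^s)$, together with decay at $s \to \pm\infty$. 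On the upper half-plane $\mathrm{Im}\,\lambda > 0$ the coefficient $\Xi - \lambda$ is nowhere zero, so the equation becomes a Schr\"odinger-type problem with holomorphic dependence on $\lambda$.

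I would then build an Evans function $E_{m_0}(\lambda)$ by taking Jost-type solutions $\phi_\pm(s,\lambda)$ normalized to decay at $\pm\infty$ (possible because the limiting equation at $s = \pm\infty$ reduces to $\phi'' = m_0^2 \phi$, whose decaying modes are $e^{\mp m_0 s}$) and setting $E_{m_0}(\lambda)$ equal to their Wronskian. This is a function holomorphic in $\{\mathrm{Im}\,\lambda > 0\}$ whose zeros correspond exactly to eigenvalues of $L_{\text{st}}|_{U_{m_0}}$ with $\mathrm{Re}\, z > 0$. A parallel resolvent computation — writing the Riesz projector of $L_{\text{st}}$ as a contour integral and expressing $(L_{\text{st}} - z)^{-1}$ via variation of parameters with $\phi_\pm$ — lets one identify the algebraic multiplicity of such an eigenvalue with the order of vanishing of $E_{m_0}$ at the corresponding $\lambda$.

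The crux is then a counting argument combined with a bifurcation analysis. By Lemma \ref{l:three} (applied to $L_\beta$ at $\beta = 1$, i.e.\ to $L_{\text{st}}$ after a trivial rescaling) the unstable zeros of $E_{m_0}$ are confined to a bounded region, and on a large semicircle in $\{\mathrm{Im}\,\lambda > 0\}$ one can compute the asymptotics of $E_{m_0}$ explicitly from the $m_0^2$-dominated limit. The profile $\bar\Omega$ can be selected so that $L_{\text{st}}|_{U_{m_0}}$ sits just past a single Tollmien-type bifurcation: along a one-parameter family $\bar\Omega_\sigma$ connecting a manifestly stable profile to $\bar\Omega$, one neutral mode at some $\lambda_\star = \Xi(s_\star)\in \R$ crosses into $\{\mathrm{Im}\,\lambda>0\}$ as $\sigma$ moves. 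Transversality of this crossing (computable from the Tollmien dispersion relation, as in Lin's work \cite{LinSIMA2003}) yields that the bifurcating zero is simple. By the argument principle along the semicircular contour, the total zero count is then exactly $1$, and by simplicity its order is $1$, giving a single unstable eigenvalue of algebraic multiplicity one.

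The main obstacle — and indeed the original gap in \cite{Vishik2} filled in \cite{Vishik3} — is not the existence of at most one zero but the identification of algebraic multiplicity in the operator sense with the order of vanishing of $E_{m_0}$. A formal Jordan chain $(L_{\text{st}} - z)\Omega_1 = \eta$ with $L_{\text{st}}\eta = z\eta$ must be converted, via the ODE reduction above, into an inhomogeneous Rayleigh equation whose solvability forces $E_{m_0}$ to vanish to second order at $\lambda$; this requires carefully integrating the inhomogeneity against the Jost solutions and controlling the behavior in a neighborhood of any critical layer where $\Xi(s) - \lambda$ approaches zero as $\mathrm{Im}\,\lambda \downarrow 0$. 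Working cleanly through this Wronskian/resolvent correspondence is the delicate step, and it is where the remainder of the argument (deferred by the authors to Appendix~\ref{a:better}) does the real work.
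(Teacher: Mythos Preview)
Your approach is genuinely different from the paper's, and the difference is worth noting. You propose an Evans-function route: build $E_{m_0}(\lambda)$ as a Wronskian of Jost solutions, identify algebraic multiplicity with its order of vanishing, and then count zeros via the argument principle along a bifurcation path in the \emph{profile} parameter $\sigma$. The paper instead continues in the \emph{wavenumber} $m$: by Remark~\ref{rmk:algebraic dim const} the total algebraic multiplicity of $\mathscr{U}_m$ is constant on each component of $]1,\infty[\setminus\{m_a,m_b\}$, so it suffices to compute it for $m$ just below $m_a$. There the paper does not invoke any Wronskian/resolvent correspondence at all; the key input (Lemma~\ref{l:aggiunto}, which is the content of \cite{Vishik3}) is an \emph{adjoint pairing}: one constructs $\beta_h$ in the kernel of $\mathcal{L}_{m_a-h}^\star - (m_a-h)\bar z_h$ with $\langle \alpha_h,\beta_h\rangle_{\mathcal{H}^e}\neq 0$, and then a Jordan chain $L_h\eta_h=\alpha_h$ is excluded immediately since $\langle \alpha_h,\beta_h\rangle = \langle L_h\eta_h,\beta_h\rangle = \langle \eta_h, L_h^\star\beta_h\rangle = 0$. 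The nontriviality of the pairing is obtained by passing to the neutral limit $h\to 0$ and evaluating $\int A(\Xi-z_h)^{-2}\varphi_h^2$ via a Plemelj-type computation, which yields a nonzero explicit constant.

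So your description of the gap in \cite{Vishik2} is slightly off-target: the missing piece was not the abstract identification ``order of vanishing of an Evans function $=$ algebraic multiplicity'' but rather any argument at all ruling out Jordan blocks, and Vishik's fix is the adjoint-pairing mechanism above, not a Wronskian identity. Your Evans-function plan is a reasonable alternative in principle, but two steps are underspecified: (i) the argument-principle count requires control of $E_{m_0}$ along the real boundary of your semicircle, precisely where $\Xi-\lambda$ degenerates and the Jost solutions need not extend continuously (this is the critical-layer issue that Proposition~\ref{p:3+4} handles by other means); and (ii) the claim that along your family $\bar\Omega_\sigma$ exactly one neutral mode crosses transversally needs an argument comparable in depth to Propositions~\ref{p:3+4}--\ref{p:5-7}. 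The paper's route sidesteps (i) entirely by working strictly in the open upper half-plane and letting the continuation-in-$m$ plus the single adjoint computation at $m_a^-$ carry the multiplicity information.
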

In Appendix \ref{a:better} we will show how to prove the latter conclusion and how Theorem \ref{thm:spectral-stronger} follows from it.

\section{Preliminaries}

If we write an arbitrary element $\Omega\in U_m$ as $\Omega (x) = e^{im\theta} \gamma (r)$ using polar coordinates, we find an isomorphism of the Hilbert space $U_m$ with the Hilbert space \index{aalHcal@$\mathcal{H}$}
\begin{equation}\label{e:def-H}
\mathcal{H}:= \left\{\gamma \colon \mathbb R^+ \to \mathbb C : \int_0^\infty |\gamma (r)|^2\, r\, dr < \infty\right\}    
\end{equation}
and thus the operator $L_{\text{st}}\colon U_m \to U_m$ can be identified with an operator 
\[
\mathcal{L}_{\text{st}} \colon \mathcal{H}\to \mathcal{H}\, .
\]
In fact, since $L_{\text{st}} = S_1+\mathscr{K}$, \index{aalKscr@$\mathscr{K}$} where $S_1$ \index{aalS1@$S_1$} is skew-adjoint and $\mathscr{K}$ compact, $\mathcal{L}_{\text{st}}$ is also a compact perturbation of a skew-adjoint operator. In order to simplify our notation and terminology, we will then revert our considerations to the operator $\mathcal{L}_m = i\mathcal{L}_{\text{st}}$, \index{aalLcalm@$\mathcal{L}_m$} which will be written as the sum of a self-adjoint operator, denoted by $\mathcal{S}_m$, \index{aalScalm@$\mathcal{S}_m$} and a compact operator, denoted by $\mathscr{K}_m$: \index{aalKscrm@$\mathscr{K}_m$}
\index{aagZbar@$\bar \Omega$}
\begin{equation}
    \mathcal{L}_m \gamma = m\zeta \gamma - \frac{m}{r} \psi g' = \mathcal{S}_m + \mathscr{K}_m \, ,
\end{equation}
where $\psi$ is defined in~\eqref{e:explicit3}.

\begin{lemma}\label{l:S-in-polar}
After the latter identification, if $\bar\Omega (x) = g (|x|)$ and $\zeta$ is given through the formula \eqref{e:def-zeta}, then $\mathcal{S}_m\colon \mathcal{H}\to \mathcal{H}$ is the following bounded self-adjoint operator:
\begin{equation}\label{e:explicit}
\gamma \mapsto \mathcal{S}_m (\gamma) = m \zeta \gamma\, .
\end{equation}
\end{lemma}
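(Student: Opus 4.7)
The plan is to verify the formula by direct computation, using the Hilbert space identification and the explicit form of $\bar V$.

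First, I would write an arbitrary $\Omega\in U_m$ in polar coordinates as $\Omega(x)=e^{im\theta}\gamma(r)$, which realizes the unitary identification of $U_m$ with $\mathcal{H}$ (the factor $2\pi$ in Plancherel is absorbed by rescaling). Recalling that $S_1(\Omega)=-\operatorname{div}(\bar V\Omega)$ and observing that $\operatorname{div}\bar V=0$ (a trivial check from $\bar V(x)=\zeta(|x|)(-x_2,x_1)$), I can rewrite $S_1(\Omega)=-\bar V\cdot\nabla\Omega$. Next, using $x^\perp=r\hat\theta$ and $\hat\theta\cdot\nabla=r^{-1}\partial_\theta$, we obtain the pointwise identity $\bar V\cdot\nabla=\zeta(r)\partial_\theta$. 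Applied to $\Omega=e^{im\theta}\gamma(r)$ this yields
\begin{equation*}
S_1(\Omega)=-im\,\zeta(r)\,e^{im\theta}\gamma(r).
\end{equation*}
Under the identification $U_m\simeq\mathcal{H}$, the contribution of $S_1$ to $\mathcal{L}_{\text{st}}$ is therefore the multiplication operator $\gamma\mapsto -im\zeta\gamma$. Since $\mathcal{L}_m=i\mathcal{L}_{\text{st}}$ and $\mathcal{S}_m$ is by definition the self-adjoint part of $\mathcal{L}_m$ (i.e. the part coming from $iS_1$), we conclude $\mathcal{S}_m(\gamma)=m\zeta\gamma$, as stated.

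It remains to check that multiplication by $m\zeta$ defines a bounded self-adjoint operator on $\mathcal{H}$. For boundedness I would verify $\zeta\in L^\infty(\mathbb{R}^+)$: $\zeta$ is continuous on $(0,\infty)$, the assumption \eqref{e:g-constant-around-0} together with the defining formula \eqref{e:def-zeta} gives $\zeta(r)\to g(0)/2$ as $r\downarrow 0$, and \eqref{e:decay-at-infinity} combined with the explicit evaluation of \eqref{e:def-zeta} for $r\ge 2$ (done already in the proof of Lemma~\ref{lem:Curl of tilde v}, yielding $\zeta(r)=ar^{-2}+br^{-\bar\alpha}$) shows that $\zeta(r)\to 0$ as $r\to\infty$. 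Self-adjointness is then the routine observation that, since $\zeta$ is real-valued, for every $\gamma_1,\gamma_2\in\mathcal{H}$,
\begin{equation*}
\langle \mathcal{S}_m\gamma_1,\gamma_2\rangle=\int_0^\infty m\zeta(r)\gamma_1(r)\overline{\gamma_2(r)}\,r\,dr=\langle \gamma_1,\mathcal{S}_m\gamma_2\rangle.
\end{equation*}

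I do not expect any real obstacle here: the statement is essentially a bookkeeping lemma that records the polar-coordinate representation of the transport part of $L_{\text{st}}$ and separates it from the nonlocal (compact) part arising from the Biot--Savart convolution, which will be treated in the parallel identification of $\mathscr{K}_m$. The only mild care needed is the factor of $i$ in passing from $\mathcal{L}_{\text{st}}$ to $\mathcal{L}_m$, and the verification that the boundary behavior of $\zeta$ at $0$ and $\infty$ suffices to place it in $L^\infty$.
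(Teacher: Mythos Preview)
Your proposal is correct and follows essentially the same approach as the paper: verify the formula by direct computation in polar coordinates, then check boundedness via $\zeta\in L^\infty$ and self-adjointness from $\zeta$ being real. The paper's proof is simply terser (it says ``the formula is easy to check'' and ``self-adjointness is obvious''), and for local boundedness of $\zeta$ near $0$ it appeals to smoothness of $g$ rather than the explicit limit you compute, but the substance is identical.
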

\begin{proof}
The formula is easy to check. The self-adjointness of \eqref{e:explicit} is obvious. Concerning the boundedness we need to show that $\zeta$ is bounded. Since $g$ is smooth (and hence locally bounded), $\zeta$ is smooth and locally bounded by \eqref{e:def-zeta}. To show that it is globally bounded recall that $g (r) = r^{-\al}$ for $r\geq 2$, so that
\[
\zeta (r) = \frac{\tilde c_0}{r^2} + \frac{1}{r^2} \int_2^r \rho^{1-\al}\, d\rho = \frac{c_0}{r^2} + \frac{c_1}{r^\al} \qquad \forall r\geq 2\, ,
\]
where $c_0$ and $c_1$ are two appropriate constants. 
\end{proof}

A suitable, more complicated, representation formula can be shown for the operator $\mathscr{K}_m$. 

\begin{lemma}\label{l:K-in-polar}
Under the assumptions of Lemma \ref{l:S-in-polar}, the compact operator $\mathscr{K}_m: \mathcal{H}\to \mathcal{H}$ is given by
\begin{equation}\label{e:explicit2}
\gamma \mapsto \mathscr{K}_m (\gamma)= - \frac{m}{r} \psi g'\, 
\end{equation}
where 
\begin{equation}\label{e:explicit3}
\psi (r) = - \frac{1}{2m} r^{m} \int_r^\infty \gamma (s) s^{1-m}\, ds - \frac{1}{2m} r^{-m} \int_0^r \gamma (s) s^{1+m}\, ds\, .
\end{equation}
\end{lemma}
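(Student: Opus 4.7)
\textbf{Plan for Lemma \ref{l:K-in-polar}.} The strategy is to compute $K_2*\Omega$ explicitly for $\Omega \in U_m$ by a stream function ansatz that respects the angular symmetry, then pair with $\nabla\bar\Omega$ and track the factor of $i$ coming from the identification $\mathcal{L}_m = i\mathcal{L}_{\text{st}}$. Writing $\Omega(x) = e^{im\theta}\gamma(r)$ and setting $v = K_2*\Omega = \nabla^\perp\psi$ with $\Delta\psi = \Omega$, rotational equivariance forces the ansatz $\psi(x) = e^{im\theta}\phi(r)$, which reduces the Poisson equation to the radial ODE
\begin{equation*}
\phi'' + \frac{1}{r}\phi' - \frac{m^2}{r^2}\phi = \gamma.
\end{equation*}
The homogeneous equation has fundamental solutions $r^m$ and $r^{-m}$, and variation of parameters with the correct growth normalizations at $0$ and $\infty$ (namely $\phi(r) = O(r^m)$ as $r\downarrow 0$ and $\phi(r) \to 0$ as $r\to\infty$, which are precisely the conditions that identify $v$ with the canonical object produced by Lemma~\ref{l:extension}) yields the formula~\eqref{e:explicit3}.

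Next, I would compute $(K_2*\Omega\cdot\nabla)\bar\Omega$. Since $\bar\Omega$ is radial, $\nabla\bar\Omega = g'(r)\hat{r}$, so only the radial component of $v$ matters. A brief polar-coordinate computation gives $\hat r \cdot \nabla^\perp\psi = -\frac{1}{r}\partial_\theta\psi$, and with $\psi = e^{im\theta}\phi(r)$ this becomes $-\frac{im}{r}e^{im\theta}\phi(r)$. Hence
\begin{equation*}
\mathscr{K}(\Omega) = -(v\cdot\nabla)\bar\Omega = \frac{im}{r}e^{im\theta}\phi(r)\, g'(r).
\end{equation*}
Under the identification $U_m\cong\mathcal{H}$, multiplying by the extra $i$ from $\mathcal{L}_m = i\mathcal{L}_{\text{st}}$ converts $im$ into $-m$, producing exactly \eqref{e:explicit2} with $\psi = \phi$.

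Some care is required to justify that the $\psi$ from \eqref{e:explicit3} does yield the $v$ guaranteed by Lemma~\ref{l:extension}. I would first verify the formula for $\gamma \in C^\infty_c(]0,\infty[)$, where $\Omega$ is Schwartz and the identity $K_2*\Omega = \nabla^\perp\psi$ holds classically; the explicit $\phi$ is the unique solution of the ODE satisfying the correct behavior at $0$ and $\infty$ and hence coincides with the standard decaying Green's function representation of $\Delta^{-1}\Omega$. For general $\gamma\in\mathcal{H}$, I would extend by density, using Cauchy--Schwarz in the measure $s\,ds$ to bound the two integrals in \eqref{e:explicit3}, which combined with the decay $|g'(r)|\lesssim r^{-\bar\alpha-1}$ for large $r$ shows that $\frac{m}{r}\phi g'$ lies in $\mathcal{H}$ with a norm controlled by $\|\gamma\|_\mathcal{H}$, yielding continuity on both sides. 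Uniqueness in Lemma~\ref{l:extension} then identifies the continuous extensions.

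The main obstacle is the final matching step: ensuring that the explicit $\phi$ really corresponds to the tempered-distribution convolution $K_2*\Omega$ defined by Lemma~\ref{l:extension}, rather than some other solution of the Poisson equation differing by a harmonic polynomial. This is why the growth normalizations on $\phi$ must be pinned down carefully: if one instead integrated $\gamma s^{1-m}$ from $0$ to $r$, the resulting $\phi$ would differ by a multiple of $r^m$, producing a $v$ that grows too fast to match the bound (b1) of Lemma~\ref{l:extension}, and hence cannot be the canonical Biot--Savart output. Everything else is routine ODE theory and direct computation.
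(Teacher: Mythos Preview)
Your proposal is correct and follows essentially the same route as the paper: separate variables in polar coordinates, solve the radial Poisson ODE for the stream function, extract the radial component of $\nabla^\perp\psi$, pair with $g'(r)$, and multiply by $i$. The only cosmetic difference is that the paper pins down the correct $\psi$ by identifying it with the Newtonian potential $\frac{1}{2\pi}\log|\cdot|\ast\Omega$ (using the mean-zero property of $\Omega\in U_m$ to get decay at infinity, then Liouville-type uniqueness), whereas you invoke the growth bound (b1) of Lemma~\ref{l:extension} directly; both arguments arrive at the same normalization.
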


\begin{remark}\label{r:potential-theory}
When $\gamma$ is compactly supported, $\phi (\theta,r):= \psi (r) e^{im\theta}$ with $\psi$ as in \eqref{e:explicit} gives the unique potential-theoretic solution of $\Delta \phi = \gamma e^{im\theta}$, namely, $\phi$ obtained as the convolution of $\gamma e^{im\theta}$ with the Newtonian potential $\frac{1}{2\pi} \ln r$. For general $\gamma\in \mathcal{H}$ we do not have enough summability to define such convolution using Lebesgue integration, but, as already done before, we keep calling $\phi$ the potential-theoretic solution of $\Delta \phi = \gamma e^{im\theta}$.  
\end{remark}

\begin{proof}[Proof of Lemma \ref{l:K-in-polar}]
First of all we want to show that the formula is correct when $\Omega = \gamma (r) e^{im \theta} \in C^\infty_c \cap L^2_m$. We are interested in computing $-i (K_2*\Omega\cdot \nabla) \bar \Omega$. We start with the formula $K_2* \Omega = \nabla^\perp \phi$, where $\phi$ is the potential-theoretic solution of $\Delta \phi = \Omega$. The explicit formula for $\phi$ reads
\[
\phi (x) = \frac{1}{2\pi} \int_{\R^2} \Omega (y) \ln |y-x|\,\mathrm dy\, .
\]
$\phi$ is clearly smooth and hence locally bounded. Observe that $\Omega$ averages to $0$ and thus
\[
\phi (x) = \frac{1}{2\pi} \int_{\R^2} \Omega (y) (\ln |y-x| - \ln |x|)\,\mathrm dy\, .
\]
Fix $R$ larger than $1$ so that $\textrm{spt}\, (\Omega) \subset B_R$ and choose $|x|\geq 2R$. We then have the following elementary inequality for every $y\in \textrm{spt}\, (\Omega)$:
\[
|\ln |x| - \ln |x-y||\leq \ln (|x-y| + |y|) - \ln (|x-y|)\leq \frac{|y|}{|y-x|} \leq \frac{2|y|}{|x|}\, ,
\]
from which we conclude that $|\phi (x)|\leq C (1+|x|)^{-1}$. Hence $\phi$ is the only solution to $\Delta \phi = \Omega$ with the property that it converges to $0$ at infinity. This allows us to show that $\phi$ satisfies the formula
\[
\phi (x) = \psi (r) e^{im\theta}
\]
where $\psi$ is given by formula \eqref{e:explicit3}. We indeed just need to check that the Laplacian of 
$\psi (r) e^{im\theta}$ equals $\gamma (r) e^{im\theta}$ and that $\lim_{r\to \infty} \psi (r) = 0$. 
Using the formula $\Delta = \frac{1}{r^2} \frac{\partial^2}{\partial \theta^2} + \frac{1}{r} \frac{\partial}{\partial r} + \frac{\partial^2}{\partial r^2}$ the first claim is a direct verification. Next, since $\gamma (r) =0$ for $r\geq R$, we conclude $\psi (r) = C r^{-m}$ for all $r\geq R$, which shows the second claim. 
Observe next that
\[
\nabla \phi = \frac{m i}{r^2} \psi (r) e^{im\theta} \frac{\partial}{\partial \theta} - \frac{\partial}{\partial r} \left(\psi (r) e^{im\theta}\right) \frac{\partial}{\partial r}\, ,
\]
which turns into
\[
\nabla \phi^\perp = - \frac{mi}{r} \psi (r) e^{im\theta} \frac{\partial}{\partial r} - \frac{1}{r} \frac{\partial}{\partial r} \left(\psi (r) e^{im\theta}\right) \frac{\partial}{\partial \theta}\, .
\]
Since $\bar\Omega (x) = g(r)$, we then conclude that 
\[
- (K_2*\Omega\cdot \nabla) \bar\Omega = \frac{mi}{r} \psi (r) e^{im\theta} g' (r)\, .
\]
Upon multiplication by $i$ we obtain formula \eqref{e:explicit2}. Since we know from the previous chapter that $\mathscr{K}$ is a bounded and compact operator and $\mathscr{K}_m$ is just the restriction of $i\mathscr{K}$ to a closed invariant subspace of it, the boundedness and compactness of $\mathscr{K}_m$ is obvious.
\end{proof}

Notice next that, while in all the discussion so far we have always assumed that $m$ is an integer larger than $1$, the operator $\mathcal{S}_m$ can in fact be easily defined for every {\em real} $m>1$, while, using the formulae \eqref{e:explicit2} and \eqref{e:explicit3} we can also make sense of $\mathscr{K}_m$ for every real $m>1$. In particular we can define as well the operator $\mathcal{L}_m$ for every $m>1$. The possibility of varying $m$ as a real parameter will play a crucial role in the rest of the chapter, and we start by showing that, for $m$ in the above range, the boundedness of $\mathcal{L}_m$ and $\mathcal{S}_m$ and the compactness of $\mathscr{K}_m$ continue to hold.
\index{aalLcalm@$\mathcal{L}_m$}\index{aalScalm@$\mathcal{S}_m$}\index{aalKscrm@$\mathscr{K}_m$}

\begin{proposition}\label{p:all-m}
The operators $\mathcal{L}_m$, $\mathcal{S}_m$, and $\mathscr{K}_m$ are bounded operators from $\mathcal{H}$ to $\mathcal{H}$ for every real $m>1$, with a uniform bound on their norms if $m$ ranges in a compact set. Moreover, under the same assumption $\mathscr{K}_m$ is compact. In particular:
\begin{itemize}
    \item[(i)] $\textrm{spec}\, (\mathcal{L}_m)$ is compact;
    \item[(ii)] for every $z$ with $\textrm{Im}\, z \neq 0$ the operator $\mathcal{L}_m-z$ is a bounded Fredholm operator with index $0$;
    \item[(iii)] every $z\in \textrm{spec}\, (\mathcal{L}_m)$ with $\textrm{Im}\, z \neq 0$ belongs to the discrete spectrum.
\end{itemize}
\end{proposition}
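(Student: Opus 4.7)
My plan is to derive all the claims from the explicit kernel representations of Lemmas \ref{l:S-in-polar} and \ref{l:K-in-polar}, combined with the standard interplay between self-adjoint and compact operators. Boundedness of $\mathcal{S}_m$ with norm at most $m\|\zeta\|_{L^\infty}$ is immediate from \eqref{e:explicit}, so only $\mathscr{K}_m$ requires genuine work. Reading the formulas \eqref{e:explicit2}--\eqref{e:explicit3}, I write $\mathscr{K}_m\gamma(r) = \int_0^\infty K_m(r,s)\gamma(s)\,s\,ds$ with
\[
K_m(r,s) = \frac{g'(r)}{2}\bigl[r^{m-1}s^{-m}\mathbf{1}_{s>r} + r^{-m-1}s^{m}\mathbf{1}_{s<r}\bigr],
\]
and estimate the Hilbert--Schmidt norm on $L^2(\mathbb R^+, r\,dr)$. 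A direct computation, in which the hypothesis $m > 1$ is precisely what makes $\int_r^\infty s^{1-2m}\,ds$ finite, yields
\[
\|\mathscr{K}_m\|_{HS}^2 = \int_0^\infty\!\!\int_0^\infty |K_m(r,s)|^2\, r s\, dr\, ds = \frac{m}{4(m^2-1)}\int_0^\infty |g'(r)|^2 r\,dr.
\]
The last integral is finite because $g'(r) = g''(0)r$ near $r=0$ by \eqref{e:g-constant-around-0} and $|g'(r)| \lesssim r^{-\al - 1}$ for $r$ large. The prefactor depends continuously on $m\in (1,\infty)$, so one has uniform bounds on compact subsets; being Hilbert--Schmidt, $\mathscr{K}_m$ is automatically bounded and compact on $\mathcal{H}$, and boundedness of $\mathcal{L}_m = \mathcal{S}_m + \mathscr{K}_m$ follows.

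With this decomposition in hand, claim (i) is immediate: a bounded operator has compact spectrum. For (ii), since $\mathcal{S}_m$ is bounded self-adjoint its spectrum lies on $\mathbb R$, so for $\textrm{Im}\, z \neq 0$ the resolvent $(\mathcal{S}_m - z)^{-1}$ exists with $\|(\mathcal{S}_m-z)^{-1}\|_O \leq |\textrm{Im}\, z|^{-1}$, and I factor
\[
\mathcal{L}_m - z = (\mathcal{S}_m - z)\bigl(I + (\mathcal{S}_m - z)^{-1}\mathscr{K}_m\bigr).
\]
The first factor is boundedly invertible and the second is a compact perturbation of the identity, hence Fredholm of index $0$; the composition is Fredholm of index $0$.

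For (iii), I will apply the analytic Fredholm theorem (as in \cite[Theorem 7.92]{RenRogBook}) to the holomorphic family $z \mapsto I + (\mathcal{S}_m - z)^{-1}\mathscr{K}_m$ of identity-plus-compact operators on $\{\textrm{Im}\, z > 0\}$, and symmetrically on the lower half-plane. For $|\textrm{Im}\, z|$ large enough the Neumann series converges because $\|(\mathcal{S}_m-z)^{-1}\mathscr{K}_m\|_O \leq \|\mathscr{K}_m\|_O/|\textrm{Im}\, z| < 1$, so the family is invertible at some point. The theorem then yields that the set of $z$ in each open half-plane where the family fails to be invertible is discrete, and that the kernel at any such $z$ is finite-dimensional; combined with the factorization in (ii) and the standard fact that an isolated Fredholm-index-$0$ point has a finite-rank Riesz projector, this exhibits every $z\in \textrm{spec}(\mathcal{L}_m)$ with $\textrm{Im}\, z\neq 0$ as an element of the discrete spectrum. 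The only genuinely computational step in the proposal is the Hilbert--Schmidt estimate, where the denominator $m^2 - 1$ simultaneously forces the restriction $m > 1$ and delivers the continuous dependence on $m$; everything else is standard Fredholm theory applied to a bounded self-adjoint plus compact decomposition.
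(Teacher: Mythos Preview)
Your proof is correct and the Fredholm portion (items (i)--(iii)) follows the same line as the paper: decompose $\mathcal{L}_m=\mathcal{S}_m+\mathscr{K}_m$ with $\mathcal{S}_m$ bounded self-adjoint and $\mathscr{K}_m$ compact, then invoke standard compact-perturbation theory. The genuine difference lies in how you establish boundedness and compactness of $\mathscr{K}_m$. The paper first proves the pointwise bound $\|r^{-1}\psi\|_{L^\infty}\leq C(m)\|\gamma\|_{\mathcal{H}}$ via Cauchy--Schwarz, combines it with the decay of $g'$ to get a pointwise decay estimate $|\mathscr{K}_m(\gamma)(r)|\leq C(1+r)^{-1-\al}\|\gamma\|_{\mathcal{H}}$, and then runs a Rellich-plus-tail-estimate argument for compactness. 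You instead compute the Hilbert--Schmidt norm in closed form as $\frac{m}{4(m^2-1)}\int_0^\infty|g'(r)|^2 r\,dr$, which delivers boundedness, compactness, and continuous dependence on $m$ in one stroke. Your route is shorter and more transparent for this proposition; the paper's route has the side benefit that the pointwise estimate $\|r^{-1}\psi\|_\infty\leq C\|\gamma\|_{\mathcal{H}}$ is reused elsewhere (Section~\ref{s:eigenvalue-equation}).
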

\begin{proof} The boundedness of $\mathcal{S}_m$ is obvious. Having shown the boundedness and compactness of $\mathscr{K}_m$, (i) follows immediately from the boundedness of $\mathcal{L}_m$, while (ii) follows immediately from the fact that $\mathcal{L}_m - z$ is a compact perturbation of the operator $\mathcal{S}_m -z$, which is invertible because $\mathcal{S}_m$ is selfadjoint, and (iii) is a standard consequence of (ii). 

First of all let us prove that $\mathscr{K}_m$ is bounded (the proof is necessary because, from what previously proved, we can just conclude the boundedness and compactness of the operator for {\em integer values} of $m$ larger than $1$). We observe first the pointwise estimate 
\begin{equation}
    \label{eq:ausefulpointwiseestimateonpsi}
    \| r^{-1}\psi\|_\infty \leq C  \|\gamma\|_{\mathcal{H}} \, ,
\end{equation} as it follows from Cauchy-Schwarz that
\begin{align*}
r^{m-1} \int_r^\infty |\gamma (s)| s^{1-m}\, ds&\leq
r^{m-1} \Big(\int_r^\infty |\gamma(s)|^2 s\, ds\Big)^{\frac{1}{2}} \Big(\int_r^\infty s^{1-2m}\, ds\Big)^{\frac{1}{2}}
\\
&\leq \frac{1}{\sqrt{2m-2}} \|\gamma\|_{\mathcal{H}}
\end{align*}
and
\begin{align*}
r^{-m-1} \int_0^r |\gamma (s)| s^{1+m}\, ds &\leq r^{-m-1} \Big(\int_0^r |\gamma (s)|^2 s\, ds\Big)^{\frac{1}{2}} \Big(\int_0^r s^{1+2m}\, ds\Big)^{\frac{1}{2}}
\\
&\leq \frac{1}{\sqrt{2m+2}} \|\gamma\|_{\mathcal{H}}\, .
\end{align*}
Since $g' (r) \leq C (1+r)^{-1-\al}$, it follows immediately that 
\begin{equation}\label{e:K_m-pointwise-bound}
|(\mathscr{K}_m (\gamma)) (r)| \leq \frac{C\|\gamma\|_{\mathcal{H}}}{(1+r)^{1+\al}}
\end{equation}
and in particular
\[
\|\mathscr{K}_m (\gamma)\|_{\mathcal{H}} \leq
C\|\gamma\|_{\mathcal{H}} \Big(\int_0^\infty \frac{s}{(1+s)^{2+2\al}}\, ds\Big)^{\frac{1}{2}}
\leq C \|\gamma\|_{\mathcal{H}} \, .
\]
This completes the proof of boundedness of the operator. In order to show compactness consider now a bounded sequence $\{\gamma_k\}\subset \mathcal{H}$. Observe that for every fixed $N$, \eqref{e:explicit3} gives the following obvious bound
\begin{equation}
\|\mathscr{K}_m(\gamma_k)\|_{W^{1,2} [N^{-1}, N]} \leq C (N) \|\gamma_k\|_{\mathcal{H}}\, .    
\end{equation}
In particular, through a standard diagonal procedure, we can first extract a subsequence of $\{\mathscr{K}_m(\gamma_k)\}$ (not relabeled) which converges strongly in the space $L^2 ([N^{-1}, N], rdr)$ for every $N$. It is now easy to show that $\{\mathscr{K}_m (\gamma_k)\}_k$ is a Cauchy sequence in $\mathcal{H}$. Fix indeed $\varepsilon>0$. Using \eqref{e:K_m-pointwise-bound} we conclude that there is a sufficiently large $N$ with the property that
\begin{equation}\label{e:Cauchy-1}
\sup_k \|\mathscr{K}_m (\gamma_k) \mathbf{1}_{[0, N^{-1}]\cup [N, \infty[}\|_{\mathcal{H}} < \frac{\varepsilon}{3}\, .
\end{equation}
Hence, given such an $N$, we can choose $k_0$ big enough so that
\begin{equation}\label{e:Cauchy-2}
\|(\mathscr{K}_m (\gamma_k) - \mathscr{K}_m (\gamma_j)) \mathbf{1}_{[N^{-1}, N]}\|_{\mathcal{H}} \leq 
\frac{\varepsilon}{3} \qquad \forall k,j \geq k_0\, .
\end{equation}
Combining \eqref{e:Cauchy-1} and \eqref{e:Cauchy-2} we immediately conclude
\[
\|\mathscr{K}_m (\gamma_k) - \mathscr{K}_m (\gamma_j)\|_{\mathcal{H}} < \varepsilon
\]
for every $j,k \geq k_0$. This completes the proof that $\{\mathscr{K}_m (\gamma_j)\}$ is a Cauchy sequence and hence the proof that $\mathscr{K}_m$ is compact.
\end{proof}

\section{The eigenvalue equation and the class \texorpdfstring{$\mathscr{C}$}{scrC}}
\label{s:eigenvalue-equation}

Using the operators introduced in the previous setting, we observe that Theorem~\ref{thm:spectral3} is equivalent to showing that $\textrm{spec}\, (\mathcal{L}_{m_0}) \cap \{\textrm{Im}\, z>0\}$ is nonempty.
We next notice that, thanks to Proposition \ref{p:all-m} and recalling that $\psi$ is defined through \eqref{e:explicit3}, the latter is equivalent to showing that the  equation
\begin{equation}\label{e:eigenvalue-equation}
m \zeta \gamma - \frac{m}{r} g' \psi = z \gamma
\end{equation}
has a nontrivial solution $\gamma \in \mathcal{H}$ for some integer $m=m_0\geq 2$ and some complex number $z$ with positive imaginary part. 

We thus turn \eqref{e:eigenvalue-equation} into an ODE problem by changing the unknown from $\gamma$ to the function $\psi$.
In particular, recall that the relation between the two is that $\Delta (\psi (r) e^{im\theta}) = \gamma (r) e^{im\theta}$, and $\psi e^{im\theta}$ is in fact the potential-theoretic solution. We infer that 
\[
\psi'' + \frac{1}{r} \psi' - \frac{m^2}{r^2} \psi = \gamma\, 
\]
and hence \eqref{e:eigenvalue-equation} becomes
\begin{equation}\label{e:eigenvalue-equation-2}
- \psi'' - \frac{1}{r}\psi' + \frac{m^2}{r^2} \psi + \frac{g'}{r (\zeta -m^{-1} z)} \psi = 0  \, .
\end{equation}
Notice that, by classical estimates for ODEs, $\psi \in W^{2,2}_{\text{loc}} (\mathbb R^+)$.
Observe, moreover, that if $\psi\in L^2 (\frac{dr}{r})\cap W^{2,2}_{\text{loc}}{(\mathbb{R}^+)}$ solves \eqref{e:eigenvalue-equation-2} and $z$ has nonzero imaginary part, it follows that
\begin{equation}\label{e:change-SK-1}
\gamma = \frac{mg'}{r (m \zeta -z)} \psi
\end{equation}
belongs to $L^2 (r dr)$ and solves \eqref{e:eigenvalue-equation}, because the function $\frac{mg'}{m \zeta -z}$ is bounded. Vice versa, assume that $\gamma \in L^2 (r dr)$ solves \eqref{e:eigenvalue-equation}. Then $\psi$ solves \eqref{e:eigenvalue-equation-2} and we claim that $\psi\in L^2 (\frac{dr}{r})\cap W^{2,2}_{\text{loc}} (\mathbb R^+)$. Again the $W^{2,2}_{\text{loc}}$ regularity follows from classical estimates on ODEs and therefore we just need to show $\psi\in L^2 (\frac{dr}{r})$.

When $m$ is an integer strictly larger than $1$, by classical Calder{\'o}n-Zygmund estimates, $\phi (x) := \psi (r) e^{im\theta}$ is a $W^{2,2}_{\text{loc}}$ function of $\mathbb R^2$. As such $\phi\in C^\omega (B_1)$ for every $\omega<1$ and therefore $\psi\in C^\omega ([0,1])$ and, by symmetry considerations, $\psi (0) =0$. Thus it turns out that $|\psi (r)|\leq C r^\omega$ for every $r\in [0,1]$, which easily shows that $\psi\in L^2 ([0,1], \frac{dr}{r})$. However, in the sequel we will consider the more general case of any real $m>1$. Under the latter assumption we just understand $\psi$ as given by the integral formula \eqref{e:explicit3} and obtain a pointwise bound $|\psi (r)|\leq C r$ from~\eqref{eq:ausefulpointwiseestimateonpsi}.

We next improve the linear growth bound on $\psi$ at $\infty$
to show that 
\begin{equation}\label{e:correzione-1}
\int_1^\infty \frac{|\psi (r)|^2}{r}\, dr < \infty\, .
\end{equation}
Here we recall that, for $r$ sufficiently large, $\zeta (r) = \frac{c_0}{r^2}+ \frac{c_1}{r^\al}$ for some constants $c_0$ and $c_1$, while $g' (r) = -\al r^{-(1+\al)}$. In particular, since $|m\zeta - z|\geq |\textrm{Im} \, z|>0$, from \eqref{e:change-SK-1} we infer
\begin{equation}\label{e:change-SK-3}
|\gamma (r)| \leq C r^{-2-\bar \alpha} |\psi (r)|\, \qquad \forall r>1\, ,
\end{equation}
while at the same time we use \eqref{e:explicit3} to bound 
\begin{equation}\label{e:change-SK-4}
|\psi (r)|\leq C r^{-m} + \frac{r^{-m}}{2m} \int_1^r |\gamma (s)| s^{1+m}\, ds 
+ \frac{r^m}{2m} \int_r^\infty |\gamma (s)| s^{1-m}\, ds \qquad \forall r>1\, .
\end{equation}
Now, insert the bound $|\psi (r)|\leq C r$ 
into \eqref{e:change-SK-3} to get $|\gamma (r)|\leq C r^{-1-\bar\alpha}$ and hence use the latter in \eqref{e:change-SK-4} to improve the bound on $\psi$ to 
\[
|\psi (r)|\leq C (r^{-m} + r^{1-\bar\alpha})\, .
\]
We then repeat the very same procedure to improve the last pointwise bound to $|\psi (r)|\leq C (r^{-m} + r^{1-2\bar\alpha})$. 
After a finite number of steps we arrive at the bound $|\psi (r)|\leq C r^{-m}$. The latter is obviously enough to infer~\eqref{e:correzione-1}.

Hence our problem is equivalent to understand for which $m$ and $z$ with positive imaginary part there is an $L^2 (\frac{dr}{r})\cap W^{2,2}_{\text{loc}}$ solution of \eqref{e:eigenvalue-equation-2}. The next step is to change variables to $t = \ln r$ and we thus set $\varphi (t) = \psi (e^t)$, namely, $\psi (r) = \varphi (\ln r)$. The condition that $\psi\in L^2 (\frac{dr}{r})$ translates then into $\varphi\in L^2 (\mathbb R)$ and $\psi\in W^{2,2}_{\text{loc}}$ translates into $\varphi\in W^{2,2}_{\text{loc}}$.
Moreover, if we substitute the complex number $z$ with $\frac{z}{m}$ we can rewrite
\begin{equation}\label{e:eigenvalue-equation-3}
- \varphi'' (t) + m^2 \varphi (t) + \frac{A(t)}{\Xi (t) - z} \varphi (t) = 0\, ,
\end{equation}
which is \emph{Rayleigh's stability equation}\index{Rayleigh's stability equation@Rayleigh's stability equation},
where the functions $A$ and $\Xi$ are given by changing variables in the corresponding functions $g'$ and \index{aalA@$A$}\index{aagO@$\Xi$}
$\zeta$:
\begin{align}
A (t) &= \frac{d}{dt} g(e^t)\\
\Xi (t) &= \int_{-\infty}^t e^{-2 (t-\tau)} g (e^\tau)\, d\tau\, .
\end{align}
Note in particular that we can express $A$ and $\Xi$ through the relation
\begin{equation}\label{e:A-Xi}
A = \Xi'' + 2 \Xi'\, .
\end{equation}
The function $g$ (and so our radial function $\bar\Omega$) \index{aagZbar@$\bar \Omega$}\index{aalg@$g$} can be expressed in terms of $\Xi$ through the formula
\begin{equation}\label{e:formula-g}
g (e^t) = e^{-2t} \frac{d}{dt} (e^{2t} \Xi (t))\, .     
\end{equation}
Rather than looking for $g$ we will then look for $\Xi$ in an appropriate class $\mathscr{C}$ \index{aalCscr@$\mathscr{C}$} which we next detail:

\begin{definition}\label{d:class-C}
The class $\mathscr{C}$ consists of those functions $\Xi: \mathbb R \to ]0, \infty[$ such that
\begin{itemize}
\item[(i)] $\Xi (-\infty) := \lim_{t\to - \infty} \Xi (t)$ is finite and there are constants $c_0>0$ and $M_0$ such that $\Xi (t) = \Xi (-\infty) - c_0 e^{2t}$ for all $t\leq M_0$;
 \item[(ii)] there is a constant $c_1$ such that $\Xi (t) = c_1 e^{-2t} + \frac{1}{2-\al} e^{-\al t}$ for $t\geq \ln 2$;
 \item[(iii)] $A$ has exactly two zeros, denoted by $a<b$, and $A' (a)>0$ and $A' (b)<0$ (in particular $A<0$ on $]-\infty,a[ \, \cup \, ]b, \infty[$ and $A>0$ on $]a,b[$);
 \item[(iv)] $\Xi ' (t) <0$ for every $t$.
\end{itemize}
\end{definition}
\begin{figure}[ht]
    \centering
    \includegraphics[width=\textwidth]{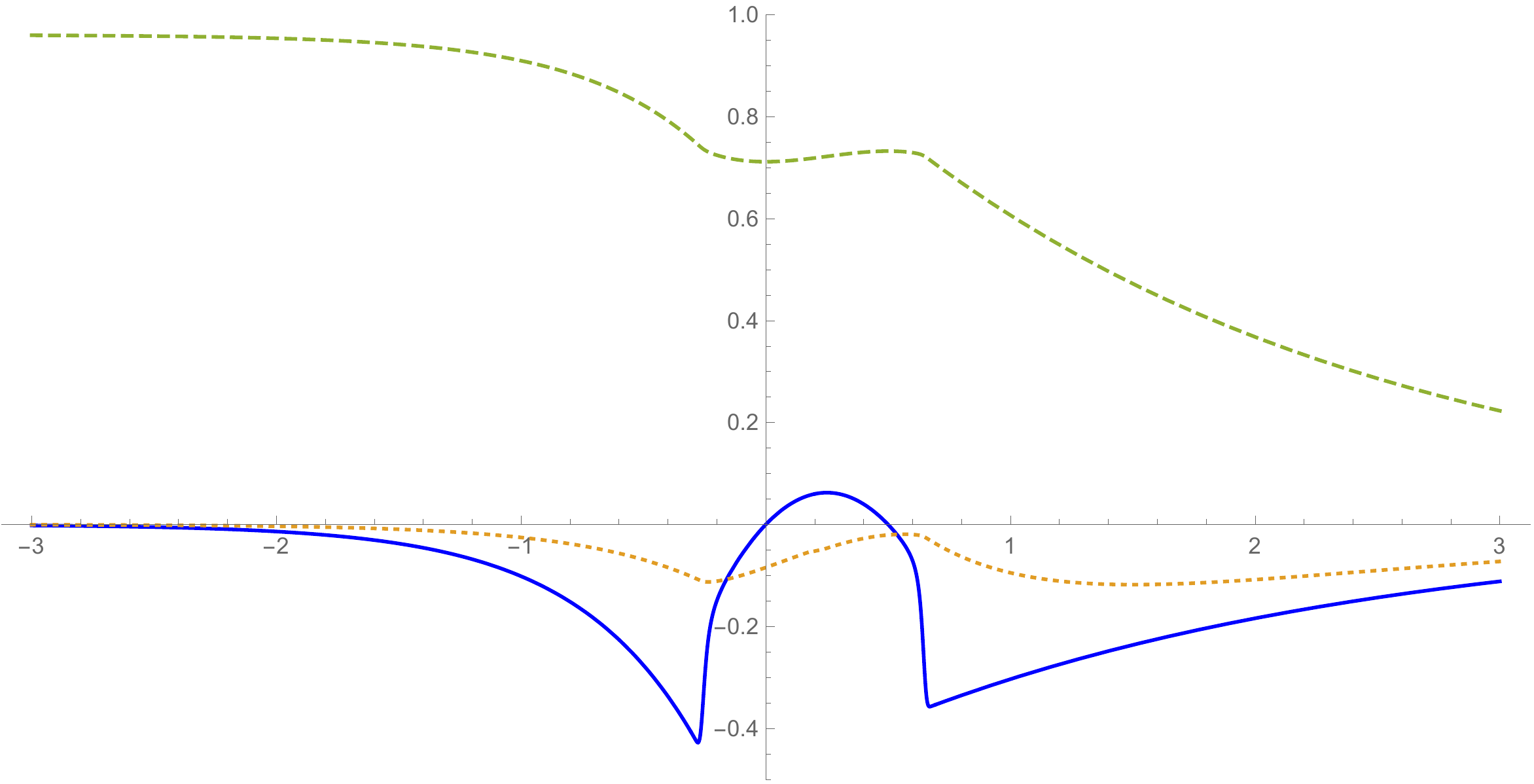}
    \caption{A sketch of the function in the class $\mathscr{C}$ which will be finally chosen in Section \ref{s:choice-of-A} to prove Theorem \ref{thm:spectral3}, in the $t= \log r$ axis. The graph of $A(t)$ is the solid curve, $G(t):=\Xi'(t)+2\Xi(t)$ the dashed one, and $\Xi'(t)$ the dotted one. Even though $A$ is smooth, its derivative undergoes a very sharp change around the points $t = \frac{1}{2}$ and the point $t= -\frac{1}{\sqrt{B}}$, where $B$ is an appropriately large constant, cf. Section \ref{s:choice-of-A}.}
    \label{fig:fig1}
\end{figure}
\begin{figure}[ht]
    \centering
    \includegraphics[width=0.75\textwidth]{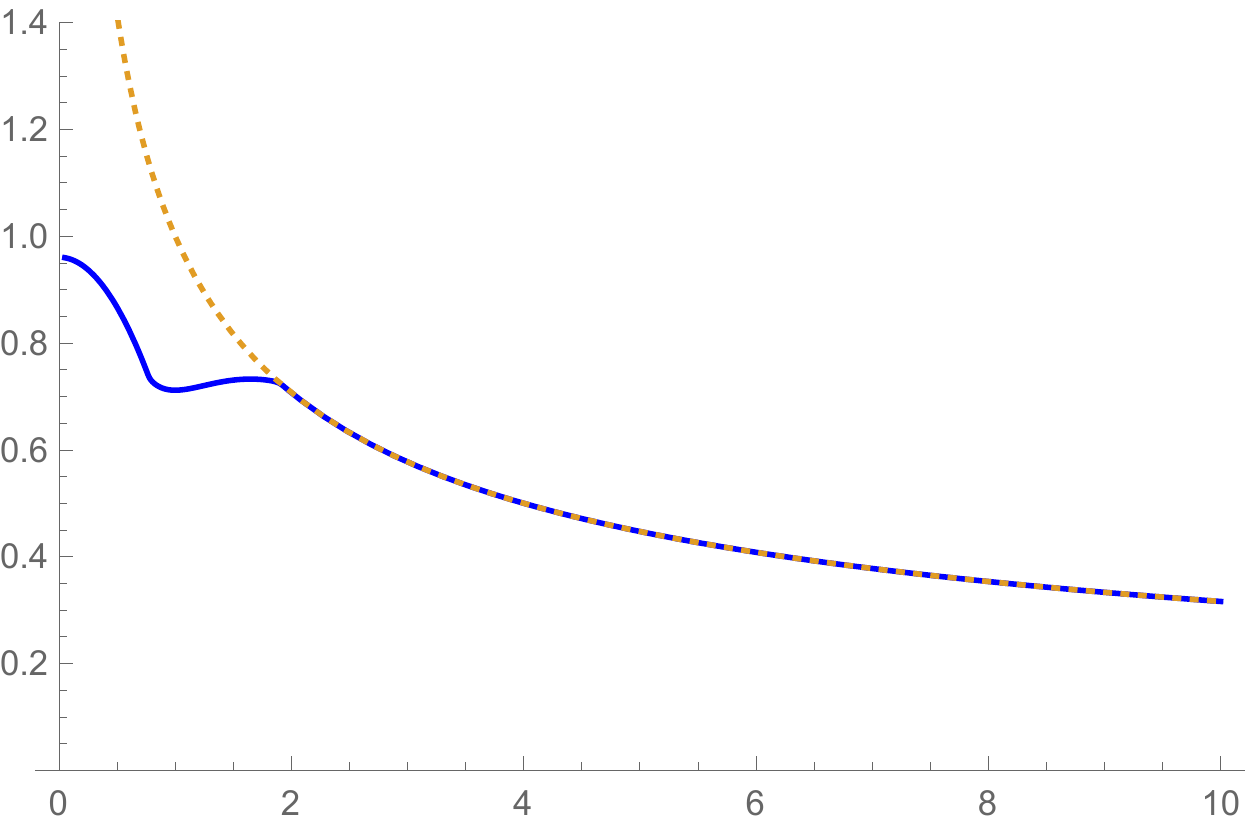}
    \caption{The profile of the background vorticity $\bar \Omega(x) = g(r)$ in the original coordinates (the solid curve). Compare with the exact singular profile $r^{-\al}$ (the dashed curve)}
    \label{fig:fig2}
\end{figure}

Fix $\Xi\in \mathscr{C}$. By \eqref{e:formula-g}, $g$ is then smooth, it equals $2\Xi (-\infty)- 4 c_0 r^2$ in a neighborhood of $0$, and it is equal to $r^{-\al}$ for $r\geq 2$, thanks to the conditions (i)-(ii). In particular the corresponding function $\bar \Omega (x) = g (|x|)$ satisfies the requirements of Theorem \ref{thm:spectral3}. We are now ready to turn Theorem \ref{thm:spectral3} into a (in fact stronger) statement for the eigenvalue equation \eqref{e:eigenvalue-equation-3}. In order to simplify its formulation and several other ones in the rest of these notes, we introduce the following sets

\begin{definition}\label{d:sets-U-m}
Having fixed $\Xi\in \mathscr{C}$ and a real number $m>1$, we denote by $\mathscr{U}_m$ the set of those complex $z$ with positive imaginary part with the property that there are nontrivial solutions $\varphi\in L^2\cap W^{2,2}_{\text{loc}} (\mathbb R, \mathbb C)$ of \eqref{e:eigenvalue-equation-3}.
\end{definition}

\index{aalUscrm@$\mathscr{U}_m$}

{\begin{remark}\label{r:m-factor-eigenvalue}
Observe that $z$ belongs to $\mathscr{U}_m$ if and only if it has positive imaginary part and $m z$ is an eigenvalue of $\mathcal{L}_m$.
\end{remark}}

\begin{theorem}\label{thm:spectral5}\label{THM:SPECTRAL5}
There is a function $\Xi\in \mathscr{C}$ and an integer $m_0\geq 2$ such that $\mathscr{U}_{m_0}$ is finite and nonempty.
\end{theorem}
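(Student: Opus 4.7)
The plan is to convert the problem into a spectral question for the real Schrödinger operator obtained as the formal boundary limit of Rayleigh's equation~\eqref{e:eigenvalue-equation-3}, design the profile $\Xi \in \mathscr{C}$ so that this operator has an integer level $-m_0^2$ as a neutral mode, and then bifurcate that neutral mode into the open upper half-plane to produce an element of $\mathscr{U}_{m_0}$. Finiteness will then be extracted from compactness of the spectrum of $\mathcal{L}_{m_0}$ (Proposition~\ref{p:all-m}) together with a bound on the number of bifurcating branches.

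First I would carry out the boundary analysis. For $z$ in the open upper half-plane the denominator $\Xi(t)-z$ never vanishes, and by the explicit tail in Definition~\ref{d:class-C}(ii) the coefficient $A/(\Xi-z)$ is bounded and decays exponentially as $t\to\pm\infty$, so Rayleigh's equation is a genuine Schrödinger-type problem with compact-resolvent Fredholm theory. As $z$ approaches the real value $z_\ast := \Xi(a)$ from above, the monotonicity of $\Xi$ and the simple zero $A(a)=0$ (property (iii) of $\mathscr{C}$) make the apparent singularity at $t=a$ removable, and the pointwise limit
\begin{equation*}
V_a(t) := \frac{A(t)}{\Xi(t)-\Xi(a)}
\end{equation*}
is a real, continuous, exponentially decaying function with $V_a(a) = A'(a)/\Xi'(a) < 0$. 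A direct sign check using the structure of $A$ and $\Xi$ gives $V_a<0$ on $(-\infty,b)\setminus\{a\}$ (since $A$ and $\Xi-\Xi(a)$ have opposite signs there), $V_a(b)=0$, and $V_a>0$ on $(b,\infty)$. Hence $V_a$ is a potential with a well on the left and a bump on the right, both decaying exponentially.

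The key device is the real self-adjoint operator $H := -\partial_t^2 + V_a$ on $L^2(\mathbb{R})$: its essential spectrum is $[0,\infty)$ and its negative discrete spectrum is finite. I would tune the construction so that $-m_0^2$ is a negative eigenvalue of $H$ for some integer $m_0 \geq 2$. Concretely, the freedom inside $\mathscr{C}$ (as suggested by the figure caption, through a scale parameter $B$ sharpening $A$ on the relevant intermediate intervals) lets one first make the well of $V_a$ deep and wide enough to force arbitrarily many negative eigenvalues, and then deform it continuously so that one eigenvalue crosses through $-m_0^2$; simplicity of the lowest eigenvalue then yields the desired neutral mode, that is, a nontrivial $L^2$ kernel of the boundary operator $T_{m_0}(z_\ast) := -\partial_t^2 + m_0^2 + V_a$, sitting exactly on the boundary of the (would-be) unstable region.

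Finally I would bifurcate this neutral mode into $\{\mathrm{Im}\,z > 0\}$. Viewing $z \mapsto T_{m_0}(z)$ as an analytic Fredholm pencil on $\overline{\mathbb{C}^+}$ near $z_\ast$, I would perform a Lyapunov–Schmidt reduction onto the one-dimensional kernel at $z_\ast$; the leading imaginary correction to the resulting scalar dispersion relation is governed by a Plemelj-type limit $\mathrm{Im}\,(\Xi-z)^{-1} \to \pi \,\delta(\Xi-z_\ast)/|\Xi'(a)|$, and the non-degeneracy $A'(a)>0$, $\Xi'(a)<0$ guarantees that this correction has a definite sign, forcing a branch of genuine eigenvalues $z \in \mathscr{U}_{m_0}$ to detach from $z_\ast$ into the open upper half-plane. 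For finiteness: by Proposition~\ref{p:all-m} the set $\mathscr{U}_{m_0}$ is bounded and discrete in $\{\mathrm{Im}\,z>0\}$, with possible accumulation only on the real axis; any accumulation point $z_\infty$ would, by the same boundary analysis applied to a normalized sequence of eigenfunctions, be forced into the critical set $\{\Xi(a),\Xi(b)\}$ (the only real values where removable-singularity $L^2$ solutions exist), and the local Lyapunov–Schmidt count controls the number of branches bifurcating from each such point by the corresponding Morse index, which is finite. \textbf{The main obstacle} is precisely this bifurcation step: making the neutral-to-unstable transition rigorous demands uniform resolvent control of the Rayleigh operator as the critical layer is approached, together with a careful calculation of the sign of the imaginary correction in the dispersion relation — the Tollmien–Lin mechanism — which is exactly the delicate point also highlighted in the rigorous shear-flow analyses of~\cite{LinSIMA2003}.
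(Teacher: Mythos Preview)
Your outline captures the right ingredients — the self-adjoint limiting operator $H = -\partial_t^2 + A/(\Xi-\Xi(a))$, the neutral mode, the Lyapunov--Schmidt/Plemelj bifurcation mechanism, and the compactness argument for finiteness — and is essentially the strategy of the paper. However, there is a genuine gap in how you deploy the bifurcation.

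You arrange $\Xi$ so that $-m_0^2$ is an eigenvalue of $H$ for an \emph{integer} $m_0$, i.e.\ so that the neutral wavenumber $m_a := \sqrt{\lambda_a}$ equals $m_0$ exactly, and then attempt to bifurcate at fixed $m = m_0$ by viewing $z \mapsto T_{m_0}(z)$ as an analytic pencil near $z_\ast = \Xi(a)$. This does not produce an unstable mode: the Lyapunov--Schmidt reduction yields a scalar equation of the form $c(a)(z - z_\ast) + o(|z - z_\ast|) = 0$ with $c(a) \neq 0$ (this is the paper's Lemma~\ref{l:will-apply-Rouche} with $h = 0$), whose only solution near $z_\ast$ is $z = z_\ast$ itself — the real neutral mode, not an element of $\mathscr{U}_{m_0}$. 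The Plemelj correction fixes the \emph{direction} in which the curve of eigenvalues leaves the real axis, but the bifurcation parameter is the wavenumber $m$, not $z$: unstable modes appear only for $m$ strictly \emph{below} $m_a$ (Proposition~\ref{p:5-7}), and $\mathscr{U}_{m_a}$ itself need not be nonempty.

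The paper avoids this by not insisting that $m_a$ be an integer. Instead it (i) bifurcates to obtain $\mathscr{U}_m \neq \emptyset$ for $m$ just below $m_a$, (ii) proves a continuation result (Proposition~\ref{p:almost-final}, via the open-and-closed argument of Lemma~\ref{l:almost-final-2}) propagating nonemptiness to the whole interval $]m_b, m_a[$, and (iii) constructs $\Xi$ so that this interval contains an integer $m_0 \geq 2$ (Proposition~\ref{p:final} and Lemma~\ref{l:bottom}). Finiteness at such an $m_0$ is then immediate from Proposition~\ref{p:3+4}: since $m_0 \neq m_a, m_b$, there are no neutral limiting modes at wavenumber $m_0$, hence no real accumulation points of $\mathscr{U}_{m_0}$. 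In your setup, by contrast, $m_0 = m_a$ is precisely the wavenumber at which a neutral limiting mode exists, so even the finiteness claim would require additional work.
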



\section{A formal expansion}
\label{sec:aformalexpansion}

\index{Rayleigh's stability equation@Rayleigh's stability equation}
In this section, we present a formal argument for the existence of solutions to Rayleigh's equation
\begin{equation}\label{e:eigenvalue-equation-recall}
(\Xi(t) - z) \Big(-\frac{d^2}{dt^2} + m^2\Big) \varphi (t) + A(t) \varphi (t) = 0 
\end{equation}
which correspond to unstable eigenmodes.

For convenience, we ignore the subtleties as $|t| \to +\infty$ by considering~\eqref{e:eigenvalue-equation-recall} in a bounded open interval $I$ with zero Dirichlet conditions. The background flow in question is a vortex-like flow in an annulus. In this setting, Rayleigh's equation~\eqref{e:eigenvalue-equation-recall} is nearly the same as the analogous equation 
for shear flows $(U(y),0)$ in a bounded channel: \index{shear flow@shear flow} the differential rotation $\Xi(t)$ corresponds to the shear profile $U(y)$, and $A(t)$, which is $r$ times the radial derivative of the vorticity, corresponds to $U''(y)$. In either setting, unstable eigenvalues $\lambda$ correspond to $z$ with $\textrm{Im} z > 0$. Moreover, Rayleigh's equation enjoys a conjugation symmetry: If $(\varphi,m,z)$ is a solution, then so is $(\bar{\varphi},m,\bar{z})$. Hence, to each unstable mode, there exists a corresponding stable mode, and vice versa. The equation is further symmetric with respect to $m \to -m$, and we make the convention that $m \geq 0$.

It will be convenient to divide by the factor $\Xi - z$ and analyze the resulting steady Schr{\"o}dinger equation. This is possible when:
\begin{itemize}
    \item[(i)] the eigenvalue is stable or unstable, i.e. $\textrm{Im} z \neq 0$, or 
    \item[(ii)]  $A(t)$ vanishes precisely where $\Xi(t) - z$ vanishes. 
    \end{itemize}
Note that we already know that $A(t)$ changing sign is necessary for instability. This is \index{Rayleigh's inflection point theorem@Rayleigh's inflection point theorem} Rayleigh's inflection point theorem, which can be demonstrated by multiplying~\eqref{e:eigenvalue-equation-3} with $\bar \varphi$ and integrating by parts (this procedure is in fact used a few times in the rest of the chapter, see for instance \eqref{e:tested}, \eqref{e:imaginary-trick}, and \eqref{e:imaginary-trick-2}). Suppose $A$ vanishes at $t_0$ and define $z_0 := \Xi(t_0)$, sometimes called a \emph{critical value} or \emph{critical layer}. By the assumption that $\Xi' < 0$, we have that\index{critical value@critical value}\index{critical layer@critical layer}
\begin{equation}
    \mathcal{K}(t) := \frac{A(t)}{\Xi(t) - z_0}
\end{equation}
is a well-defined potential, and
\begin{equation}
    \label{eq:aoverxi}
    \frac{A(t)}{\Xi(t) - z_0} \approx \frac{A'(t_0)}{\Xi'(t_0)} + O(t-t_0)
\end{equation}
as $t \to t_0$. After rearranging, the equation at $z = z_0$ becomes
\begin{equation}
    \underbrace{-\varphi'' + \mathcal{K} \varphi}_{ =: \mathcal{L} \varphi} = - m^2 \varphi \, .
\end{equation}
$\mathcal{L}$ is a self-adjoint operator on the appropriate function spaces. Using~\eqref{eq:aoverxi}, it is possible to design $\Xi$ such that $\mathcal{K}$ is negative enough to ensure that $\mathcal{L}$ has a negative eigenvalue; see Lemma~\ref{l:bottom}. Let $-m_0^2$ denote the bottom of the spectrum of $\mathcal{L}$, with the convention $m_0 > 0$. Let $\varphi_0$ denote an associated eigenfunction,
\begin{equation}
    \label{eq:varphievalequation}
    \mathcal{L} \varphi_0 = - m_0^2 \varphi_0 \, ,
\end{equation}
which we normalize to $\| \varphi_0 \|_{L^2}^2 = 1$. By Sturm-Liouville theory, the eigenvalue $-m_0^2$ is \index{algebraic multiplicity@algebraic multiplicity} algebraically simple, and its eigenfunction $\varphi_0$ does not vanish except at $\partial I$. In summary, $(\varphi_0,m_0,z_0)$ is a solution to Rayleigh's equation. The eigenfunction $\varphi_0$ is known as a \emph{neutral mode}, since it is neither stable nor unstable: $\textrm{Im} z = 0$. $m_0$ is the corresponding \emph{neutral wavenumber}, and in general it may not be an integer.
\index{neutral mode@neutral mode}\index{neutral wavenumber@neutral wavenumber}

We seek unstable modes $\varphi_\varepsilon$ nearby the neutral mode $\varphi_0$ in a perturbation expansion:
\begin{equation}
    \begin{aligned}
        \varphi_\varepsilon &= \varphi_0 + \varepsilon \varphi_1 + \varepsilon^2 \varphi_2 + \cdots \\
    m_\varepsilon &= m_0 + \varepsilon m_1 + \varepsilon^2 m_2 + \cdots \\
    z_\varepsilon &= z_0 + \varepsilon z_1  + \varepsilon^2 z_2 + \cdots \, .
    \end{aligned}
\end{equation}
We normalize $\| \varphi_\varepsilon \|_{L^2}^2 = 1$, which yields the condition
\begin{equation}
    \langle \varphi_0, \varphi_1 \rangle = 0 \, .
\end{equation}
Eventually, we will see that it is also possible to fix the freedom in the parameter $\varepsilon$ by choosing, e.g., $|m_1|=1$. Due to the conjugation symmetry of Rayleigh's equation, the perturbation expansion will simultaneously detect stable modes.

The equation to be satisfied by $\varphi_1$ is
\begin{align}
    -z_1 & \Big( -\frac{d^2}{dt^2} + m_0^2 \Big) \varphi_0  + 2(\Xi - z_0) m_0 m_1 \varphi_0\nonumber\\
    &+ \underbrace{(\Xi - z_0)  \Big( -\frac{d^2}{dt^2} + m_0^2 \Big) \varphi_1 + A \varphi_1}_{ = (\Xi - z_0) (\mathcal{L} + m_0^2) \varphi_1} = 0 \, .
\end{align}
We wish to divide by $\Xi - z_0$ and apply the equation~\eqref{eq:varphievalequation} for $\varphi_0$ to obtain
\begin{equation}
    z_1 (\Xi - z_0)^{-1} \mathcal{K} \varphi_0 + 2 m_0 m_1 \varphi_0 + (\mathcal{L} + m_0^2) \varphi_1 = 0 \, .
\end{equation}
However, we should not manipulate terms containing $(\Xi - z_0)^{-1}$ cavalierly; $(\Xi - z_0)^{-1}$ is not locally integrable. Rather, we divide by $\Xi - z_0 \pm i \delta$ and send $\delta \to 0^+$. We will find that one choice of $\pm$ will detect the stable eigenvalue, and the other choice will detect the unstable eigenvalue. We write
\begin{equation}
    z_1 \lim_{\delta \to 0^+} [(\Xi - z_0 \pm i\delta )^{-1} \mathcal{K} \varphi_0] + 2 m_0 m_1 \varphi_0 + (\mathcal{L} + m_0^2) \varphi_1 = 0 \, .
\end{equation}
By the Fredholm theory, the above equation is solvable provided that
\begin{equation}
    \langle z_1 (\Xi - z_0 \pm i0)^{-1} \mathcal{K} \varphi_0 + 2 m_0 m_1 \varphi_0, \varphi_0 \rangle = 0 \, ,
\end{equation}
borrowing the physics shorthand $\pm i0$ to track the regularization procedure. By the Plemelj formula,\index{Plemelj's formula@Plemelj's formula}
\begin{align}
    \int (\Xi - z_0 \pm i0)^{-1} \mathcal{K} |\varphi_0|^2 \, dt = & \mp i\pi \Xi'(t_0)^{-1} \mathcal{K}(t_0) |\varphi_0(t_0)|^2\nonumber\\
    &+ \textrm{pv} \, \int (\Xi - z_0)^{-1} \mathcal{K} |\varphi_0|^2 \, dt \, .
\end{align}
Hence,
\begin{equation}
    z_1 \left[ \mp i\pi \Xi'(t_0)^{-1}\mathcal{K}(t_0) |\varphi_0(t_0)|^2  + \textrm{pv} \, \int (\Xi - z_0)^{-1} \mathcal{K} |\varphi_0|^2 \, dt \right] + 2 m_0 m_1 = 0 \, .
\end{equation}
The first term is guaranteed not to vanish.
In particular, we have the relationship
\begin{equation}
    \label{eq:calculateangle}
    z_1 = \frac{2 m_0 m_1}{\pm i\pi \Xi'(t_0)^{-1} \mathcal{K}(t_0) |\varphi_0(t_0)|^2  - \textrm{pv} \, \int (\Xi - z_0)^{-1} \mathcal{K} |\varphi_0|^2 \, dt } \neq 0 \, ,
\end{equation}
and
\begin{equation}
    \label{eq:choiceofsign}
    \textrm{sgn} (\textrm{Im} \, {z_1}) = \mp \textrm{sgn} \frac{\mathcal{K}(t_0) m_1}{\Xi'(t_0)} \, .
\end{equation}

The perturbation expansion suggests the existence of a curve of unstable modes bifurcating from the neutral mode; see Figure~\ref{fig:figeval}. For the rigorous implementation of the ideas above to show that the existence of the unstable branch in a neighborhood of the neutral mode we refer to Lemma \ref{l:will-apply-Rouche} where ultimately Plemelj's formula will play a crucial role.  

However, it is also important to say something about the global picture of the curve of unstable modes and, in particular, to ensure that $m$ can be chosen to be an integer. This is done by continuing the curve; see Proposition~\ref{p:almost-final}. Heuristically, the main obstacle will be that the curve could ``fall" back to the plane $\textrm{Im} z = 0$. A key part of understanding the global picture will be to demonstrate that the only possible \emph{neutral limiting modes}, where the curve touches $\textrm{Im} z = 0$, are at the critical values $z_c = \Xi(t_c)$ where $A(t_c) = 0$ (otherwise, the potential $A/(\Xi - z)$ contains a singularity which will lead to a contradiction); see Proposition~\ref{p:3+4}.

\begin{figure}[ht]
    \centering
    \includegraphics[width=0.8\textwidth]{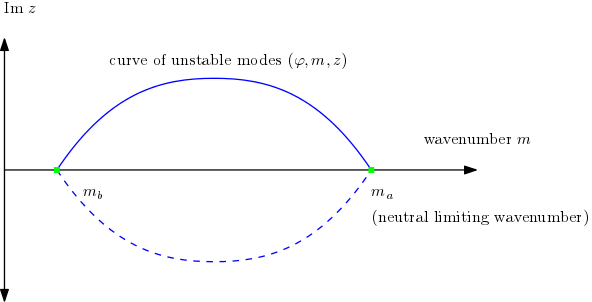}
    \caption{A schematic depiction of the emergence of stable and unstable modes from neutral modes. The dotted curve is the corresponding curve of stable modes, obtained by complex conjugation. Notably, the curve is not smooth across $\textrm{Im} z = 0$. The one-sided derivatives of $z$ at $m_a$ may be calculated as $-z_1$ in~\eqref{eq:calculateangle} with $m_1 = -1$.}
    \label{fig:figeval}
\end{figure}

The case of rotating flows in an annulus was considered by Lin in~\cite{LinSIMA2003}, and the reader to encouraged to compare with the approach therein.

\section{Overview of the proof of Theorem \ref{thm:spectral5}}\label{sec:overviewspectralthm}

The rest of the chapter is devoted to proving Theorem \ref{thm:spectral5}. The proof will be achieved through a careful study of \index{Rayleigh's stability equation@Rayleigh's stability equation} Rayleigh's stability equation~\eqref{e:eigenvalue-equation-3} and, in particular, the set $\mathscr{P}$ of pairs $(m,z)$ such that $z\in \mathscr{U}_m$ and $m>1$, i.e.,
\begin{equation}\label{e:def-pairs}
\mathscr{P}:= \left\{(m,z)\in \mathbb R \times \mathbb C: z\in \mathscr{U}_m, m>1\right\}\, .
\end{equation}\index{aalPscr@$\mathscr{P}$}
Given that $\Xi$ is strictly decreasing, we have 
\[
\lim_{t\to -\infty} \Xi (t) > \Xi (a) > \Xi (b) > \lim_{t\to \infty} \Xi (t) = 0
\]
and in order to simplify our notation we will use $\Xi (-\infty)$ for $\lim_{t\to -\infty} \Xi (t)$ and, occasionally, $\Xi (\infty)$ for $0$.

The first step in the proof of Theorem \ref{thm:spectral5} is understanding which pairs $(m,z)$ belong to the closure of $\mathscr{P}$ and have $\textrm{Im}\, z =0$. Solutions $(m,z,\varphi)$ to~\eqref{e:eigenvalue-equation-3} with $(m,z) \in \overline{\mathscr{P}}$ and $\textrm{Im}\, z = 0$ are sometimes called \emph{neutral limiting modes}~\cite{LinSIMA2003}. \index{neutral mode@neutral mode} To that end, it is convenient to introduce the following two self-adjoint operators:\index{aalLa@$L_a$}\index{aalLb@$L_b$}
\begin{align}
L_a &:= -\frac{d^2}{dt^2} + \frac{A(t)}{\Xi (t) - \Xi (a)}\label{e:def-L_a}\\
L_b &:= -\frac{d^2}{dt^2} + \frac{A(t)}{\Xi (t) - \Xi (b)}\label{e:def-L_b}\, .
\end{align}
Thanks to the definition of the class $\mathscr{C}$, it is easy to see that both functions $\frac{A(t)}{\Xi (t) - \Xi (a)}$ and $\frac{A(t)}{\Xi (t) - \Xi (b)}$ are bounded and that $\frac{A(t)}{\Xi (t) - \Xi (a)} < \frac{A(t)}{\Xi (t) - \Xi (b)}$.

Moreover, the first is negative on $]\! - \! \infty, b[$ and positive on $]b, \infty[$, while the second is negative on $]\! -\!\infty , a[$ and positive on $]a, \infty[$. Recall that the spectra of these operators are necessarily real and denote by $-\lambda_a$ and $-\lambda_b$ the smallest element in the respective spectra: observe that, by the Rayleigh quotient characterization, $-\lambda_a < -\lambda_b$.\index{aagla@$\lambda_a$}\index{aaglb@$\lambda_b$}

While \emph{a priori}, $\lambda_a = \lambda_b = 0$ is possible, for the remainder of this section, we work only in the setting $-\lambda_b < 0$; this is possible due to Proposition~\ref{p:final}.

The following proposition characterizes the possible neutral limiting modes:

\begin{proposition}\label{p:3+4}\label{P:3+4}
If $(m_0,z)\in \overline{\mathscr{P}}$ and $\textrm{Im}\, z =0$, then either $z= \Xi (a)$ or $z= \Xi (b)$. 
Moreover, in either case, if $m_0>1$ then necessarily $m_0 = \sqrt{\lambda_a}$ or $m_0 = \sqrt{\lambda_b}$. Assume in addition that $- \lambda_a < -1$. Then, for $z = \Xi (a)$, the unique $m\geq 1$ such that \eqref{e:eigenvalue-equation-3} has a nontrivial solution $\psi_a\in L^2$ is $m_a = \sqrt{\lambda_a}$. Moreover, any nontrivial solution has the property that $\psi_a (a) \neq 0$.
\end{proposition}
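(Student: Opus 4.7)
First I would extract a limit $\varphi$ of normalized eigenfunctions. Pick $(m_n,z_n)\in\mathscr{P}$ converging to $(m_0,z)$ with $\textrm{Im}\, z_n>0$, and nontrivial $\varphi_n\in L^2\cap W^{2,2}_{\mathrm{loc}}$ solving \eqref{e:eigenvalue-equation-3} with $\|\varphi_n\|_{L^2}=1$. Testing with $\bar{\varphi}_n$ and integrating by parts gives
\begin{equation*}
\int|\varphi_n'|^2\,dt+m_n^2+\int\frac{A\,(\Xi-\bar{z}_n)}{|\Xi-z_n|^2}|\varphi_n|^2\,dt=0 \, .
\end{equation*}
The imaginary part, divided by the positive quantity $\textrm{Im}\, z_n$, yields the Howard-type identity
\begin{equation}\label{eq:plan-howard}
\int\frac{A(t)}{|\Xi(t)-z_n|^2}|\varphi_n(t)|^2\,dt=0 \, ,
\end{equation}
while the real part, combined with \eqref{eq:plan-howard}, controls $\|\varphi_n'\|_{L^2}$ uniformly away from the critical layer $t_0$ defined by $\Xi(t_0)=z$ (should $z$ lie in the range of $\Xi$). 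A compactness argument then extracts a subsequence with $\varphi_n\to\varphi$ in $L^2_{\mathrm{loc}}$, uniformly on compact sets avoiding $t_0$, where $\varphi$ is nontrivial and solves Rayleigh's equation at $(m_0,z)$.

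Next I would localize $z$ via Plemelj. Since $\textrm{Im}\, z_n/|\Xi-z_n|^2\to\pi\delta_{t_0}/|\Xi'(t_0)|$ as measures, multiplying \eqref{eq:plan-howard} by $\textrm{Im}\, z_n$ and passing to the limit yields
\begin{equation*}
\pi\,\frac{A(t_0)\,|\varphi(t_0)|^2}{|\Xi'(t_0)|}=0 \, ,
\end{equation*}
so either $A(t_0)=0$, i.e., $t_0\in\{a,b\}$ and $z\in\{\Xi(a),\Xi(b)\}$, or $\varphi(t_0)=0$. To exclude $\varphi(t_0)=0$ when $A(t_0)\ne 0$, I would run a Frobenius analysis at the regular singular point $t_0$ of $(\Xi-z)(-\varphi''+m_0^2\varphi)+A\varphi=0$: the indicial roots are $0$ and $1$, and the $r=0$ local solution carries a $(t-t_0)\log(t-t_0)$ term with coefficient proportional to $A(t_0)/\Xi'(t_0)\ne 0$, whose second derivative fails to be $L^2_{\mathrm{loc}}$. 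Hence $\varphi$ is proportional to the regular branch $\sim(t-t_0)$ near $t_0$, so $\varphi'(t_0)\ne 0$. The next-order principal-value identity, extracted from the real part of the tested equation in the same limit, then yields a quantitative Orr-type obstruction inconsistent with $\varphi$ being a nontrivial $L^2$ solution, in the spirit of Lin's shear-flow analysis~\cite{LinSIMA2003}. The case $z\notin[0,\Xi(-\infty)]$ (outside the range of $\Xi$) is easier: $\Xi-z$ is bounded away from $0$, \eqref{eq:plan-howard} passes directly to the limit as $\int A|\varphi|^2/(\Xi-z)^2\,dt=0$, and combining this with the limit equation leads to a spectral obstruction for the bounded Schr\"odinger operator $-\partial_t^2+A/(\Xi-z)$ that is incompatible with the sign structure of $A$.

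For the second half, once $z=\Xi(a)$, the simple zeros of $A$ and $\Xi-\Xi(a)$ at $t=a$ cancel and the potential $A/(\Xi-\Xi(a))$ extends continuously across $a$ to $A'(a)/\Xi'(a)<0$. The limit equation reduces to $L_a\varphi=-m_0^2\varphi$ with $\varphi\in L^2(\mathbb{R})$, so $-m_0^2$ is a negative eigenvalue of the self-adjoint operator $L_a$. Because the potential of $L_a$ vanishes at $\pm\infty$, the negative spectrum of $L_a$ is discrete and its infimum is $-\lambda_a$; hence $m_0^2\le\lambda_a$. The equality $m_0=\sqrt{\lambda_a}$ is forced by tracking the bifurcation picture of Section~\ref{sec:aformalexpansion}: the unstable branch whose closure contains $(m_0,z)$ is pinned at the principal eigenvalue, whose ground-state eigenfunction inherits the non-oscillatory character of the limits $\varphi_n$. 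The symmetric argument at $z=\Xi(b)$ yields the alternative $m_0=\sqrt{\lambda_b}$, completing the dichotomy.

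Finally, for the uniqueness clause under $-\lambda_a<-1$: any $m\ge 1$ admitting a nontrivial $L^2$ solution of \eqref{e:eigenvalue-equation-3} at $z=\Xi(a)$ produces a negative eigenvalue $-m^2\le -1$ of $L_a$. Sturm-Liouville theory on the line ensures the eigenvalues of $L_a$ are simple and the ground state is strictly of one sign, which immediately gives $\psi_a(a)\ne 0$. The choice of $\Xi$ to be made in Section~\ref{s:choice-of-A} will be designed so that $-\lambda_a$ is the unique eigenvalue of $L_a$ in $(-\infty,-1]$, forcing $m_a=\sqrt{\lambda_a}$ to be the unique admissible $m\ge 1$. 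The main obstacle I expect is the Frobenius/Orr step excluding $\varphi(t_0)=0$ at non-inflection critical layers; this requires a careful ODE analysis near the regular singular point of Rayleigh's equation and is the vortex analogue of the classical shear-flow argument of Lin~\cite{LinSIMA2003}.
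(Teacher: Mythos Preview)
Your proposal has the right opening moves (the imaginary-part identity \eqref{eq:plan-howard}, compactness of the sequence $\varphi_n$), but it breaks down at the two points where the proposition actually requires work, and in both places the paper uses a single algebraic device that you never invoke: the identity $A=\Xi''+2\Xi'$ together with the substitution $\psi=(\Xi-z)\eta$.

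\textbf{The uniqueness clause is not proved.} You write that ``the choice of $\Xi$ to be made in Section~\ref{s:choice-of-A} will be designed so that $-\lambda_a$ is the unique eigenvalue of $L_a$ in $(-\infty,-1]$.'' This is circular and also false: the construction in Section~\ref{s:choice-of-A} does \emph{not} arrange $L_a$ to have a single eigenvalue below $-1$, and Proposition~\ref{p:final} needs Proposition~\ref{p:3+4} to hold for \emph{every} $\Xi\in\mathscr{C}$ along the homotopy $\Xi_\sigma$. The paper proves the uniqueness for arbitrary $\Xi\in\mathscr{C}$ by a direct energy computation: given two hypothetical solutions $\psi_1,\psi_2$ at $m_1<m_2$, a nontrivial combination $\psi$ with $\psi(a)=0$ is formed, one integrates by parts using $A=(\Xi'+2\Xi)'$, and then substitutes $\psi=(\Xi-\Xi(a))\eta$ to obtain
\[
\int(\Xi-\Xi(a))^2(\eta'-\eta)^2=-C_1^2(m_1^2-1)\!\int\!\psi_1^2-C_2^2(m_2^2-1)\!\int\!\psi_2^2,
\]
which forces $C_2=0$ and then $\eta=Ce^t$, contradicting the known decay. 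This same computation yields $\psi_a(a)\neq0$ for \emph{any} nontrivial solution, not just the ground state; your Sturm--Liouville positivity argument only covers the ground state and therefore presupposes the uniqueness you are trying to prove. The identification $m_0=\sqrt{\lambda_a}$ in the first clause then follows from this uniqueness, so your appeal to ``tracking the bifurcation picture'' (which is Proposition~\ref{p:5-7}, proved \emph{after} this one) is also circular.

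\textbf{The exclusion of non-inflection critical layers is not an argument.} At a point $t_0$ with $A(t_0)\neq0$, you correctly anticipate that the limit $\varphi$ satisfies $\varphi(t_0)=0$, but your ``next-order principal-value identity yields a quantitative Orr-type obstruction'' is a placeholder, not a proof. The paper instead passes to the limit distributionally (writing $\frac{A}{\Xi-z_j}\psi_j$ via $\frac{d}{dt}\ln(\Xi-z_j)$ to control the singularity), obtains a genuine $C^1$ solution $\psi$ of the limiting Rayleigh equation with $\psi(c)=0$, and then runs the \emph{same} $\eta$-substitution as above to force $\psi\equiv0$. The contradiction with nontriviality is then obtained by testing the equation against $\bar\psi_j-\overline{\psi_j(c_j)}$ and showing $\|\psi_j'\|_{L^2}\to0$. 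The analogous $\eta$-identity also disposes of the cases $z\notin\overline{\Xi(\mathbb R)}$ and $z\in\{0,\Xi(-\infty)\}$ in one stroke; your ``spectral obstruction\ldots incompatible with the sign structure of $A$'' for $z$ outside the range is again unspecified.

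In short, every part of the paper's proof is powered by the $(\Xi-z)\eta$ substitution exploiting $A=\Xi''+2\Xi'$, and your proposal never touches this structure.
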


\begin{remark}\label{r:b-also}
We remark that the exact same argument applies with $b$ in place of $a$ when $\lambda_b >1$, even though this fact does not play any role in the rest of the notes.
\end{remark}

\index{aalma@$m_a$}\index{aalmb@$m_b$}
Observe that this does not yet show that $(m_a, \Xi (a))\in \overline{\mathscr{P}}$ corresponds to a neutral limiting mode. The latter property will be achieved in a second step, in which we seek a curve of unstable modes emanating from $(m_a, \Xi(a))$:

\begin{proposition}\label{p:5-7}\label{P:5-7}
Assume $- \lambda_a<-1$ and let $m_a=\sqrt{\lambda_a}$.
There are positive constants $\varepsilon >0$ and $\delta>0$ with the following property: 
For every $h\in ]0, \delta[$, $\mathscr{U}_{m_a-h} \cap B_\varepsilon (\Xi (a)) \neq \emptyset$. 
\end{proposition}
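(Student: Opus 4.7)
My plan is to rigorously implement the formal bifurcation from Section~\ref{sec:aformalexpansion} at the neutral mode $(m_a,\Xi(a))$. A Lyapunov--Schmidt reduction produces a scalar equation $F(m,z)=0$ which is holomorphic in $z$ in the open upper half-plane and continuous up to the real axis at $z=\Xi(a)$; an explicit Plemelj computation shows that $\partial_z F$ at $(m_a,\Xi(a))$, computed as a boundary value from above, has nonzero imaginary part; Rouché's theorem then produces, for each $h\in\,]0,\delta[$, a root $z(h)\in B_\varepsilon(\Xi(a))$, and the sign of $\operatorname{Im}\partial_z F$ forces $\operatorname{Im} z(h)>0$, so $z(h)\in\mathscr{U}_{m_a-h}$.

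For the setup, for $z$ in the closed half-disk $\overline{D_\varepsilon^+}:=\overline{B_\varepsilon(\Xi(a))}\cap\{\operatorname{Im} z\geq 0\}$ (with $\varepsilon$ small), the potential $V_z(t):=A(t)/(\Xi(t)-z)$ is bounded and continuous in $t$ (bounded at $z=\Xi(a)$ because $A(a)=0$ and $\Xi'(a)<0$) and depends holomorphically on $z$ in the open half-disk. The operator $L_z:=-\partial_t^2+V_z$ on $L^2(\mathbb{R})$ reduces at $z=\Xi(a)$ to the self-adjoint $L_a$ of~\eqref{e:def-L_a}, whose smallest eigenvalue $-m_a^2$ is simple; let $\varphi_0$ denote the real, $L^2$-normalized eigenfunction, which decays exponentially at $\pm\infty$ and satisfies $\varphi_0(a)\neq 0$ by Proposition~\ref{p:3+4}. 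Let $P$ be the orthogonal projection onto $\operatorname{span}\{\varphi_0\}$ and $Q:=I-P$. Decomposing $\varphi=\varphi_0+\psi$ with $\psi\in QL^2$, the equation $(L_z+m^2)\varphi=0$ splits into
\begin{align*}
Q(L_z+m^2)\psi &= -Q\bigl((V_z-V_{\Xi(a)})\varphi_0\bigr)-(m^2-m_a^2)Q\varphi_0, \\
F(m,z) &:= \langle L_z(\varphi_0+\psi),\varphi_0\rangle+m^2 = 0.
\end{align*}
Since $L_a+m_a^2$ is invertible on $QL^2$, the first equation solves uniquely for $\psi=\psi(m,z)$ by a Banach contraction argument, with $\psi(m_a,\Xi(a))=0$ and holomorphic (in $z$)/continuous dependence on $(m,z)$.

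Expanding $F$ around $(m_a,\Xi(a))$, the first derivative in $m$ equals $2m_a>0$; the first derivative in $z$, which is formally $\int A|\varphi_0|^2/(\Xi-z)^2\,dt$ at $z=\Xi(a)$, is computed as the upper-half-plane limit by applying Plemelj's formula to the substituted Cauchy integral $g(z):=\int_0^{\Xi(-\infty)} h(s)/(s-z)\,ds$, where $h(s):=A(\tau(s))|\varphi_0(\tau(s))|^2/|\Xi'(\tau(s))|$ with $\tau:=\Xi^{-1}$; noting $h(\Xi(a))=0$ and integrating by parts once, one finds
\begin{equation*}
Z := \partial_z F\bigl(m_a,\Xi(a)+i0\bigr) = \mathrm{pv}\!\int \frac{h'(s)}{s-\Xi(a)}\,ds \;+\; i\pi\,\frac{\mathcal{K}(a)\,|\varphi_0(a)|^2}{|\Xi'(a)|},
\end{equation*}
with $\mathcal{K}(a)=A'(a)/\Xi'(a)<0$, so $\operatorname{Im} Z<0$. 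Hence
\begin{equation*}
F(m,z) = 2m_a(m-m_a)+Z\,(z-\Xi(a))+o\bigl(|m-m_a|+|z-\Xi(a)|\bigr)
\end{equation*}
uniformly on $\overline{D_\varepsilon^+}\times[m_a-\delta,m_a]$. Fixing $m=m_a-h$ and viewing $F(m_a-h,\cdot)$ as a holomorphic function of $z\in D_\varepsilon^+$ continuous on $\overline{D_\varepsilon^+}$, Rouché's theorem (by comparison with the linear model $z\mapsto -2m_a h+Z(z-\Xi(a))$ on a small circle around $\Xi(a)+2m_a h/Z$) yields a unique zero $z(h)\in B_\varepsilon(\Xi(a))$, and $z(h)-\Xi(a)\approx 2m_a h/Z$ has imaginary part $\approx -2m_a h\,\operatorname{Im} Z/|Z|^2>0$, so $z(h)\in\mathscr{U}_{m_a-h}\cap B_\varepsilon(\Xi(a))$.

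The main obstacle is justifying the uniform boundary behavior: one needs that $\psi(m,z)$ and $F(m,z)$, a priori defined only for $z$ in the open half-disk $D_\varepsilon^+$, extend continuously to $\overline{D_\varepsilon^+}$, and that the Plemelj residue formula for $\partial_z F(m_a,\Xi(a)+i0)$ is valid with an $o(\cdot)$ error term that is uniform in the manner of approach to the real axis. Once this is secured, tracking the sign of $\operatorname{Im} Z$---which uses only $A'(a)>0$, $\Xi'(a)<0$, and $\varphi_0(a)\neq 0$---is immediate and pins the branch $z(h)$ into the upper half-plane.
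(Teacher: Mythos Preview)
Your overall strategy---Lyapunov--Schmidt reduction, Plemelj computation of the boundary value, Rouch\'e---is exactly the paper's approach, and you have correctly identified the crucial sign $\operatorname{Im} Z<0$ (the paper's $\operatorname{Im} c(a)>0$, same content after a sign convention). However, there is a genuine gap, and it is precisely the one you flag at the end but have not resolved.

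The assertion that ``for $z$ in the closed half-disk $\overline{D_\varepsilon^+}$ the potential $V_z(t)=A(t)/(\Xi(t)-z)$ is bounded and continuous in $t$'' is false. It holds at the single boundary point $z=\Xi(a)$ (because $A(a)=0$), but for any other real $z=\Xi(c)$ with $c$ near $a$ and $c\neq a$ we have $A(c)\neq 0$, so $V_z$ has a genuine pole at $t=c$. Consequently your reduced equation for $\psi$, your function $F(m,z)$, and your Rouch\'e comparison circle---which as written may graze or cross the real axis---are simply not defined on the full closed half-disk. The ``main obstacle'' you name is therefore not a technicality to be patched but the heart of the matter.

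The paper's resolution is to abandon the half-disk and work instead in cones $C_\mu=\{\operatorname{Im} z\geq \mu\,|\operatorname{Re}(z-\Xi(a))|\}$ with apex at $\Xi(a)$. Inside such a cone the potentials $V_z$ are uniformly controllable, and the paper shows (its Lemma~\ref{l:Rmz-small}) that the perturbation operator has operator norm tending to $0$ as $(m,z)\to(m_a,\Xi(a))$ \emph{within the cone}; this is what makes the contraction for $\psi(m,z)$ go through. To obtain a Taylor remainder of size $o(|h|+|z-\Xi(a)|)$---needed for Rouch\'e---the paper further upgrades to $C^\sigma$ estimates (Step~1 of Lemma~\ref{l:will-apply-Rouche}), since the $L^2$ operator norm alone does not vanish linearly. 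Finally, the Rouch\'e disk is centred at $\xi(h)=\Xi(a)+2m_a h/c(a)$, which for $h>0$ lies strictly in the upper half-plane, and the cone opening $\mu$ and disk radius are tuned (the paper's choice $\beta=\gamma/3$) so that the entire disk stays inside $C_\mu$. You should replace the half-disk setup with this cone framework; once that is done, your outline goes through essentially as in the paper.
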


This is proved by making the formal argument in Section~\ref{sec:aformalexpansion} rigorous.

\begin{remark}\label{r:b-also-2} In fact, the argument given for the proposition proves the stronger conclusion that $\mathscr{U}_{m_a-h} \cap B_\varepsilon (\Xi (a))$ consists of a single point $z$, with the property that $mz$ is an eigenvalue of $\mathcal{L}_m$ with geometric multiplicity $1$. \index{geometric multiplicity@geometric multiplicity}
Moreover, the very same argument applies to $b$ in place of $a$ and $h \in ]-\delta,0[$ if $\lambda_b >1$.
\end{remark}
Combined with some further analysis, in which the curve of unstable modes is continued, the latter proposition will allow us to conclude the following:

\begin{proposition}\label{p:almost-final}\label{P:ALMOST-FINAL}
Assume $- \lambda_a<-1$, let $m_a = \sqrt{\lambda_a}$ and  $m_b:= \sqrt{\max \{1, \lambda_b\}}$. Then 
$\mathscr{U}_m\neq \emptyset$ for every $m\in ]m_b, m_a[$. \end{proposition}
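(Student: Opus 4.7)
The plan is to continue the curve of unstable modes produced by Proposition~\ref{p:5-7} via a topological connectedness argument applied to the set
\begin{equation*}
I := \{m \in \,]m_b, m_a[\,:\, \mathscr{U}_m \neq \emptyset \}.
\end{equation*}
First, $]m_b, m_a[$ is nonempty: the hypothesis $-\lambda_a < -1$ gives $\lambda_a > 1$, and combined with $\lambda_a > \lambda_b$ (from the Rayleigh quotient characterization noted after~\eqref{e:def-L_b}) this yields $\lambda_a > \max\{1, \lambda_b\}$, i.e. $m_a > m_b$. Nonemptiness of $I$ follows from Proposition~\ref{p:5-7}: choosing $\delta' \in \,]0, \delta[$ small enough that $m_a - \delta' > m_b$, we have $m_a - h \in I$ for $h \in \,]0, \delta'[$. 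The goal is to show $I$ is both open and relatively closed in $]m_b, m_a[$, whence $I = \,]m_b, m_a[$ by connectedness.

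For openness, fix $m_* \in I$ and $z_* \in \mathscr{U}_{m_*}$. By Remark~\ref{r:m-factor-eigenvalue}, $\mu_* := m_* z_*$ is an eigenvalue of $\mathcal{L}_{m_*}$ with $\textrm{Im}\, \mu_* > 0$; by Proposition~\ref{p:all-m}(iii) it belongs to the discrete spectrum. Enclose $\mu_*$ in a small closed disk $\overline{D}$ in the open upper half-plane whose boundary contour $\gamma$ avoids $\textrm{spec}(\mathcal{L}_{m_*})$. The explicit formulas of Lemmas~\ref{l:S-in-polar} and~\ref{l:K-in-polar} make $m \mapsto \mathcal{L}_m$ continuous in operator norm, so for $m$ near $m_*$ the contour $\gamma$ still lies in the resolvent set of $\mathcal{L}_m$ and the Riesz projections $P_m := \frac{1}{2\pi i} \oint_\gamma (w - \mathcal{L}_m)^{-1}\, dw$ converge in operator norm to $P_{m_*}$. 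Since $P_{m_*} \neq 0$, the same holds for $P_m$ with $m$ close to $m_*$; this forces an eigenvalue of $\mathcal{L}_m$ inside $D$, producing a point of $\mathscr{U}_m$.

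For relative closedness, let $m_n \in I$ with $m_n \to m_\infty \in \,]m_b, m_a[$ and pick $z_n \in \mathscr{U}_{m_n}$ with associated eigenfunction $\varphi_n$. A Howard-type semicircle estimate—obtained by multiplying the Rayleigh equation~\eqref{e:eigenvalue-equation-3} by $\overline{\varphi_n}/(\Xi - z_n)$, integrating, and taking real and imaginary parts, using that $\Xi$ takes values in $\,]0, \Xi(-\infty)[$—confines $z_n$ to the closed disk of center $\Xi(-\infty)/2$ and radius $\Xi(-\infty)/2$. In particular $\{z_n\}$ is bounded, so we may pass to a subsequence with $z_n \to z_\infty$ and $\textrm{Im}\, z_\infty \geq 0$. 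If $\textrm{Im}\, z_\infty > 0$, the operator-norm continuity of $\mathcal{L}_m$ and upper semicontinuity of the spectrum under norm-convergent perturbations place $m_\infty z_\infty \in \textrm{spec}(\mathcal{L}_{m_\infty})$; Proposition~\ref{p:all-m}(iii) then promotes it to an eigenvalue, giving $m_\infty \in I$. If instead $\textrm{Im}\, z_\infty = 0$, then $(m_\infty, z_\infty) \in \overline{\mathscr{P}}$ with $m_\infty > m_b \geq 1$, so Proposition~\ref{p:3+4} forces $m_\infty \in \{\sqrt{\lambda_a}, \sqrt{\lambda_b}\}$; but $\sqrt{\lambda_a} = m_a > m_\infty$ and $\sqrt{\lambda_b} \leq m_b < m_\infty$, a contradiction. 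Hence $m_\infty \in I$ and $I$ is closed in $]m_b, m_a[$.

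The main obstacle is the uniform boundedness of the unstable eigenvalues $z_n$ in the closedness step: without it one cannot extract a convergent subsequence and the appeal to Proposition~\ref{p:3+4} collapses. This is precisely the role of the classical Howard semicircle-type estimate, whose derivation here relies on the bounds on $\Xi$ built into the class~$\mathscr{C}$.
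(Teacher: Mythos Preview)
Your argument follows the same connectedness strategy as the paper (Lemma~\ref{l:almost-final-2}): openness via Riesz projectors and norm-continuity of $m\mapsto\mathcal{L}_m$, closedness by extracting a subsequential limit of the $z_n$ and invoking Proposition~\ref{p:3+4} to rule out $\mathrm{Im}\,z_\infty=0$. This is correct.

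Two remarks on execution. First, the Howard semicircle estimate you invoke is more than you need and is not developed in the paper; in fact the standard semicircle proof is written for the shear-flow Rayleigh equation with $A=\Xi''$, whereas here $A=\Xi''+2\Xi'$, so a direct transcription would require some care. The paper instead obtains boundedness of the $z_n$ from Proposition~\ref{p:all-m}(i): $m_nz_n\in\mathrm{spec}(\mathcal{L}_{m_n})$ and $\|\mathcal{L}_{m_n}\|$ is uniformly bounded on compact $m$-sets, so $|z_n|\le m_n^{-1}\|\mathcal{L}_{m_n}\|$ is bounded. You should use this instead. Second, for the limit passage when $\mathrm{Im}\,z_\infty>0$, your abstract spectral argument (norm-convergence $\Rightarrow$ limit point lies in the spectrum, then Proposition~\ref{p:all-m}(iii) upgrades to an eigenvalue) is a clean alternative to what the paper does, namely establish compactness of the normalized eigenfunctions $\psi_j$ in $L^2$ via the ODE estimates of Lemma~\ref{l:ODE2} and pass to the limit directly in \eqref{e:eigenvalue-equation-3}. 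Both routes work; yours is shorter here, while the paper's yields the eigenfunction convergence explicitly.
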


Thus far, we have not selected our function $\Xi$: the above properties are valid for any element in the class $\mathscr{C}$. The choice of $\Xi$ comes in the very last step.

\begin{proposition}\label{p:final}
There is a choice of $\Xi\in \mathscr{C}$ with the property that $]m_b,m_a[$ contains an integer larger than $1$.
\end{proposition}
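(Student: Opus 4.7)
The plan is to exhibit a one-parameter family $\{\Xi_B\}_{B\geq B_0}\subset\mathscr{C}$, indexed by a large parameter $B$, and to show that as $B\to\infty$ the quantity $\lambda_a = \lambda_a(B)$ diverges to $+\infty$ while $\lambda_b = \lambda_b(B)$ remains uniformly bounded. Since $m_a(B) = \sqrt{\lambda_a(B)}\to+\infty$ and $m_b(B)\leq C$ uniformly, for $B$ large enough the open interval $]m_b(B),m_a(B)[$ has length larger than $1$ and contains an integer $m_0\geq 2$. In particular, this will simultaneously guarantee the standing hypothesis $-\lambda_a<-1$ needed in Propositions~\ref{p:3+4}, \ref{p:5-7}, \ref{p:almost-final}.

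\textbf{The construction.} Following the schematic in Figure~\ref{fig:fig1}, I take $a=a_B := -1/\sqrt{B}$ and $b:=1/2$, and design $A_B=\Xi_B''+2\Xi_B'$ to be a smooth function, negative on $]\!-\!\infty,a_B[\,\cup\,]b,+\infty[$ and positive on $]a_B,b[$, with $A_B'(a_B)$ growing at least linearly in $B$. The asymptotic shape of $\Xi_B$ on $]\!-\!\infty, M_0]$ and $[\ln 2,+\infty[$ is dictated by conditions (i) and (ii) of Definition~\ref{d:class-C} (and in particular is independent of $B$ near $t=b=1/2$). On the transition region the profile is interpolated by prescribing $A_B$ first, then integrating the ODE $\Xi_B''+2\Xi_B'=A_B$ with matched boundary values. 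The sharp change of $A_B'$ near $a_B$ is engineered so that near $a_B$
\begin{equation*}
\frac{A_B(t)}{\Xi_B(t)-\Xi_B(a_B)}\;\approx\;\frac{A_B'(a_B)}{\Xi_B'(a_B)} \;\longrightarrow\; -\infty \qquad (B\to\infty),
\end{equation*}
and this very negative value persists on a macroscopic neighborhood of $a_B$, not just a shrinking one. Monotonicity $\Xi_B'<0$ is preserved by keeping the positive $L^1$-mass of $A_B$ on $]a_B,b[$ small enough (thanks to the $+2\Xi_B'$ term), which is compatible with $A_B'(a_B)$ being large once $A_B$ is allowed to vary rapidly on a short scale.

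\textbf{The eigenvalue estimates.} For the lower bound $\lambda_{a}(B)\to\infty$ I use the min-max characterization
\begin{equation*}
-\lambda_a(B)\;=\;\inf_{\substack{\psi\in H^1(\R)\\ \|\psi\|_{L^2}=1}}\int_{\R}\Big(|\psi'|^2+\frac{A_B(t)}{\Xi_B(t)-\Xi_B(a_B)}|\psi|^2\Big)\,dt.
\end{equation*}
A rescaled smooth bump $\psi$ supported in an interval $I_B$ around $a_B$ on which the potential is bounded above by $-cB$, together with $\int|\psi'|^2 = O(|I_B|^{-2})$, yields $-\lambda_a(B)\leq -cB/2$ for $B$ large, hence $\lambda_a(B)\gtrsim B$. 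For the upper bound on $\lambda_b(B)$, I observe that the potential of $L_b$ is
\begin{equation*}
\frac{A_B(t)}{\Xi_B(t)-\Xi_B(b)},
\end{equation*}
which by construction has bounded $L^\infty$-norm uniformly in $B$ (the denominator is of order $|t-b|$ near $b$, cancelling the zero of $A_B$ at $b$; elsewhere the denominator stays away from $0$). Testing against a fixed smooth $\psi_0\in H^1(\R)$ of unit norm then produces a constant upper bound on $-\lambda_b(B)$ from above (and trivially $\lambda_b(B)\geq 0$ is not available, but a uniform bound $\lambda_b(B)\leq C$ follows from the uniform $L^\infty$ bound on the potential). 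Combining, $m_a(B)-m_b(B)\to +\infty$ and the proposition follows.

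\textbf{Main obstacle.} The main subtlety is engineering $\Xi_B$ to satisfy all four conditions of Definition~\ref{d:class-C} while simultaneously producing the deep well in the $L_a$-potential. Preserving $\Xi_B'<0$ throughout, ensuring $A_B$ has exactly the two prescribed zeros, and matching the exponential asymptotics at $\pm\infty$ are mutually competing requirements, particularly because a large positive peak of $A_B$ on $]a_B,b[$ tends to raise $\Xi_B'$; this has to be counterbalanced by using the fast-decaying $-2\Xi_B'$ contribution and by keeping the peak sufficiently narrow. The verification of the uniform pointwise bound on the $L_b$-potential near $b=1/2$ also requires quantitative control of $\Xi_B'(b)$ and of the decay rate of $A_B$ away from its zero at $b$, which has to be locked down by the interpolation procedure.
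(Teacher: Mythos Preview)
Your approach differs substantially from the paper's, and there is a genuine gap in the step where you bound $\lambda_b(B)$ uniformly. You claim that the potential $V_b := A_B/(\Xi_B-\Xi_B(b))$ has uniformly bounded $L^\infty$ norm, but this is incompatible with your own requirement that the well of $V_a := A_B/(\Xi_B-\Xi_B(a_B))$ be very deep on a \emph{macroscopic} neighborhood of $a_B$. Since $V_a$ is continuous at $a_B$ with value $A_B'(a_B)/\Xi_B'(a_B)$, a deep well extends to both sides of $a_B$. On the portion of this neighborhood lying to the left of $a_B$ one has $A_B<0$ and $\Xi_B-\Xi_B(b)>0$, and there $|A_B| = |V_a|\cdot|\Xi_B-\Xi_B(a_B)|$ is large (the second factor is bounded below on a fixed-distance set) while $\Xi_B-\Xi_B(b)$ stays bounded above; hence $V_b$ is very negative there as well. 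Testing $L_b$ with a bump supported on that fixed-width set then forces $-\lambda_b(B)\to-\infty$, so $m_b(B)\to\infty$ along with $m_a(B)$ and nothing prevents $]m_b,m_a[$ from missing every integer.

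The paper sidesteps this entirely by a continuity argument that never bounds $\lambda_b$ directly. With $a=0$, $b=\tfrac12$ fixed, it takes an arbitrary $\Xi_0\in\mathscr{C}$, chooses an integer $m_0\geq 3$ with $m_0^2>\lambda_a(\Xi_0)$, and invokes Lemma~\ref{l:bottom} to produce $\Xi_1\in\mathscr{C}$ with $\lambda_a(\Xi_1)>m_0^2$. Along the convex interpolation $\Xi_\sigma=(1-\sigma)\Xi_0+\sigma\Xi_1$ (which remains in $\mathscr{C}$), the map $\sigma\mapsto\lambda_a(\sigma)$ is continuous, so there is a largest $\sigma_0$ with $\lambda_a(\sigma_0)=m_0^2$. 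At that $\sigma_0$ the \emph{universal} strict inequality $\lambda_b<\lambda_a$ (valid for every $\Xi\in\mathscr{C}$, from the pointwise ordering of the two potentials) gives $\lambda_b(\sigma_0)<m_0^2$; continuity of $\sigma\mapsto\lambda_b(\sigma)$ preserves this for $\sigma=\sigma_0+h$ with small $h>0$, while by maximality $\lambda_a(\sigma_0+h)>m_0^2$. Hence $m_b<m_0<m_a$ for $\Xi=\Xi_{\sigma_0+h}$. The ingredient you are missing is precisely this use of the ordering $\lambda_b<\lambda_a$ together with the intermediate value theorem, in place of separate quantitative control of $\lambda_b$.
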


Clearly, the combination of Proposition \ref{p:almost-final} and Proposition \ref{p:final} gives Theorem \ref{thm:spectral5}: we first choose $\Xi$ as in Proposition \ref{p:final} and hence we select $m_0$ as the largest natural number which belongs to the interval $]m_b,m_a[$; the properties claimed in Theorem \ref{thm:spectral5} follow then from Proposition \ref{p:almost-final}. The proof of Proposition \ref{p:final} is in fact a rather straightforward application of the following.

\begin{lemma}\label{l:bottom}\label{L:BOTTOM}
Let $m_0$ be any integer. Then there exists $\Xi\in \mathscr{C}$ with $a=0$ and $b=\frac{1}{2}$ such that the smallest eigenvalue of the operator $L_a$ is smaller than $-m_0^2$.
\end{lemma}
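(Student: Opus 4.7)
By the min-max principle, the smallest eigenvalue of the self-adjoint operator $L_a$ is given by the Rayleigh quotient
\begin{equation*}
-\lambda_a\;=\;\inf_{\substack{\varphi\in H^1(\mathbb R)\\\varphi\not\equiv 0}}\,\frac{\int_{\mathbb R}(|\varphi'|^2+V_a|\varphi|^2)\,dt}{\int_{\mathbb R}|\varphi|^2\,dt},\qquad V_a:=\frac{A}{\Xi-\Xi(0)}.
\end{equation*}
By L'H\^opital, $V_a$ is smooth at $t=0$ with $V_a(0)=A'(0)/\Xi'(0)$, which is already negative under the sign conditions of Definition~\ref{d:class-C}. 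Hence it suffices, given $m_0$, to build some $\Xi\in\mathscr C$ (with $a=0$, $b=\tfrac12$) together with a compactly supported trial function $\varphi$ making the above quotient strictly less than $-m_0^2$. The strategy is to produce a one-parameter family $\Xi_B\in\mathscr C$ along which $V_{a,B}(0)\to-\infty$ at rate $-B$, and to match this with a suitably rescaled trial function.

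\textbf{Construction.} It is convenient to parametrize candidates in $\mathscr C$ via the auxiliary profile $G(t):=\Xi'(t)+2\Xi(t)=g(e^t)$: one has $A=G'$, and $\Xi$ is recovered from $G$ by the linear ODE $\Xi'+2\Xi=G$ with $\Xi(-\infty)$ prescribed. Membership in $\mathscr C$ with $a=0$, $b=\tfrac12$ becomes a list of conditions on $G$ alone --- the prescribed exponential asymptotics of Definition~\ref{d:class-C}(i)--(ii), $G$ having $t=0$ as its unique local minimum and $t=\tfrac12$ as its unique local maximum, and $\Xi'=G-2\Xi<0$ everywhere. Start from any reference $G_\infty$ producing some $\Xi_\infty\in\mathscr C$, and set $G_B:=G_\infty+h_B$ with $h_B(t):=\mu\,\eta(t/\delta_B)$, where $\eta$ is a fixed even bump with $\eta(0)=0$ and $\eta''(0)=K$ (with $K>0$ a constant to be fixed), $\mu>0$ is a small constant with $\mu<|\Xi_\infty'(0)|$, and $\delta_B:=\sqrt{K\mu/B}$. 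Then $h_B''(0)=B$, so $A_B'(0)=G_\infty''(0)+B\asymp B$; on the other hand $\|h_B\|_{C^0}=\mu$ and the induced perturbation of $\Xi$ is $O(\mu\delta_B)=O(B^{-1/2})$ in $C^1$, so $\Xi_B\in\mathscr C$ still holds for $B$ large (the sign $\Xi_B'<0$ survives because the perturbation of $\Xi_B'$ stays below $|\Xi_\infty'(0)|$, and the approximate linearity $h_B'(t)\approx Bt$ on the support of $h_B$ prevents the creation of extra critical points of $G_B$). In particular $V_{a,B}(0)\asymp -B/|\Xi_\infty'(0)|$.

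\textbf{Rayleigh estimate and main obstacle.} On the window $|t|\lesssim\delta_B$ the expansions $A_B(t)\approx Bt$ and $\Xi_B(t)-\Xi_B(0)\approx\Xi_\infty'(0)\,t$ persist. Plugging in the rescaled trial function $\varphi_B(t):=\delta_B^{-1/2}\varphi_0(t/\delta_B)$, for a fixed $\varphi_0\in C_c^\infty((0,1))$, and changing variables leads, after an elementary computation, to a Rayleigh quotient of the form
\begin{equation*}
\frac{B}{K}\Bigl(\frac{\|\varphi_0'\|_{L^2}^2}{\mu\|\varphi_0\|_{L^2}^2}-\frac{D_\eta}{|\Xi_\infty'(0)|\,\|\varphi_0\|_{L^2}^2}\Bigr)+o(B),\qquad D_\eta:=\int_0^1\frac{\eta'(u)}{u}\,\varphi_0^2(u)\,du.
\end{equation*}
A suitable choice of $\eta$ (with $\eta'(u)/u$ bounded below on $[0,1]$ by a large enough positive constant) makes the bracket strictly negative; the Rayleigh quotient then tends to $-\infty$ as $B\to\infty$ and falls below $-m_0^2$ for $B$ sufficiently large. \emph{The main obstacle} is the balance between the aggressive perturbation of $G$ ($G_B''(0)\asymp B$) and the rigid structural constraints of $\mathscr C$ (strict monotonicity $\Xi_B'<0$ and correct critical-point structure of $G_B$); this is achieved by the scaling $\delta_B\asymp B^{-1/2}$ together with the smallness of $\mu$ relative to $|\Xi_\infty'(0)|$.
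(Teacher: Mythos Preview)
Your overall plan—drive $V_a(0)=A'(0)/\Xi'(0)$ to $-\infty$ via a family localized at scale $\delta_B\sim B^{-1/2}$ and test the Rayleigh quotient on a matching window—is exactly the paper's. But the perturbative realization $G_B=G_\infty+\mu\,\eta(\cdot/\delta_B)$ cannot be made to work. Your normalization $\|h_B\|_{C^0}=\mu$ forces $\|\eta\|_{C^0}=1$; preserving $\Xi_B'<0$ then requires $\mu<|\Xi_\infty'(0)|$, since $\Xi_B'-\Xi_\infty'=h_B+O(\delta_B)$. Under this constraint the Rayleigh bracket is \emph{always nonnegative} whenever $\eta'\ge 0$ on $(0,1)$: for $\varphi_0\in H^1_0(0,1)$ Cauchy--Schwarz gives $\varphi_0(u)^2=\big(\int_0^u\varphi_0'\big)^2\le u\,\|\varphi_0'\|_{L^2}^2$, hence
\[
D_\eta=\int_0^1\eta'(u)\,\frac{\varphi_0(u)^2}{u}\,du\;\le\;\|\varphi_0'\|_{L^2}^2\int_0^1\eta'\;=\;\|\varphi_0'\|_{L^2}^2\,\eta(1)\;\le\;\|\varphi_0'\|_{L^2}^2\,,
\]
so that
\[
\frac{\|\varphi_0'\|_{L^2}^2}{\mu\|\varphi_0\|_{L^2}^2}-\frac{D_\eta}{|\Xi_\infty'(0)|\,\|\varphi_0\|_{L^2}^2}
\;\ge\;\frac{\|\varphi_0'\|_{L^2}^2}{\|\varphi_0\|_{L^2}^2}\Big(\frac{1}{\mu}-\frac{1}{|\Xi_\infty'(0)|}\Big)\;>\;0\,.
\]
On the other hand, any compactly supported even bump with $\eta(0)=0$ and $\eta''(0)>0$ must have $\eta'<0$ somewhere on the positive half of its support; there $h_B'(t)=(\mu/\delta_B)\,\eta'(t/\delta_B)$ is of order $-\sqrt{B}$, which dominates $A_\infty(t)=O(\delta_B)$ and forces $A_B<0$ at points of $(0,\tfrac12)$, destroying condition~(iii) of $\mathscr C$. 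Your linearization ``$h_B'(t)\approx Bt$ on the support of $h_B$'' is only valid for $|t|\ll\delta_B$, not on the full support.

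The paper sidesteps both traps by constructing $A$ directly rather than as a perturbation: it prescribes $A(t)=Bt$ on $[-B^{-1/2},0]$, so the steep slope sits entirely in the region $\{A<0\}$ (which only \emph{helps} the condition $\Xi'<0$, verified through the integral constraints $\int_{-\infty}^0 e^{2\tau}A\,d\tau=-1$ and $\max A\le 1/e$), and keeps $A$ uniformly small on $[0,\tfrac12]$. The test function is the \emph{fixed} $\psi(t)=\sqrt{2}\cos(\pi t)\,\mathbf 1_{[-1/2,1/2]}$; since $V_a<0$ throughout $(-\tfrac12,\tfrac12)$ one only needs the crude bound $V_a\le -B/2$ on $[-B^{-1/2},0]$ to get $-\lambda_a\le 2\pi^2-\sqrt{B}$.
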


\begin{remark}\label{rmk:veryunstablemodes}
A consequence of Lemma~\ref{l:bottom} is that the most unstable wavenumber $m_0$ can be made arbitrarily large. Only $m_0 \geq 2$ is necessary to prove non-uniqueness. We warn the reader that here $m_0$ {\em does not} refer to the neutral wavenumber of Section~\ref{sec:aformalexpansion}.
\end{remark}

The rest of the chapter will be devoted to proving the Propositions \ref{p:3+4} and \ref{p:5-7} and Lemma \ref{l:bottom}. We finish this section by giving the simple proof of Proposition \ref{p:final}

\begin{proof} For simplicity we fix $a=0$ and $b=\frac{1}{2}$ and we look at the set of functions $\Xi$ with this particular choice of zeros for $A$. We then denote by $L_{\Xi, a}$ the operator in \eqref{e:def-L_a}. We fix an arbitrary $\Xi_0\in \mathscr{C}$ and let $-\lambda (0)$ be the smallest eigenvalue of $L_{\Xi_0,a}$. We then consider the smallest integer $m_0\geq 3$ such that $m_0^2 > \lambda (0)$. By Lemma \ref{l:bottom} there is an element $\Xi_1\in \mathscr{C}$ with the property that $a=0$, $b=\frac{1}{2}$ and, if $-\lambda (1)$ is the smallest element of the spectrum of $L_{\Xi_1, a}$, then $-\lambda (1) < -m_0^2$. For $\sigma\in [0,1]$ consider $L_{\Xi_\sigma,a}$ where
\[
\Xi_\sigma = (1-\sigma) \Xi_0 + \sigma \Xi_1\, 
\]
and observe that $\Xi_\sigma \in \mathscr{C}$ for every $\sigma\in [0,1]$.

Since $\sigma \mapsto \Xi_\sigma$ is continuous in the uniform convergence, by the Rayleigh quotient characterization we see that the smallest element $-\lambda (\sigma)$ of the spectrum of $L_{\Xi_\sigma,a}$ is a continuous function of $\sigma$. There is thus one $\sigma\in [0,1[$ with $\lambda (\sigma)= m_0^2$. Let $\sigma_0$ be the largest $\sigma$ with $\lambda (\sigma)= m_0^2$. Observe now that, if we let $- \mu (\sigma_0)$ be the smallest eigenvalue of $L_{\Xi_{\sigma_0}, b}$, then $\mu (\sigma_0) < m_0^2$. In addition, $\sigma\mapsto \mu (\sigma)$ is also continuous and thus there is $h>0$ such that $\mu (\sigma) < m_0^2$ for all $\sigma\in [\sigma_0-h, \sigma_0+h]$. On the other hand $\lambda (\sigma_0+h)> m_0^2$. This shows that $m_b < m_0 < m_a$ if we choose $\Xi= \Xi_{\sigma_0+h}$, completing the proof of our claim.
\end{proof}

\section{ODE Lemmas}

An essential tool in the proofs of the Propositions \ref{p:3+4} and \ref{p:5-7} are the following two ODE lemmas.

\begin{lemma}\label{l:ODE1}
Let $m>0$. For every $f\in L^2 (\mathbb R)$ there is a unique $\psi\in L^2(\R) \cap W^{2,2}_{\text{loc}}$ s.t.
\begin{equation}\label{e:Laplacian-1d}
-\frac{d^2\psi}{dt^2} + m^2\psi = f
\end{equation}
and it is given by
\begin{equation}\label{e:potential-1d}
\psi (t) = \frac{1}{2m} \int_{\R} e^{-m|t-\tau|} f (\tau)\, d\tau\, .
\end{equation}
\end{lemma}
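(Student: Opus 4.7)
The plan is to observe that the function $K(t) := \frac{1}{2m} e^{-m|t|}$ is the Green's function for the operator $-\frac{d^2}{dt^2} + m^2$ on $\mathbb{R}$, so that the candidate solution is simply $\psi = K * f$. Existence will follow by showing this convolution lies in the right spaces and satisfies the equation; uniqueness will follow from the fact that no nontrivial homogeneous solution of $-\psi'' + m^2 \psi = 0$ belongs to $L^2(\mathbb{R})$.

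For existence, I would first verify that $K \in L^1(\mathbb{R})$ with $\|K\|_{L^1} = \frac{1}{m^2}$, so by Young's convolution inequality the function $\psi$ defined by \eqref{e:potential-1d} satisfies $\|\psi\|_{L^2} \leq \frac{1}{m^2} \|f\|_{L^2}$, and in particular $\psi \in L^2(\mathbb{R})$. To check the equation, I would split the kernel and write
\[
\psi(t) = \frac{1}{2m} e^{-mt} \int_{-\infty}^t e^{m\tau} f(\tau)\, d\tau + \frac{1}{2m} e^{mt} \int_t^\infty e^{-m\tau} f(\tau)\, d\tau\, .
\]
Differentiating once, the boundary contributions from the two integrals cancel and one obtains
\[
\psi'(t) = -\tfrac{1}{2} e^{-mt} \int_{-\infty}^t e^{m\tau} f(\tau)\, d\tau + \tfrac{1}{2} e^{mt} \int_t^\infty e^{-m\tau} f(\tau)\, d\tau\, ,
\]
which is absolutely continuous, so $\psi \in W^{2,2}_{\text{loc}}$. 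Differentiating once more and collecting the boundary terms yields $\psi''(t) = m^2 \psi(t) - f(t)$, i.e.\ \eqref{e:Laplacian-1d}. (Alternatively, one could take Fourier transforms and note $(m^2 + \xi^2) \hat\psi = \hat f$, which gives the same formula after inversion.)

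For uniqueness, suppose $\psi_1, \psi_2 \in L^2 \cap W^{2,2}_{\text{loc}}$ both solve \eqref{e:Laplacian-1d}. Then $\varphi := \psi_1 - \psi_2$ satisfies $-\varphi'' + m^2 \varphi = 0$ in the distributional sense, hence classically by elliptic regularity on $\mathbb{R}$. The general solution is $\varphi(t) = c_+ e^{mt} + c_- e^{-mt}$, and since $m > 0$ neither exponential lies in $L^2(\mathbb{R})$; thus $c_+ = c_- = 0$ and $\psi_1 = \psi_2$.

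The proof is essentially routine; the only mild care is in justifying the differentiation under the integral sign and confirming $W^{2,2}_{\text{loc}}$ regularity. This is straightforward because $f \in L^2 \subset L^1_{\text{loc}}$ makes the two antiderivatives in the expression for $\psi'$ absolutely continuous, so no genuine obstacle arises.
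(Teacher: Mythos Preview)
Your proof is correct and follows essentially the same approach as the paper: the paper invokes Young's inequality (from $e^{-m|t|}\in L^1$) for the $L^2$ bound, calls the verification of \eqref{e:Laplacian-1d} ``an elementary computation,'' and handles uniqueness exactly as you do, by noting that any other solution differs by $C_+ e^{mt} + C_- e^{-mt}$ and that $L^2$ forces $C_+ = C_- = 0$. Your write-up is simply more detailed in carrying out the differentiation step that the paper leaves to the reader.
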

\begin{proof} The lemma is a classical well-known fact. At any rate the verification that
$\psi$ as in \eqref{e:potential-1d} solves \eqref{e:Laplacian-1d} is an elementary computation while, since obviosuly $e^{-m|t|}\in L^1$, $\psi\in L^2$ if $f\in L^2$. Moreover, any other solution $\hat\psi$ of \eqref{e:Laplacian-1d} must satisfy $\hat\psi (t) = \psi (t) + C_+ e^{mt} + C_- e^{-mt}$ for some constants $C_\pm$ and the requirement $\hat\psi\in L^2$ immediately implies $C_+=C_-=0$.
\end{proof}

The second ODE Lemma is the following:

\begin{lemma}\label{l:ODE2}
Let $v\in L^1 (\mathbb R, \mathbb C)$. Then for every constant $c_-$ there is a unique solution $y \in W^{2,1}_{\text{loc}} (\mathbb R, \mathbb C)$ of 
\begin{equation}\label{e:ODE2}
- \frac{d^2y}{dt^2} + (m^2 + v) y =0
\end{equation}
with the property that 
\begin{equation}\label{e:y=e^mt}
\lim_{t\to - \infty} e^{-mt} y (t) =c_-\, .
\end{equation}
Moreover we have $y(t) = e^{mt} (c_-+z(t))$ for a function $z(t)$ which satisfies the bounds
\begin{align}
|z(t)| &\leq |c_-|\left[\exp \left(\frac{1}{2m} \int_{-\infty}^t |v(s)|\, ds\right) -1\right]\label{e:est-z}\\
|z'(t)| &\leq 2m |c_-|\left[\exp \left(\frac{1}{2m} \int_{-\infty}^t |v(s)|\, ds\right) -1\right]\label{e:est-z'}
\end{align}
A symmetric statement, left to the reader, holds for solutions such that
\begin{equation}\label{e:y=e^mt-plus}
\lim_{t\to \infty} e^{mt} y (t) =c_+\, .
\end{equation}
\end{lemma}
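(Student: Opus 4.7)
The plan is to substitute $y(t) = e^{mt}(c_- + z(t))$, so that the asymptotic condition \eqref{e:y=e^mt} becomes simply $z(t) \to 0$ as $t \to -\infty$. A direct computation shows that \eqref{e:ODE2} is equivalent to
\[ z'' + 2m z' = v\,(c_- + z), \]
and multiplying by the integrating factor $e^{2mt}$ and integrating from $-\infty$ (the decay $e^{2mt}z'(t) \to 0$ being checked a posteriori) yields
\[ z'(t) = \int_{-\infty}^t v(s)\,(c_- + z(s))\, e^{2m(s-t)}\, ds. \]
Integrating once more and interchanging the order of integration produces the fixed-point equation
\[ z(t) = \frac{1}{2m}\int_{-\infty}^t v(s)\,(c_- + z(s))\,\bigl(1 - e^{2m(s-t)}\bigr)\, ds. \]

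I would then construct $z$ as the unique fixed point of the right-hand side by a standard Picard iteration on the Banach space $C_b(\,]\!-\!\infty,t_\ast])$ for $t_\ast$ sufficiently negative (so that the contraction constant $\tfrac{1}{2m}\int_{-\infty}^{t_\ast}|v|$ is small); once $z$ is constructed on a left half-line, the classical Picard--Lindel\"of theorem extends it uniquely to a global $W^{2,1}_{\text{loc}}$ solution of \eqref{e:ODE2}. For the pointwise estimate on $z$, the fixed-point equation gives $|c_- + z(t)| \leq |c_-| + \tfrac{1}{2m}\int_{-\infty}^t |v(s)|\,|c_-+z(s)|\,ds$, so Gronwall's inequality and the notation $F(t) := \tfrac{1}{2m}\int_{-\infty}^t |v|$ yield $|c_-+z(t)| \leq |c_-|\,e^{F(t)}$, which is \eqref{e:est-z}.

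The delicate step is the sharp bound \eqref{e:est-z'} on $z'$: the naive estimate using $|u(s)| \leq |c_-| e^{F(s)}$ and $e^{2m(s-t)} \leq 1$ only gives $|z'(t)| \leq 2m|c_-| F(t)\,e^{F(t)}$, which is strictly weaker than what is claimed (since $Fe^F \geq e^F - 1$). The trick is to notice the identity $|v(s)|\,e^{F(s)} = 2m\,\bigl(e^{F(s)}\bigr)'$ and integrate by parts:
\[ \int_{-\infty}^t |v(s)|\,e^{F(s)}\,e^{2m(s-t)}\, ds = 2m\,e^{F(t)} - 4m^2 \int_{-\infty}^t e^{F(s)}\,e^{2m(s-t)}\, ds, \]
where the boundary contribution at $-\infty$ vanishes because $e^{F(s)}\to 1$ while $e^{2m(s-t)}\to 0$. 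Since $e^{F(s)}\geq 1$, the remaining integral is at least $\int_{-\infty}^t e^{2m(s-t)}\,ds = 1/(2m)$, which gives exactly $|z'(t)| \leq 2m|c_-|\bigl(e^{F(t)}-1\bigr)$.

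Uniqueness is then immediate: if $y_1,y_2$ are two solutions with the same $c_-$, then $\tilde z := y_1 e^{-mt} - y_2 e^{-mt}$ satisfies the homogeneous version of the fixed-point equation on every left half-line, so Gronwall forces $\tilde z \equiv 0$ on $\,]\!-\!\infty,t_\ast]$, and standard ODE uniqueness propagates this to all of $\mathbb R$. The only genuinely non-routine point in the whole argument is the integration-by-parts identity used to pass from the crude $Fe^F$ bound to the sharp $e^F - 1$ bound for $z'$; everything else is a direct application of Gronwall's inequality and classical linear ODE theory.
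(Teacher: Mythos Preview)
Your approach is essentially the paper's: both derive the same Volterra fixed-point equation for $z$ (the paper writes $\tilde y = c_- + z$), obtain existence by contraction on a left half-line, and extend by classical ODE theory. For the pointwise bound the paper compares Picard iterates against $Z(t) := |c_-|(e^{F(t)}-1)$ and then runs a separate continuity argument to push the bound past the contraction interval; your Gronwall route is more direct and works globally once the integral equation is known to hold for all $t$.

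Two small corrections. First, the inequality $|c_- + z(t)| \leq |c_-|e^{F(t)}$ is \emph{not} \eqref{e:est-z}: the latter asserts $|z(t)| \leq |c_-|(e^{F(t)}-1)$, and this does not follow from the former by the triangle inequality. The fix is a one-liner: feed $|c_-+z(s)| \leq |c_-|e^{F(s)}$ back into the fixed-point bound $|z(t)| \leq \tfrac{1}{2m}\int_{-\infty}^t |v(s)|\,|c_-+z(s)|\,ds$ and use $|v|e^F = 2m\,(e^F)'$ to get $|z(t)|\leq |c_-|(e^{F(t)}-1)$. Second, your integration-by-parts manoeuvre for $z'$ is correct but unnecessary: once you have $|c_-+z(s)|\leq |c_-|e^{F(s)}$, simply drop the factor $e^{2m(s-t)}\leq 1$ in the integral formula for $z'$ and apply the same identity $\int_{-\infty}^t |v(s)|e^{F(s)}\,ds = 2m(e^{F(t)}-1)$ to obtain \eqref{e:est-z'} directly; this is effectively what the paper does.
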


Important consequences of the above Lemmas are the following:

\begin{corollary}\label{c:decay}
If $(m,z)\in \mathscr{P}$, then the space of solutions $\varphi\in L^2\cap W^{2,2}_{\text{loc}}$ of \eqref{e:eigenvalue-equation-3} is $1$-dimensional. Moreover for any such $\varphi$ there is a constant $C$ with the property that
\begin{align}
|\varphi (t)| &\leq C e^{-m|t|}\, 
\end{align}
and there are two constants $C_+$ and $C_-$ such that
\begin{align}
\lim_{t\to\infty} e^{mt} \varphi (t) &= C_+\\
\lim_{t\to -\infty} e^{-mt} \varphi (t) &= C_-\, .
\end{align}
The constants are either both nonzero or both zero, in which case $\varphi$ vanishes identically.

The same conclusions apply if $m>1$, $z\in \{\Xi (a), \Xi (b)\}$ and $\varphi$ solves \eqref{e:eigenvalue-equation-3}.
\end{corollary}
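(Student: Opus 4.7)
The plan is to recast Rayleigh's equation as a Schr\"odinger-type ODE with an $L^1$ potential and then read off the asymptotic behavior from Lemma~\ref{l:ODE2}.

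First I would verify that the potential $v(t):=A(t)/(\Xi(t)-z)$ lies in $L^1(\mathbb R)$ in both cases of the statement. Using \eqref{e:A-Xi} together with the explicit forms of $\Xi$ prescribed by the class $\mathscr C$, a direct computation gives $A(t)=-8c_0 e^{2t}$ for $t\leq M_0$ and $A(t)=-\bar\alpha\, e^{-\bar\alpha t}$ for $t\geq \ln 2$, so $A\in L^1(\mathbb R)$ unconditionally. When $\textrm{Im}\,z>0$ the bound $|\Xi(t)-z|\geq|\textrm{Im}\,z|>0$ yields $v\in L^1$ immediately. When $z\in\{\Xi(a),\Xi(b)\}$ the only difficulty is the simple zero of $\Xi-z$ at $t=a$ or $t=b$; but the class $\mathscr C$ forces $A'(a)>0$, $A'(b)<0$ and $\Xi'(a),\Xi'(b)<0$, so $v$ has a removable singularity there by L'H\^opital and extends continuously, hence $v\in L^1$ again. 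Away from those points $|\Xi-z|$ is bounded below on compacts, and at $\pm\infty$ it tends to $\Xi(\mp\infty)-z\neq 0$.

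Next I would rewrite \eqref{e:eigenvalue-equation-3} as $-\varphi''+(m^2+v)\varphi=0$. This identity holds almost everywhere (the set $\{\Xi=z\}$ is at most two points, hence negligible), so any $W^{2,2}_{\text{loc}}$ solution of \eqref{e:eigenvalue-equation-3} is a $W^{2,2}_{\text{loc}}$ solution of the reduced equation and vice versa. Lemma~\ref{l:ODE2} then supplies, for each $c_-\in\mathbb C$, a unique solution with $\lim_{t\to-\infty}e^{-mt}\varphi(t)=c_-$, of the form $\varphi(t)=e^{mt}(c_-+z_-(t))$ with $z_-$ controlled by \eqref{e:est-z} in terms of $\|v\|_{L^1}$; analogously at $+\infty$. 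Any solution of the second-order ODE \emph{not} of this form carries a component $\sim e^{-mt}$ as $t\to-\infty$ and hence blows up exponentially, so cannot lie in $L^2$ near $-\infty$. Therefore every $L^2\cap W^{2,2}_{\text{loc}}$ solution admits limits $C_\pm:=\lim_{t\to\mp\infty}e^{\mp mt}\varphi(t)$, and the uniform bound $|\varphi(t)|\leq Ce^{-m|t|}$ follows on each half-line.

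Finally, the space of $L^2$ solutions is the intersection of the two one-dimensional ``decaying at $\pm\infty$'' subspaces produced by Lemma~\ref{l:ODE2}, hence at most one-dimensional; it is exactly one-dimensional because $(m,z)\in\mathscr P$ (resp.\ because we assume a nontrivial solution exists in the critical-value case). The dichotomy ``$C_+,C_-$ both nonzero, or $\varphi\equiv 0$'' is then a direct consequence of the uniqueness clause of Lemma~\ref{l:ODE2}: $C_-=0$ forces $\varphi$ to coincide with the unique solution having $c_-=0$, which is the trivial one, and symmetrically for $C_+$. The hard part of the argument, and the only place where real work is needed, is the first step: verifying the $L^1$ character of $v$ at the critical values $z=\Xi(a),\Xi(b)$. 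Without the precise structure of class $\mathscr C$ the potential would be genuinely singular and the reduction to Lemma~\ref{l:ODE2} would fail, forcing one to work with the degenerate equation \eqref{e:eigenvalue-equation-3} directly and re-prove by hand the asymptotic WKB-type expansion at $\pm\infty$.
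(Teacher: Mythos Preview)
Your argument is correct and somewhat more streamlined than the paper's. Both proofs rest on the same key observation, namely that $v=A/(\Xi-z)\in L^1(\mathbb R)$ (including at the critical values $z=\Xi(a),\Xi(b)$, where the simple zero of $A$ cancels that of $\Xi-z$), and both ultimately invoke Lemma~\ref{l:ODE2}. The difference is in how the decay $|\varphi(t)|\leq Ce^{-m|t|}$ and the existence of $C_\pm$ are obtained. The paper first uses Lemma~\ref{l:ODE1} to write $\varphi$ via the Green's function of $-\frac{d^2}{dt^2}+m^2$, then bootstraps from $\varphi\in L^2$ through a finite chain of polynomial-exponential bounds $|\varphi(t)|\leq C_ke^{-k\bar\alpha|t|/2}$ until reaching $e^{-m|t|}$; only afterward does it read off $C_\pm$ as explicit integrals and appeal to Lemma~\ref{l:ODE2} for uniqueness. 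You bypass the bootstrap entirely: Lemma~\ref{l:ODE2} applied at each end directly furnishes both the exponential bound and the limits $C_\pm$, once you know the $L^2$ solution must lie in the one-dimensional ``decaying'' subspace at $-\infty$. The one step you gloss over is precisely that last implication---that any solution outside the range of Lemma~\ref{l:ODE2} grows like $e^{-mt}$ as $t\to-\infty$---which is true but needs a one-line Wronskian or reduction-of-order remark (using that $\varphi_-$ is nonvanishing for $t\ll 0$ by \eqref{e:est-z}). Also note a sign slip: you want $C_\pm=\lim_{t\to\pm\infty}e^{\pm mt}\varphi(t)$, not $\mp$. Your route is cleaner; the paper's bootstrap has the mild advantage of producing $C_\pm$ as explicit integrals of $\varphi$ against the potential, which is occasionally handy elsewhere.
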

\begin{proof}
Observe that $|\Xi (t)-z|\geq |\textrm{Im}\, z|$, while $A (t) = 6 c_0 e^{2t}$ for $-t$ sufficiently large and $|A(t)|\leq  2 e^{-\al t}$ for $t$ sufficiently large. In particular 
\begin{equation}\label{e:estimate-A-over-Xi}
\frac{|A(t)|}{|\Xi (t)-z|} \leq C e^{-\al |t|}\, .
\end{equation}
First of all notice that, if $\varphi\in L^2\cap W^{2,2}_{\text{loc}}$ solves \eqref{e:eigenvalue-equation-3}, by Lemma \ref{l:ODE1} (applied with $f= -\frac{A\varphi}{\Xi-z}$) we have
\begin{equation}\label{e:integral-equation}
|\varphi (t)| \leq \frac{C}{2m} \int e^{-m|t-\tau|} e^{-\al |\tau|} |\varphi (\tau)|\, d\tau\, . 
\end{equation}
Using Cauchy-Schwarz and the fact that $\varphi\in L^2$ we immediately obtain that $\varphi\in L^\infty$, namely, that there is a constant $C$ such that $|\varphi|\leq C$. We now prove inductively that $|\varphi (t)|\leq C_k e^{-k\al |t|/2}$ as long as $k\al/2 \leq m$. The case $k=0$ has already been shown. Assume thus that the inequality holds for $k-1$ and that $k\al/2 \leq m$. We then observe that
\begin{align*}
e^{-m|t-\tau|} e^{-\al |\tau|} |\varphi (\tau)| &\leq C_{k-1} e^{- m|t-\tau| - k\al |\tau|/2} e^{-\al |\tau|/2}\\
&\leq C_{k-1} e^{-k\al (|t-\tau| + |\tau|)/2} e^{-\al |\tau|/2}\\
&\leq C_{k-1} e^{-k\al |t|/2} e^{-\al |\tau|/2}\, .
\end{align*}
Inserting in \eqref{e:integral-equation} and using $e^{-\al |\tau|/2}\in L^1$ we then obtain $|\varphi (t)|\leq C_k e^{-k\al |t|/2}$. Assuming now $k\al/2 \leq m < (k+1) \al/2$ we can, likewise, bound
\[
e^{-m|t-\tau|} e^{-\al |\tau|} |\varphi (\tau)| \leq C_k  e^{- m|t-\tau| - (k+1)\al |\tau|/2} e^{-\al |\tau|} \leq C_k e^{-m|t|} e^{-\al |\tau|/2}
\]
and plugging into \eqref{e:integral-equation} one last time we conclude $|\varphi (t)|\leq C e^{-m|t|}$.

In order to show that $\varphi$ is unique up to a multiplicative constants, it suffices to show that $\lim_{t\to -\infty} e^{-mt} \varphi (t)$ exists and is finite. Hence Lemma \ref{l:ODE2} would conclude that the solution is uniquely determined by $C_-$, and that the latter must be nonzero, otherwise $\varphi\equiv 0$. 
In order to show existence and finiteness of $C_-$ rewrite 
\[
\varphi (t) = \frac{e^{mt}}{2m} \int_t^\infty e^{-ms} \frac{A(s)}{\Xi (s) -z} \varphi (s)\, ds 
+ \frac{e^{-mt}}{2m} \int_{-\infty} ^t e^{m s} \frac{A(s)}{\Xi (s) -z} \varphi (s)\, ds\, .
\]
Since by our estimates both $e^{-ms} \frac{A(s)}{\Xi (s) -z} \varphi (s)$ and $e^{m s} \frac{A(s)}{\Xi (s) -z} \varphi (s)$ are integrable, we conclude that $C_{\pm}$ exist and equal
\begin{align*}
C_\pm = \frac{1}{2m}\int_{-\infty}^\infty e^{\pm ms} \frac{A(s)}{\Xi (s) -z} \varphi (s)\, ds\, .
\end{align*}

\medskip

As for the last sentence of the statement of the lemma, the same arguments can be used in the case $z\in \{\Xi (a), \Xi (b)\}$, since the crucial point is that, thanks to the assumption that $A (a) = A(b)=0$ and $\Xi' (a) \neq 0 \neq \Xi' (b)$, the estimate \eqref{e:estimate-A-over-Xi} remains valid.
\end{proof}

\begin{proof}[Proof of Lemma \ref{l:ODE2}] We distinguish between the case $c_-\neq 0$ and $c_-=0$. In the case $c_- \neq 0$ we can divide by $c_-$ and reduce the statement to $c_-=1$. For the existence it suffices to look for a solution of \eqref{e:ODE2} which satisfies \eqref{e:y=e^mt} on a half-line of type $]-\infty, T]$ for some $T$. Such solution has then a $W^{2,1}_{\text{loc}}$ continuation on $[T, \infty[$ by standard ODE theory. Likewise the uniqueness is settled once we can show the uniqueness holds on $]-\infty, T]$. Observe next that, if the solution exists, we would clearly conclude that $\frac{d^2 y}{dt^2}\in L^1 (]-\infty, T])$, hence implying that 
\[
\lim_{t\to-\infty} y' (t)
\]
exists and is finite. On the other hand \eqref{e:y=e^mt} implies that such limit must be $0$.

Let $\tilde{y} (t) = e^{-mt} y (t)$ and observe that we are looking for a solution of 
\[
(e^{2mt} \tilde{y}')' = e^{2mt} v \tilde{y}\, .
\]
Integrating between $-N$ and $t$ the latter identity and letting $t\to -\infty$ we conclude 
\begin{equation}\label{e:tildey'}
e^{2mt} \tilde{y}' (t) = \int_{-\infty}^t e^{2ms} v (s)\tilde{y} (s)\, ds\, .
\end{equation}
Divide by $e^{2mt}$ and integrate once more to reach
\begin{align*}
\tilde{y} (t) -1 &= - \int_{-\infty}^t \int_{-\infty}^r e^{2m (s-r)} v(s)\tilde{y} (s)\, ds\, dr
\\&= \frac{1}{2m} \int_{-\infty}^t \big(1-e^{-2m (t-s)}\big) v(s) \tilde{y} (s)\, ds
\end{align*}
We then define the transformation 
\begin{equation}\label{e:fixed-point}
\mathscr{F} (\tilde{y}) (t) = \frac{1}{2m} \int_{-\infty}^t \big(1-e^{-2m (t-s)}\big) v(s) \tilde{y} (s)\, ds + 1\, 
\end{equation}
which we consider as a map from $L^\infty (]-\infty, T])$ into itself.
From our discussion we conclude that $y$ solves \eqref{e:ODE2} and obeys \eqref{e:y=e^mt} if and only if $\tilde{y}$ is a fixed point of $\mathscr{F}$. Choosing $T$ large enough so that $\|v\|_{L^1 (]-\infty, T])}\leq m$ we see immediately that $\mathscr{F}$ is contraction on $L^\infty (]-\infty, T])$ and it thus has a unique fixed point. We have thus showed existence and uniqueness of the solution in question. 

Observe now that $z(t) = \tilde{y} (t) -1$ and set 
\[
Z(t) := \exp \left(\frac{1}{2m} \int_{-\infty}^t |v(s)|\, ds\right) -1\, .
\]
$Z$ solves the ODE $Z' = \frac{|v|}{2m} Z + \frac{|v|}{2m}$ and, since $\lim_{t\to-\infty} Z(t) =0$, the integral equation
\[
Z (t) = \frac{1}{2m} \int_{-\infty}^t |v(s)| Z(s)\, ds + \frac{1}{2m} \int_{-\infty}^t |v(s)|\, ds\, .
\]
We first want to show that $|z(t)|\leq Z(t)$ on $]-\infty, T]$. We set $\tilde{y}_0 := Z+1$ and define inductively $\tilde{y}_{i+1} = \mathscr{F} (\tilde{y}_i)$. From the above discussion we know that $\tilde{y}_i$ converges uniformly to $\tilde{y}$ and it suffices thus to show that $|\tilde{y}_i -1| \leq Z$ for all $i$.
By definition we have $|\tilde{y}_0-1| = Z$ and thus we need to show the inductive step. We estimate
\begin{align*}
|\tilde{y}_{i+1} (t) -1| &\leq \frac{1}{2m} \int_{-\infty}^t |v(s)| |\tilde{y}_i (s)|\, ds\\
&\leq \frac{1}{2m} \int_{-\infty}^t |v(s)| Z(s)\, ds + \frac{1}{2m} \int_{-\infty}^t |v(s)|\, ds = Z(t)\, ,
\end{align*}
We have shown \eqref{e:est-z} on $]-\infty, T]$. In order to extend the inequality to the whole real axis observe first that we can assume, without loss of generality, that $\|v\|_{L^1 (\mathbb R)}>0$, otherwise we trivially have $|\tilde{y} (t)-1| = Z(t) =0$ for all $t$. In particular we can select $T$ so that all of the above holds and at the same time $\|v\|_{L^1 (]-\infty, T])}>0$. This implies $Z(T)>0$. Moreover, by \eqref{e:fixed-point} and $\mathscr{F} (\tilde{y})= \tilde{y}$, either
\begin{align*}
|\tilde{y} (T) -1| &< \frac{1}{2m} \int_{-\infty}^{T} |v(s)| |\tilde{y} (s)|\, ds
\end{align*}
or $|v||\tilde{y}|$ vanishes identically on $]-\infty, T]$. In both cases we conclude $|\tilde{y} (T)-1|< Z(T)$. Consider now $\sup \{t\geq T: |\tilde{y} (t)-1|< Z (t)\}$. Such supremum cannot be a finite number $T_0$ because in that case we would have $|\tilde{y} (T_0)-1| = Z(t_0)$ while the same argument leading to the strict inequality $|\tilde{y} (T)-1|< Z(T)$ implies $|\tilde{y} (T_0)-1|< Z (T_0)$.

Having shown \eqref{e:est-z} we now come to \eqref{e:est-z'}. Recalling \eqref{e:tildey'} we have
\begin{align*}
z'(t) &= \int_{-\infty}^t e^{-2m (t-s)} v (s) (z(s)+1)\, ds\\
&\leq \int_{-\infty}^t e^{-2m (t-s)} |v (s)| Z(s)\, ds + \int_{-\infty}^t e^{-2m (t-s)} |v (s)|\, ds
= 2m Z(t)\, .
\end{align*}

We now come to the case $c_- =0$. In that case we need to show that the unique solution is identically~$0$. Arguing as for the case $c_- =1$ we conclude that $\varphi$ is a fixed point of the transformation
\[
\mathscr{F} (\varphi) (t) = \frac{1}{2m} \int_{-\infty}^t \big(1-e^{-2m (t-s)}\big) v(s) \tilde{y} (s)\, ds
\]
Again, for a sufficiently small $T$, $\mathscr{F}$ is a contraction on $L^\infty (]\! - \infty, T])$ and hence it has a unique fixed point. Since however $0$ is, trivially, a fixed point, we conclude that $\varphi\equiv 0$ on $]\! -\!\infty, T]$. Standard ODE theory implies then that $\varphi$ vanishes identically on the whole $\mathbb R$.
\end{proof}

\section{Proof of Proposition \ref{p:3+4}}\label{s:3+4}

We start by showing the last statement of the proposition, namely:
\begin{itemize}
    \item[(A)] For $z = \Xi (a)$ and under the assumption that $\lambda_a>1$, the unique $m\geq 1$ such that \eqref{e:eigenvalue-equation-3} has a nontrivial solution $\psi_a\in L^2$ is $m_a = \sqrt{\lambda_a}$.
\end{itemize}
Before coming to its proof we also observe that the same argument applies with $b$ in place of $a$.

First of all observe that, for $z=\Xi (a)$, the equation \eqref{e:eigenvalue-equation-3}, which becomes
\begin{equation}\label{e:eigenvalue-equation-again}
-\frac{d^2\varphi}{dt^2} + m^2 \varphi + \frac{A}{\Xi-\Xi (a)} \varphi = 0,
\end{equation}
has nontrivial solutions $\varphi\in W^{2,2}_{\text{loc}}\cap L^2 (\mathbb R; \mathbb C)$ if and only if it has nontrivial solution $\varphi \in W^{2,2}_{\text{loc}} \cap L^2 (\mathbb R;\mathbb R)$. That the equation has a nontrivial solution when $m=\sqrt{\lambda_a}$ follows from the classical theory of self-adjoint operators. We therefore only need to show that the existence of a nontrivial solution is only possible for a single $m\geq 1$. Arguing by contradiction assume there are two, $1\leq m_1< m_2$, and denote by $\psi_1$ and $\psi_2$ the respective solutions. Then there is a nontrivial linear combination
\[
\psi = C_1 \psi_1 + C_2 \psi_2
\]
which vanishes on $a$. Observe that $\psi_1$ and $\psi_2$ can be interpreted as eigenfuctions of the self-adjoint operator $-\frac{d^2}{dt^2} + \frac{A(t)}{\Xi (t)-\Xi (a)} $ relative to distinct eigenvalues and they are, therefore, $L^2$ orthogonal. Summing the equations, multiplying by $\psi$ and integrating by parts we achieve
\begin{equation}\label{e:tested}
\underbrace{\int \Big((\psi')^2 +\frac{A}{\Xi-\Xi (a)} \psi^2\Big)}_{=:I} = - C_1^2 m_1^2 \int \psi_1^2 - C_2^2 m_2^2 \int \psi_2^2\, .
\end{equation}
Recalling that $A = \Xi'' + 2\Xi' = (\Xi' + 2\Xi)'$, we wish to integrate by parts the second integrand in the left-hand side. Observe that, because $\psi$ vanishes on $a$ and $\Xi' (a) \neq 0$, the function $\frac{\psi^2}{\Xi - \Xi (a)}$ is in fact continuously differentiable. In particular we can write
\[
\int\frac{A}{\Xi-\Xi (a)} \psi^2 = \int \left(\frac{\Xi'+2\Xi}{(\Xi-\Xi(a))^2} \Xi' \psi^2 - 2\frac{\Xi'+2\Xi}{\Xi-\Xi (a)}\psi\psi'\right)\, .
\]
Substituting it into I, we achieve
\begin{align*}
I &= \int \left(\psi' - \frac{\Xi'}{\Xi-\Xi (a)}\psi\right)^2 + \int \left(\frac{2\Xi\Xi'\psi^2}{(\Xi-\Xi (a))^2} - \frac{4\Xi\psi\psi'}{\Xi-\Xi (a)} \right)\\
&= \int \left(\psi' - \frac{\Xi'}{\Xi-\Xi (a)}\psi\right)^2 + 2 \int \frac{\Xi'}{\Xi-\Xi (a)} \psi^2\, ,
\end{align*}
where to reach the second line we have written the first term in the second integral as 
\[
- 2 \, \Xi \frac{d}{dt} \left(\frac{1}{\Xi-\Xi (a)}\right) \psi^2
\]
and integrated it by parts. Again thanks to the fact that $\psi$ vanishes at $a$ we can write it as $\psi= (\Xi-\Xi (a)) \eta$ and hence conclude
\begin{align*}
I &= \int ((\Xi - \Xi (a))\eta')^2 + \int 2 (\Xi-\Xi (a))\Xi' \eta^2 
\\
&= \int ((\Xi - \Xi (a))\eta')^2 - 2 \int (\Xi-\Xi (a))^2 \eta\eta'\\
&= \int (\Xi-\Xi(a))^2 (\eta'-\eta)^2 - \int (\Xi-\Xi(a))^2 \eta^2\\
&= \int  (\Xi-\Xi(a))^2 (\eta'-\eta)^2 - \int (C_1^2\psi_2^2 + C_2^2\psi_2^2)\, .
\end{align*}
Inserting the latter in \eqref{e:tested} we conclude
\[
\int (\Xi-\Xi(a))^2 (\eta'-\eta)^2 = - C_1^2 (m_1^2-1) \int \psi_1^2 - C_2^2 (m_2^2-1) \int \psi_2^2\, .
\]
Observe that, since $m_2>1$ and $\psi_2$ is nontrivial, we conclude that $C_2=0$. This would then imply that $\psi = C_1 \psi_1$ and we can thus assume $C_1=1$ in all our computations. In particular $\eta'=\eta$, which implies $\eta (t) = C e^t$. We can now write $\psi_1 (t) = (\Xi (t)-\Xi(a)) \eta (t)$ and given the properties of $\Xi (t)$ we easily see that this would violate the decay at $+\infty$ that we know for $\psi_1$ from Corollary \ref{c:decay}. 

\begin{remark}\label{r:phi(a)-nonzero}
We record here a consequence of the above argument: a nontrivial solution $\varphi$ of \eqref{e:eigenvalue-equation-again} necessarily satisfies $\varphi (a) \neq 0$ (and thus it must be unique up to constant factors).
\end{remark}

\medskip

We next show that
\begin{itemize}
    \item[(B)] If $(m_0,z)\in \overline{\mathscr{P}}$, $m_0\geq 1$ and $z\in \mathbb R$, then $z$ is in the closure of the range of $\Xi$. 
\end{itemize}
We again argue by contradiction and assume the existence of
\begin{itemize}
    \item[(i)] A sequence $\{m_j\}\subset \, ]1, \infty[$ converging to $m_0\in [1, \infty[$;
    \item[(ii)] A sequence $\{z_j\}\subset \mathbb C$ with $\textrm{Im}\, z_j >0$ converging to $z\in \mathbb R\setminus \overline{\Xi (\mathbb R)}$;
    \item[(iii)] A sequence $\psi_j$ of nontrivial solutions of
    \begin{equation}\label{e:eigenvalue-5}
    -\frac{d^2\psi_j}{dt^2} + m^2 \psi_j + \frac{A}{\Xi-z_j} \psi_j = 0
    \end{equation}
\end{itemize}
By Corollary \ref{c:decay} we can normalize our functions $\psi_j$ so that $\psi_j (t) e^{-m_jt} \to 1$ as $t\to-\infty$ and $\psi_j (t) e^{m_jt} \to C_j\neq 0$ as $t\to\infty$. Observe also that there is a positive constant $c_0$ such that $|\Xi-z_j|\geq c_0$ for all $j$ sufficiently large, thanks to (ii). In particular, the functions $\frac{A}{\Xi-z_j}$ are uniformly bounded in $L^1$. By Lemma \ref{l:ODE2} there is a positive $T_0\geq b+1$, independent of $j$ such that
\begin{equation}\label{e:uniform-exp}
\left|\psi_j (t) - C_j e^{-m_j t}\right| \leq \frac{C_j}{2} e^{- m_j t} \qquad \forall t \geq T_0\, ,
\end{equation}
and there is a constant $C$, independent of $j$ such that
\begin{equation}\label{e:uniform-inside}
\|\psi_j\|_{L^\infty ([a,b])} \leq C\, .
\end{equation}
Next multiply \eqref{e:eigenvalue-5} by $\bar \psi_j$, integrate in $t$ and take the imaginary part of the resulting equality to conclude
\begin{equation}\label{e:imaginary-trick}
\int \frac{A}{(\Xi - \textrm{Re}\, z_j)^2 + (\textrm{Im}\, z_j)^2} |\psi_j|^2 = 0\, .
\end{equation}
We might break the integral into three integrals on the regions $]\! -\! \infty, a[$, $]a,b[$, and $]b, \infty[$, where the function $A$ is, respectively, negative, positive, and negative. This gives
\[
-\int_{T_0}^{2T_0} \frac{A}{(\Xi - \textrm{Re}\, z_j)^2 + (\textrm{Im}\, z_j)^2} |\psi_j|^2 \leq
 \int_a^b \frac{A}{(\Xi - \textrm{Re}\, z_j)^2 + (\textrm{Im}\, z_j)^2} |\psi_j|^2
\]
Now, the right-hand side of the inequality can be bounded uniformly independently of $j$ by \eqref{e:uniform-inside} and (ii). On the other hand the function $\frac{-A}{(\Xi - \textrm{Re}\, z_j)^2 + (\textrm{Im} z_j)^2}$ is larger than a positive constant $c$ independent of $j$ on $[T_0, 2 T_0]$. Using \eqref{e:uniform-exp} we can achieve a uniform bound $|C_j|\leq C$ for the constants $C_j$. The latter bound, combined with the estimates of Lemma \ref{l:ODE2} and the uniform bound on $\|\frac{A}{\Xi-z_j}\|_{L^1}$ easily imply that $\psi_j$ is precompact in $L^2$. We can thus extract a subsequence, not relabeled, converging to a nontrivial $L^2$ solution $\psi$ of 
\begin{equation}\label{e:eigenvalue-equation-6}
-\frac{d^2\psi}{dt^2} + m_0^2 \psi + \frac{A}{\Xi-z} \psi = 0\, .
\end{equation}
Without loss of generality we assume that $\psi$ is real valued, since $z$ is real. We can thus multiply \eqref{e:eigenvalue-equation-6} by $\psi$ and integrate to achieve
\[
\int ((\psi')^2 + m_0^2 \psi^2) + \int \frac{\Xi''+2\Xi'}{\Xi- z} \psi^2 = 0\, .
\]
Integrating by parts $\int \frac{\Xi''}{\Xi-z} \psi^2$ we find 
\[
\int ((\psi')^2 + m_0^2 \psi^2) + \int \left(\frac{(\Xi')^2}{(\Xi-z)^2} \psi^2 - 2 \frac{\Xi'}{\Xi-z} \psi' \psi\right) +
\int \frac{2\Xi'}{\Xi-z} \psi^2 = 0 \, ,
\]
which we can rewrite as 
\begin{equation}\label{e:energy-trick}
\int \Big(\Big(\psi' - \frac{\Xi'}{\Xi-z} \psi\Big)^2 + m_0^2 \psi^2\Big) + 2 \int \frac{\Xi'}{\Xi-z} \psi^2 = 0\, .
\end{equation}
As already done in the previous paragraphs we set $\eta = \frac{\psi}{\Xi-z}$ and write the identity as
\[
\int \left((\Xi-z)^2 (\eta')^2 + m_0^2 (\Xi-z)^2 \eta^2 + 2 \Xi' (\Xi-z) \eta^2\right) = 0 
\]
Integrating by parts the last term we find 
\[
\int (\Xi-z)^2 (\eta'-\eta)^2 + \int (m_0^2-1) (\Xi-z)^2 \eta^2 = 0\, .
\]
We thus conclude that $m_0=1$ and $\eta'=\eta$, i.e. $\eta (t) = C e^t$, but again we see that this would violate $\psi\in L^2$. 

\medskip

We next employ a suitable variation of the latter argument to show that
\begin{itemize}
    \item[(C)] $(m_0, 0)$ and $(m_0, \Xi (-\infty))$ do not belong to $\overline{\mathscr{P}}$ if $m_0\geq 1$.
\end{itemize}
We again argue by contradiction and assume the existence of
\begin{itemize}
    \item[(i)] A sequence $\{m_j\}\subset \, ]1, \infty[$ converging to $m_0\in [1, \infty[$;
    \item[(ii)] A sequence $\{z_j\}\subset \mathbb C$ with $\textrm{Im}\, z_j >0$ converging to $0$ or to $\Xi (- \infty)$;
    \item[(iii)] A sequence $\psi_j$ of nontrivial solutions of
    \begin{equation}\label{e:eigenvalue-equation-7}
    -\frac{d^2\psi_j}{dt^2} + m_j^2 \psi_j + \frac{A}{\Xi-z_j} \psi_j = 0\, .
    \end{equation}
\end{itemize}
We first focus on the case $z_j\to 0$. Normalize again the solutions so that $\psi_j (t)$ is asymptotic to $e^{m_j t}$ for $t$ negative, and to $C_j e^{-m_j t}$ for $t$ positive.

Observe that in this case we have $\frac{A}{\Xi}\in L^1 (]-\infty, N])$ for every $N$, while $\frac{A}{\Xi-z_j}$ enjoys a uniform $L^1$ bound on any $]-\infty, N]$. We can thus apply Lemma \ref{l:ODE2} and conclude the $\psi_j$ can be assumed to converge uniformly to a function $\psi$ on $]-\infty, N]$ for every $N$ and that likewise $\psi (t)$ is asymptotic to $e^{m_0 t}$ for $t$ negative. 
 
As done previously we multiply the equation  \eqref{e:eigenvalue-equation-7} by $\bar\psi_j$, integrate, and take the imaginary part. In particular we gain the inequality
\[
\int_b^\infty \frac{A}{(\Xi- \textrm{Re}\, z_j)^2 + (\textrm{Im}\, z_j)^2} |\psi_j|^2 \leq - \int_a^b \frac{A}{(\Xi- \textrm{Re}\, z_j)^2+ (\textrm{Im}\, z_j)^2} |\psi_j|^2\, .
\]
Since $z_j\to 0$ and the range of $\Xi$ on $[a,b]$ is bounded away from $0$, we conclude that the right-hand side is uniformly bounded. In particular, passing to the limit we conclude that 
\begin{equation}\label{e:info-L^1}
\Xi^{-2} A |\psi|^2 \in L^1 ([b, \infty[)\, .
\end{equation}
Observe however that 
\[
\lim_{t\to\infty} \frac{A(t)}{\Xi (t)} = \lim_{t\to \infty} \frac{-\al e^{-\al t}}{c_1 e^{-2t}+\frac{1}{2-\al} e^{-\al t}} = - \al (2-\al)\, .
\]
In particular we conclude that $\psi\in L^2$. Moreover, we can write
\[
\frac{A}{\Xi} = -\al (2-\al) + B 
\]
for a function $B$ which belongs to $L^1 ([T, \infty[)$ for every $T$. We thus have that 
\[
- \frac{d^2\psi}{dt^2} + (m_0^2 - \al (2-\al)) \psi + B \psi = 0\, .
\]
Recalling that $0<\al <1$ and $m_0\geq 1$, we have $m_0^2 - \al (2-\al)>0$ and we can therefore apply Lemma \ref{l:ODE2} to conclude that, for $\bar m := \sqrt{m_0^2 - \al (2-\al)}$ 
\[
\lim_{t\to \infty} e^{\bar m t} \psi (t)
\]
exists, it is finite, and nonzero. Observe however that \eqref{e:info-L^1} forces $e^{\al t} |\psi|^2\in L^1$, which in particular implies that $\bar m > \frac{\al}{2}$
We next argue as in the derivation of \eqref{e:energy-trick} to get
\[
\int \Big(\Big(\psi' - \frac{\Xi'}{\Xi} \psi\Big)^2 + m_0^2 \psi^2\Big) + 2 \int \frac{\Xi'}{\Xi} \psi^2 = 0\, .
\]
We again set $\psi= \Xi \eta$ and observe that, by our considerations, $\eta$ decays exponentially at $-\infty$, while it is asymptotic to $e^{(\al - \bar m) t}$ at $+\infty$. We rewrite the latter identity as 
\[
\int (\Xi^2 (\eta')^2 + m_0^2 \Xi^2 \eta^2 + 2 \Xi\Xi' \eta^2) = 0\, .
\]
We wish to integrate by parts the latter term to find
\begin{equation}\label{e:da-giustificare}
\int (\Xi^2 (\eta'-\eta)^2 + (m_0^2-1) \Xi^2 \eta^2)=0\, .
\end{equation}
Since we have exponential decay of $\eta$ at $-\infty$, while at $+\infty$ $\eta$ might grow, the latter integration by parts need some careful justification. First of all we notice that $\Xi \Xi' \eta^2$ decays exponentially at $+\infty$ and thus, since the other two integrands are positive, we can write
\[
\int (\Xi^2 (\eta')^2 + m_0^2 \Xi^2 \eta^2 + 2 \Xi\Xi' \eta^2) =
\lim_{N\to\infty} \int_{-\infty}^N (\Xi^2 (\eta')^2 + m_0^2 \Xi^2 \eta^2 + 2 \Xi\Xi' \eta^2) \, .
\]
Next, we can integrate by parts the third integrand (before passing to the limit) to write
\begin{align*}
& \int_{-\infty}^N (\Xi^2 (\eta')^2 + m_0^2 \Xi^2 \eta^2 + 2 \Xi\Xi' \eta^2)\\ 
 = & \int_{-\infty}^N (\Xi^2 (\eta'-\eta)^2 + (m_0^2-1) \Xi^2 \eta^2) + \Xi^2 (N) \eta^2 (N)\, .
\end{align*}
Since $\Xi (N) \eta (N)$ converges to $0$ exponentially, passing into the limit we conclude \eqref{e:da-giustificare}.
As before this would imply $m_0=1$ and $\eta (t) = C e^t$ at $+\infty$, while we have have already argued that $\eta$ is asymptotic to $e^{(\bar\alpha -\bar m) t}$, which is a contradiction because $\bar\alpha - \bar m \leq \bar\alpha <1$. 

We next tackle the case $z_j \to \Xi (-\infty)$. This time we observe that $\frac{A}{\Xi-z_j}$ enjoys a uniform $L^1$ bound on $[T, \infty[$ for every $T$ and we thus normalize the functions $\psi_j$ so that $\psi_j (t)$ is asymptotic to $e^{-m_j t}$ for $t\to \infty$. Arguing as above, we assume that $\psi_j$ converges uniformly on all $[T, \infty[$ to a $\psi$ which is asymptotic to $e^{-m_0 t}$ and solves
\begin{equation}\label{e:eigenvalue-equation-9}
-\frac{d^2\psi}{dt^2} + m_0^2 \psi + \frac{A}{\Xi-\Xi (-\infty)} \psi=0\, .
\end{equation}
As above we can assume that $\psi$ is real valued. Moreover, this time we infer (with the same method used to prove \eqref{e:info-L^1})
\begin{equation}\label{e:info-L1-2}
(\Xi-\Xi (-\infty))^{-2} A \psi^2 \in L^1 (\mathbb R)
\end{equation}
This time observe that, for $t$ sufficiently negative, $\frac{A(t)}{\Xi (t)- \Xi (-\infty)} = 8$. In particular we can explicitly solve the equation as 
\[
\psi (t) = C_1 e^{-t\sqrt{m_0^2+8}} + C_2 e^{t\sqrt{m_0^2+8}}
\]
when $t$ is sufficiently negative.
However, if $C_1$ were positive, \eqref{e:info-L1-2} would not hold. In particular we infer exponential decay at $-\infty$. We can now argue as for the case $z_j\to 0$: we multiply \eqref{e:eigenvalue-equation-9} by $\psi$, integrate in time and perform an integration by part to infer
\[
\int \Big( \Big(\psi' - \frac{\Xi'}{\Xi - \Xi (-\infty)} \psi\Big)^2 + m_0 \psi^2\Big) + 2 \int \frac{\Xi'}{\Xi-\Xi (-\infty)} \psi^2 = 0\, .
\]
We then introduce $\eta$ so that $\psi = (\Xi-\Xi (-\infty)) \eta$. This time we infer exponential decay for $\eta$ at both $\infty$ and $-\infty$. Arguing as above we rewrite the last identity as
\[
\int ((\Xi- \Xi (-\infty))^2 (\eta'-\eta)^2 + (m_0^2-1) (\Xi- \Xi (-\infty))^2 \eta^2)=0\, ,
\]
reaching again a contradiction.

\medskip

In order to complete the proof of the proposition we need to show
\begin{itemize}
    \item[(D)] If $(m_0, \Xi (c)) \in \overline{\mathscr{P}}$ 
    and $m_0> 1$
    , then either $c=a$ or $c=b$ 
    and moreover we have, respectively, $m_0 = \sqrt{\lambda_a}$ or $m_0 = \sqrt{\lambda_b}$
    .
\end{itemize}
As before we argue by contradiction and assume the existence of
\begin{itemize}
    \item[(i)] A sequence $\{m_j\}\subset \, ]1, \infty[$ converging to $m_0\in \, ]1, \infty[$;
    \item[(ii)] A sequence $\{z_j\}\subset \mathbb C$ with $\textrm{Im}\, z_j >0$ converging to $\Xi (c)$ for some $c\not\in \{a,b\}$;
    \item[(iii)] A sequence $\psi_j$ of nontrivial solutions of
    \begin{equation}\label{e:eigenvalue-equation-10}
    -\frac{d^2\psi_j}{dt^2} + m_j^2 \psi_j + \frac{A}{\Xi-z_j} \psi_j = 0\, .
    \end{equation}
\end{itemize}
This time we normalize the $\psi_j$'s so that
\begin{equation}\label{e:L2-normalization}
\int (|\psi_j'|^2 + m_j^2 |\psi_j|^2) =1\, .
\end{equation}
By Lemma \ref{l:ODE2} we know that $\psi_j (t)$ is asymptotic to $\rho_j^\pm e^{\mp m_j t}$ for $t\to \pm \infty$, where $\rho_j^\pm \in \mathbb C \setminus \{0\}$. Since $\Xi (c)$ has a positive distance from both $0$ and $\Xi (-\infty)$, we can apply Lemma \ref{l:ODE2} to achieve uniform times $T_\pm$ with the properties that
\begin{align}
\left|\psi_j (t) - \rho_j^+ e^{-m_j t}\right|& \leq \frac{|\rho_j^+|}{2} e^{-m_j t} \qquad\qquad \forall t\geq T_+\, ,\label{e:exp-bound-1}\\
\left|\psi_j (t) - \rho_j^- e^{m_j t}\right| &\leq \frac{|\rho_j^-|}{2} e^{m_j t} \qquad\qquad \forall t\leq T_-\, .\label{e:exp-bound-2}
\end{align}
Combining the latter inequalities with \eqref{e:L2-normalization} we conclude that $\sup_j |\rho_j^\pm| < \infty$, and in particular $\{\psi_j\}_j$ is tight in $L^2$, i.e. for every $\varepsilon >0$ there is $N = N (\varepsilon)$ such that
\[
\sup_j \int_{|t|\geq N} |\psi_j|^2 < \varepsilon\, .
\]
The latter bound combined with \eqref{e:L2-normalization} implies, up to extraction of a subsequence which we do not relabel, the strong $L^2$ convergence of $\psi_j$ to a function $\psi$. Thanks to Sobolev embedding, the convergence is uniform on any compact set and, moreover, $\psi\in C^{1/2}$.

Arguing as for \eqref{e:imaginary-trick} we infer 
\begin{equation}\label{e:imaginary-trick-2}
\int \frac{A}{(\Xi-\textrm{Re}\, z_j)^2 + (\textrm{Im}\, z_j)^2} |\psi_j|^2 =0
\end{equation}
The latter bound implies $\psi (c)=0$. In fact first we
observe that $\frac{A}{|\Xi-z_j|^2} |\psi_j|^2$ converges in $L^1$ on $\mathbb R \setminus ]c-\delta, c+\delta[$ for every $\delta$. Choosing $\delta>0$ so that $|A (t) - A(c)| \leq \frac{|A(c)|}{2}$ for $t\in [c-\delta, c+\delta]$ and recalling that $|A(c)|>0$, we easily infer that 
\[
\sup_j \int_{c-h}^{c+h} \frac{|\psi_j|^2}{(\Xi-\textrm{Re}\, z_j)^2 + (\textrm{Im}\, z_j)^2} < \infty \qquad \forall h < \delta\, .
\]
If $\psi (c)$ were different from $0$, we can select a positive $h< \delta$ and a positive $c_0$ with the property that $|\psi (t)|^2 \geq 2c_0$ for all $t\in [c-h, c+h]$. In particular, for a large enough $j$ we infer $|\psi_j (t)|^2 \geq c$ for all $t\in [c-\delta, c+\delta]$. But then we would conclude
\[
\sup_j \int_{c-h}^{c+h} \frac{1}{(\Xi-\textrm{Re}\, z_j)^2 + (\textrm{Im}\, z_j)^2} < \infty\, .
\]
Since the denominator converges to $(\Xi - \Xi (c))^2$, this is clearly not possible. 

We now wish to pass in the limit in \eqref{e:eigenvalue-equation-10} to derive that
\begin{equation}\label{e:eigenvalue-equation-11}
- \psi'' + m_0^2 \psi + \frac{A}{\Xi-\Xi (c)} \psi =0\, ,
\end{equation}
where we notice that, thanks to $\psi (c)=0$ and the H\"older regularity of $\psi$, the function $\frac{A}{\Xi-\Xi (c)} \psi$ is indeed in $L^p$ for every $p<2$. We thus understand the equation distributionally.
The equation clearly passes to the limit outside the singularity $c$ of the denominator and thus we just need to pass it to the limit distributionally in some interval $]c-h,c+h[$. We write the third term as
\begin{align*}
\frac{A}{\Xi-z_j} \psi_j &= \left(\frac{d}{dt} \ln (\Xi-z_j) \right)\frac{A}{\Xi'} \psi_j\\
 &= \frac{d}{dt} \left(\ln (\Xi-z_j) \frac{A}{\Xi'} \psi_j\right) - \ln (\Xi-z_j)\frac{A}{\Xi'} \psi'_j - \ln (\Xi-z_j) \frac{d}{dt} \left(\frac{A}{\Xi'}\right) \psi_j\, .
\end{align*}
Observe that we can define the logarithm unequivocally because $\Xi$ is real valued and $\textrm{Im}\, z_j >0$.

Next, we remark that:
\begin{itemize}
    \item[(i)] $\frac{A}{\Xi'}$ is smooth in $]c-h, c+h[$;
    \item[(ii)] $\ln (\Xi-z_j)$ converges strongly \footnote{Since $\ln (\Xi-z_j)$ converges uniformly to $\ln (\Xi-\Xi(c))$ on any compact set which does not contain $c$, in order to reach the conclusion it suffices to prove a uniform $L^q$ bound on the functions, for every $q<\infty$. This can be easily concluded as follows. Choose an interval $[c-h, c+h]$ and recall that $\Xi$ does not change sign on it. For each $j$ large enough we then find a unique $c_j \in [c-h, c+h]$ such that $\Xi (c_j) = \textrm{Re}\, z_j$. Using the mean value theorem we easily conclude that $|\Xi (t)-z_j|\geq |\Xi (t) - \Xi (c_j)|
    \geq C^{-1} |t-c_j|$ for every $t\in [c-h, c+h]$, where $C^{-1} = \min \{|\Xi'(t)|: c-h\leq t \leq c+h\}$.} to $\ln (\Xi-\Xi(c))$ in $L^q (]c-h, c+h[)$ for every $q<\infty$;
    \item[(iii)] $\psi_j' \to \psi'$ weakly in $L^2$, while $\psi_j\to \psi$ uniformly. 
\end{itemize}
We thus conclude that $\frac{A}{\Xi-z_j} \psi_j$ converges distributionally to
\[
\frac{d}{dt} \left(\ln (\Xi-\Xi (c)) \frac{A}{\Xi'} \psi\right) - \ln (\Xi-\Xi(c)) \frac{A}{\Xi'} \psi' - \ln (\Xi-\Xi(c)) \frac{d}{dt} \left(\frac{A}{\Xi'}\right) \psi\, .
\]
Using now that $\psi\in W^{1,2}$ and $\psi (c)=0$ we can rewrite the latter distribution as
\[
\frac{A}{\Xi-\Xi (c)} \psi
\]
and hence conclude the validity of \eqref{e:eigenvalue-equation-11}. 

Observe next that from \eqref{e:eigenvalue-equation-11} we infer $\psi''\in L^p$ for every $p< 2$, which in turn implies that $\psi$ is indeed $C^{1,\kappa}_{\text{loc}}$ for every $\kappa < \frac{1}{2}$. In turn this implies that $\frac{A}{\Xi-\Xi (c)} \psi$ is continuous at $c$, so that in particular $\psi$ is twice differentiable. We thus can argue as for the derivation of \eqref{e:energy-trick} and get
\begin{equation}\label{e:energy-trick-4}
\int \Big(\Big(\psi' - \frac{\Xi'}{\Xi-\Xi (c)} \psi\Big)^2 + m_0^2 \psi^2\Big) + 2 \int \frac{\Xi'}{\Xi-\Xi (c)} \psi^2 = 0\, .
\end{equation}
Once again we can set $\psi = (\Xi-\Xi (c)) \eta$ and observe that $\eta\in W^{1,2}$, to rewrite the latter identity as 
\[
\int ((\Xi- \Xi (c))^2 (\eta'-\eta)^2 + (m_0^2-1) (\Xi- \Xi (c))^2 \eta^2)=0\, ,
\]
inferring that $\eta=0$. 

We thus have concluded that $\psi$ vanishes identically, but this is not yet a contradiction since the normalization \eqref{e:L2-normalization} and the strong $L^2$ convergence does not ensure that $\psi$ is nontrivial. In order to complete our argument, note first that, by the monotonicity of $\Xi$, for each $j$ large enough there is a unique $c_j$ such that $\Xi (c_j) = \textrm{Re}\, z_j$. We then multiply the equation \eqref{e:eigenvalue-equation-10} by $\bar \psi_j - \overline{\psi_j (c_j)}$ to obtain
\[
\int \Big(|\psi_j'|^2 + m_j^2 \psi_j (\bar\psi_j - \overline{\psi_j (c_j)}) + \frac{A}{\Xi-z_j} \psi_j (\bar\psi_j - \overline{\psi_j (c_j)})\Big) = 0\, .
\]
Note that $c_j$ must converge to $c$ and that the integrals
\[
\int \psi_j (\bar\psi_j - \overline{\psi_j (c_j)})
\]
converges to $0$ because $\psi_j - \psi_j (c_j)$ converges to $0$ uniformly and, thanks to the uniform exponential decay of $\psi_j$, the latter are uniformly bounded in $L^1$. For the same reason the first integral in the sum
\begin{equation}
    \label{e:up}\int_{|t-c|\geq h} \frac{A}{\Xi-z_j} \psi_j (\bar\psi_j - \overline{\psi_j (c_j)}) +\int_{|t-c|\leq h} \frac{A}{\Xi-z_j} \psi_j (\bar\psi_j - \overline{\psi_j (c_j)})
\end{equation}
converges to $0$ for every fixed $h$. On the other hand, $|\frac{A (t)}{\Xi (t)-z_j}| |\psi_j (t) - \psi_j (c_j)|\leq C |t-c_j|^{-1/2}$ and thus the second integrand in \eqref{e:up}
converges to $0$ as well. We thus conclude that the $L^2$ norm of $\psi'_j$ converges to $0$ as well. This however contradicts the normalization \eqref{e:L2-normalization}.

\section{Proof of Proposition \ref{p:5-7}: Part I}

We set $m_0=m_a$, $z_0 = \Xi (a)$, and
we fix a $\psi_0$ solution of 
\[
-\frac{d^2\psi_0}{dt^2} + m_0^2 \psi_0 + \frac{A}{\Xi-z_0} \psi_0 = 0
\]
with $L^2$ norm equal $1$. Since the operator is self-adjoint we will indeed assume that $\psi_0$ is real. We then define the projector $P_0: L^2 (\mathbb R; \mathbb C) \to \{\kappa \psi_0:\kappa \in \mathbb C\}$ as 
\[
P_0 (\psi) = \langle \psi, \psi_0\rangle \psi_0\, .
\]
Observe that $P_0$ is self-adjoint.\index{aalP0@$P_0$}
Next,
in a neighborhood of $(m_0, z_0)$ we will look for solutions of \eqref{e:eigenvalue-equation-3} by solving
\begin{equation}\label{e:Lagrange}
\left\{
\begin{array}{l}
-\psi'' + m^2 \psi + \frac{A}{\Xi-z} \psi + P_0 (\psi) = \psi_0\\ \\

\langle \psi, \psi_0\rangle =1
\end{array}
\right.
\end{equation}
which we can rewrite as
\begin{equation}\label{e:Lagrange-2}
\left\{
\begin{array}{l}
-\psi'' + m_0^2 \psi + \frac{A}{\Xi-z_0} \psi + P_0 (\psi) = A \left(((\Xi-z_0)^{-1} - (\Xi-z)^{-1}) \psi\right)\\
\qquad\qquad\qquad\qquad\qquad\qquad\qquad\qquad + (m_0^2-m^2)\psi + \psi_0\\ \\

\langle \psi, \psi_0\rangle =1
\end{array}
\right.
\end{equation}
Next we observe that the operator $-\frac{d^2}{dt^2} + m_0^2$, considered as a closed unbounded self-adjoint operator in $L^2$ (with domain $W^{2,2}$) has an inverse $\mathcal{K}_{m_0}\colon L^2 \to L^2$ which is a bounded operator. \index{aalKcalm@$\mathcal{K}_m$} We thus rewrite \eqref{e:Lagrange-2} as 
\begin{equation}\label{e:Lagrange-3}
\left\{
\begin{array}{ll}
\underbrace{\psi + \mathcal{K}_{m_0} \left(\frac{A}{\Xi-z_0} \psi + P_0 (\psi)\right)}_{=: T (\psi)}\\
\qquad \qquad= \underbrace{\mathcal{K}_{m_0}
\left(\left(A \left((\Xi-z_0)^{-1} - (\Xi-z)^{-1}\right) + (m_0^2 -m^2)\right) \psi\right)}_{=:- \mathcal{R}_{m,z} (\psi)} +
\mathcal{K}_{m_0} (\psi_0)\\ \\
\langle \psi, \psi_0 \rangle =1\, .
\end{array}
\right.
\end{equation}
The proof of Proposition \ref{p:5-7} will then be broken into two pieces. In this section we will show the first part, which we can summarize in the following 

\begin{lemma}\label{l:solve-for-psi}
For every $\mu>0$, if $(m,z)$ sufficiently close to $(m_0, z_0)$ and $\textrm{Im}\, z\geq \mu |\textrm{Re}\, (z-z_0)|$ then there is a unique $\psi= \psi (m,z) \in L^2 (\mathbb R)$ solving 
\begin{equation}\label{e:solve-for-psi}
T (\psi) + \mathcal{R}_{m,z} (\psi) = \mathcal{K}_{m_0} (\psi_0)\, .
\end{equation}
\end{lemma}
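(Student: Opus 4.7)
The plan is to view $T + \mathcal{R}_{m,z}$ as a perturbation of $T$: first establish invertibility of $T$ on $L^2$, then show that $\mathcal{R}_{m,z}$ has small operator norm as $(m,z) \to (m_0, z_0)$ inside the cone $\textrm{Im}\, z \geq \mu|\textrm{Re}\,(z - z_0)|$, and finally invoke a Neumann series. The existence and uniqueness of $\psi$ will follow automatically from the bounded invertibility of $T + \mathcal{R}_{m,z}$.

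\textbf{Invertibility of $T$.} I would write $T = I + K$ with $K = \mathcal{K}_{m_0}\circ(M_{f_0} + P_0)$ and $f_0(t) := A(t)/(\Xi(t)-z_0)$. Because $A$ vanishes to first order at $a$ and $\Xi'(a) \neq 0$, $f_0$ is bounded and smooth, and by the estimate $|A|/|\Xi - z_0| \leq Ce^{-\bar\alpha|t|}$ at infinity it also decays exponentially. Combined with the exponential decay of the convolution kernel $\frac{1}{2m_0}e^{-m_0|t-s|}$, this makes $\mathcal{K}_{m_0}\circ M_{f_0}$ compact on $L^2$, and $\mathcal{K}_{m_0}\circ P_0$ has rank one, so $K$ is compact. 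By Fredholm theory, invertibility reduces to $\ker T = \{0\}$. If $T\psi = 0$, applying $-\partial_t^2 + m_0^2$ gives
\[
(L_a + m_0^2)\psi + \langle \psi,\psi_0\rangle \psi_0 = 0.
\]
Pairing with $\psi_0$ and using $L_a\psi_0 = -m_0^2\psi_0$ (since $m_0 = \sqrt{\lambda_a}$) together with the self-adjointness of $L_a$ yields $\langle\psi,\psi_0\rangle = 0$. Hence $(L_a + m_0^2)\psi = 0$, so $\psi$ is an $L^2$-eigenfunction of $L_a$ at the bottom of the spectrum; by Remark~\ref{r:phi(a)-nonzero} the corresponding eigenspace is one-dimensional and spanned by $\psi_0$, and the orthogonality forces $\psi = 0$.

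\textbf{Smallness of $\mathcal{R}_{m,z}$.} With $h(t) := f_0(t)\dfrac{z_0-z}{\Xi(t)-z} + (m_0^2 - m^2)$, one has $\mathcal{R}_{m,z}(\psi) = -\mathcal{K}_{m_0}(h\psi)$. Young's inequality, exploiting $\|e^{-m_0|\cdot|}\|_{L^2} < \infty$, gives
\[
\|\mathcal{K}_{m_0}(h\psi)\|_{L^2} \leq C\|h\psi\|_{L^1} \leq C\|h\|_{L^2}\|\psi\|_{L^2},
\]
so it suffices to show $\|h\|_{L^2} \to 0$. The factor $(z_0 - z)/(\Xi - z)$ tends to $0$ pointwise as $z \to z_0$ except at $t = a$, while being pointwise bounded by $|z-z_0|/|\textrm{Im}\,z| \leq \sqrt{1 + \mu^{-2}}$ throughout the cone. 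Since $f_0 \in L^2$, the dominated convergence theorem implies $\|f_0(z_0-z)/(\Xi-z)\|_{L^2}\to 0$, and the constant piece $(m_0^2 - m^2)$ vanishes as $m \to m_0$.

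\textbf{Conclusion.} Once $(m,z)$ lies close enough to $(m_0, z_0)$ in the cone that $\|\mathcal{R}_{m,z}\|_{L^2 \to L^2} < \|T^{-1}\|^{-1}$, the operator $T + \mathcal{R}_{m,z} = T(I + T^{-1}\mathcal{R}_{m,z})$ is invertible on $L^2$ by a Neumann series, and the unique solution of \eqref{e:solve-for-psi} is $\psi = (T + \mathcal{R}_{m,z})^{-1}\mathcal{K}_{m_0}(\psi_0)$. The principal difficulty is the smallness step: the multiplier $h$ is \emph{not} small in $L^\infty$ because the critical point of $\Xi - z$ approaches $t = a$ where the zero of $f_0$ is only of order zero in the quotient; the cone condition is precisely what makes $h$ uniformly $L^\infty$-bounded (enabling dominated convergence) while simultaneously guaranteeing its pointwise decay off $t = a$.
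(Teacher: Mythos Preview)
Your overall architecture matches the paper's: invertibility of $T$ via Fredholm plus a kernel computation, smallness of $\mathcal{R}_{m,z}$ in the cone, and a Neumann series to conclude. Your kernel argument for $T$ is essentially identical to the paper's Lemma~\ref{l:invert-T}.

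There is one slip in the smallness step. You set $h = f_0\,(z_0-z)/(\Xi-z) + (m_0^2 - m^2)$ and claim it suffices to show $\|h\|_{L^2}\to 0$; but the constant $(m_0^2-m^2)$ is not in $L^2(\mathbb R)$, so $\|h\|_{L^2}=\infty$ whenever $m\neq m_0$, and your displayed chain $\|\mathcal K_{m_0}(h\psi)\|_{L^2}\le C\|h\|_{L^2}\|\psi\|_{L^2}$ is vacuous as written. The fix is immediate: split $h=h_1+h_2$ with $h_1 = f_0(z_0-z)/(\Xi-z)$ and $h_2 = m_0^2-m^2$; for $h_2$ simply use $\|\mathcal K_{m_0}((m_0^2-m^2)\psi)\|_{L^2} = |m_0^2-m^2|\,\|\mathcal K_{m_0}\psi\|_{L^2}\le m_0^{-2}|m_0^2-m^2|\,\|\psi\|_{L^2}$, while your Young-plus-dominated-convergence argument handles $h_1$ correctly.

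Once that is patched, your route to smallness is actually more elementary than the paper's Lemma~\ref{l:Rmz-small}. The paper argues abstractly: it bounds the multiplication operators $L_z$ uniformly, shows their adjoints converge strongly to zero, and then uses a finite-rank approximation of the compact operator $\mathcal K_{m_0}$ to upgrade strong convergence to norm convergence of $\mathcal K_{m_0}\circ L_z$. Your argument bypasses this by exploiting the concrete fact that $f_0\in L^2$ (from its exponential decay) together with the uniform $L^\infty$ bound $|(z_0-z)/(\Xi-z)|\le\sqrt{1+\mu^{-2}}$ inside the cone, so that dominated convergence gives $\|h_1\|_{L^2}\to 0$ directly. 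This is shorter and avoids the compactness machinery; the paper's approach, on the other hand, generalizes more readily (e.g.\ to the $C^\sigma$ setting it also treats) since it does not rely on the multiplier lying in any particular Lebesgue space.
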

\index{aalT@$T$}\index{aalRcalmz@$\mathcal{R}_{m,z}$}

We recognize this as, essentially, the first step in the Lyapunov-Schmidt reduction but written for the solution $\psi$ rather than the perturbation $\tilde{\psi} = \psi - \psi_0$. (The second step will be to the solve one-dimensional equation~\eqref{e:what-we-want-to-do} enforcing $\langle \psi, \psi_0 \rangle = 1$.)\index{Lyapunov-Schmidt reduction@Lyapunov Schmidt reduction}

Before coming to the proof of Lemma~\ref{l:solve-for-psi}, 
we single out two important ingredients.

\begin{lemma}\label{l:invert-T}
$T$ is a bounded operator with bounded inverse on the spaces $L^2$ and $C^\sigma$, for any $\sigma \in ]0,1[$.
\end{lemma}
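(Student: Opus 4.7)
\bigskip

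\noindent\textbf{Proof plan for Lemma \ref{l:invert-T}.} The plan is to write $T = I + K$ where $K = \mathcal{K}_{m_0}\circ M$ and $M\psi := \frac{A}{\Xi - z_0}\psi + P_0(\psi)$, and then combine a compactness argument with the Fredholm alternative, so that invertibility of $T$ reduces to its injectivity. Boundedness of $T$ on both $L^2$ and $C^\sigma$ is the easy direction: the coefficient $\frac{A}{\Xi-z_0}$ is bounded on all of $\mathbb{R}$ with exponential decay at $\pm\infty$ (by the class $\mathscr{C}$, since $A$ vanishes simply at $a$ where $\Xi$ is strictly monotone, while $A$ decays exponentially at infinity); $P_0$ has rank one, and $\psi_0$ belongs to $L^1\cap L^2\cap L^\infty$ by the exponential bounds of Corollary~\ref{c:decay}; finally $\mathcal{K}_{m_0}$ is given by the convolution kernel $\frac{1}{2m_0}e^{-m_0|t|}$ of Lemma~\ref{l:ODE1}, which is bounded on $L^2$ and, via Young's inequality, also on $L^\infty$ (hence on $C^\sigma$ after one derivative of the kernel is absorbed into Hölder quotients).

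The first main step is compactness of $K$. The operator $\mathcal{K}_{m_0}\circ P_0$ has rank one, hence is compact on both $L^2$ and $C^\sigma$. For the other piece $K_1 := \mathcal{K}_{m_0}\circ B$, where $B$ is multiplication by $\frac{A}{\Xi-z_0}$, we use the explicit kernel representation
\begin{equation*}
(K_1\psi)(t) = \int_{\mathbb R}\frac{e^{-m_0|t-\tau|}}{2m_0}\,\frac{A(\tau)}{\Xi(\tau)-z_0}\,\psi(\tau)\,d\tau.
\end{equation*}
On $L^2$, since $\left|\frac{A(\tau)}{\Xi(\tau)-z_0}\right|\le Ce^{-\bar\alpha|\tau|}$ by the argument leading to \eqref{e:estimate-A-over-Xi}, the kernel is square-integrable on $\mathbb R^2$, so $K_1$ is Hilbert–Schmidt, hence compact. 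On $C^\sigma$, the same exponential decay of the coefficient combined with the exponential decay of the kernel yields that $(K_1\psi)(t)$ is uniformly bounded and enjoys a uniform decay at $|t|\to\infty$ for $\|\psi\|_{C^\sigma}\le 1$, and differentiating under the integral gives uniform control of the first derivative; Arzelà–Ascoli on $\mathbb R$ (with decay playing the role of tightness) yields compactness on $C^\sigma$.

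The second and crucial step is injectivity on $L^2$. Suppose $T\psi=0$ for $\psi\in L^2$. Applying $-\tfrac{d^2}{dt^2}+m_0^2$ to both sides gives
\begin{equation*}
L_a\psi + P_0\psi = 0, \qquad L_a := -\frac{d^2}{dt^2} + m_0^2 + \frac{A}{\Xi-z_0}.
\end{equation*}
Since $\psi,\psi_0\in L^2$ and both lie in the domain of the self-adjoint operator $L_a$, we may pair with $\psi_0$, use $L_a\psi_0=0$ (because $m_0^2=\lambda_a$), and obtain $0 = \langle L_a\psi,\psi_0\rangle + \langle P_0\psi,\psi_0\rangle = \langle\psi,\psi_0\rangle$. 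Hence $P_0\psi=0$ and $L_a\psi=0$. By Proposition~\ref{p:3+4} (see also Remark~\ref{r:phi(a)-nonzero}), the $L^2$ kernel of $L_a$ is one-dimensional, spanned by $\psi_0$, so $\psi=c\psi_0$; but then $c=\langle\psi,\psi_0\rangle=0$, giving $\psi=0$. The Fredholm alternative now yields that $T$ is a bounded bijection on $L^2$, and the open mapping theorem delivers a bounded inverse.

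The $C^\sigma$ injectivity is reduced to the $L^2$ one by a bootstrap: if $\psi\in C^\sigma$ with $T\psi=0$, then $\psi = -\mathcal{K}_{m_0}(B\psi + P_0\psi)$, and the right-hand side is $\mathcal{K}_{m_0}$ applied to a bounded function with exponential decay (both $B\psi$ and $P_0\psi=\langle\psi,\psi_0\rangle\psi_0$ decay exponentially). Since convolution of $\frac{1}{2m_0}e^{-m_0|t|}$ with an exponentially decaying bounded function again decays exponentially, we conclude $\psi\in L^2$, and the previous step forces $\psi=0$. Combined with compactness on $C^\sigma$, the Fredholm alternative again yields a bounded inverse. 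The main conceptual obstacle is the injectivity step — ensuring that the one-dimensional kernel of $L_a$ is detected cleanly so that the rank-one addition $P_0$ suffices to remove it — and this is precisely where Proposition~\ref{p:3+4} is essential.
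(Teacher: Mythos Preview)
Your proof is correct and follows essentially the same approach as the paper: write $T$ as identity plus a compact operator, invoke the Fredholm alternative, and verify injectivity by pairing the kernel equation $L_a\psi+P_0\psi=0$ against $\psi_0$ using self-adjointness of $L_a$ to force $\langle\psi,\psi_0\rangle=0$, then use one-dimensionality of $\ker L_a$. Your $C^\sigma$ reduction (bootstrapping directly to $\psi\in L^2$ via the exponential decay of $B\psi+P_0\psi$) is slightly more streamlined than the paper's, which first establishes $\langle\psi,\psi_0\rangle=0$ in $C^\sigma$ by integration by parts before bootstrapping; both are valid. One minor citation point: the one-dimensionality of the $L^2$ kernel of $L_a$ is most directly a consequence of Corollary~\ref{c:decay} (or equivalently Lemma~\ref{l:ODE2}), rather than Proposition~\ref{p:3+4} itself.
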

\begin{proof} Recall that the operator $\mathcal{K}_m$ is given by the convolution with $\frac 1{2m} e^{-m|\cdot|}$.
{Observe that $\mathcal{K}_m$ is well-defined on $C^\sigma$ and so is the multiplication by $\frac{A}{\Xi-z_0}$, since the latter is a smooth functions with bounded derivatives, and the operator $P_0 (\psi) = \langle \psi, \psi_0\rangle \psi_0$: for the latter we just need to check that $\psi\overline{\psi_0}$ is integrable, which follows from the exponential decay of $\psi_0$, { cf. Corollary \ref{c:decay}}.}

 In this first step we prove that $T$ is a bounded operator with bounded inverse in the spaces $L^2 (\mathbb R)$ and $C^\sigma (\mathbb R)$.

Recall that $\frac{A}{\Xi-z_0}= \frac{A}{\Xi-\Xi (a)}$ is indeed a bounded smooth function (thanks to the structural assumptions on $\Xi$: in particular recall that $\Xi' (a)\neq 0$ and $A(a) =0$, which implies that $\frac{A}{\Xi-\Xi(a)}$ is in fact smooth at $a$). Moreover the function and its derivatives decay exponentially at $\pm \infty$. It follows therefore that $\psi \mapsto \mathcal{K}_{m_0} (\frac{A}{\Xi-z_0} \psi + P_0 (\psi))$ is a compact operator, both on $L^2$ and on $C^\sigma$. Thus $T$ is a Fredholm operator with index $0$. We thus just need to check that the kernel is $0$ in order to conclude that it is invertible with bounded inverse. In both cases we need to show that the equation
\begin{equation}\label{e:kernel-T}
-\frac{d^2\psi}{dt^2} + m_0^2 \psi + \frac{A}{\Xi-\Xi (a)} \psi + P_0 (\psi) = 0
\end{equation}
has only the trivial solution. Observe that the kernel $V$ of the operator $\psi \mapsto -\frac{d^2\psi}{dt^2} + m_0^2 \psi + \frac{A}{\Xi-\Xi (a)} \psi$ is $1$-dimensional by Lemma \ref{l:ODE2} and Corollary \ref{c:decay}. In particular $V$ is generated by $\psi_0$. Since the operator $P_0$ is the orthogonal projection onto $V$ and $-\frac{d^2}{dt^2} + m_0^2+  \frac{A}{\Xi-\Xi (a)}$ is self-adjoint, the kernel of $-\frac{d^2}{dt^2} + m_0^2+ \frac{A}{\Xi-\Xi (a)} + P_0$ in $L^2$ must be trivial. 

In order to argue that the kernel is $0$ on $C^\sigma$ we apply a variation of the same idea: first we observe that if $\psi$ is a $C^\sigma$ solution of \eqref{e:kernel-T}, then $\frac{A}{\Xi-\Xi (a)} \psi + P_0 (\psi)$ is also in $C^\sigma$ and hence $\psi''\in C^\sigma$. Observe also that the operator is self-adjoint and thus we can assume that $\psi$ is real-valued. We then multiply both sides of \eqref{e:kernel-T} by $\bar \psi_0$, integrate by parts and use the fact that $\psi_0$ is in the kernel of the self-adjoint operator $-\frac{d^2}{dt^2} + m_0^2 + \frac{A}{\Xi-\Xi (a)}$ to conclude that $(\langle \psi, \psi_0\rangle)^2 =0$. But then
$\psi$ is a bounded solution of $-\frac{d\psi}{dt^2} + m_0^2 \psi + \frac{A}{\Xi-\Xi (a)}\psi =0$. Given that $\frac{A}{\Xi-\Xi (a)} \psi$ is a product of an exponentially decaying function and a bounded function, we conclude that $-\frac{d^2\psi}{dt^2} + m_0^2 \psi$ is an exponentially decaying function $f$. We thus have $\psi = \mathcal{K}_m (f) + C_1 e^{-m_0t} + C_2 e^{m_0t}$ for two constants $C_1$ and $C_2$. However $\mathcal{K}_m (f)$ decays exponentially at both $\pm \infty$ and thus, given that $\psi$ is bounded, we must have $C_1=C_2=0$. In particular $\psi$ decays exponentially at both $\pm \infty$ and so it is an $L^2$ function. But we already saw that every $L^2$ solution is trivial.
\end{proof}

\begin{lemma}\label{l:Rmz-small}
For every constant $\mu>0$ we define the cone $C_\mu := \{z: \textrm{Im} z \geq \mu |\textrm{Re}\, (z-z_0)|\}$. Then
\begin{equation}
\lim_{z\in C_\mu, (m,z)\to (m_0, z_0)} \|\mathcal{R}_{m,z}\|_O = 0\, ,
\end{equation}
where $\|L\|_O$ is the operator norm of $L$ 
when considered as a bounded operator from $L^2$ to $L^2$.
\end{lemma}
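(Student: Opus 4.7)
The plan is to absorb the multiplier into a Hilbert--Schmidt kernel and then exploit the cone condition to obtain smallness. First, using the identity
\[
\frac{1}{\Xi-z_0}-\frac{1}{\Xi-z} \;=\; -\frac{z-z_0}{(\Xi-z_0)(\Xi-z)},
\]
I would rewrite
\[
\mathcal{R}_{m,z} \;=\; (z-z_0)\,\mathcal{K}_{m_0}\circ M_w \;+\; (m^2-m_0^2)\,\mathcal{K}_{m_0},
\]
where $M_w$ is the operator of multiplication by $\widetilde A/(\Xi-z)$ and $\widetilde A := A/(\Xi-z_0)$. The second summand is trivially of operator norm $|m^2-m_0^2|\,\|\mathcal{K}_{m_0}\|_O \to 0$. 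Two structural facts for $\widetilde A$ will be used repeatedly: since $A(a)=0$ with $A'(a)\ne 0$ and $\Xi'(a)\ne 0$, the function $\widetilde A$ extends smoothly through $a$; and the exponential decay of $A$ at $\pm\infty$ combined with $|\Xi-z_0|$ being bounded below off any neighborhood of $a$ gives $\widetilde A\in L^\infty\cap L^2$.

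The key step is to replace the operator norm of $\mathcal{K}_{m_0}\circ M_w$ by its Hilbert--Schmidt norm. Since $\mathcal{K}_{m_0}$ acts by convolution with $(2m_0)^{-1} e^{-m_0|\cdot|}$, the integral kernel of the composition is $K(t,s)=(2m_0)^{-1}e^{-m_0|t-s|}\widetilde A(s)/(\Xi(s)-z)$, and Fubini gives
\[
\|\mathcal{K}_{m_0}\circ M_w\|_{HS}^2 \;=\; \frac{1}{4m_0^3}\int_\R \frac{|\widetilde A(s)|^2}{|\Xi(s)-z|^2}\,ds.
\]
Note that $\mathcal{K}_{m_0}$ itself is \emph{not} Hilbert--Schmidt, so it is essential that the $L^2$ multiplier be absorbed into the kernel before this estimate is applied. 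Since $\|\cdot\|_O\le\|\cdot\|_{HS}$, the proof reduces to showing that $|z-z_0|^2$ times the above integral tends to zero.

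Finally, I would split the integral at $|s-a|=\delta$ for a small fixed $\delta>0$. Off a $\delta$-neighborhood of $a$, $|\Xi-z_0|$ is bounded below by some $c_0(\delta)>0$, hence $|\Xi-z|\ge c_0(\delta)/2$ for $|z-z_0|$ small, and this contribution is $O(|z-z_0|^2)$ via $\|\widetilde A\|_{L^2}<\infty$. Near $a$, the monotonicity of $\Xi$ with $\Xi'(a)\ne 0$ produces, for $|z-z_0|$ small, a unique point $t_*=t_*(\mathrm{Re}\,z)$ with $\Xi(t_*)=\mathrm{Re}\,z$ and a constant $c_1>0$ such that $|\Xi(s)-z|^2\ge c_1^2(s-t_*)^2+(\mathrm{Im}\,z)^2$. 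A one-variable Cauchy-type integral then gives the local bound
\[
\int_{|s-a|\le\delta}\frac{|\widetilde A|^2}{|\Xi-z|^2}\,ds \;\le\; \frac{\pi\|\widetilde A\|_\infty^2}{c_1\,|\mathrm{Im}\,z|},
\]
and multiplying by $|z-z_0|^2$ and invoking the cone condition $|z-z_0|\le\sqrt{1+\mu^{-2}}\,|\mathrm{Im}\,z|$ yields $O(|z-z_0|)$. Both pieces vanish as $z\to z_0$ in $C_\mu$.

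The conceptual obstacle is precisely this last step: the multiplier $\widetilde A/(\Xi-z)$ has, near $t=a$, a Cauchy-kernel-like bump of $L^\infty$ height $O(|\mathrm{Im}\,z|^{-1})$, and the prefactor $(z-z_0)$ only renormalizes it to $O(1)$ under the cone condition. So no $L^\infty$ smallness of the multiplier is ever available; what \emph{does} decay is the $L^2$ mass of the bump (like $|\mathrm{Im}\,z|^{-1/2}$, gaining $|\mathrm{Im}\,z|^{1/2}$ after the $(z-z_0)$ prefactor via the cone). The Hilbert--Schmidt pass-through is the device that converts this $L^2$ decay into the required operator-norm decay.
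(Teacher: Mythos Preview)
Your proof is correct and takes a genuinely different route from the paper. The paper argues softly: it observes that the multiplication operators $L_z(\psi)=\frac{A(z-z_0)}{(\Xi-z)(\Xi-z_0)}\psi$ are uniformly bounded on the cone (this is exactly where the cone condition enters for them), that the adjoints $L_z^\star$ converge strongly to $0$, and then uses a finite-rank approximation argument in the style of Lemma~\ref{l:three} to upgrade this to operator-norm convergence of $\mathcal{K}_{m_0}\circ L_z$. Your approach is instead quantitative: you dominate the operator norm by the Hilbert--Schmidt norm, reduce to a single explicit integral, and split near/far from $a$ to extract a rate $O(|z-z_0|)$ from the cone condition. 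What your approach buys is an explicit decay rate and a completely self-contained computation that avoids any abstract compactness machinery; what the paper's approach buys is that one never needs to compute the HS integral or track constants, and the same template (uniform boundedness plus strong adjoint convergence plus finite-rank truncation) is reused from earlier in the paper. Both rely on the same structural inputs: smoothness of $A/(\Xi-\Xi(a))$ across $t=a$, its exponential decay, and the cone bound $|z-z_0|\lesssim_\mu \mathrm{Im}\,z$.
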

\begin{proof} Clearly, it suffices to show that
\begin{equation}
\lim_{z\in C_\mu, z\to z_0} \|\mathcal{K}_{m_0} \circ (A/(\Xi-z) - A/(\Xi-z_0))\|_O = 0\, .
\end{equation}
We can rewrite the operator as 
\[
\psi \mapsto \mathcal{K}_{m_0} \Big(\frac{A (z-z_0)}{(\Xi-z) (\Xi-z_0)} \psi\Big) \, .
\]
First of all observe that the operators 
\[
\psi \mapsto L_z (\psi) = \frac{A (z-z_0)}{(\Xi-z) (\Xi-z_0)} \psi
\]
are bounded in the operator norm uniformly in $z\in C_\mu$ by a constant $M$. Moreover, we can see that the adjoint operator is given by $L_z^* (\psi)= \frac{A (\bar z-z_0)}{(\Xi-\bar z) (\Xi-z_0)} \psi$ converges strongly in $L^2$ to $0$: indeed the functions $\frac{A (\bar z-z_0)}{(\Xi-\bar z) (\Xi-z_0)}$ are uniformly bounded and they converge to $0$ on $\mathbb R \setminus \{a\}$. We now use an argument entirely similar to that used in the proof of Lemma \ref{l:three}: given any $\varepsilon >0$ we fix the orthogonal projection $P_N$ onto a finite-dimensional subspace $V$ of $L^2$ with the property that $\|\mathcal{K}_{m_0}\circ P_N - \mathcal{K}_{m_0}\|_O$ is smaller than $\frac{\varepsilon}{2M}$. We then argue that for $|z-z_0|$ sufficiently small $P_N \circ L_z$ has operator norm smaller than $\frac{\varepsilon}{2}$. Having chosen an orthonormal base $\psi_1, \ldots, \psi_N$ for $V$, we recall that
\[
P_N (\psi)= \sum_i \langle \psi_i, \psi\rangle \psi_i\, 
\]
(where $\langle \cdot, \cdot \rangle$ denotes the scalar product in $L^2$).
Therefore our claim amounts to show that
\[
|\langle \psi_i, L_z (\psi)\rangle|\leq \frac{\varepsilon}{2N}
\]
for $z$ sufficiently close to $z_0$ and every $\psi$ with $\|\psi\|_{L^2}\leq 1$. For the latter we use
\[
|\langle \psi_i, L_z (\psi)\rangle| = |\langle L_z^* (\psi_i), \psi \rangle|\leq \|L_z^* (\psi_i)\|_{L^2}\, .
\]
\end{proof}

\begin{proof}[Proof of Lemma \ref{l:solve-for-psi}]
We rewrite the equation that we want to solve as 
\[
\psi + T^{-1} \circ \mathcal{R}_{m,z} (\psi) = T^{-1} \circ \mathcal{K}_{m_0} (\psi_0)\, .
\]
Note that $P_0(\psi_0)=\psi_0$. Furthermore, since $\mathcal K_{m_0}$ is, by definition, the inverse operator of $-\frac{\mathrm d^2}{\mathrm dt^2}+m_0^2\operatorname{Id}$,
\begin{equation*}
    \mathcal K_{m_0}^{-1}\Big(\psi_0+\mathcal K_{m_0}\Big(\frac{A}{\Xi-z_0}\psi_0\Big)\Big) = -\psi_0''+m_0^2\psi_0+\frac{A}{\Xi-z_0}\psi_0 = 0.
\end{equation*}
Therefore, 
\begin{equation*}
    \psi_0+\mathcal K_{m_0}\Big(\frac{A}{\Xi-z_0}\psi_0\Big) = 0.
\end{equation*}
In combination with the definition of $T$ in \eqref{e:Lagrange-3}, we get
\begin{equation*}
    T(\psi_0) = \psi_0+\mathcal K_{m_0}\Big(\frac{A}{\Xi-z_0}\psi_0+\psi_0\Big)=\mathcal K_{m_0}(\psi_0),
\end{equation*}
in other words,
\begin{equation}\label{e:T-1K}
T^{-1} \circ \mathcal{K}_{m_0} (\psi_0) = \psi_0\, .
\end{equation}
Therefore, \eqref{e:solve-for-psi} becomes
\begin{equation}\label{e:to-Neumann}
(\operatorname{ Id} + T^{-1} \circ \mathcal{R}_{m,z}) (\psi) = \psi_0\, ,
\end{equation}
so the existence of a unique solution is guaranteed as soon as $\|T^{-1} \circ \mathcal{R}_{m,z}\|_{O} < 1$. 
\end{proof}

\begin{remark}\label{r:Neumann-series}
In the remaining part of the proof of Proposition \ref{p:5-7} we will take advantage of the representation of $\psi$ as a function of $\psi_0$ through the Neumann series coming from \eqref{e:to-Neumann}. More precisely, our proof of Lemma \ref{l:solve-for-psi} leads to the following representation:
\begin{equation}\label{e:Neumann-series}
\psi = \psi_0 - (T^{-1} \circ \mathcal{R}_{m,z}) (\psi_0) + \sum_{k=2}^\infty (-1)^k (T^{-1}\circ \mathcal{R}_{m,z})^k (\psi_0)\, .
\end{equation}
\end{remark}

\section{Proof of Proposition \ref{p:5-7}: Part II}\label{s:5-7-part-II}

We now complete the proof of Proposition \ref{p:5-7}. The positive parameter $\mu>0$ in Lemma \ref{l:solve-for-psi} will have to be chosen sufficiently small: its choice will be specified in a few paragraphs, while for the moment we assume it to be fixed. We set $m_0 = m_a$ and $z_0 = \Xi (a)$. Thus, for each $(m_0+h ,z)$ in a set 
\[
U_{\delta, \mu} := \{|h|< \delta, |z-z_0|< \delta, \textrm{Im} z > \mu |\textrm{Re}\, (z-z_0)|\}
\]
we know that that there is a solution $\psi = \psi (m_0+h,z)$ of \eqref{e:solve-for-psi} which moreover satisfies the expansion \eqref{e:Neumann-series}.
We then define the function 
\begin{equation}
H (h,z) := \langle \psi (m_0+h,z), \psi_0\rangle\, ,
\end{equation}
and obviously we are looking for those $z$ which solve
\begin{equation}\label{e:what-we-want-to-do}
H (h,z) =1  \, .
\end{equation}
The main point of our analysis is the following
\begin{lemma}\label{l:will-apply-Rouche}
The function $H$ is holomorphic in $z$ and moreover
\begin{equation}\label{e:expansion}
H (h,z) = 1 - 2m_a h + c (a) (z-z_0) + o (|z-z_0| + |h|)
\end{equation}
where $c(a)$ is a complex number with $\textrm{Im}\, c(a) > 0 $.
\end{lemma}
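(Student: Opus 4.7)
The holomorphicity of $H$ in $z$ is automatic: for each fixed real $h$, the operator $\mathcal{R}_{m_0+h,z}$ of \eqref{e:Lagrange-3} depends holomorphically on $z$ (pointwise via $1/(\Xi(t)-z)$), so the uniformly convergent Neumann series \eqref{e:Neumann-series} realises $z\mapsto\psi(h,z)$ as a norm-holomorphic $L^2$-valued map, and therefore $H(h,z)=\langle\psi(h,z),\psi_0\rangle$ is holomorphic on $\{z:(h,z)\in U_{\delta,\mu}\}$.

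For the expansion, my strategy is to derive an \emph{exact} scalar identity rather than iterating the Neumann series term by term. Pairing the equation $-\psi''+m^2\psi+\tfrac{A}{\Xi-z}\psi+P_0(\psi)=\psi_0$ with the real eigenfunction $\psi_0$, integrating by parts in the Laplacian, and using $-\psi_0''+m_0^2\psi_0+\tfrac{A}{\Xi-z_0}\psi_0=0$ to eliminate second derivatives, I obtain, after combining the two singular integrals via $\tfrac{1}{\Xi-z}-\tfrac{1}{\Xi-z_0}=\tfrac{z-z_0}{(\Xi-z_0)(\Xi-z)}$,
\[
(1+2m_0h+h^2)\,H(h,z)+(z-z_0)\,I(h,z)=1, \qquad I(h,z):=\int\frac{B\,\psi(h,z)\,\psi_0}{\Xi-z}\,dt,
\]
where $B(t):=A(t)/(\Xi(t)-z_0)$ is smooth and bounded (smooth at $a$ because $A(a)=0$ and $\Xi'(a)\neq 0$, with $B(a)=A'(a)/\Xi'(a)$ by L'H\^opital). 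Once I establish that $I(h,z)$ admits a finite limit $I_0$ as $(h,z)\to(0,z_0)$ inside the cone $C_\mu$, inverting the factor $(1+2m_0h+h^2)$ and Taylor-expanding to first order in $h$ (with $m_0=m_a$) yields the claimed expansion with $c(a):=-I_0$.

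To extract $I_0$ and, in particular, its imaginary part, I split $\psi=\psi_0+(\psi-\psi_0)$ inside $I(h,z)$. The leading piece $\int B\psi_0^2/(\Xi-z)\,dt$ is the pairing of a smooth, exponentially decaying density with $1/(\Xi-z)$, and the Plemelj--Sokhotski formula (applicable because $\Xi'(a)\neq 0$) gives
\[
\lim_{z\to z_0,\;z\in C_\mu}\int\frac{B\psi_0^2}{\Xi-z}\,dt=\mathrm{p.v.}\!\int\frac{B\psi_0^2}{\Xi-z_0}\,dt+\frac{i\pi B(a)\psi_0(a)^2}{|\Xi'(a)|}.
\]
Now $B(a)=A'(a)/\Xi'(a)<0$, since $A'(a)>0$ and $\Xi'(a)<0$ by the definition of $\mathscr{C}$, and $\psi_0(a)\neq 0$ by Remark~\ref{r:phi(a)-nonzero}; the displayed imaginary part is therefore strictly negative, so $\mathrm{Im}\,c(a)=-\pi B(a)\psi_0(a)^2/|\Xi'(a)|>0$, as required.

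The main obstacle is to show that the complementary contribution $\int B\psi_0(\psi-\psi_0)/(\Xi-z)\,dt$ tends to zero in the limit; naive Cauchy--Schwarz fails because $\|B\psi_0/(\Xi-z)\|_{L^2}$ diverges like $|\mathrm{Im}\,z|^{-1/2}$. My plan is to exploit the sharper first-order Neumann information
\[
\psi-\psi_0=-(z-z_0)\,T^{-1}\mathcal{K}_{m_0}\!\Big(\tfrac{B\psi_0}{\Xi-z}\Big)+O(\|\mathcal{R}_{m_0+h,z}\|_O^2),
\]
which extracts an explicit factor $(z-z_0)$ and reduces the correction to $\int B\psi_0 F_z/(\Xi-z)\,dt$ with $F_z:=T^{-1}\mathcal{K}_{m_0}(B\psi_0/(\Xi-z))$. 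A pointwise application of Plemelj--Sokhotski, combined with the exponential localisation of $\psi_0$, shows that $F_z$ converges pointwise and boundedly to an $L^2$ function $F_0$, and a second application of Plemelj--Sokhotski to the outer integral produces the uniform bound $\int B\psi_0 F_z/(\Xi-z)\,dt=O(1)$ in the cone. Multiplying by the extracted factor $(z-z_0)$ then gives the correction as $O(|z-z_0|)=o(1)$, completing the proof.
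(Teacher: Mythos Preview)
Your exact scalar identity $(1+2m_0h+h^2)H(h,z)+(z-z_0)I(h,z)=1$, obtained by pairing the equation for $\psi$ with $\psi_0$ and integrating by parts, is a genuinely cleaner route to what the paper does in its Step~2 via the self-adjointness trick $\mathcal{K}_{m_0}(T^*)^{-1}\psi_0=\psi_0$. The leading Plemelj term and the sign computation for $\mathrm{Im}\,c(a)$ are also correct (and coincide with the paper's $G(z)$).

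However, your treatment of the correction $\int \tfrac{B\psi_0(\psi-\psi_0)}{\Xi-z}\,dt$ has a real gap. Two issues:
\begin{itemize}
\item[(a)] The claim that ``a second application of Plemelj--Sokhotski produces $\int B\psi_0 F_z/(\Xi-z)\,dt=O(1)$'' requires \emph{uniform H\"older} control on $F_z$ near $t=a$, not merely pointwise boundedness. For merely $L^\infty$ (even continuous) integrands the singular integral can diverge logarithmically as $z\to z_0$.
\item[(b)] You do not address the higher-order Neumann remainder at all. Writing it as $O(\|\mathcal{R}_{m_0+h,z}\|_O^2)$ in $L^2$ is insufficient: Lemma~\ref{l:Rmz-small} only gives $\|\mathcal{R}\|_{L^2\to L^2}\to 0$ with \emph{no rate}, while Cauchy--Schwarz against $B\psi_0/(\Xi-z)$ picks up a factor $|\mathrm{Im}\,z|^{-1/2}$ that you cannot absorb. (Also, your first-order formula for $\psi-\psi_0$ drops the $-(2m_0h+h^2)\psi_0$ piece coming from the $h$-part of $\mathcal{R}$; this is harmless but should be recorded.)
\end{itemize}

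The fix is exactly the technical ingredient the paper isolates as its Step~1: the uniform bound $\|\mathcal{R}_{m_0+h,z}\|_{\mathcal{L}(C^\sigma)}\le C(|h|+|z-z_0|)$ (equivalently, $B_z:=\mathcal{K}_{m_0}\circ(\Xi-z)^{-1}$ is bounded $C^\sigma\to C^\sigma$ uniformly in $z$). Granting this, the Neumann series gives $\|\psi-\psi_0\|_{C^\sigma}=O(|h|+|z-z_0|)$, and then the standard singular-integral bound on H\"older densities yields directly
\[
\left|\int\frac{B\psi_0(\psi-\psi_0)}{\Xi-z}\,dt\right|\le C\|B\psi_0(\psi-\psi_0)\|_{C^\sigma}=O(|h|+|z-z_0|)\to 0,
\]
with no need to split into first-order and higher-order pieces. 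So your scalar-identity shortcut replaces the paper's Step~2, but the paper's Step~1 (the $C^\sigma$ estimate) is unavoidable and is precisely what your sketch is missing.
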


The coefficient $c(a)$ is computed in~\eqref{eq:limitexists} and is, in fact, $1/z_1$ from~\eqref{eq:calculateangle} with $\textrm{Im} z_1 < 0$ and $m_1 = -(2m_0)^{-1}$. The formal computation involving the Plemelj formula to obtain~\eqref{eq:calculateangle} will appear in the calculation of $c(a)$ below.\index{Plemelj's formula@Plemelj's formula}

Given Lemma \ref{l:will-apply-Rouche}, consider now $\xi (h)$ which we obtain by solving $c(a) (\xi-z_0)= 2m_a h$, namely,
\[
\xi (h) = \frac{2m_a h}{c(a)} +z_0 = \frac{2m_a h}{|c(a)|^2} \overline{c(a)} +z_0\, .
\]
The idea behind the latter definition is that, if the term $o (|z-z_0| + |h|)$ vanished identically, $z = \xi (h)$ would be the solution of $H (h,z)=1$. Even though $o (|z-z_0| + |h|)$ does not vanish, we nonetheless expect that the solution $z$ of $H (h,z)=1$ is relatively close to $\xi (h)$.

Since $\textrm{Im}\, c(a)>0$, $\xi (h)$ has positive imaginary part if $h<0$. In particular we have
\[
\textrm{Im}\, \xi (h) \geq \gamma |h| \qquad \forall h < 0\, .
\]
where $\gamma$ is a positive constant. We then rewrite
\[
H (h, z) = 1 + c (a) (z-\xi (h)) + \underbrace{o (|\xi (h)-z_0| + h)}_{=: r(h)} + o (|z-\xi (h)|)\, .
\]
Consider the disk $D_h := \{|z-\xi (h)| \leq 2 \beta |h|\}$, for a suitably chosen constant $\beta>0$. We will show below that adjusting the constants $\mu$ and $\beta$ suitably, the disk will be in the domain of the holomorphic function $H (\cdot, z)$. Leaving this aside for the moment, by Rouch\'e Theorem, if we choose $h$ sufficiently small the set $H(h, D_h)$ contains a disk of radius $|c(a)|\beta h$ centered at $1+ r (h)$. But then for $h$ sufficiently small we also have $|r(h)| \leq \frac{|c(a)|\beta h}{2}$ and so we conclude that $1\in H (h, D_h)$, namely that there is a point $z (h)$ in the disk $D_h$ which is mapped in $1$ by $H (h, \cdot)$. This would then complete the proof of Proposition \ref{p:5-7} if we were able to prove that $\textrm{Im}\, z (h) >0$. We therefore need to show that $D_h$ is in the domain of $H (h, \cdot)$, namely,
\[
\textrm{Im}\, z \geq \mu |\textrm{Re}\, (z-z_0)|\, \qquad \forall z\in D_h\, .
\]
We first estimate
\[
\textrm{Im}\, z \geq \, \textrm {Im}\, \xi (h) - 2 \beta |h| \geq (\gamma- 2 \beta) |h|\, .
\]
Then
\begin{equation}\label{e:inequality-101}
|\textrm{Re}\, z-z_0| \leq |\xi (h)-z_0| + |z-\xi (h)| \leq \left(\frac{2m_a}{|c(a)|} + 2 \beta\right) |h|\, .
\end{equation}
We thus conclude that
\begin{equation}\label{e:inequality-102}
\textrm{Im}\, z(h) \geq \frac{\gamma-2\beta}{2m_a |c(a)|^{-1} +2\beta} |\textrm{Re}\, (z (h)-z_0)|\, .
\end{equation}
Thus it suffices to choose $\beta = \frac{\gamma}{3}$ and $\mu = \frac{\gamma}{6 m_a |c(a)|^{-1} + 2 \gamma}$. This guarantees at the same time the existence of a solution and the fact that $z (h)$ has positive imaginary part when $h<0$ (which results from combining \eqref{e:inequality-101} and \eqref{e:inequality-102}.

In order to complete the proof of Proposition \ref{p:5-7} we therefore just need to show Lemma \ref{l:will-apply-Rouche}.

\begin{proof}[Proof of Lemma \ref{l:will-apply-Rouche}]
In order to show holomorphicity we just need to show that, for each fixed $z$,
\[
z\mapsto \sum_{k=0}^\infty (- T^{-1} \circ \mathcal{R}_{m,z})^k
\]
is holomorphic. Since the series converges in the operator norm, it suffices to show that each map $z\mapsto (-T^{-1} \circ \mathcal{R}_{m,z})^k$ is holomorphic for every $k$, for which indeed it suffices to show that $z\mapsto \mathcal{R}_{m,z}$ is holomorphic. This is however obvious from the explicit formula. We therefore now come to the Taylor expansion \eqref{e:expansion}. 

\medskip

\textbf{ Step 1} We will show here that 
\begin{equation}\label{e:small-in-Csigma}
\|\mathcal{R}_{m_0+h,z}\|_{\mathcal{L} (C^\sigma)} \leq C (\sigma) (|h| + |z-z_0|)\, 
\end{equation}
for every $\sigma\in ]0,1[$, where $\|L\|_{\mathcal{L} (C^\sigma)}$ is the operator norm of a bounded linear operator $L$ on $C^{\sigma}$.
The estimate will have the following consequence. First of all using \eqref{e:Neumann-series} and $\|\psi_0\|_{L^2}^2 =1$ we expand
\begin{equation}\label{e:Taylor-2}
H (h,z) = 1 - \langle T^{-1} \circ \mathcal{R}_{m_0+h,z} (\psi_0), \psi_0\rangle 
+ \underbrace{\sum_{k=2}^\infty \langle (-T^{-1} \circ \mathcal{R}_{m_0+h,z})^k (\psi_0), \psi_0\rangle}_{=: R_1 (z,h)}\, .
\end{equation}
Hence using \eqref{e:small-in-Csigma} we estimate
\begin{align}
|R_1 (z,h)| & \leq \sum_{k=2}^\infty \|(-T^{-1} \circ \mathcal{R}_{m_0+h,z})^k (\psi_0)\|_\infty \|\psi_0\|_{L^1}\nonumber\\\hspace{-1em}
&\leq C \sum_{k=2}^\infty (\|T^{-1}\|_{C^\sigma} \|\mathcal{R}_{m_0+h,z}\|_{\mathcal{L} (C^\sigma)})^k \|\psi_0\|_{C^\sigma}\|\psi_0\|_{L^1}\nonumber\\
&= o (|h|+|z-z_0|)\, ,\label{e:resto-1}
\end{align}
for some fixed $\sigma$.
In order to show \eqref{e:small-in-Csigma} we write
\[
\mathcal{R}_{m_0+h,z} (\psi) = (z-z_0) \mathcal{K}_{m_0} \Big(\frac{1}{\Xi-z} \Big(\frac{A}{\Xi-z_0} \psi\Big)\Big) + (2m_0 h +h^2) \mathcal{K}_{m_0} (\psi)\, .
\]
Since $\frac{A}{\Xi-z_0}$ is smooth, it suffices to show that the operators $B_z:= \mathcal{K}_{m_0} \circ \frac{1}{\Xi-z}$ are uniformly bounded in $\mathcal{L} (C^\sigma)$. We first fix a smooth cut-off function $\varphi \in C^\infty_c (]a-2, a+2[)$ which equals $1$ on $[a-1,a+1]$ and write 
\[
B_z= B_z^1 + B_z^2
:= \mathcal{K}_{m_0} \circ \Big(\frac{1-\varphi}{\Xi-z}\Big)+\mathcal{K}_{m_0} \circ \Big(\frac{\varphi}{\Xi-z} \Big)\, .
\]
But since $(1-\varphi)/(\Xi-z)$ enjoys a uniform bound in $C^k$, it is easy to conclude that $\|B^1_z\|_{\mathcal{L} (C^\sigma)}$ is bounded uniformly in $z$. We thus need to bound
\begin{align*}
B^2_z (\psi) (t) &= \frac 1{2m_0}\int e^{-m_0 |t-s|} \frac{\varphi (s)}{\Xi (s) -z}  \psi (s)\, ds\, .
\end{align*}
We first bound $\|B^2_z\|_{L^\infty}$. We write $z= x+iy$ and, since $x$ is close to $a$, we select the only $a'$ such that $\Xi (a')=x$ and write
\begin{align*}
B^2_z (\psi) (t) &= \frac 1{2m_0}\underbrace{\int e^{-m_0 |t-s|} \frac{\varphi (s) (\psi (s)-\psi (a'))}{(\Xi (s) -\Xi (a')) - iy} \, ds}_{=: I_1 (t)}\\
&\qquad
+ \frac {\psi(a')}{2m_0}\underbrace{\int e^{-m_0 |t-s|} \frac{\varphi (s)}{\Xi (s) -z}\, ds}_{=: I_2 (t)}
\end{align*}
Writing $\frac{1}{\Xi -z} = \frac{1}{\Xi'}\frac{d}{dt} \ln (\Xi -z)$ we can integrate by parts to get
\begin{align*}
I_2 (t) &= - \underbrace{\int m_0 \frac{t-s}{|t-s|} e^{-m_0 |t-s|} (\Xi' (s))^{-1} \ln (\Xi (s)-z) \varphi (s)\, ds}_{=:I_{2,1} (t)}\\
&\qquad  -
\underbrace{\int e^{-m_0 |t-s|} \ln (\Xi (s) -z) \frac{d}{ds} ((\Xi')^{-1} \varphi) (s)\, ds}_{=: I_{2,2} (t)}
\end{align*}
and use the uniform bound for $\ln (\Xi (s)-z)$ in $L^1 ([a-2,a+2])$ to conclude that $|I_{2,1}|$ and $|I_{2,2}|$ are both bounded uniformly. As for $I_1$, note that, on any compact interval $K$ around $a'$, we have, since $\Xi'$ is continuous and $\Xi'<0$,
\begin{equation*}
    C(K):=\inf_{x\in K} |\Xi'(x)| = -\max_{x\in K} \Xi'(x) >0.
\end{equation*}
Therefore by the mean value theorem, for all $s\in K$, there exists a $\iota = \iota(s)\in K$ such that
\begin{align*}
    \abs{\Xi(s)-\Xi(a')-iy} &=\sqrt{y^2+(\Xi(s)-\Xi(a'))^2}\\ & >\abs{\Xi(s)-\Xi(a')}= \abs{s-a'}\abs{ \Xi'(\iota)}\ge \abs{s-a'} C(K).
\end{align*}
By the definition of the Hölder semi-norm, we thus have, for all $s\in K$,
\[
\left|\frac{\psi (s)- \psi (a')}{\Xi (s) - \Xi (a') - iy}\right| \leq  \frac{\|\psi\|_{C^\sigma}}{C(K)|s-a'|^{1-\sigma}},
\]
which is integrable. Furthermore, outside of $K$ the integrand of $I_1$ is bounded and decays exponentially, therefore one can uniformly bound $I_1$.

We next wish to bound the seminorm
\[
[B^2_z (\psi)]_\sigma:= \sup_{t\neq t'} \frac{|B^2_z (\psi) (t) - B^2_z (\psi) (t')|}{|t-t'|^\sigma}\, .
\]
We write
\[
B^2_z (\psi) (t) - B^2_z (\psi) (t') = (I_1 (t) - I_1 (t')) + \psi' (a) (I_2 (t) - I_2 (t'))\, . 
\]
Using that $|e^{-m_0 |t-s|} - e^{-m_0 |t'-s|}|\leq C |t-t'|$ we can bound
\[
|I_1 (t) - I_1 (t')| \leq C |t-t'| \int |\varphi (s)| \frac{|\psi (s)-\psi (a')|}{|\Xi (s) - \Xi (a')|}\, ds
\leq C \|\psi\|_{C^\sigma} |t-t'|\, .
\]
Similarly we can write
\[
|I_{2,2} (t) - I_{2,2} (t')| \leq C |t-t'| \int \left|\ln (\Xi (s) -z) \frac{d}{ds} ((\Xi')^{-1} \varphi) (s)\right|\, ds \leq C |t-t'|\, .
\]
Next denoting the function $(\Xi' (s))^{-1} \varphi (s) \ln (\Xi (s) -z)$ by $B (s)$ we assume $t> t'$ and write further 
\begin{align*}
I_{2,1} (t) - I_{2,1} (t')&= m_0 \Big(\underbrace{\int_t^\infty e^{-m_0 (s-t)} B(s)\, ds - \int_{t'}^\infty e^{-m_0(s-t')} B(s)\, ds}_{=: J_+(t,t')}\Big)\\
&\quad - m_0 \Big(\underbrace{\int_{-\infty}^t e^{-m_0 (t-s)} B(s)\, ds - \int_{-\infty}^{t'} e^{-m_0 (t'-s)} B(s)\, ds}_{=: J_- (t,t')}\Big)\, . 
\end{align*}
Then we choose $p=\frac{1}{\sigma}$, let $p'$ be the dual exponent and estimate
\begin{align*}
|J_+ (t,t')| &\leq C |t-t'| \int_t^\infty |B(s)|\, ds + \int_{t'}^t |B (s)|\, ds\\
&\leq C |t-t'| \|B\|_{L^1} + |t-t'|^\sigma \|B\|_{L^{p'}}\, .
\end{align*}
A similar estimate for $J_- (t,t')$ finally shows the existence of a constant $C$ such that
\[
|B^2_z (\psi) (t) - B^2_z (\psi) (t')|\leq C \|\psi\|_{C^\sigma} \left(|t-t'|+|t-t'|^\sigma\right)\, .
\]
Clearly this implies
\[
|B^2_z (\psi) (t) - B^2_z (\psi) (t')|\leq C \|\psi\|_{C^\sigma} |t-t'|^\sigma \qquad \mbox{if $|t-t'|\leq 1$.}
\]
On the other hand we can trivially bound
\[
|B^2_z (\psi) (t) - B^2_z (\psi) (t')| \leq 2 \|B^2_z (\psi)\|_\infty \leq C \|\psi\|_{C^\sigma} |t-t'|^\sigma
\quad\mbox{if $|t-t'|\geq 1$.}
\]

\medskip

\textbf{ Step 2.} In this second step we compute
\begin{align*}
\langle T^{-1} \mathcal{R}_{m,z} (\psi_0), \psi_0\rangle &=
\langle T^{-1} \circ \mathcal{K}_{m_0} \left(A ((\Xi-z)^{-1} - (\Xi-z_0)^{-1})\psi_0\right), \psi_0\rangle\\
&\qquad
+ (2m_0 h + h^2) \langle T^{-1} \circ \mathcal{K}_{m_0} (\psi_0), \psi_0\rangle\, .
\end{align*}
Recalling \eqref{e:T-1K} (and using that $\mathcal{K}_{m_0}$ is self-adjoint) we rewrite the expression as
\begin{align}
\langle T^{-1} \mathcal{R}_{m,z} (\psi_0), \psi_0\rangle &=
(z-z_0) \langle T^{-1}\circ \mathcal{K}_{m_0} \big( A (\Xi-z)^{-1} (\Xi-z_0)^{-1} \psi_0\big), \psi_0\rangle\nonumber\\
&\qquad + 2m_a h + h^2\nonumber\\
&= (z-z_0) \langle A (\Xi-z)^{-1} (\Xi-z_0)^{-1} \psi_0, \mathcal{K}_{m_0}\circ (T^*)^{-1} (\psi_0)\rangle\nonumber\\
&\qquad + 2ma_ h + h^2\nonumber\\
&= (z-z_0) \underbrace{\langle A (\Xi-z)^{-1} (\Xi -z_0)^{-1} \psi_0, \psi_0 \rangle}_{=: G (z)} + 2m_a h + h^2\label{e:Taylor-3}\, .
\end{align}
We thus want to show that the following limit exists and to compute its imaginary part:
\begin{align}
\label{eq:limitexists}
- c (a) &:= \lim_{\textrm{Im}\, z >0, z\to \Xi (a)} G (z)\nonumber\\
&=
\lim_{\textrm{Im}\, z >0, z\to \Xi (a)} \int \frac{1}{\Xi (s)-z} |\psi_0 (s)|^2 \frac{A(s)}{\Xi (s) - \Xi (a)}\, ds\, . 
\end{align}
Observe indeed that inserting $G(z) = - c(a) + o (1)$ in \eqref{e:Taylor-3} and taking into account \eqref{e:Taylor-2} and \eqref{e:resto-1} we conclude that \eqref{e:expansion} holds.

In order to compute $c(a)$ we observe first an 
\[
\phi (s) := |\psi_0 (s)|^2 \frac{A(s)}{\Xi (s) - \Xi (a)}
\]
is smooth and decays exponentially. We thus rewrite
\[
G (z) = \int \frac{1}{\Xi (s)-z} \phi (s)\, ds\, .
\]
Next we decompose $z$ into its real and imaginary part as $z = x + iy$ and observe that 
\begin{align*}
\lim_{\textrm{Im}\, z >0, z\to \Xi (a)} \textrm{Re}\, G (z) &= \lim_{x\to \Xi(a), y \downarrow 0} \int \frac{\Xi (s)-x}{(\Xi (s)-x)^2 + y^2} \phi (s)\, ds 
\end{align*}
Here we are only interested in showing that the limit exists and we thus fix a cut-off function $\varphi\in C^\infty_c (]a-2, a+2[)$, identically $1$ on $[a-1, a+1]$ and split the integral into
\begin{align*}
\textrm{Re}\, G (z) &=  \int \frac{\Xi (s)-x}{(\Xi (s)-x)^2 + y^2} \phi (s) \varphi (s)\, ds\\
&\qquad +
\int \frac{\Xi (s)-x}{(\Xi (s)-x)^2 + y^2} \phi (s) (1-\varphi (s))\, ds\, .
\end{align*}
The second integral has a limit, while in order to show that the first has a limit we write 
\[
\frac{\Xi (s)-x}{(\Xi (s)-x)^2 + y^2} = \frac{1}{2\Xi' (s)} \frac{d}{ds} \ln ((\Xi(s)-x)^2 + y^2)\, .
\]
We then integrate by parts and use the fact that $\ln ((\Xi (s)-x)^2 +y^2)$ converges to $2 \ln |(\Xi(s)-\Xi (a)|$ strongly in $L^q ([a-2,a+2])$ for every $q$ to infer the existence of the limit of the first integral. 

As for the imaginary part we write instead
\begin{align}\label{e:arctan-integral}
\lim_{\textrm{Im}\, z >0, z\to \Xi (a)} \textrm{Im}\, G (z) &= \lim_{x\to \Xi(a), y \downarrow 0} \int \frac{y}{(\Xi (s)-x)^2 + y^2} \phi (s)\, ds\, .
\end{align}
We wish to show that the latter integral converges to 
\begin{equation}\label{e:arctan-integral-2}
I = \phi (a) \int \frac{ds}{(\Xi' (a))^2 s^2 +1} = \frac{\pi \phi (a)}{|\Xi' (a)|}\, .
\end{equation}
On the other hand $\phi (a) = |\psi_0 (a)|^2 A' (a) (\Xi' (a))^{-1}$. 
Since $A' (a) > 0$ and $\Xi' (a)<0$, we conclude that $c (a)$ exists and it is a complex number with positive imaginary part, which completes the proof of the lemma. 

It remains to show the convergence of \eqref{e:arctan-integral} to \eqref{e:arctan-integral-2}. First observe that for each $x$ sufficiently close to $\Xi (a)$ there is a unique $a' = \Xi^{-1} (x)$ such that $\Xi (a')=x$. Changing variables ($s$ becomes $a'+s$), the integral in \eqref{e:arctan-integral} becomes
\begin{equation}
\int \frac{y}{(\Xi (a'+s)-x)^2 + y^2} \phi (a'+s)\, ds\, 
\end{equation}
and we wish to show that its limit is $I$ as $(a',y)\to (a,0)$.
Next, fix any $\delta>0$ and observe that 
\[
\lim_{y\to 0} \int_{|s|\geq \delta} \frac{y}{(\Xi (a'+s)-x)^2 + y^2} \phi (a'+s)\, ds=0
\]
uniformly in $a' \in [a-1, a+1]$. We therefore define
\[
I (\delta, a', y) := \int_{-\delta}^\delta \frac{y}{(\Xi (a'+s)-x)^2 + y^2} \phi (a'+s)\, ds
\]
and we wish to show that, for every $\varepsilon >0$ there is a $\delta>0$ such that
\begin{equation}\label{e:arctan-integral-3}
\limsup_{(a',y) \downarrow (a,0)} \left| I (\delta, a', y) - I\right| \leq C \varepsilon\, ,
\end{equation}
where $C$ is a geometric constant.
We rewrite
\[
I (\delta, a', y) = \int_{-\delta y^{-1}}^{\delta y^{-1}} \frac{\phi (a'+ys)}{y^{-2} (\Xi (a' + ys) - \Xi (a'))^2 +1}\, ds\, .
\]
Fix now $\varepsilon$ and observe that, since $\Xi'$ and $\phi$ are continuous, if $\delta$ is chosen sufficiently small, then
\begin{align}
&((\Xi' (a))^2 - \varepsilon^2) s^2 \leq y^{-2} (\Xi (a' + ys) - \Xi (a'))^2 \leq ((\Xi' (a))^2 + \varepsilon^2) s^2\\
& |\phi (a' + ys) - \phi (a)| \leq \varepsilon\, .
\end{align}
for all $|a'-a|<\delta$ and $y |s| \leq \delta$. Choosing $\varepsilon>0$ so that $\varepsilon \leq \frac{|\Xi' (a)|}{2}$ we easily see that, when $|a'-a| < \delta$, we have 
\[
\left|I (\delta, a', y) - \phi (a) \int_{-\delta y^{-1}}^{\delta y^{-1}} \frac{ds}{(\Xi' (a))^2 s^2 +1}\right| \leq C \varepsilon\, .
\]
In particular, as $y\downarrow 0$, we conclude \eqref{e:arctan-integral-3}.
\end{proof}

\section{Proof of Proposition \ref{p:almost-final}}

We reduce the proof of Proposition \ref{p:almost-final} to the following lemma.

\begin{lemma}\label{l:almost-final-2}
Consider $G:= \{m> 1, m \neq m_a, m_b : \mathscr{U}_m \neq \emptyset\}$. Then $G$ is relatively open and relatively closed in $]1, \infty[\setminus \{m_a, m_b\}$.
\end{lemma}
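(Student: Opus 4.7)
The plan is to combine Remark~\ref{r:m-factor-eigenvalue} with classical spectral perturbation theory and Proposition~\ref{p:3+4}. By the remark, $m\in G$ iff $\mathcal{L}_m$ has an eigenvalue in the open upper half plane. A key auxiliary fact is that $m\mapsto\mathcal{L}_m$ is continuous in the operator norm on $\mathcal{H}$: continuity of $\mathcal{S}_m(\gamma)=m\zeta\gamma$ is immediate from the boundedness of $\zeta$, and for $\mathscr{K}_m$ one uses the pointwise estimate $|u^m-u^{m_0}|\le|m-m_0|\,u^{\min(m,m_0)}|\log u|$ for $u\in(0,1]$, applied to the ratios $r/s$ and $s/r$ in the definition \eqref{e:explicit3}, recycling the Cauchy-Schwarz bounds from the proof of Proposition~\ref{p:all-m} to obtain $\|\mathscr{K}_m-\mathscr{K}_{m_0}\|_O\le C(K)|m-m_0|$ for $m,m_0$ in a compact set $K\subset\, ]1,\infty[$.

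For relative openness, I would fix $m_0\in G$ and $z_0\in\mathscr{U}_{m_0}$, so that $w_0:=m_0z_0$ is an isolated eigenvalue of $\mathcal{L}_{m_0}$ of finite algebraic multiplicity by Proposition~\ref{p:all-m}(iii). Choose a small circle $\gamma\subset\{\textrm{Im}\,w>0\}$ around $w_0$ enclosing no other spectrum. Norm continuity of $m\mapsto\mathcal{L}_m$ gives, for $m$ near $m_0$, that the resolvents $(w-\mathcal{L}_m)^{-1}$ exist and are norm-continuous in $m$ uniformly for $w\in\gamma$, so the Riesz projector
\begin{equation*}
P_\gamma(m):=\frac{1}{2\pi i}\oint_\gamma(w-\mathcal{L}_m)^{-1}\,dw
\end{equation*}
depends continuously on $m$. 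Since $P_\gamma(m_0)\ne 0$, the same holds in a neighborhood, so $\mathcal{L}_m$ must have an eigenvalue $w_m$ inside $\gamma$, and $z_m:=w_m/m\in\mathscr{U}_m$ witnesses $m\in G$.

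For relative closedness, I take $m_n\to m_\infty$ with $m_n\in G$ and $m_\infty\in\,]1,\infty[\,\setminus\{m_a,m_b\}$, and pick $z_n\in\mathscr{U}_{m_n}$. The uniform norm bound of Proposition~\ref{p:all-m} forces $|w_n|:=|m_nz_n|\le C$, so up to a subsequence $w_n\to w_\infty$ and $z_n\to z_\infty:=w_\infty/m_\infty$. The main obstacle is the possibility that the limit is a neutral limiting mode, i.e.\ $\textrm{Im}\,z_\infty=0$. Proposition~\ref{p:3+4} is precisely tailored to exclude this: since $(m_\infty,z_\infty)\in\overline{\mathscr{P}}$ and $m_\infty>1$, the case $\textrm{Im}\,z_\infty=0$ would force $m_\infty=\sqrt{\lambda_a}=m_a$ or $m_\infty=\sqrt{\lambda_b}$; the latter equals $m_b$ when $\lambda_b>1$, and is impossible when $\lambda_b\le 1$ because $\sqrt{\lambda_b}\le 1<m_\infty$. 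Both subcases contradict $m_\infty\notin\{m_a,m_b\}$, so $\textrm{Im}\,z_\infty>0$.

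It remains to verify $w_\infty\in\textrm{spec}(\mathcal{L}_{m_\infty})$. Otherwise $(\mathcal{L}_{m_\infty}-w_\infty)^{-1}$ is bounded, and writing $\mathcal{L}_{m_n}-w_n=(\mathcal{L}_{m_\infty}-w_\infty)+\big[(\mathcal{L}_{m_n}-\mathcal{L}_{m_\infty})-(w_n-w_\infty)\big]$ with the bracket arbitrarily small in operator norm, a Neumann series makes $(\mathcal{L}_{m_n}-w_n)$ invertible for large $n$, contradicting $w_n\in\textrm{spec}(\mathcal{L}_{m_n})$. Since $\textrm{Im}\,w_\infty>0$, Proposition~\ref{p:all-m}(iii) promotes $w_\infty$ to a discrete eigenvalue, so $z_\infty\in\mathscr{U}_{m_\infty}$ and $m_\infty\in G$.
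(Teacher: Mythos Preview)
Your proof is correct and shares the same high-level architecture as the paper's: both directions rest on the norm-continuity of $m\mapsto\mathcal{L}_m$ together with Proposition~\ref{p:3+4}, which keeps $\mathscr{U}_m$ uniformly away from the real axis on compact subintervals of $]1,\infty[\setminus\{m_a,m_b\}$. For openness the two arguments are essentially identical (continuity of the Riesz projector), only phrased in opposite directions: you show directly that a neighborhood of $m_0\in G$ lies in $G$, whereas the paper shows the contrapositive, that $m_j\notin G$, $m_j\to m$ forces $m\notin G$.

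The one substantive difference is in the closedness step. The paper works at the level of eigenfunctions: given $m_j\to m$ and $z_j\in\mathscr{U}_{m_j}$ with $z_j\to z$, it normalizes the $\psi_j$ in $L^2$, extracts uniform exponential decay from Lemma~\ref{l:ODE2}, and obtains a strong $L^2$ limit $\psi$ solving the eigenvalue equation at $(m,z)$. You stay purely at the operator level, arguing that if $w_\infty$ were in the resolvent set of $\mathcal{L}_{m_\infty}$ then a Neumann series would make $\mathcal{L}_{m_n}-w_n$ invertible for large $n$, a contradiction. Your route is shorter and avoids the ODE bookkeeping; the paper's route has the mild advantage of producing the limiting eigenfunction explicitly, though that is not used here.
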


Proposition \ref{p:almost-final} is an obvious consequence of the latter lemma and of Proposition \ref{p:5-7}: Lemma \ref{l:almost-final-2} implies that $G$ is the union of connected components of $[1, \infty[\setminus \{m_a, m_b\}$. On the other hand the connected component $]m_b, m_a[$ intersects $G$ because of Proposition \ref{p:5-7} and thus it is contained in $G$.
We thus complete the proof of Proposition \ref{p:almost-final} showing Lemma \ref{l:almost-final-2}
 
 \begin{proof}[Proof of Lemma \ref{l:almost-final-2}] We start with some preliminary considerations. Fix an interval $[c,d]\subset ]1, \infty[\setminus \{m_a, m_b\}$.
 Recalling Proposition \ref{p:all-m} we know that, since the operator norm of $\mathcal{L}_m$ is bounded uniformly in $m\in [c,d]$,
 \begin{itemize}
     \item[(a)] There is $R>0$ such that $\mathscr{U}_m\subset B_R (0)$ for all $m\in [c,d]$.
\end{itemize}
However it also follows from Proposition \ref{p:3+4} that 
\begin{itemize}
\item[(b)] There is a $\delta >0$ such that $\mathscr{U}_m\subset \{\textrm{Im}\, z > \delta\}$. 
\end{itemize}

\medskip

\textbf{ Step 1.} We first prove that $G$ is relatively closed. To that end we fix a sequence $m_j \to m\in ]1, \infty[\setminus \{m_a, m_b\}$ such that $m_j$ belongs to $G$. Without loss of generality we can assume $\{m_j\}\subset [c,d]\subset ]1, \infty[\setminus \{m_a, m_b\}$. For each $m_j$ we can then consider $z_j\in \mathscr{U}_{m_j}$, which by (a) and (b) we can assume to converge to some $z\in \mathbb C$ with positive imaginary part. We then let $\psi_j$ be a sequence of nontrivial elements in $L^2$ such that 
\begin{equation}\label{e:eigenvalue-equation-21}
-\psi_j'' + m_j^2 \psi_j + \frac{A}{\Xi -z_j} \psi_j = 0\, ,
\end{equation}
and normalize them to $\|\psi_j\|_{L^2}=1$
Since $\textrm{Im}\, z_j \geq \delta >0$, the sequence of functions $\frac{A}{\Xi-z_j}$ enjoy uniform bounds in the spaces $L^1$ and $C^k$. We can then argue as in Section \ref{s:3+4} to find that
\begin{itemize}
    \item[(i)] $\|\psi_j'\|_{L^2}$ enjoy a uniform bound;
    \item[(ii)] There are uniformly bounded nonzero constants $\{C^\pm_j\}$ with the property that $\psi_j$ is asymptotic to $C^\pm_j e^{\mp m_j t}$ and $\pm \infty$;
    \item[(iii)] There is a $T_0>0$ independent of $j$ with the property that
    \[
    |\psi_j (t) - C^\pm_j e^{\mp m_j t}| \leq \frac{|C^\pm_j|}{2} e^{\mp m_j t} \qquad \forall \pm t > T_0\, .
    \]
\end{itemize}
These three properties together imply that a subsequence, not relabeled, converges strongly in $L^2$ to some $\psi$. Passing into the limit in \eqref{e:eigenvalue-equation-21} we conclude that 
\[
-\psi'' + m^2 \psi + \frac{A}{\Xi-z} \psi = 0\, .
\]
This shows that $z\in \mathscr{U}_m$, i.e. that $m \in G$.

\medskip

\textbf{ Step 2.} Here we show that $G$ is relatively open. To that end we consider some sequence $m_j \to m \in ]1, \infty[\setminus \{m_a, m_b\}$ with the property that $m_j \not\in G$ and we show that $m\not \in G$. By (a) and (b) above, it suffices to show that the domain
\[
\Delta := \{|z|< R : \textrm{Im}\, z > \delta\}
\]
does not contain any element of $\textrm{spec}\, (\mathcal{L}_m)$. Observe first that, since we know that it does not intersect $\gamma = \partial \Delta$, the distance between $\gamma$ and any element in $\textrm{spec}\, (\mathcal{L}_m)$ is larger than a positive constant $\varepsilon$. Recalling that the spectrum on the upper half complex space is discrete, we have that
\[
P_m := \int_\gamma (\mathcal{L}_m -z)^{-1}\, dz 
\]
is a projection on a finite-dimensional space which contains all eigenspaces of the elements $z\in \textrm{spec}\, (\mathcal{L}_m)\cap \Delta = m \mathscr{U}_m = \{mz: z\in \mathscr{U}_m\}$. And since all such elements belong to the discrete spectrum, $\mathscr{U}_m = \emptyset$ if and only if $P_m = 0$. On the other hand 
\[
P_{m_j} := \int_\gamma (\mathcal{L}_{m_j} -z)^{-1}\, dz
\]
equals $0$ precisely because $m_j \not \in G$. We thus just need to show that $P_{m_j}$ converges to $P_m$ to infer that $m\in G$. The latter follows from the following observations:
\begin{itemize}
    \item[(i)] Since $\gamma$ is a compact set and does not intersect the spectrum of $\mathcal{L}_m$, there is a constant $M$ such that $\|(\mathcal{L}_m -z)^{-1}\|_O \leq M$ for all $z\in \gamma$;
    \item[(ii)] $\mathcal{L}_{m_j}$ converges to $\mathcal{L}_m$ in the operator norm;
    \item[(iii)] Writing 
    \begin{align*}
    &\qquad (\mathcal{L}_{m_j} - z)^{-1}\\ 
    &= (\textrm{Id} + (\mathcal{L}_m -z)^{-1} 
    (\mathcal{L}_{m_j} - \mathcal{L}_m))^{-1}( \mathcal{L}_m - z)^{-1}\, ,
    \end{align*}
    when $\|\mathcal{L}_{m_j} - \mathcal{L}_m\|_{\mathcal{L}} \leq \frac{1}{2M}$ we can use the Neumann series for the inverse to infer
    \[
    \sup_{z\in \gamma} \|(\mathcal{L}_{m_j} -z)^{-1} - (\mathcal{L}_m -z)^{-1}\|_O \leq C \|\mathcal{L}_m - \mathcal{L}_{m_j}\|_O\, ,
    \]
    for some constant $C$ independent of $j$.
\end{itemize}
We then conclude that $P_{m_j}$ converges to $P_m$ in the operator norm.
\end{proof}

\begin{remark}\label{rmk:algebraic dim const}
An immediate outcome of the argument above is that the sum of the \index{algebraic multiplicity@algebraic multiplicity} algebraic multiplicities of $z\in \mathscr{U}_m$, as eigenvalues of $m^{-1} \mathcal{L}_m$, is constant on any connected component of $]\! -\! \infty, \infty[\setminus \{m_a, m_b\}$. Indeed, it coincides with the rank of the operator $P_m$ defined in Step 2.
\end{remark}

\section{Proof of Lemma \ref{l:bottom}}\label{s:choice-of-A}

\index{aagO@$\Xi$}\index{aalA@$A$}
We start by observing that 
\[
\Xi' (t) := \int_{-\infty}^t e^{-2(t-\tau)} A(\tau)\, d\tau\, . 
\]
In fact, since $A = \Xi''+2\Xi$, the identity follows from the classical solution formula for first order ODEs with constant coefficients once we show that the right hand side and the left hand side coincide for all sufficiently negative $t$. The latter can be verified with a direct computation using that $\Xi' (t) = -2 c_0 2^{2t}$ for all $t$ sufficiently negative.

We next read the conditions for $\Xi\in \mathscr{C}$ in terms of $A$ to find that it suffices to impose
\begin{itemize}
    \item[(i)] $A (t) = - 8 c_0 e^{2t}$ for all $t$ sufficiently negative;
    \item[(ii)] $A(t) = -\al e^{-\al t}$ for all $t\geq \ln 2$;
    \item[(iii)] There are exactly two zeros $a<b$ of $A$ and $A' (a) >0$, $A' (b)<0$;
    \item[(iv)] $\int_{-\infty}^t e^{-2(t-\tau)} A(\tau) d\tau <0$ for every $t$.
\end{itemize}
Note indeed that, by imposing $\Xi (\infty) =0$, with a simple computation we can determine $\Xi (-\infty)$ as 
\[
\Xi (-\infty) = - \int_{-\infty}^\infty \Xi' (\tau)\, d\tau =
-\frac{1}{2} \int_{-\infty}^\infty A (\tau)\, d\tau\, 
\]
and moreover (using $\Xi'<0$, which follows from (iv)) we conclude that the conditions (i)-(iv) ensure 
\[
\int_{-\infty}^\infty A (\tau)\, d\tau < 0\, .
\]
We next fix $a=0$ and $b=\frac{1}{2}$ and rather than imposing (iv) we impose the two conditions
\begin{itemize}
    \item[(v)] $\int_{-\infty}^0 e^{2\tau} A (\tau) d\tau = -1$;
    \item[(vi)] $\max A \leq \frac{1}{e}$.
\end{itemize}
Observe, indeed, that (iv) is equivalent to 
\[
\int_{-\infty}^t e^{2\tau} A (\tau) \, d\tau < 0
\]
and that, since $A$ is negative on $]-\infty, 0[$ and $]\frac{1}{2}, \infty[$, the integral on the left-hand side is maximal for $t=\frac{1}{2}$. We then can use (v) and (vi) to estimate
\[
\int_{-\infty}^{\frac{1}{2}} e^{2\tau} A(\tau)\,d \tau \leq -1 + \frac{e}{2} \max A \leq -\frac{1}{2}\, .
\]
We next recall that, by the Rayleigh criterion, 
\[
- \lambda_a = \min_{\|\psi\|_{L^2} = 1} \langle \psi, L_a \psi\rangle = \min_{\|\psi\|_{L^2} = 1} \int \Big(|\psi'|^2 + \frac{A}{\Xi-\Xi (0)} |\psi|^2 \Big)\, . 
\]
We test the right-hand side with 
\[
\psi (t) :=
\left\{
\begin{array}{ll}
0 \qquad & \mbox{for $|t|\geq \frac{1}{2}$}\\ \\
\sqrt{2} \cos (\pi t) \qquad &\mbox{for $|t|\leq \frac{1}{2}$.}
\end{array}\right.
\]
We therefore get 
\begin{equation}\label{e:bottom-est-1}
- \lambda_a \leq 2 \pi^2 + 2 \int_{-1/2}^{1/2} \frac{A (t)}{\Xi (t) - \Xi (0)} \cos^2 \pi t\, dt\, . 
\end{equation}
Next, for any fixed positive constant $B>0$, we impose that $A (t) = B t$ on the interval $]- \sqrt{B}^{-1}, 0]$ and we then continue it smoothly on $[0, \infty[$ so to satisfy (ii), (iii), and (v) on $[0, \infty[$ (the verification that this is possible is rather simple). We also can continue it smoothly on $]\! -\!\infty, - \sqrt{B}^{-1}]$ so to ensure (i). In order to ensure (v) as well we just need to show that
\[
\int_{-\sqrt{B}^{-1}}^0 e^{2\tau} A(\tau)\, d\tau \geq -\frac{1}{2}\, .
\]
The latter is certainly ensured by
\[
\int_{- \sqrt{B}^{-1}}^0 e^{2\tau} A(\tau)\, d\tau \geq \int_{-\sqrt{B}^{-1}}^0 A(\tau)\, d\tau = -\frac{1}{2}\, .
\]
Now, observe that $\Xi' (0) = \int_{-\infty}^0 e^{2\tau} A (\tau) \, d\tau = -1$. For $t\in \, ]-\sqrt{B}^{-1}, 0[$ we wish to estimate $\Xi' (t)$ and to do it we compute
\begin{align*}
|\Xi' (t)- \Xi' (0)| & = \left|e^{-2t}\int_{-\infty}^t e^{2\tau} A (\tau)\, d\tau - \int_{-\infty}^0 e^{2\tau} A(\tau)\, d\tau\right|\\
&\leq e^{-2t} \left|\int_t^0 e^{2\tau} A(\tau)\, d\tau\right| + (e^{-2t}-1) \left|\int_{-\infty}^0 e^{2\tau} A(\tau)\, d\tau\right|\\
&\leq \frac{e^{2\sqrt{B}^{-1}}}{2} + (e^{2\sqrt{B}^{-1}}-1) \leq \frac{3}{4}\, ,
\end{align*}
which can be ensured by taking $B$ sufficiently large. In particular $-\frac{1}{4} \geq \Xi' (t) \geq -2 $ for $t\in \, ]-\sqrt{B}^{-1}, 0[$. We thus conclude that
\[
-2t \leq \Xi (t) - \Xi (0) \leq -\frac{t}{4} \qquad \forall t\in \, ]-\sqrt{B}^{-1}, 0[\, .
\]
In turn the latter can be used to show 
\[
\frac{A (t)}{\Xi (t) - \Xi(0)} \leq - \frac{B}{2} \qquad \forall t\in \, ]-\sqrt{B}^{-1}, 0[\, .
\]
Since $\frac{A}{\Xi - \Xi (0)}$ is otherwise negative on $]-\frac{1}{2}, \frac{1}{2}[$, we conclude
\begin{equation}\label{e:bottom-est-2}
- \lambda_a \leq 2\pi^2 - 2 \int_{-\sqrt{B}^{-1}}^0 B \cos^2 \pi t\, dt\, .
\end{equation}
By taking $B$ large enough we can ensure that $\cos^2 \pi t\geq \frac{1}{2}$ on the interval $]-\sqrt{B}^{-1}, 0[$. In particular we achieve
\[
- \lambda_a \leq 2\pi^2 - \sqrt{B}\, .
\]
Since we can choose $\sqrt{B}$ as large as we wish, the latter inequality completes the proof of the lemma.

\chapter{Nonlinear theory}\label{sect:Proof-main4}

This final chapter will prove Theorem~\ref{thm:main4} and hence complete the argument leading to Theorem~\ref{thm:main}. To that end we fix a choice of $\bar \Omega$, $\bar V$, $m$ and $\eta$ as given by Theorem~\ref{thm:spectral}, where $\bar a>0$ is a large parameter whose choice will be specified only later. We moreover fix a suitable subspace $X$ of $L^2_m$ and we will prove an estimate corresponding to \eqref{e:H2-estimate} in this smaller space.  

\begin{definition}\label{d:X}
We denote by $X$ the subspace of elements $\Omega\in L^2_m$ for which the following norm is finite:
\begin{equation}\label{e:X-norm}
\|\Omega\|_X:= \|\Omega\|_{L^2} + \||x| \nabla \Omega\|_{L^2} + \|\nabla \Omega\|_{L^4}\, .
\end{equation}
\end{definition}

The above norm has two features which will play a crucial role in our estimates. The first feature is that it ensures an appropriate decay of the $L^2$ norm of $D\Omega$ on the complements of large disks $\mathbb R^2\setminus B_R$. The second feature is that it allows to bound the $L^\infty$ norm of $\Omega$ and $\nabla (K_2 *\Omega)$ and to give a bound on the growth of $K_2*\Omega$ at infinity. More precisely, we have the following:

\begin{proposition}\label{p:X-bounds}\label{P:X-BOUNDS}
For all $\kappa \in ]0,1[$, there is a constant $C(\kappa) >0$ such that the following estimates hold for every $m$-fold symmetric $\Omega\in X$:
\begin{align}
|\nabla (K_2*\Omega) (x)|+|\Omega (x)| &\leq \frac{C(\kappa)}{1+|x|^{1-\kappa}}\|\Omega\|_X\qquad\forall x\in\R^2 \label{e:decay-Omega}\\
|K_2* \Omega (x)| &\leq C  \|\Omega\|_X \min\{ |x| , 1\} \qquad \forall x\in \mathbb R^2\, \label{e:Hoelder}.
\end{align}
\end{proposition}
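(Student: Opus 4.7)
For $|x|=R\geq 1$, rescale to the unit annulus $A = \{1/2 < |y| < 2\}$ via $\tilde\Omega(y) := \Omega(Ry)$. The three pieces of $\|\Omega\|_X$ translate to $\|\tilde\Omega\|_{L^2(A)}\leq C R^{-1}\|\Omega\|_X$, $\|\nabla\tilde\Omega\|_{L^2(A)}\leq C R^{-1}\|\Omega\|_X$ (using that $|x|\geq R/2$ on $RA$), and $\|\nabla\tilde\Omega\|_{L^4(A)}\leq C R^{1/2}\|\Omega\|_X$. Interpolating between the last two gives $\|\nabla\tilde\Omega\|_{L^p(A)}\leq C R^{-1+\kappa}\|\Omega\|_X$ for a suitable $p=p(\kappa)>2$, and the 2D Morrey embedding $W^{1,p}(A)\hookrightarrow L^\infty(A'')$ yields $|\Omega(x)|\leq C R^{-1+\kappa}\|\Omega\|_X$. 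For $|x|\leq 1$ use $W^{1,4}(B_2)\hookrightarrow C^{1/2}(\bar B_1)$ directly.

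\textbf{Step 2 (pointwise decay of $\nabla v$).} Apply the same rescaling with $\tilde v(y) := R^{-1} v(Ry)$, so that $\tilde v = K_2 * \tilde\Omega$ satisfies $\Delta \tilde v = \nabla^\perp \tilde\Omega$ and $\mathrm{div}\,\tilde v = 0$. Lemma~\ref{l:extension}(b1) provides $\|\tilde v\|_{L^2(A)} + \|\nabla\tilde v\|_{L^2(A)}\leq C R^{-1}\|\Omega\|_X$, hence $\|\tilde v\|_{L^p(A)}\leq C R^{-1}\|\Omega\|_X$ for every finite $p$ by $H^1\hookrightarrow L^p$ in 2D. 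Interior elliptic regularity then gives $\|\nabla^2 \tilde v\|_{L^p(A')}\leq C(\|\nabla\tilde\Omega\|_{L^p(A)} + \|\tilde v\|_{L^p(A)})\leq C R^{-1+\kappa}\|\Omega\|_X$, and a Morrey embedding applied to $\nabla\tilde v$ unrescales to $|\nabla v(x)|\leq C R^{-1+\kappa}\|\Omega\|_X$ on $|x|=R\geq 1$. For $|x|\leq 1$, global Calder\'on--Zygmund $\|\nabla^2 v\|_{L^4}\leq C\|\nabla\Omega\|_{L^4}\leq C\|\Omega\|_X$ combined with Morrey gives the bound on $\nabla v$ near the origin.

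\textbf{Step 3 (growth bound on $v$).} Continuity of $v$ at $0$ (from Step~2) together with the $m$-fold covariance $v(R_{2\pi/m} x)=R_{2\pi/m}v(x)$ forces $v(0)=R_{2\pi/m}v(0)=0$ when $m\geq 2$; by the mean value inequality, $|v(x)|\leq |x|\,\|\nabla v\|_{L^\infty(B_1)}\leq C|x|\|\Omega\|_X$ for $|x|\leq 1$. For the uniform bound when $|x|\geq 1$, decompose $\Omega = \sum_{k\in \mathbb Z} a_k(r)e^{ikm\theta}$ and $v = \sum_k v_k$ via the Green's function formula of Lemma~\ref{l:K-in-polar}. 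The radial mode $v_0(x) = r^{-2}\bigl(\int_0^r \rho a_0(\rho)\,d\rho\bigr) x^\perp$ satisfies $|v_0(x)|\leq \|a_0\|_\mathcal{H}/\sqrt 2\leq C\|\Omega\|_X$ by Cauchy--Schwarz. For $k\neq 0$, the explicit formula \eqref{e:explicit3} combined with the Cauchy--Schwarz argument from the proof of Proposition~\ref{p:all-m} gives the mode-wise bound $|v_k(x)| \leq C (|k|m)^{-1/2}\|a_k\|_\mathcal{H}$. The Plancherel computation of $\||x|\nabla\Omega\|_{L^2}$ in polar coordinates yields
\[
\sum_{k} (km)^2\|a_k\|_\mathcal{H}^2 \;\leq\; \tfrac{1}{2\pi}\||x|\nabla\Omega\|_{L^2}^2 \;\leq\; C\|\Omega\|_X^2,
\]
and so weighted Cauchy--Schwarz sums the mode bounds to
\[
\sum_{k\neq 0}|v_k(x)| \;\leq\; C\Bigl(\sum_{k\neq 0}(km)^{-3}\Bigr)^{1/2}\Bigl(\sum_{k\neq 0}(km)^2\|a_k\|_\mathcal{H}^2\Bigr)^{1/2} \;\leq\; C\|\Omega\|_X,
\]
which combined with the bound on $v_0$ completes the uniform estimate.

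The main obstacle is Step~3. The pointwise decay of $\nabla v$ from Step~2 is borderline non-integrable along radial rays, so naive integration from $0$ would cost a $|x|^{\kappa}$ factor; no Morrey--Campanato chaining based only on Step~2 can close the gap. The role of the weighted Sobolev component $\||x|\nabla\Omega\|_{L^2}$ in the definition of $X$ is precisely to guarantee that $\sum_k (km)^2\|a_k\|_\mathcal{H}^2$ is finite, which is exactly the summability needed for the mode-by-mode Green's function representation to deliver a uniform $L^\infty$ bound on $v$ without logarithmic loss.
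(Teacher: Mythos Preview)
Your Steps~1 and~2 are essentially the paper's argument: the same rescaling-to-an-annulus plus interpolation plus Morrey for $\Omega$, and a close variant for $\nabla v$ (you invoke interior $W^{2,p}$ elliptic regularity for $\Delta\tilde v=\nabla^\perp\tilde\Omega$, whereas the paper splits $\nabla K_2*\Omega$ via cut-offs into a near-$x$ piece, a near-origin piece handled by the mean-value property for harmonic functions, and a far piece estimated pointwise from Step~1). Both routes work and produce the same bound.

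Step~3, however, is genuinely different. The paper fixes $|y|=3R\ge 1$, takes a radial cut-off $\varphi$ equal to $1$ on $B_R$, and writes $K_2*\Omega=K_2*(\varphi\Omega)+K_2*((1-\varphi)\Omega)$. The first piece is bounded by $CR^{-1}\|\varphi\Omega\|_{L^1}\le C\|\Omega\|_{L^2}$ since $y$ is far from the support. For the second, the $m$-fold symmetry of $(1-\varphi)\Omega$ gives $\int_{B}K_2*((1-\varphi)\Omega)=0$ (this is where the symmetry enters), so Poincar\'e plus Calder\'on--Zygmund yield $\|K_2*((1-\varphi)\Omega)\|_{L^2(B)}\le CR\|\Omega\|_X$; Calder\'on--Zygmund on $D((1-\varphi)\Omega)$ together with the weighted bound $\|D\Omega\|_{L^2(\R^2\setminus B_R)}\le R^{-1}\||x|D\Omega\|_{L^2}$ controls $\|D^2K_2*((1-\varphi)\Omega)\|_{L^2}\le CR^{-1}\|\Omega\|_X$; Gagliardo--Nirenberg then closes to $L^\infty$.

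Your route instead expands $\Omega$ in angular Fourier modes and uses the explicit radial Green's function \eqref{e:explicit3} mode by mode. The bound $|v_k(x)|\le C(|k|m)^{-1/2}\|a_k\|_{\mathcal H}$ is correct (the two components of $v_k$ are $\psi_k'$ and $km\,r^{-1}\psi_k$, each controlled by the Cauchy--Schwarz computation in the proof of Proposition~\ref{p:all-m}), and the summability via $\sum_k(km)^2\|a_k\|_{\mathcal H}^2\le(2\pi)^{-1}\|\partial_\theta\Omega\|_{L^2}^2\le C\||x|\nabla\Omega\|_{L^2}^2$ is a clean use of the weighted norm. Both proofs exploit the same two ingredients---the $m$-fold symmetry and the weighted $L^2$ control of $\nabla\Omega$---but in complementary ways: the paper uses symmetry for a Poincar\'e inequality and the weight for a Calder\'on--Zygmund estimate at scale $R$, while you use symmetry to organize the Fourier decomposition and the weight to sum the modes. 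Your approach is more explicit and makes the role of $\||x|\nabla\Omega\|_{L^2}$ very transparent; the paper's is more flexible (it does not rely on the exact Green's function) and closer in spirit to the cut-off arguments already used in Step~2.
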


The aim of this chapter is therefore to give the bound 
\begin{equation}\label{e:final-bound}
\|\Omega_{\text{per}, k} (\cdot, \tau)\|_{X} \leq e^{\tau (a_0+\delta_0)} \qquad\qquad\qquad \forall \tau\leq \tau_0\,  
\end{equation}
for some appropriately chosen constants $\delta_0>0$ and $\tau_0<0$, independent of $k$. Of course the main difficulty will be to give the explicit estimate \eqref{e:final-bound}. However a first point will be to show that the norm is indeed finite for every $\tau\geq -k$. This will be a consequence of the following:

\begin{lemma}\label{l:initial-bound}\label{L:INITIAL-BOUND}\label{l:pointwise}\label{L:POINTWISE}
Provided $a_0$ is large enough, the eigenfunction $\eta$ of Theorem \ref{thm:spectral} belongs to $C^2 (\mathbb R^2\setminus \{0\})$ and satisfies the pointwise estimates
\begin{equation}\label{e:eta-pointwise-decay}
|D^j \eta (x)|\leq C (1+ |x|)^{-m-2-j-\al} \qquad \forall x, \forall j\in \{0,1,2\}\, .
\end{equation}
In particular $\eta\in C^1 (\mathbb R^2)$ and its first derivatives are Lipschitz (namely, $\eta\in W^{2,\infty} (\mathbb R^2)$). Moreover $K_2*\eta \in L^2$.
\end{lemma}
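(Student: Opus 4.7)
The plan is to exploit property (iv) of Theorem~\ref{thm:spectral}, which gives $\eta(x)=e(r)e^{ij\theta}$ (or its real part) with $j=km\geq m\geq 2$, and to reduce the eigenvalue equation $L_{\text{ss}}(\eta)=z_0\eta$ to a scalar first-order linear ODE for the radial profile $e$. Using the splitting $L_{\text{ss}}=(1+\tfrac{\xi}{\alpha}\cdot\nabla)+\beta L_{\text{st}}$ from \eqref{e:Lss} together with the polar formulas underlying Lemmas~\ref{l:S-in-polar}--\ref{l:K-in-polar}, namely $L_{\text{st}}(e(r)e^{ij\theta})=-ij(\zeta e-r^{-1}\psi g')e^{ij\theta}$ where $\psi$ is the stream function given by~\eqref{e:explicit3} applied to $e$, the eigenvalue equation becomes
\[ \frac{r}{\alpha}e'(r)+\bigl(1-z_0-ij\beta\zeta(r)\bigr)e(r)+\frac{ij\beta}{r}\psi(r)g'(r)=0. \]
Since $\zeta,g,g'$ are smooth on $(0,\infty)$ and $\psi\in W^{2,2}_{\mathrm{loc}}(0,\infty)$, this ODE combined with~\eqref{e:explicit3} yields $e\in C^\infty(0,\infty)$ by a standard bootstrap, hence $\eta\in C^\infty(\R^2\setminus\{0\})$.

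For the decay at infinity, introduce the integrating factor $\mu(r)$ of the ODE; since $\zeta(r)=O(r^{-\bar\alpha})$ as $r\to\infty$, one has $|\mu(r)|\sim r^{\alpha(1-a_0)}$, which tends to $0$ because $a_0>1$. The $L^2$ constraint on $\eta$ excludes the growing homogeneous mode of the ODE, forcing $\mu(r)e(r)\to 0$ at infinity. Using $g'(r)=-\bar\alpha r^{-\bar\alpha-1}$ for $r\geq 2$ together with the bound $|\psi(r)|\leq Cr^{-j}$ at infinity (obtained, as in the proof of Corollary~\ref{c:decay}, by bootstrapping~\eqref{e:explicit3} from $e\in L^2$), the integrand $\mu(s)\psi(s)g'(s)/s^2$ decays like $s^{\alpha(1-a_0)-j-\bar\alpha-3}$; integrating from $r$ to $\infty$ and dividing by $\mu$ yields $|e(r)|\leq Cr^{-j-\bar\alpha-2}$. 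Differentiating the ODE propagates the decay to derivatives: $|e^{(n)}(r)|\leq Cr^{-j-\bar\alpha-2-n}$ for $n=1,2$. Since $n$-th Cartesian derivatives of $e(|x|)e^{ij\theta}$ are bounded by linear combinations of $r^{k-n}|e^{(k)}(r)|$ for $k\leq n$ with bounded angular coefficients, and $j\geq m$, we obtain $|D^n\eta(x)|\leq C(1+|x|)^{-m-\bar\alpha-2-n}$ for $n=0,1,2$.

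The hard part is $C^{1,1}$ regularity at $0$. Near $r=0$, $g'(r)=g''(0)r+O(r^3)$ by~\eqref{e:g-constant-around-0} and $\zeta$ is bounded. The indicial analysis of the ODE at $r=0$ gives a homogeneous exponent $\alpha(z_0-1)$ (modulus $\sim r^{\alpha(a_0-1)}$) and, from the source $\psi g'/r^2$, a particular contribution whose parity matches $r^j$ via~\eqref{e:explicit3}. Both modes vanish rapidly at $0$ when $a_0$ is large, and a bootstrap propagates improved pointwise bounds on $e$ through~\eqref{e:explicit3} into improved bounds on $\psi$ and hence on the source; this produces $|e(r)|\leq Cr^N$ near $0$ for any prescribed $N$, provided $\bar a$ is chosen large enough (linearly in $N$). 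Choosing $N$ large enough that $e(r)/r^j$ extends to a $C^2$ function of $r^2$ at $0$, we conclude $\eta(x)=(e(|x|)/|x|^j)(x_1+ix_2)^j\in C^{1,1}(\R^2)=W^{2,\infty}(\R^2)$.

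Finally, $K_2*\eta=\nabla^\perp\phi$ with $\phi(x)=\psi(r)e^{ij\theta}$, so $|\nabla\phi|^2=|\psi'|^2+j^2|\psi|^2/r^2$. The bounds $|\psi(r)|\leq Cr^j$ near $0$ and $|\psi(r)|\leq Cr^{-j}$ at infinity, with matching derivative bounds from \eqref{e:explicit3}, give $\int_0^\infty(|\psi'|^2 r+j^2|\psi|^2/r)\,dr<\infty$, hence $K_2*\eta\in L^2(\R^2)$. The main obstacle of the proof is the bootstrap at $0$ in the third step, which is precisely why the hypothesis ``$a_0$ large enough'' appears in the statement.
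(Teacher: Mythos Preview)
The proposal is essentially correct and follows the same strategy as the paper's proof: reduce the eigenvalue equation to a first-order ODE for the radial profile with the stream function $\psi$ as a nonlocal source, use an integrating factor, and bootstrap between this ODE and formula~\eqref{e:explicit3} to obtain decay at infinity and vanishing at the origin. The paper carries this out after the change of variable $s=\log r$, working with the stream function $h(s)=f(e^s)$ and the derived quantity $\Gamma=h''-m^2h$ (so that $\Gamma(s)=e^{2s}e(e^s)$); this turns all power bounds into exponential ones and makes the iteration transparent. Your integrating factor $\mu$ is exactly the paper's $I$ after this change of variables, so the two arguments are the same computation in different coordinates.

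Two small points. First, the bound $|\psi(r)|\le Cr^{-j}$ at infinity is not literally the content of Corollary~\ref{c:decay} (that corollary treats the Rayleigh equation for $L_{\text{st}}$), but the same bootstrap between~\eqref{e:explicit3} and the integrated ODE works here, and this is precisely what the paper does in its Step~3. Second, your route to $W^{2,\infty}$ at the origin via the claim that $e(r)/r^{j}$ extends to a $C^2$ function of $r^2$ is stronger than what the pointwise bound $|e(r)|\le Cr^N$ alone gives you; one also needs matching control on $e'$ and $e''$. The paper handles this by differentiating the integrated equation (its \eqref{e:Gamma'}--\eqref{e:Gamma''}) to bound $\Gamma,\Gamma',\Gamma''$ at $-\infty$ directly, and then reads off the Cartesian derivative bounds from those; that is the cleaner way to close this step.
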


One relevant outcome of Lemma 
\ref{l:initial-bound} is that the bound \eqref{e:final-bound} holds at least for $\tau$ sufficiently close to $-k$, given that $\Omega_{\text{per}, k} (\cdot, -k)\equiv 0$. The main point of \eqref{e:final-bound} is then that we will be able to deduce the following estimates.

\begin{lemma}\label{l:final-estimates}
Under the assumptions of Theorem \ref{thm:main4} there is a constant $C_0$ (independent of $k$) such that the following holds. Assume that $\bar\tau \leq 0$ is such that for all $\tau\in [-k, \bar\tau]$ we have the estimate
\begin{equation}\label{e:a-priori}
\|\Omega_{\text{per}, k} (\cdot, \tau)\|_X \leq e^{(a_0+\delta_0) \tau}.
\end{equation}
Then
\begin{align}
\|\Omega_{\text{per},k} (\cdot, \bar \tau)\|_{L^2} &\leq C_0 e^{(a_0+\delta_0+1/2)\bar\tau}\, ,
\label{e:stima-L2}\\
\||x| D\Omega_{\text{per}, k} (\cdot, \bar\tau)\|_{L^2} &\leq C_0 e^{(a_0+2\delta_0)\bar\tau}\, , \label{e:stima-H1-pesata}\\
\|D \Omega_{\text{per},k} (\cdot, \bar \tau)\|_{L^4} &\leq C_0 e^{(a_0+2\delta_0)\bar\tau}\label{e:stima-L4}\, .
\end{align}
\end{lemma}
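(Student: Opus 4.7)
The plan is to prove the three bounds in sequence, first \eqref{e:stima-L2} via Duhamel and the semigroup estimate of Theorem \ref{t:group}, then \eqref{e:stima-H1-pesata} and \eqref{e:stima-L4} via weighted energy estimates on the differentiated equation, each feeding on the previous. Write \eqref{e:master} schematically as $(\partial_\tau - L_{\text{ss}})\Omega_{\text{per},k} = \mathscr{F} - \mathscr{L} - \mathscr{Q}$, with $\mathscr{F}$ the forcing, $\mathscr{L}$ the small linear terms and $\mathscr{Q} = (V_{\text{per},k}\cdot\nabla)\Omega_{\text{per},k}$ the quadratic term. Since $\Omega_{\text{per},k}(\cdot,-k)=0$, Duhamel gives
\[
\Omega_{\text{per},k}(\cdot,\bar\tau) = \int_{-k}^{\bar\tau} e^{(\bar\tau-s)L_{\text{ss}}}\bigl[\mathscr{F}(\cdot,s) - \mathscr{L}(\cdot,s) - \mathscr{Q}(\cdot,s)\bigr]\,ds,
\]
and Theorem \ref{t:group} yields $\|e^{\tau L_{\text{ss}}}\|_O \leq M(\delta)e^{(a_0+\delta)\tau}$ for $\tau\geq 0$ and any $\delta>0$. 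For the forcing, Lemma \ref{l:pointwise} bounds $\|\eta\|_{L^2} + \|K_2\ast\eta\|_\infty$ by a constant, so $\|(V_{\text{lin}}\cdot\nabla)\Omega_{\text{lin}}\|_{L^2} \lesssim \varepsilon^2 e^{2a_0 s}$; the mixed terms $(V_r\cdot\nabla)\Omega_{\text{lin}}$ and $(V_{\text{lin}}\cdot\nabla)\Omega_r$ are supported in $|\xi|\gtrsim e^{-s/\alpha}$ where $\chi(e^{s/\alpha}|\xi|)-1$ is nonzero, and combining this support restriction with the polynomial decay of $\eta$ from \eqref{e:eta-pointwise-decay} produces a factor $e^{\mu s/\alpha}$ with $\mu$ as large as one wants by taking $m$ large. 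Hence $\|\mathscr{F}(\cdot,s)\|_{L^2}\lesssim e^{(a_0+\sigma)s}$ for a $\sigma$ much bigger than $\delta_0 + \tfrac12$. The small linear terms give $\|\mathscr{L}\|_{L^2}\lesssim (\|V_{\text{lin}}\|_\infty + \|V_r\|_\infty)\|\nabla\Omega_{\text{per},k}\|_{L^2} + \|V_{\text{per},k}\|_\infty\|\nabla(\Omega_{\text{lin}}+\Omega_r)\|_{L^2} \lesssim e^{(2a_0+\delta_0)s}$, using Proposition \ref{p:X-bounds} on $V_{\text{per},k}$ and the a priori bound. The quadratic term is controlled by $\|V_{\text{per},k}\|_\infty\|\nabla\Omega_{\text{per},k}\|_{L^2}\lesssim e^{2(a_0+\delta_0)s}$. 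Performing the Duhamel integral with $\delta$ small, picking $\bar a$ large enough that $a_0 > \tfrac12+\delta_0$, and using that the exponents above all dominate $a_0+\delta_0+\tfrac12$, yields \eqref{e:stima-L2}.

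For \eqref{e:stima-H1-pesata}, I will differentiate \eqref{e:master} in space, multiply by $|\xi|^2 \partial_i\Omega_{\text{per},k}$ and sum in $i$ to obtain a differential inequality for $E(\tau):=\||\xi|D\Omega_{\text{per},k}(\cdot,\tau)\|_{L^2}^2$. The transport term $-\bigl(1+\tfrac{\xi}{\alpha}\cdot\nabla\bigr)\Omega_{\text{per},k}$ is skew after integration by parts modulo a harmless contribution controlled by $E$ itself and the $L^2$ norm. The divergence-free fields $V_{\text{lin}}+V_r$ and $V_{\text{per},k}$ make the corresponding transport contributions integrate by parts to commutator terms involving $\nabla|\xi|^2 = 2\xi$; these are bounded using Proposition \ref{p:X-bounds} (pointwise control of $V_{\text{per},k}$ by $\|\Omega_{\text{per},k}\|_X\min\{|\xi|,1\}$, which cancels the weight at the origin and is bounded at infinity), giving contributions of order $e^{a_0 s}\cdot E(s)^{1/2}\|\Omega_{\text{per},k}\|_{L^2}$ or $e^{2(a_0+\delta_0)s}$. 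The source terms $D\mathscr{F}$ and $D$ applied to the linear couplings are estimated using the decay of $\eta$ and of $\nabla\eta$ from Lemma \ref{l:pointwise} multiplied by $|\xi|$, which is integrable thanks to $m$ being large. Gronwall combined with \eqref{e:stima-L2} and $E(-k)=0$ then gives \eqref{e:stima-H1-pesata}.

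For the $L^4$ bound \eqref{e:stima-L4} I proceed similarly: differentiating the equation and multiplying by $|D\Omega_{\text{per},k}|^2 D\Omega_{\text{per},k}$ to produce an $L^4$ energy inequality. The key point is that we can avoid estimating $\nabla V_{\text{per},k}$ in $L^\infty$, which would fail by a logarithm; instead the $V_{\text{per},k}\cdot\nabla$-type terms pass through an $L^\infty$ estimate on $V_{\text{per},k}$ (Proposition \ref{p:X-bounds}) against $\|D\Omega_{\text{per},k}\|_{L^4}^4$, which feeds back into Gronwall. The forcing and linear terms are bounded in $L^4$ using the decay of $\eta$ and of $K_2\ast\eta$ from Lemma \ref{l:pointwise}. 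The main obstacle throughout is making the ``loss of derivative'' in the quadratic term compatible with closing at a universal time $\tau_0$ independent of $k$: the clean resolution is that Proposition \ref{p:X-bounds} converts every $L^\infty$ estimate on a velocity field into an $X$-norm estimate on vorticity, so derivatives of the vorticity never need to be moved onto the velocity. Combined with the gain $e^{\tfrac12\bar\tau}$ from the $L^2$ step and the weight estimate from the previous paragraph, Gronwall produces \eqref{e:stima-L4}.
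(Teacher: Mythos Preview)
Your $L^2$ step is essentially the paper's argument, though your bound $\|\mathscr{L}\|_{L^2}\lesssim \|V_r\|_\infty\|\nabla\Omega_{\text{per},k}\|_{L^2}$ is wrong as stated: $V_r=\beta\bar V(\chi(e^{\tau/\alpha}|\xi|)-1)$ grows like $|\xi|^{1-\bar\alpha}$ on its support and is not in $L^\infty$. The paper handles this by exploiting the support restriction $|\xi|\gtrsim e^{-\tau/\alpha}$ together with the weighted norm $\||\xi|D\Omega\|_{L^2}$ that sits inside $\|\cdot\|_X$, gaining a factor $e^{\bar\alpha\tau/\alpha}$. This is fixable on your side.

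The real gap is in the derivative estimates. You propose to differentiate in Cartesian directions $\partial_i$ and run an energy argument. But the transport field in \eqref{e:master} contains $\beta\bar V$, not only $V_{\text{lin}}+V_r+V$, and after differentiating you acquire the commutator $\beta(\partial_i\bar V)\cdot\nabla\Omega_{\text{per},k}$. Since $|D\bar V|$ is merely bounded (no decay in $\tau$) and $\beta$ is large, this term feeds back into the energy inequality as $C\beta\,\|D\Omega_{\text{per},k}\|^2$ with no smallness factor, and Gronwall produces growth $e^{C\beta\tau}$ rather than $e^{(a_0+2\delta_0)\tau}$. Your write-up lists only the small transport fields and the scaling field $\xi/\alpha$, so this term is silently dropped.

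The paper's resolution, which you are missing, is structural: one differentiates in polar coordinates, first in $\theta$ and then in $r$, and runs the energy estimates in that order. Because $\bar V\cdot\nabla=\zeta(r)\partial_\theta$ is radial times angular derivative, $\partial_\theta$ commutes with it and no $\beta$-commutator appears in the angular estimate. In the radial estimate the commutator $(\bar V_{,r}\cdot\nabla)\Omega=\zeta'(r)\partial_\theta\Omega$ involves only the \emph{angular} derivative, which has just been bounded; this is why the order matters. The same device is used for the $L^4$ estimate. Without this angular/radial splitting your energy argument does not close.
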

With the above lemma we easily conclude \eqref{e:final-bound} (and hence Theorem \ref{thm:main4}). Indeed denote by $\tau_k$ the largest non-positive time such that \begin{equation}\label{e:assumed-for-the-moment}
\|\Omega_{\text{per}, k} (\cdot, \tau)\|_X \leq e^{(a_0+\delta_0) \tau} \qquad \forall \tau\in [-k, \tau_k]\, .
\end{equation}
Then we must have
\begin{equation}\label{e:forza}
\|\Omega_{\text{per}, k} (\cdot, \tau_k)\|_X = e^{(a_0+\delta_0) \tau_k}\, .
\end{equation}
On the other hand, summing the three estimates \eqref{e:stima-L2}, \eqref{e:stima-H1-pesata}, and \eqref{e:stima-L4} we conclude 
\begin{equation}\label{e:forza2}
\|\Omega_{\text{per}, k} (\cdot, \tau_k)\|_X \leq \bar C e^{(a_0+2 \delta_0) \tau_k}
\end{equation}
for some constant $\bar C$ independent of $k$. However \eqref{e:forza} and \eqref{e:forza2} give
$e^{\delta_0 \tau_k}\geq \bar C^{-1}$, i.e. $\tau_k \geq - \frac{1}{\delta_0} \ln \bar C$, implying that \eqref{e:final-bound} holds with $\tau_0:= - \frac{1}{\delta_0} \ln \bar C$.

After proving Proposition \ref{p:X-bounds} and Lemma \ref{l:initial-bound}, we will dedicate two separate sections to the three estimates \eqref{e:stima-L2}, \eqref{e:stima-H1-pesata}, and \eqref{e:stima-L4}. The first estimate, which we will call {\em baseline estimate}, will differ substantially from the other two, and to it we will dedicate a section. In order to accomplish the gain in the exponent in \eqref{e:stima-L2} we will use crucially the information on the semigroup which comes from Theorem \ref{thm:spectral} and Theorem \ref{t:group}, namely, that the growth bound $\omega (L_{ss})$ is precisely $a_0$ (i.e., the growth achieved by $\Omega_{\text{lin}}$). Since, however, the terms in the Duhamel's formula depend on derivatives, we need to invoke an a priori control of them, which is present in the norm $\|\cdot\|_X$. Indeed, one such term experiencing the derivative loss arise from the nonlinearity is the following:
\begin{equation}
    \int_{-k}^\tau e^{(\tau-s) L_\textrm{ss}} [(K_2 * \Omega_{\text{per}, k}) \cdot \nabla \Omega_{\text{per}, k}](\cdot,s) \, ds \, .
\end{equation}      
Note that $\|\cdot\|_X$ also includes the weighted $L^2$ norm $\||x| D\Omega\|_{L^2}$ because we encounter a term where $D\Omega_{\text{per}, k}$ is multiplied by the function $V^r$, which grows at $\infty$ like $|x|^{1-\al}$ when $\al \in ]0,2[$. In order to close the argument we then need to control the $L^4$ norm and the weighted $L^2$ norm of $D\Omega_{\text{per}, k}$. The latter estimates will not be accomplished through a growth bound on the semigroup $e^{\tau L_{ss}}$ (which would invoke controls on yet higher derivatives), but rather through some careful energy estimates. The structure of the problem will then enter crucially, since the term $(K_2 * \Omega_{\text{per}, k}) \cdot \nabla \bar{\Omega}$ which we need to bound in the energy estimates will take advantage of the improved baseline estimate on the $L^2$ norm. The above term,
which is responsible for the creation of unstable eigenvalues, actually \emph{gains a derivative}. Finally, there is one remaining difficulty when estimating $D \Omega_{\text{per}, k}$ due to the transport term. Namely, differentiating the equation in cartesian coordinates contributes a term $(\partial_i \bar{V}) \cdot \nabla \Omega_{\text{per} ,k}$, which could destabilize the estimates. We exploit the structure of the problem again in a crucial way by estimating angular and radial derivatives, rather than derivatives in cartesian coordinates, and by estimating the angular derivatives {\em first} and the radial derivatives {\em after}.


\section{Proof of Proposition \ref{p:X-bounds}}

We start by bounding $|\Omega (x)|$. Since $W^{1,4} (B_2)$ embeds in $C^{1/2}$, the bound $|\Omega (x)|\leq C \|\Omega\|_X$ is true for every $x\in B_2$. Consider further $R:= \frac{|x|}{2} \geq 1$ and define $u_R (y) := \Omega (Ry)$. In particular let $B:= B_1 (\frac{x}{R})$ and notice that
\begin{align}
\|u_R\|_{L^2 (B)} &= R^{-1} \|\Omega\|_{L^2 (B_R (x))} \leq R^{-1}\|\Omega\|_X\, ,\\
\|Du_R\|_{L^2 (B)} &= \|D\Omega\|_{L^2 (B_R (x))} \le  R^{-1} \||\cdot |D\Omega\|_{L^2 (B_R (x))} \leq R^{-1} \|\Omega\|_X\, ,\\
\|D u_R\|_{L^4 (B)} &= R^{1/2} \|D\Omega\|_{L^4 (B_R (x))} \leq R^{1/2} \|\Omega\|_X\, .
\end{align}
By interpolation, for $\frac{1}{p} = \frac{\lambda}{2} + \frac{1-\lambda}{4}$ we have
\[
\|Du_R\|_{L^p (B)} \leq C \|\Omega\|_X R^{-\lambda + (1-\lambda)/2}\, .
\]
Choosing $p$ very close to $2$, but larger, we achieve $\|Du_R\|_{L^p (B)} \leq C R^{-1+\kappa} \|\Omega\|_X$. Since the average of $u_R$ over $B$ is smaller than $\|u_R\|_{L^2 (B)} \leq C R^{-1} \|\Omega\|_X$, from Poincar\'e we conclude $\|u_R\|_{W^{1,p} (B)} \leq C R^{-1+\kappa} \|\Omega\|_X$. In particular using the embedding of $W^{1,p}$ in $L^\infty$ we conclude
\begin{equation}\label{e:Morrey}
\|u_R\|_{L^\infty (B)} \leq C (p) R^{-1+\kappa} \|\Omega\|_X\, .
\end{equation}
Since however $|\Omega (x)|\leq \|u_R\|_{L^\infty (B)}$, we reach the estimate
\begin{equation}\label{e:decay-Omega-2}
|\Omega (x)|\leq \frac{C}{|x|^{1-\kappa}} \|\Omega\|_X\, .
\end{equation}
Note that the constant $C$ depends on $\kappa$: $\kappa$ is positive, but a very small choice of it forces to choose a $p$ very close to $2$, which in turn gives a dependence of the constant $C(p)$ in \eqref{e:Morrey} on $\kappa$. 

We next come to the estimates for $\nabla K_2 * \Omega$. First of all, observe that 
\begin{align*}
\|\nabla K_2 * \Omega\|_{L^2} \leq & C \|\Omega\|_{L^2} \leq C \|\Omega\|_X\\
\|D^2 K_2*\Omega\|_{L^2} + \|D^2 K_2* \Omega\|_{L^4} \leq & C \|D\Omega\|_{L^2} + C \|D\Omega\|_{L^4}
\leq C \|\Omega\|_X
\end{align*}
and thus $|\nabla K_2*\Omega (x)|\leq C \|\Omega\|_X$ follows for every $x\in B_4$. Consider now $|x|\geq 4$, set $R:= \frac{|x|}{4}$ and let $\varphi\in C^\infty_c (B_{2R} (x))$ be a cut-off function identically equal to $1$ on $B_R (x)$ and $\psi\in C^\infty_c (B_{2R})$ equal to $1$ on $B_R$. We choose them so that $\|D^k \psi\|_{C^0} + \|D^k \varphi\|_{C^0} \leq C (k) R^{-k}$. 

We split 
\begin{equation*}
\nabla K_2 * \Omega = \nabla K_2 * (\varphi \Omega) + \nabla K_2 * (\psi \Omega) + \nabla K_2 * ((1-\varphi -\psi) \Omega)\, =: F_1 + F_2 + F_3\, .
\end{equation*}
We have 
\begin{align*}
\|F_1\|_{L^2} &\leq C\|\varphi \Omega\|_{L^2} \leq C \|\Omega\|_X\\
\|DF_1\|_{L^2} &\leq C\|D (\varphi \Omega)\|_{L^2} \leq C R^{-1} \|\Omega\|_{L^2} + C \|\varphi D\Omega\|_{L^2}
\leq C R^{-1} \|\Omega\|_X\\
\|D F_1\|_{L^4} & \leq C \|\Omega\|_X\, .
\end{align*}
The argument used above implies then $|F_1 (x)| \leq C (\kappa) |x|^{\kappa-1} \|\Omega\|_X$. As for estimating $F_2$ we observe that $F_2$ is harmonic outside $B_{2R}$. On the other hand $\|F_2\|_{L^2} \leq C \|\Omega\|_X$. Using the mean-value inequality for harmonic functions we then get 
\[
|F_2 (x)| \leq \frac{1}{\pi (2R)^2} \int_{B_{2R} (x)} |F_2| \leq \frac{C}{R} \|\Omega\|_X\, .
\]
As for $F_3$ we write, using the bound on $|\Omega|$ and $|\nabla K (x-y)|\leq C |x-y|^{-2}$,
\begin{align*}
|F_3 (x)| &\leq \int_{\mathbb R^2\setminus (B_{2R} (x) \cup B_{2R})} \frac{C(\kappa) \|\Omega\|_X}{|x-y|^2 |y|^{1-\kappa}}\\
&\leq \int_{(\mathbb R^2\setminus B_{2R})\cap \{|x-y|\geq |y|\}} \frac{C(\kappa) \|\Omega\|_X}{|y|^{3-\kappa}}
\\& \qquad 
+ \int_{(\mathbb R^2\setminus B_{2R} (x)) \cap \{|y|\geq |x-y|\}} \frac{C(\kappa) \|\Omega\|_X }{|x-y|^{3-\kappa}}
\\&
\leq \frac{C(\kappa) \|\Omega\|_X}{R^{1-\kappa}}\, .
\end{align*}

Recalling that $K_2*\Omega (0)=0$, integrating \eqref{e:decay-Omega} on the segment with endpoints $0$ and $x$ we conclude \eqref{e:Hoelder} for $|x|\leq 2$. In order to prove the bound when $|x|\geq 1$, fix a point $y$ with $3 R:= |y|\geq 1$. Let $\varphi$ be a radial cut-off function which is identically equal to $1$ on $B_{R} (0)$, is supported in $B_{2R} (0)$ and whose gradient is bounded by $C R^{-1}$. We then write 
\begin{equation}\label{e:decompose}
K_2 * \Omega = K_2 * (\varphi \Omega) + K_2 * ((1-\varphi) \Omega)\, .
\end{equation}
Since the distance between $y$ and the support of $\varphi \Omega$ is larger than $R$, we can estimate
\begin{equation}\label{e:cut-inside}
|K_2*(\varphi \Omega) (y)|\leq \frac{C}{R} \int |\varphi \Omega|
\leq C\|\Omega\|_{L^2}\leq C\|\Omega\|_X\, .
\end{equation}
Next observe that, by Calderon-Zygmund,
\begin{align*}
   \|D^2 K_2 * (\Omega (1-\varphi))\|_{L^2} 
   & = \|D ((1-\varphi) \Omega)\|_{L^2} \\&\leq \frac{C}{R} \|\Omega\|_{L^2} + C \|D\Omega\|_{L^2 (\mathbb R^2\setminus B_R)}
   \\& \leq \frac{C}{R} \|\Omega\|_X\, .
\end{align*}
 
Since $(1-\varphi) \Omega$ belongs to $L^2_m$, the average over $B$ of $K_2* ((1-\varphi) \Omega)$ equals $0$. Hence we conclude from Poincar\'e inequality and Calderon-Zygmund that
 \begin{align*}
 \|K_2 * ((1-\varphi) \Omega) \|_{L^{2}(B)} &\leq  CR\|DK_2 * ((1-\varphi) \Omega) \|_{L^{2}(B)} 
 \\&\leq CR\|((1-\varphi) \Omega) \|_{L^{2}(B)} \, .
 \\& 
 \leq CR \|\Omega\|_X\, .
 \end{align*}
 From Gagliardo-Nirenberg interpolation inequality applied on $B$ we have
 \begin{align*}
    \|K_2* ((1-\varphi) \Omega)\|_{L^\infty(B)}&\leq C\|D^2 K_2 * (\Omega (1-\varphi))\|_{L^2}^{1/2}\|K_2* ((1-\varphi) \Omega)\|^{1/2}_{L^{2}(B)} 
    \\
    &\qquad + \frac C R  \|K_2 * ((1-\varphi) \Omega) \|_{L^{2}(B)}
    \leq C \|\Omega\|_X.
 \end{align*}

\section{Proof of Lemma \ref{l:initial-bound}}

The proof of the estimates for $\eta$ and its derivatives can be found below.

As for the second conclusion of the Lemma, observe that, by going back to the solutions $\omega_{\varepsilon, k}$, it follows from the regularity and decay of the initial data in \eqref{e:Euler-later-times} (just proved in the above lemma) and the regularity and decay of the forcing term $f$ for positive times, that $\omega_{\varepsilon, k}\in C ([t_k, T], X)$ for every $T> t_k$. Given the explicit transformation from $\omega_{\varepsilon, k}$ to $\Omega_{\varepsilon, k}$ we conclude that $\Omega_{\varepsilon, k} \in C ([-k, T], X)$ for every $T> -k$. Since the same regularity is enjoyed by $\tilde{\Omega}$ on $[-k, T]$ (the latter is in fact smooth and compactly supported on $\mathbb R^2\times [-k,T]$) and by $\Omega_{\text{lin}}$, we infer that $\Omega_{\text{per}, k} = \Omega_{\varepsilon, k} - \tilde{\Omega}-\Omega_{\text{lin}}$ belongs to $C ([-k, T], X)$. 

\begin{proof}[Proof of the first part of Lemma \ref{l:initial-bound}]
Consider $\eta\in L^2_m$, for $m\geq 2$ as in Theorem \ref{thm:spectral} and write it as $\eta (\theta, r) = \vartheta (r) e^{ik m\theta}$ when $b_0\neq 0$ or $\eta (\theta, r) = \vartheta (r) e^{ik m \theta} +\bar\vartheta (r) e^{-ikm\theta}$ if $b_0=0$ (where $\bar\vartheta$ denotes the complex conjugate of $\vartheta$). 
In both cases $\vartheta (r) e^{ikm\theta}$ is an eigenfunction and through this property we will show that it satisfies the estimates of the Lemma. We can therefore assume without loss of generality that $\eta (x) = \vartheta (r) e^{ikm\theta}$. We will also see that the argument leads in fact to estimates \eqref{e:eta-pointwise-decay} with $km$ replacing $m$ and hence without loss of generality we assume
$k=1$. Furthermore an outcome of the argument below is that $\eta$ is smooth except possibly at the origin. 
Note moreover that after having shown the pointwise bounds \eqref{e:eta-pointwise-decay} for $\eta$ outside of the origin, the $W^{2,\infty}$ regularity of $\eta$ follows immediately: indeed $\eta$ and $D\eta$ can be continuously extended to the origin by setting them equal to $0$, hence showing that $\eta\in C^1 (B_1)$, while the uniform bound of $|D^2 \eta|$ in $B_1\setminus \{0\}$ easily implies that $D\eta$ is Lipschitz.

\medskip

\textbf{ Step 1. Exponential coordinates.} We recall that the distribution $K_2* \eta$ is well defined, according to Lemma \ref{l:extension}, and its action on a test function in $\mathscr{S}$ is given by integrating the scalar product of the test with a function $v\in W^{1,2}_{\text{loc}} (\mathbb R^2, \mathbb C^2)$, cf. the proof of Lemma \ref{l:extension} in the appendix. It follows from the argument given there that $R_{2\pi/m} v (R_{2\pi/m} x) = v (x)$. Given that $\textrm{div}\, V =0$, $-V^\perp$ is the gradient of a continuous function, which is determined by its value $\psi$ at the origin. $\psi$ inherits the symmetry and can thus be written as $\psi (\theta, r) = f (r) e^{im \theta}$. We thus conclude that
\begin{align}
\vartheta =&f'' +\frac{1}{r} f' -\frac{m^2}{r^2} f\label{e:Poisson}\\
v = &\frac{f'}{r} \frac{\partial}{\partial \theta} - \frac{im f}{r} \frac{\partial}{\partial r}\, .
\end{align}
Observe moreover that $v\in W^{1,2}_{\text{loc}}$ implies $f'', \frac{f'}{r}$, $\frac{f}{r^2}\in L^2_{\text{loc}}$. Therefore $f$ is determined by \eqref{e:Poisson} and the boundary conditions
\begin{itemize}
    \item[(a)] $f (0) =0$;
    \item[(b)] $\int_0^1 \frac{|f'(r)|^2}{r} \, dr < \infty$.
\end{itemize}
(observe that condition (b) implies $f'(0)=0$ when $f\in C^1$ and we can therefore interpret it as a surrogate for a boundary condition). We recall also that we have the estimate $\|D^2\psi\|_{L^2} \leq C \|\vartheta\|_{L^2} < \infty$ and owing to $\int_{B_R} D\psi = 0$, by Poincar\'e we achieve
\[
\|D\psi\|_{L^p (B_R)} \leq C (p) R^{2/p}\, .
\]
Using Morrey's embedding and the fact that $\psi (0)=0$ we conclude, in turn
\[
|\psi (x)|\leq C \|D\psi\|_{L^p (B_{|x|})} |x|^{1-2/p} \leq C |x|\, .
\]
In particular we conclude 
\begin{equation}\label{e:linear-bound-f}
|f(r)|\leq C r\, .
\end{equation}

The equation satisfied by $\eta$ can thus be written in terms of the function $f$ as
\begin{equation}\label{e:third-order}
\left(1+\frac{r}{\alpha}\frac{d}{dr} - z_0 - im\beta \zeta\right) \left(f''+\frac{f'}{r} - \frac{m^2}{r^2} f\right) - \frac{imf \beta g'}{r} = 0\, ,
\end{equation}
where $g$ is the smooth function such that $\bar\Omega (x) = g (|x|)$ (in particular $g$ is constant in a neighborhood of the origin, and equals $r^{-\alpha}$ for $r\geq 2$) and $\zeta$ is given by the formula \eqref{e:def-zeta}. We next set $s = \ln r$ and in particular
\begin{align}
\tilde{g} (s) &= g (e^s)\\
h (s) & = f (e^s)\\
\tilde\zeta (s) &= \zeta (e^s)\, .
\end{align}
Note that 
\begin{equation}\label{e:Gamma}
\vartheta (e^s) = e^{-2s} (h'' (s) - m^2 h (s)) =: e^{-2s} \Gamma (s)\, .   
\end{equation}
In these new coordinates we observe that the claim of the lemma corresponds then to showing that $\vartheta\in C^2_{\text{loc}} (\mathbb R)$ and 
\begin{align}
|\Gamma (s)| + |\Gamma' (s)| + |\Gamma'' (s)|&\leq C e^{4s} \qquad &\forall s\leq 0\label{e:decad-negativo}\, ,\\
|\Gamma (s)| + |\Gamma' (s)| + |\Gamma'' (s)|&\leq C e^{- (m+\al)s} \qquad &\forall s\geq 0\label{e:decad-positivo}\, .
\end{align}
In order to achieve the latter estimates we will need the following bounds on $\tilde{g}'$, $\tilde{g}''$, $\tilde\zeta$, and $\tilde{\zeta}'$, which we record here and can be easily checked:
\begin{align}
|\tilde{g} (s)| + |\tilde{g}' (s)| + |\tilde\zeta (s)| + |\tilde\zeta' (s)| &\leq C e^{-\al s}\qquad &\forall s \geq 0\, ,\label{e:stima-tilde-g-positivo}\\
|\tilde{g} (s) - g (0)| + |\tilde{g}' (s)| + |\tilde\zeta (s) - \zeta (0)| +
|\tilde\zeta' (s)| &\leq C e^{2s}\qquad &\forall s\leq 0\, .\label{e:stima-tilde-g-negativo}
\end{align}
We observe next that, by \eqref{e:linear-bound-f} 
\begin{equation}\label{e:the-very-first-crappy-bound}
|h(s)|\leq C e^s\, .
\end{equation}

\medskip

\textbf{ Step 2. The equation for $h$.} In these new coordinates the equation \eqref{e:third-order} becomes then
\begin{equation}
\left(1+\frac{1}{\alpha} \frac{d}{ds} -z_0 - im \beta \tilde{\zeta}\right) \left(e^{-2s} (h''-m^2 h)\right) - im h e^{-2s} \tilde{g}' = 0\, ,
\end{equation}
which we simplify as 
\begin{equation}
\left[ \frac{d}{ds} - \alpha \left(im\beta \tilde{\zeta} + z_0 - 1 +\frac{2}{\alpha}\right)\right] (h''-m^2 h) - i\alpha mh \tilde{g}' = 0\, .
\end{equation}
We then define the integrating factor 
\[
I (s) = \exp \left[- \alpha \int_0^s \left(im \beta \tilde{\zeta} + z_0 -1 +\frac{2}{\alpha}\right)\, d\sigma\right]
\]
We can thus write
\[
\frac{d}{ds} \left[ I (h'' -m^2 h)\right] = i \alpha m I h \tilde{g}'\, .
\]
Given that $z_0= a_0 + i b_0$, 
\begin{equation}\label{e:exact-identity-I}
|I (s)|\le C e^{-(2+\alpha (a_0-1)) s}
\end{equation}
and in particular, by \eqref{e:stima-tilde-g-positivo}
\begin{equation}\label{e:first-crappy-bound}
|I h \tilde{g}'| (s) \leq C e^{-(1+\alpha a_0 + {(\al-\alpha)}) s}\, .
\end{equation}
This implies that the latter is an integrable function on every halfline $[s, \infty[$ so that we can write
\begin{equation}\label{e:ODE-again}
\Gamma (s) = h'' (s) - m^2 h (s) = - \alpha I(s)^{-1} \int_s^\infty i m I h \tilde{g}' \, . 
\end{equation}
Since $\Gamma (s) = e^{2s} \vartheta (e^s)$ and $\vartheta \in L^2 (rdr)$, $e^{-s} \Gamma \in L^2 (ds)$. We claim in particular that $e^{-m|s|} \Gamma (s)$ is integrable. Indeed: 
\[
\int_{\mathbb R}|\Gamma (s)| e^{-m|s|}\, ds \leq \|e^{-s} \Gamma\|_{L^2 (\mathbb R)} \left(\int_{\mathbb R} e^{-2 (m|s|-s)}\, ds\right)^{1/2}< \infty\, .
\]
We claim then that for the function $h$ we have the formula
\begin{equation}\label{e:formulozza}
h (s) = -\frac{1}{2m} e^{-ms} \int_{-\infty}^s e^{ms'} \Gamma (s') ds' - \frac{1}{2m} e^{ms} \int_s^\infty e^{-ms'} \Gamma (s')\, ds'\, .
\end{equation}
In order to prove the identity denote the right hand side by $H$ and observe that it is a solution of the
same ODE satisfied by $H$, namely, $H''-m^2 H = \Gamma$. Hence $(\frac{d^2}{ds^2} -m^2) (H-h) =0$, which implies that $H (s)-h(s) = C_1 e^{ms} + C_2 e^{-ms}$. On the other hand, using the information that $e^{-s}\Gamma\in L^2$, it can be readily checked that $H (s) = o (e^{m|s|})$ at both $\pm \infty$. Since this property is shared by $h$, thanks to the bound \eqref{e:the-very-first-crappy-bound}, we conclude that $C_1 e^{ms} + C_2 e^{-ms}$ must be $o (e^{m|s|})$ at $\pm \infty$, implying $C_1=C_2=0$.

\medskip

\textbf{ Step 3. Estimates at $+\infty$.} In this step we give bounds for the asymptotic behavior of $h$ at $+\infty$.
We recall \eqref{e:stima-tilde-g-positivo} and hence observe that
\begin{equation}\label{e:stima-Gamma}
|\Gamma (s)| \leq C e^{(2 + \alpha a_0 - \alpha) s} \int_s^\infty |h(\sigma)| e^{-(2+\alpha a_0+{(\al-\alpha)}) \sigma}\, d\sigma\, ,
\end{equation}
for $s$ positive.
On the other hand, for $s\geq 0$ we can also write from \eqref{e:formulozza}
\begin{equation}\label{e:stima-h-positiva}
|h(s)| \leq C e^{-ms} + C e^{-ms} \int_0^s e^{m\sigma} |\Gamma (\sigma)|\, d\sigma + C e^{ms} \int_s^\infty e^{-m\sigma} |\Gamma (\sigma)|\, d\sigma\, ,
\end{equation}
Starting with the information $|h(s)|\leq C e^s$ for $s>0$, we then infer from \eqref{e:stima-Gamma} that $|\Gamma (s)|\leq C e^{(1-\al) s}$ for $s>0$. In turn plugging the latter into \eqref{e:stima-h-positiva} we infer $|h (s)|\leq C e^{(1-\al) s}$ for $s>0$. The latter, plugged into \eqref{e:stima-Gamma} turns into $|\Gamma (s)|\leq C e^{(1-2\al) s}$ for $s>0$. We then can keep iterating this procedure. The bootstrap argument can be repeated until we reach the largest integer $k$ such that $(1-k\al) > -m$: one last iteration of the argument gives then 
\begin{equation}\label{e:bound-finale-h-positivo}
|h(s)|\leq C e^{-ms}
\end{equation}
and hence, inserting one last time in \eqref{e:stima-Gamma} 
\begin{equation}\label{e:decad-positivo-1}
|\Gamma (s)|\leq C e^{-(m+\al) s}\, . 
\end{equation}
In order to estimate the first and second derivatives of $\Gamma$ we observe that
\[
\frac{I'}{I} = -\alpha (im \beta \tilde{\zeta} + z_0-1+2\alpha)
\]
and we compute explicitly
\begin{align}
\Gamma' &= \alpha (im \beta \tilde\zeta + z_0-1+2\alpha) \Gamma + i m h \tilde{g}'\label{e:Gamma'}\\
\Gamma'' &=\alpha im \beta \tilde\zeta' \Gamma + \alpha (im \beta \tilde\zeta + z_0-1+2\alpha) \Gamma'
+ i m h \tilde{g}'' + i m h' \tilde{g}'\, .\label{e:Gamma''}
\end{align}
From \eqref{e:Gamma'} and the bounds \eqref{e:decad-positivo-1},\eqref{e:bound-finale-h-positivo}, and \eqref{e:stima-tilde-g-positivo}, we immediately conclude
\begin{equation}\label{e:decad-positivo-2}
|\Gamma' (s)|\leq C e^{-(m+\al)s}\, .
\end{equation}
As for the second derivative, using \eqref{e:decad-positivo-2}, \eqref{e:decad-positivo-1},\eqref{e:bound-finale-h-positivo}, and \eqref{e:stima-tilde-g-positivo}, we conclude
\[
|\alpha im \beta \tilde\zeta' \Gamma + \alpha (im \beta \tilde\zeta + z_0-1+2\alpha) \Gamma'
+ i m h \tilde{g}''|\leq C e^{-(m+\al)s}\, .
\]
In order to estimate the term $i m h' \tilde{g}'$ we differentiate \eqref{e:formulozza} to infer
\begin{equation}\label{e:h'}
h' (s) = \frac{1}{2} e^{-ms} \int_{-\infty}^s e^{ms'} \Gamma (s') ds' - \frac{1}{2} e^{ms} \int_s^\infty e^{-ms'} \Gamma (s')\, ds'\, . 
\end{equation}
We then derive $|h'(s)|\leq C e^{-ms}$ using the same argument for bounding $h$. In turn, combined again with \eqref{e:stima-tilde-g-positivo} we conclude $|i m h' \tilde{g}' (s)|\leq C e^{-(m+\al)s}$, hence completing the proof of \eqref{e:decad-positivo}.

\medskip

\textbf{ Step 4. Estimates at $-\infty$.}
For the bound at $-\infty$ we use instead \eqref{e:stima-tilde-g-negativo} (which we observe holds for positive $s$ as well). This leads to the inequality
\begin{equation}\label{e:bootstrap-negative-1}
|\Gamma (s)|\leq C e^{(2+ \alpha (a_0-1))s} \int_s^{\infty} e^{-\alpha (a_0-1) \sigma} |h(\sigma)|\, d\sigma
\end{equation}
In this argument we assume that $a_0$ is selected very large, depending on $m$.
In turn we estimate $h$ for negative $s$ by
\begin{equation}\label{e:bootstrap-negative-2}
|h (s)|\leq C e^{ms} + C e^{ms} \int_s^0 e^{-m\sigma} |\Gamma (\sigma)|\, d\sigma
+ C e^{-ms} \int_{-\infty}^s e^{m\sigma} |\Gamma (\sigma)|\, d\sigma\, .
\end{equation}
Observe now that we have $|h(s)|\leq C e^s$ for every $s$. Inserting this bound in \eqref{e:bootstrap-negative-1} and assuming that $a_0$ is large enough we conclude $|\Gamma (s)|\leq C e^{3s}$. In turn we can insert the latter bound in \eqref{e:bootstrap-negative-2} to conclude
$|h (s)|\leq C (e^{ms} + e^{3s})$. Since $m\geq 2$ we can then conclude $|h(s)|\leq C e^{2s}$ and inserting it in \eqref{e:bootstrap-negative-1} we conclude $|\Gamma (s)|\leq C e^{4s}$. 

For the first and second derivatives we use the formulae \eqref{e:Gamma'}, \eqref{e:Gamma'}, and \eqref{e:h'} and argue as above to conclude $|\Gamma' (s)| + |\Gamma'' (s)|\leq C e^{4s}$.

\medskip

\textbf{Step 5. Estimate on $v = K_2* \eta$.} Recall that we can explicitly compute $K_2* \eta = \nabla^\perp \psi$ and that we already argued that $v\in W^{1,2}_{loc}$. Hence to complete the proof we need to show
\begin{equation}\label{e:K2*eta-L2}
\int_{\{|x|\geq 1\}} |\nabla \psi|^2\, dx = \int_1^\infty \left((f' (r))^2 + \frac{(f(r))^2}{r^2}\right)\, r\, dr < \infty\, .
\end{equation}
Recall next that $f(r) = h (\ln r)$ and $f'(r) = \frac{1}{r} h' (\ln r)$. Thus the exponential decays for $h$ proved in the previous steps imply $f (r) \leq C (1+ r)^{-m}$ and $f' (r) \leq C (1+ r)^{-m-1}$. Since $m\geq 2$, this clearly implies \eqref{e:K2*eta-L2} and completes the proof. 
\end{proof}

\section{Proof of the baseline \texorpdfstring{$L^2$}{L2} estimate}

In this section we prove \eqref{e:stima-L2}. In order to simplify the notation, from now on we will use $\Omega$ in place $\Omega_{\text{per}, k}$. We recall next equation \eqref{e:master}
\begin{align}
 (\partial_{\tau} - L_{\text{ss}}) \Omega
= &- \underbrace{(V_{\text{lin}}\cdot \nabla) \Omega}_{=:\mathscr{F}_1} - \underbrace{(V_r\cdot \nabla) \Omega}_{=:\mathscr{F}_2}- \underbrace{(V \cdot \nabla) \Omega_{\text{lin}}}_{=:\mathscr{F}_3} + \underbrace{(V\cdot \nabla) \Omega_r}_{=:\mathscr{F}_4} + \underbrace{(V \cdot \nabla) \Omega}_{=:\mathscr{F}_5}\nonumber\\
 &-\underbrace{(V_{\text{lin}}\cdot \nabla) \Omega_{\text{lin}}}_{=:\mathscr{F}_6} - \underbrace{(V_r\cdot \nabla) \Omega_{\text{lin}}}_{=:\mathscr{F}_7} - \underbrace{(V_{\text{lin}}\cdot \nabla) \Omega_r}_{=:\mathscr{F}_8}\,  .\label{e:master-2}
\end{align}
We then define $\mathscr{F} := - \sum_{i=1}^8 \mathscr{F}_i$. 
Recalling Theorem \ref{t:group} and the fact that $\Omega (\cdot, -k) =0$, we estimate via Duhamel's formula
\begin{equation}\label{e:Duhamel}
\|\Omega (\cdot, \bar\tau)\|_{L^2} \leq C (\varepsilon) \int_{-k}^{\bar\tau} e^{(a_0+\varepsilon) (\bar\tau - s)} \|\mathscr{F} (\cdot, s)\|_{L^2}\, ds\, .
\end{equation}
We next estimate the $L^2$ norms of the various $\mathscr{F}_i$. In order to keep our notation simpler we use $\|\cdot\|_2$ for $\|\cdot\|_{L^2}$ and $\|\cdot\|_\infty$ for $\|\cdot\|_{L^\infty}$. $\mathscr{F}_1$ is simple:
\begin{equation}\label{e:F-1}
\|\mathscr{F}_1 (\cdot, s)\|_2 \leq \|V_{\text{lin}} (\cdot, s)\|_\infty \|D\Omega (\cdot, s)\|_{L^2} \leq C e^{a_0 s} e^{(a_0+\delta_0) s} \leq C e^{(2a_0+\delta_0) s}\, .
\end{equation}
As for $\mathscr{F}_2$ we use the fact that 
\begin{align*}
\int |\mathscr{F}_2 (\xi, \tau)|^2\, d\xi & \leq C \int_{|\xi|\geq e^{-\tau/\alpha}} |\xi|^{2-{2\al}} |D \Omega (\xi, \tau)|^2\, d\xi\\
& \leq C e^{{\frac{2\al}{\alpha}}\tau} \int |\xi|^2 |D \Omega (\xi, \tau)|^2\, d\xi \leq C e^{{\frac{2\al}{\alpha}}\tau} \|\Omega (\cdot, \tau)\|^2_X\, . 
\end{align*}
We hence conclude
\begin{equation}\label{e:F2}
\|\mathscr{F}_2 (\cdot, \tau)\|_{L^2} 
\leq C e^{(a_0+{\frac{\al}{\alpha}}+\delta_0)\tau}
\leq C e^{(a_0+1+\delta_0)\tau}\, .
\end{equation}
As for $\mathscr{F}_3$, for every fixed $\tau$ with $\kappa=\frac{1}{2}$ we can use Proposition \ref{p:X-bounds} to conclude
\begin{align}\label{e:F3}
\|\mathscr{F}_3 (\cdot, \tau)\|_{L^2} &\leq \|V (\cdot, \tau)\|_{L^\infty} \|\nabla \Omega_{\text{lin}} (\cdot, \tau)\|_{L^2} 
\\&\leq C \|\Omega (\cdot, \tau)\|_X \|\nabla \Omega_{\text{lin}} (\cdot, \tau)\|_{L^2}
\\& \leq C e^{(2a_0 +{\delta_0}) \tau}\, .
\end{align}
To estimate $\mathscr{F}_4$ we recall that
\begin{equation}
\Omega_r (\xi, \tau) = \beta (1-\chi (e^{\tau/\alpha} (\xi)) \bar\Omega (\xi) + e^{\tau/\alpha} (\beta \zeta (|\xi|) |\xi|) \chi' (e^{\tau/\alpha} \xi)\, .   
\end{equation}
Differentiating the latter identity we get:
\begin{align}
|\nabla \Omega_r (\xi, \tau)| \leq & C \mathbf{1}_{|\xi|\geq e^{-\tau/\alpha}} |D \bar \Omega| (\xi)
\nonumber \\&
+  C e^{\tau/\alpha} (|\bar\Omega| (\xi) + | D (\zeta (|\xi|) |\xi|)) \mathbf{1}_{e^{-\tau/\alpha} R\geq |\xi|\geq e^{-\tau/\alpha}}
\nonumber \\&
+ C e^{2\tau/\alpha} (|\zeta (\xi)||\xi|) \mathbf{1}_{e^{-\tau/\alpha} R \geq |\xi|\geq e^{-\tau/\alpha}}\nonumber\\
\leq & C \mathbf{1}_{|\xi|\geq e^{-\tau/\alpha}} |\xi|^{-1-\al} 
\nonumber \\&+ C (e^{\tau/\alpha} |\xi|^{-\al} + e^{2\tau/\alpha} |\xi|^{1-\al}) \mathbf{1}_{e^{-\tau/\alpha} R \geq |\xi|\geq e^{-\tau/\alpha}}\label{e:sfava}
\end{align}
where we are assuming that $\textrm{spt}\, (\chi) \subset B_R$. We next use Proposition \ref{p:X-bounds} with $\kappa = \alpha/2$ to get $\|V (\cdot, \tau)\|_{L^\infty} \leq C \|\Omega (\cdot, \tau)\|_X\leq C e^{(a_0+\delta_0)\tau}$.
In particular we can estimate
\begin{align*}
\int |\mathscr{F}_4 (\xi, \tau)|^2 d\xi  \leq & C e^{2(a_0+\delta_0) \tau} \int_{e^{-\tau/\alpha}}^\infty r^{-1-{ 2\al}} d r
\\& + C e^{2(a_0+\delta_0 + 1/\alpha)\tau} \int_{e^{-\tau/\alpha}}^{e^{-\tau/\alpha} R} r^{1 - { 2\al} }\, dr\\
& + C e^{2(a_0+\delta_0
+ 2/\alpha)\tau} \int_{e^{-\tau/\alpha}}^{e^{-\tau/\alpha} R} r^{3 -{ 2\al}}\, dr \leq C e^{(2 a_0+ 2\delta_0 + {2})\tau}\, .
\end{align*}
We thus conclude
\begin{equation}\label{e:F-4}
\|\mathscr{F}_4 (\cdot, \tau)\|_2 \leq C e^{(a_0+\delta_0+1) \tau}\, .
\end{equation}
For $\mathscr{F}_5$ we use again $\|V (\cdot, \tau)\|_{L^\infty} \leq C e^{(a_0+\delta_0) \tau}$ to get
\begin{equation}\label{e:F-5}
\|\mathscr{F}_5 (\cdot, \tau)\|_2 \leq C e^{(a_0+\delta_0)\tau} \| D\Omega (\cdot, \tau)\|_2 \leq C e^{2(a_0+\delta_0)\tau}\, .    
\end{equation}
$\mathscr{F}_6$ follows easily from Lemma \ref{l:initial-bound}
\begin{equation}\label{e:F-6}
\|\mathscr{F}_6 (\cdot, \tau)\|_2 \leq \|\Omega_{\text{lin}} (\cdot, \tau)\|_\infty \|\Omega_{\text{lin}} (\cdot, \tau)\|_2 
\leq C e^{2(a_0+\delta_0) \tau}\, .
\end{equation}
$\mathscr{F}_7$ and $\mathscr{F}_8$ can be easily estimated using the explicit formula for $\Omega_r$ and the decay estimates given by Lemma \ref{l:pointwise} for $\Omega_{\text{lin}}$, in particular they enjoy an estimate which is better than \eqref{e:F-4}, i.e.
\begin{equation}\label{e:F-7+8}
\|\mathscr{F}_7 (\cdot, \tau)\|_2 + \|\mathscr{F}_8 (\cdot, \tau)\|_2 \leq C e^{(a_0+\delta_0+1/2) \tau}\, .
\end{equation}
Assuming that $a_0$ is sufficiently large we then achieve the estimate
\begin{equation}\label{e:F-all-together}
\|\mathscr{F} (\cdot, \tau)\|_2 \leq C e^{(a_0+\delta_0+1/2) \tau}\, .
\end{equation}
Inserting in \eqref{e:Duhamel} we choose $\varepsilon < 1/2 +\delta_0$ and we then achieve 
\begin{equation}\label{e:baseline-bar-tau}
\|\Omega (\cdot, \bar\tau)\|_2 \leq C e^{(a_0+\varepsilon) \bar \tau} \int_{-k}^{\bar \tau} e^{(\delta_0+1/2 -\varepsilon) s}\, ds
\leq C e^{(a_0+\delta_0 + 1/2)\bar \tau}\, .
\end{equation}
In fact, observe that the argument just given implies the stronger conclusion
\begin{equation}\label{e:stronger-baseline}
\|\Omega (\cdot, \tau)\|_2 \leq C e^{(a_0+\delta_0 + 1/2) \tau}\, , \qquad \forall \tau \in [-k, \bar \tau]\, .
\end{equation}

\section{Estimates on the first derivative}

In this section we prove \eqref{e:stima-H1-pesata} and \eqref{e:stima-L4}. 
The proof will be achieved via $L^2$ and $L^4$ energy estimates, where we will differentiate \eqref{e:master} first with respect to the angular variable and then with respect to the radial variable. We start rewriting \eqref{e:master} as 
\begin{align}
& \partial_\tau \Omega - \Omega + \left(\left(-\frac{\xi}{\alpha} + \beta \bar V + V_r + V_{\text{lin}} + V \right)\cdot \nabla\right) \Omega\nonumber\\ 
& = - \beta (V \cdot \nabla) \bar \Omega - (V\cdot \nabla)\Omega_{\text{lin}} - (V\cdot \nabla) \Omega_r - (V_{\text{lin}}\cdot \nabla) \Omega_{\text{lin}} 
\nonumber\\
& \qquad
- (V_r \cdot \nabla) \Omega_{\text{lin}} - (V_{\text{lin}}\cdot \nabla) \Omega_r
\nonumber \\&
= : \mathscr{G}\, .\label{e:master-10}
\end{align}
We next differentiate in $\theta$. In order to simplify our notation we will write $\theta$ in the subscript (or eventually $,\theta$ if there is already another subscript). We also recall that $\Omega_r$, $\bar\Omega$ are radial functions, while $(V_r \cdot \nabla)$ and $(\bar V \cdot \nabla)$ are angular derivatives times a function which is radial, and $\xi\cdot \nabla$ is a radial derivative times a radial function. So we can write
\begin{align}
& \partial_\tau \Omega_\theta - \Omega_\theta + \left(\left(-\frac{\xi}{\alpha} + \beta \bar V + V_r + V_{\text{lin}} + V \right)\cdot \nabla\right) \Omega_\theta\nonumber\\ 
=  &\mathscr{G}_\theta - (V_{\text{lin}, \theta}\cdot \nabla) \Omega - (V_\theta \cdot \nabla) \Omega =: \mathscr{H}_1\, \label{e:master-angular-1}
\\
& \partial_\tau \frac{\Omega_\theta}{r} - \left(\frac{1}{\alpha} + 1\right) \frac{\Omega_\theta}{r} + \left(\left(-\frac{\xi}{\alpha} + \beta \bar V + V_r + V_{\text{lin}} + V \right)\cdot \nabla\right) \frac{\Omega_\theta}{r}\nonumber\\
= &\frac{1}{r} \mathscr{G}_\theta - \frac{1}{r} (V_{\text{lin}, \theta}\cdot \nabla) \Omega - \frac{1}{r} (V_\theta \cdot \nabla) \Omega
+ \Omega_\theta ((V_{\text{lin}} + V) \cdot \nabla)\frac{1}{r}
= :\mathscr{H}_2\, . \label{e:master-angular-2}
\end{align}
We then multiply by $\Omega_\theta$ the first equation and integrate by parts the terms on the left-hand side to conclude
\begin{align}
\frac{d}{d\tau} \frac{1}{2} \|\Omega_\theta (\cdot, \tau)\|_2^2 &= \left(1 -\frac{1}{\alpha}\right) \|\Omega_\theta (\cdot, \tau)\|_2^2 
+ \int \mathscr{H}_1 (\xi, \tau) \Omega_\theta (\xi, \tau)\nonumber\\
& \leq \|\mathscr{H}_1 (\cdot, \tau)\|_2 \|\Omega_\theta (\cdot, \tau)\|_2\label{e:first-energy-est}
\end{align}
Likewise we multiply the second identity by $(\frac{1}{r} \Omega_\theta)^3$ and integrate by parts to achieve
\begin{align}
\frac{d}{d\tau} \frac{1}{4} \|r^{-1} \Omega_\theta (\cdot, \tau)\|_4^4 &= \left(1 + \frac{1}{2\alpha}\right) \|r^{-1} \Omega_\theta (\cdot, \tau)\|_4^4 
+ \int \mathscr{H}_2 (\xi, \tau) (r^{-1} \Omega_\theta (\xi, \tau))^3\nonumber
\end{align}
which implies
\begin{equation}\label{e:second-energy-est}
    \|r^{-1} \Omega_\theta (\cdot, \tau)\|_4
    \le
    \int_{-k}^\tau e^{\left(1+\frac{1}{2\alpha}\right)(\tau - \hat \tau)} \|\mathscr{H}_2 (\cdot, \hat \tau)\|_4\, d\hat \tau\, \quad
    \forall \tau \in [-k, \bar \tau]\, .
\end{equation}
We next wish to estimate the two integrals in the right-hand sides of both equations. 

We summarize the relevant estimates in the Lemma \ref{l:ugly-lemma} below. Note that they imply
\begin{align}
\|\Omega_\theta (\cdot, \hat\tau)\|_2 &\leq C e^{(a_0+2 \delta_0)\hat \tau}\,  \qquad &\forall \hat \tau\in [-k, \bar \tau ]\, ,\\
\|r^{-1} \Omega_\theta (\cdot, \hat \tau)\|_4 &\leq C e^{(a_0+2 \delta_0)\hat \tau}\qquad &\forall \hat \tau\in [-k, \bar\tau]\, ,
\end{align}
provided $a_0$ is chosen big enough.

\begin{lemma}\label{l:ugly-lemma}
Under the assumptions of Lemma \ref{l:final-estimates} we have
\begin{align}
\|D \mathscr{G} (\cdot, \tau)\|_{4} &\leq C e^{(a_0+2\delta_0) \tau}\label{e:DG}\\
\|r D\mathscr{G} (\cdot, \tau)\|_{2} &\leq C e^{(a_0+2\delta_0) \tau}\label{e:rDG}\\
\||D V_{\text{lin}}| |\nabla \Omega| (\cdot, \tau)\|_4 + \||D V| |\nabla \Omega| (\cdot, \tau) \|_4 &\leq C e^{(a_0+2 \delta_0) \tau}\label{e:DVDOmega}\\
\|r |D V_{\text{lin}}| |\nabla \Omega| (\cdot, \tau)\|_2 + \|r |D V| |\nabla \Omega| (\cdot, \tau) \|_2 &\leq C e^{(a_0+2 \delta_0) \tau}\label{e:rDVDOmega}\\
\|r^{-1}V_{\text{lin}} D \Omega (\cdot, \tau)\|_4 + \|r^{-1} V D\Omega (\cdot, \tau)\|_4 &\leq C e^{(a_0+2 \delta_0)\tau}\, .\label{e:comm-term}
\end{align}
\end{lemma}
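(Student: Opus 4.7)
The plan is to verify each of the five bounds in Lemma \ref{l:ugly-lemma} by expanding $\mathscr G$ as in \eqref{e:master-10} and estimating term by term. Throughout I will combine four ingredients: the improved baseline $\|\Omega(\cdot,\tau)\|_{L^2}\le Ce^{(a_0+\delta_0+1/2)\tau}$, already proved in \eqref{e:stronger-baseline}; the running $X$-bound \eqref{e:a-priori}, which supplies $\|\nabla\Omega\|_{L^4}$ and $\||\xi|\nabla\Omega\|_{L^2}$ bounded by $e^{(a_0+\delta_0)\tau}$; the pointwise bounds of Proposition \ref{p:X-bounds}, notably $|V(\xi)|\le C\|\Omega\|_X\min(|\xi|,1)$ and $\|\nabla V\|_{L^\infty}\le C\|\Omega\|_X$; and the pointwise decay of $\eta,D\eta,D^2\eta$ from Lemma \ref{l:initial-bound}, which yields uniform bounds on $\Omega_{\text{lin}}(\cdot,\tau)/e^{a_0\tau}$ in every $L^p$, together with $|V_{\text{lin}}(\xi,\tau)|\le Ce^{a_0\tau}\min(|\xi|,1)$ and $\|DV_{\text{lin}}(\cdot,\tau)\|_{L^\infty}\le Ce^{a_0\tau}$. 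In addition, formula \eqref{e:sfava} and its $V_r$ analogue show that $\Omega_r$ and $V_r$ are effectively supported on $\{|\xi|\gtrsim e^{-\tau/\alpha}\}$, producing an extra factor $e^{\tau/\alpha}$ in every norm. The parameters are chosen with $a_0\ge\max(2\delta_0,\,1/\alpha)$ and $\delta_0\le 1/4$, so that any gain of order $e^{a_0\tau}$ or $e^{\tau/2}$ automatically absorbs $e^{\delta_0\tau}$ for $\tau\le 0$.

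Bounds \eqref{e:DVDOmega}, \eqref{e:rDVDOmega} and \eqref{e:comm-term} are bilinear and fall directly to H\"older together with the tools of the previous paragraph. For example, $\||DV_{\text{lin}}||\nabla\Omega|\|_{L^4}\le\|DV_{\text{lin}}\|_{L^\infty}\|\nabla\Omega\|_{L^4}\le Ce^{a_0\tau}\cdot e^{(a_0+\delta_0)\tau}\le Ce^{(a_0+2\delta_0)\tau}$ once $a_0\ge\delta_0$; the second halves of \eqref{e:DVDOmega}--\eqref{e:rDVDOmega} follow the same pattern after substituting $\|\nabla V\|_{L^\infty}\le C\|\Omega\|_X$, and the weighted version uses $\||\xi|\nabla\Omega\|_{L^2}\le\|\Omega\|_X$ in place of $\|\nabla\Omega\|_{L^4}$. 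For \eqref{e:comm-term} the linear vanishing of the stream function at the origin, encoded in Proposition \ref{p:X-bounds}, gives $\|r^{-1}V_{\text{lin}}\|_{L^\infty}\le Ce^{a_0\tau}$ and $\|r^{-1}V\|_{L^\infty}\le C\|\Omega\|_X$, closing the estimate.

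The core of the argument, and its main obstacle, is the bound \eqref{e:DG} (and its weighted counterpart \eqref{e:rDG}). Every summand of $\mathscr G$ except $-\beta(V\cdot\nabla)\bar\Omega$ is either bilinear in the perturbation (hence $O(e^{2(a_0+\delta_0)\tau})$) or contains a factor of $\Omega_r$ or $V_r$ (hence carries an extra $e^{\tau/\alpha}$): either way the resulting bound fits inside $e^{(a_0+2\delta_0)\tau}$ for the above choice of parameters. The leftover term $-\beta(V\cdot\nabla)\bar\Omega$ is genuinely linear, and the naive estimate via $\|\Omega\|_X$ only yields $O(e^{(a_0+\delta_0)\tau})$; here one must invoke the baseline improvement. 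Interpolating $\|\Omega\|_{L^2}\le Ce^{(a_0+\delta_0+1/2)\tau}$ against $\|\Omega\|_{L^\infty}\le C\|\Omega\|_X\le Ce^{(a_0+\delta_0)\tau}$ gives
\[
\|\Omega(\cdot,\tau)\|_{L^p}\le Ce^{(a_0+\delta_0+1/p)\tau}\qquad\text{for every }p\in[2,\infty],
\]
and Calder\'on--Zygmund then delivers $\|DV(\cdot,\tau)\|_{L^4}\le Ce^{(a_0+\delta_0+1/4)\tau}\le Ce^{(a_0+2\delta_0)\tau}$ for $\delta_0\le 1/4$. Writing
\[
D\bigl((V\cdot\nabla)\bar\Omega\bigr)=(DV\cdot\nabla)\bar\Omega+V\cdot D^2\bar\Omega,
\]
the first summand is bounded in $L^4$ by $\|DV\|_{L^4}\|\nabla\bar\Omega\|_{L^\infty}$, which is acceptable, while the second is handled by a split at $|\xi|=1$: on $|\xi|\le 1$ the linear vanishing $|V(\xi)|\le C\|\Omega\|_X|\xi|$ pairs with the smoothness of $D^2\bar\Omega$, and on $|\xi|\ge 1$ one uses the decay $|D^2\bar\Omega(\xi)|\le C|\xi|^{-\al-2}$ together with an interpolated bound on $V$ obtained from Calder\'on--Zygmund and the baseline. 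The weighted estimate \eqref{e:rDG} is proved in exactly the same way, now using the direct $L^2$-Calder\'on--Zygmund bound $\|DV\|_{L^2}\le C\|\Omega\|_{L^2}\le Ce^{(a_0+\delta_0+1/2)\tau}$, and absorbing the weight $|\xi|$ into the pointwise decay of $\nabla\bar\Omega$ and $D^2\bar\Omega$ at infinity.

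The single hardest step will be the $L^4$ control of $V\cdot D^2\bar\Omega$, since $\|V\|_{L^\infty}$ is not directly improved by the baseline; the required gain $e^{\delta_0\tau}$ must be extracted from the combination of the linear vanishing of $V$ at the origin, the decay of $D^2\bar\Omega$ at infinity, and the baseline improvement on $\|\Omega\|_{L^p}$ for $p>2$.
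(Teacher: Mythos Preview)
Your overall strategy is exactly the paper's, and your treatment of \eqref{e:DVDOmega}, \eqref{e:rDVDOmega}, \eqref{e:comm-term}, and of all summands of $D\mathscr{G}$ except the one you single out is correct.

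There is, however, a genuine gap in your handling of the term $V\cdot D^2\bar\Omega$, which you yourself flag as the hardest. Your split at $|\xi|=1$ with the bound $|V(\xi)|\le C\|\Omega\|_X|\xi|$ on $B_1$ only yields $\|VD^2\bar\Omega\|_{L^4(B_1)}\le C\|\Omega\|_X\le Ce^{(a_0+\delta_0)\tau}$, which is short of the target by the factor $e^{\delta_0\tau}$. The difficulty is that Proposition~\ref{p:X-bounds} expresses pointwise bounds on $V$ only through $\|\Omega\|_X$, never through $\|\Omega\|_{L^2}$, so it cannot by itself propagate the baseline improvement \eqref{e:stronger-baseline} to $V$. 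Likewise, on $|\xi|\ge 1$ your phrase ``an interpolated bound on $V$ obtained from Calder\'on--Zygmund and the baseline'' is not a complete argument: Calder\'on--Zygmund controls $DV$ in $L^p$ by $\|\Omega\|_{L^p}$, but passing from $DV$ to $V$ requires an additional input.

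That missing input is Lemma~\ref{l:extension}: the $m$-fold symmetry of $\Omega$ forces $\int_{B_R}V=0$ for every $R$, and Poincar\'e then gives $\|V\|_{L^2(B_R)}\le CR\|\Omega\|_{L^2}$ and (either by Sobolev--Poincar\'e or by interpolation with $\|V\|_{L^\infty}$) $\|V\|_{L^4(B_R)}\le CR^{1/2}\|\Omega\|_{L^2}$. This is precisely how the paper closes the estimate: from these local bounds one gets $\|(1+|\cdot|)^{-1/2-\varepsilon}V\|_{L^4}\le C\|\Omega\|_{L^2}\le Ce^{(a_0+\delta_0+1/2)\tau}$ by a dyadic sum, and then the decay $|D^2\bar\Omega(\xi)|\le C(1+|\xi|)^{-2-\bar\alpha}$ absorbs the weight. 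The weighted $L^2$ case \eqref{e:rDG} is handled the same way. Once you insert this one ingredient, the rest of your proposal goes through unchanged.
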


\begin{proof}
\textbf{ Proof of \eqref{e:DG} and of \eqref{e:rDG}.} We break the terms as
\begin{align}
\|D\mathscr{G}\|_4 \leq & C \|DV D\bar\Omega\|_4 + C \|V D^2 \bar \Omega\|_4 
+ \|DVD\Omega_{\text{lin}}\|_4 
\nonumber\\&
+ \|VD^2\Omega_{\text{lin}}\|_4 
+ C \|DVD\Omega_r\|_4 
+ C \|VD^2\Omega_r\|_4 
\nonumber\\&
+ C \|DV_{\text{lin}}D\Omega_{\text{lin}}\|_4 
+ C \|V_{\text{lin}}D^2 \Omega_{\text{lin}}\|_4
+ \|D V_r D\Omega_{\text{lin}}\|_4 
\nonumber\\&
+ \|V_r D^2 \Omega_{\text{lin}}\|_4 + \|DV_{\text{lin}}D\Omega_r\|_4 + \|V_{\text{lin}} D^2 \Omega_r\|_4\,  ,
\end{align}
and
\begin{align}
\|rD\mathscr{G}\|_2 \leq & 
C \|rDV D\bar\Omega\|_2 
+ C \|rV D^2 \bar \Omega\|_2 
+ \|rDVD\Omega_{\text{lin}}\|_2 
\nonumber\\&
+ \|rVD^2\Omega_{\text{lin}}\|_2
+ C \|rDVD\Omega_r\|_2 
+ C \|VrD^2\Omega_r\|_2 
\nonumber\\&
+ C \|rDV_{\text{lin}}D\Omega_{\text{lin}}\|_2 
+ C \|rV_{\text{lin}}D^2 \Omega_{\text{lin}}\|_2
+ \|rD V_r D\Omega_{\text{lin}}\|_2 
\nonumber\\&
+ \|rV_r D^2 \Omega_{\text{lin}}\|_2 + \|rDV_{\text{lin}}D\Omega_r\|_2 + \|rV_{\text{lin}} D^2 \Omega_r\|_2\, . 
\end{align}
The terms involving $\Omega$ and $\bar\Omega$ is where we use the baseline $L^2$ estimate. Observe that
\begin{equation}\label{e:interpolating-baseline}
\|\Omega (\cdot, \tau)\|_{4} \leq \|\Omega (\cdot, \tau)\|_{\infty}^{1/2} \|D\Omega (\cdot, \tau)\|^{1/2}_{2}
\leq C e^{(a_0 + \delta_0+1/4)\tau}\, 
\end{equation}
and, by Calder{\'o}n-Zygmund,
\begin{equation}\label{e:CZ-baseline}
\|D K_2* \Omega (\cdot, \tau)\|_{4} \leq C \|\Omega (\cdot, \tau)\|_{4} \leq C e^{(a_0+\delta_0+1/4) \tau}\, .
\end{equation}
Next we estimate
\begin{align}
\|DV D\bar\Omega (\cdot, \tau)\|_4 & \leq \|D \bar \Omega (\cdot, \tau)\|_\infty \|DV (\cdot, \tau)\|_4 
\nonumber\\&
\leq C \|\Omega (\cdot, \tau)\|_4
\nonumber\\&
\leq C e^{(a_0+\delta_0+1/4) \tau}\, , \label{e:per-bar-1}\\
\|r DV D\bar\Omega (\cdot, \tau)\|_{L^2} &\leq 
\|r D \bar \Omega (\cdot, \tau)\|_\infty \|DV (\cdot, \tau)\|_2 
\nonumber\\&
\leq C \|\Omega (\cdot, \tau)\|_{L^2}
\nonumber \\&
\leq C e^{(a_0+\delta_0+1/2)\,  \tau}\, .\label{e:per-bar-1-weight}
\end{align}
Next, recalling Lemma \ref{l:extension} we get 
\begin{equation}
\|V (\cdot , \tau)\|_{L^2 (B_R)} \leq C R \|\Omega (\cdot, \tau)\|_2 \leq C R e^{(a_0+\delta_0+1/2) \tau}\, .    
\end{equation}
However, using $\int_{B_R} V (\cdot, \tau) =0$, we can in fact estimate also
\begin{equation}
\|V (\cdot, \tau)\|_{L^4 (B_R)} \leq C R^{1/2} \|\Omega (\cdot, \tau)\|_2 \leq C R^{1/2} e^{(a_0+\delta_0+1/2)\tau}  \, . 
\end{equation}
In particular we can infer
\begin{equation}
\| (1+|\cdot|)^{-(1+\varepsilon)} V (\cdot, \tau)\|_2 \leq C(\varepsilon) e^{(a_0+\delta_0+1/2) \tau} 
\end{equation}
and 
\begin{equation}
\| (1+|\cdot|)^{-(1/2+\varepsilon)} V (\cdot, \tau)\|_4 \leq C (\varepsilon) e^{(a_0+\delta_0+1/2) \tau}    
\end{equation}
for every positive $\varepsilon$. On the other hand, given that $|D^2 \bar \Omega (\xi)|\leq C (1+|\xi|)^{-2-\al}$, we easily infer
\begin{align}
\|V D^2 \bar \Omega (\cdot, \tau)\|_2 &\leq C \|(1+|\cdot|)^{-{1}} V (\cdot, \tau)\|_4 \leq C e^{(a_0+\delta_0+1/2) \tau}\label{e:per-bar-2}\\
\|r V D^2 \bar \Omega (\cdot, \tau)\|_2 &\leq C \|(1+|\cdot|)^{-1-\al} V (\cdot, \tau)\|_2 \leq C e^{(a_0+\delta_0+1/2) \tau}\, .\label{e:per-bar-2-r}
\end{align}
From now on we will not handle the terms with the weight $r$ as the proof is entirely analogous: we will just focus on the $L^4$ estimates and leave to the reader the computations with the weight.

For the two quadratic terms in $V_{\text{lin}}$ we can use Lemma \ref{l:pointwise} to achieve
\begin{equation}\label{e:lin-lin}
\|DV_{\text{lin}}D\Omega_{\text{lin}} (\cdot, \tau)\|_4 + \|V_{\text{lin}}D^2 \Omega_{\text{lin}} (\cdot, \tau)\|_4 \leq C e^{2a_0 \tau}\, .
\end{equation}
Likewise we can estimate
\begin{equation}\label{e:lin-per}
\|DV D \Omega_{\text{lin}} (\cdot, \tau)\|_4 + \|V D^2\Omega_{\text{lin}}\|_4 \leq C e^{a_0\tau}\|\Omega (\cdot, \tau)\|_X \leq C e^{(2a_0+\delta_0) \tau}\, ,
\end{equation}
(where for the second term we use the decay at infinity of $D^2 \Omega_{\text{lin}}$ to compensate the moderate growth of $V$, the argument is the same as for \eqref{e:per-bar-2} and we do not repeat it here). 
Observe next that, by \eqref{e:sfava}, $\|D \Omega_r (\cdot, \tau)\|_\infty \leq C$ for $\tau\leq 0$. Hence the term $DV D\Omega_r$ can be estimated as in \eqref{e:per-bar-1}:
\begin{equation}\label{e:per-err-1}
\|DV D\Omega_r (\cdot, \tau)\|_4 \leq C \|DV (\cdot, \tau)\|_4 \leq C e^{(a_0+\delta_0+1/4) \tau}\, .
\end{equation}
As for the other term, differentiating once more and arguing as for \eqref{e:sfava} we get:
\begin{align}
|D^2 \Omega_r (\xi, \tau)| 
\leq & C |\xi|^{-2-\al}\mathbf{1}_{|\xi|\geq e^{-\tau/\alpha}}
\nonumber\\&
 + (e^{\tau/\alpha} |\xi|^{-1-\al} + e^{2\tau/\alpha} |\xi|^{-\al} + e^{3\tau/\alpha} |\xi|^{1-\al}) \mathbf{1}_{e^{-\tau/\alpha} R \geq |\xi|\geq e^{-\tau/\alpha}}\nonumber\\
\leq &  C |\xi|^{-2-\al} \mathbf{1}_{|\xi|\geq e^{-\tau/\alpha}} + C e^{3\tau/\alpha} |\xi|^{1-\al} \mathbf{1}_{e^{-\tau/\alpha} R \geq |\xi|\geq e^{-\tau/\alpha}}\, . \label{e:sfava-2}
\end{align}
We can thus argue similarly as for \eqref{e:per-bar-2} to conclude
\begin{align}\label{e:per-err-2}
\|V D^2 \bar \Omega_r (\cdot, \tau)\|_4 &\leq C \|(1+|\cdot|)^{-3/2} V (\cdot, \tau)\|_4 + C e^{3\tau/\alpha} \|V\|_{L^4 (B_{R e^{-\xi/\tau}})}\nonumber\\ &\leq C e^{(a_0+\delta_0+1/4) \tau}\, .
\end{align}
In order to handle the remaining three terms, we recall that, by Lemma \ref{l:pointwise}, 
\begin{align}
\|V_{\text{lin}} (\cdot, \tau) \|_\infty &\leq C e^{a_0 \tau}\\
|DV_{\text{lin}} (\xi, \tau)|
&\leq C e^{a_0\tau} |\xi|^{-2-\al}\\
|D^k \Omega_{\text{lin}} (\xi, \tau)| &\leq C e^{a_0\tau} |\xi|^{-2-k-\al}\, .
\end{align}
On the other hand, owing to the computations in this and the previous section we can also write
\begin{align*}
|V_r (\xi, \tau)| &\leq C |\xi|^{1- \al} \mathbf{1}_{|\xi|\geq e^{-\tau/\alpha}}\\
|DV_r (\xi, \tau)| + |\Omega_r (\xi, \tau)| &\leq C |\xi|^{-\al}\mathbf{1}_{|\xi|\geq e^{-\tau/\alpha}} + C e^{\tau/\alpha} |\xi|^{1-\al}
\mathbf{1}_{e^{-\tau/\alpha} R \geq |\xi|\geq e^{-\tau/\alpha}}\\
|D^k \Omega_r (\xi, \tau)| &\leq C |\xi|^{-k-\al}\mathbf{1}_{|\xi|\geq e^{-\tau/\alpha}}\\
&\qquad+ C e^{k \tau/\alpha} |\xi|^{1-\al}
\mathbf{1}_{e^{-\tau/\alpha} R \geq |\xi|\geq e^{-\tau/\alpha}}\, .
\end{align*}
Integrating the estimates in the respective domain we easily get 
\begin{equation}\label{e:lin-err}
\|D V_r D\Omega_{\text{lin}}\|_4 + \|V_r D^2 \Omega_{\text{lin}}\|_4 + \|DV_{\text{lin}}D\Omega_r\|_4 + \|V_{\text{lin}} D^2 \Omega_r\|_4\leq C e^{(a_0+1)\tau}\, .
\end{equation}

\medskip

\textbf{ Remaining estimates.} The two terms \eqref{e:DVDOmega} and \eqref{e:rDVDOmega} have already been covered in the argument above. It remains to handle \eqref{e:comm-term}. Notice that, by Lemma \ref{l:pointwise} and Proposition \ref{p:X-bounds} we have 
\begin{align}
\|r^{-1} V (\cdot, \tau)\|_\infty &\leq C \|\Omega (\cdot, \tau)\|_X \leq C e^{(a_0+\delta_0) \tau}\\
\|r^{-1} V_{\text{lin}} (\cdot, \tau)\|_\infty & \leq C e^{a_0\tau}\, .
\end{align}
We thus conclude easily
\begin{align}
\|r^{-1} V D\Omega (\cdot, \tau)\|_4 & \leq C e^{(a_0+\delta_0) \tau}\|D\Omega (\cdot, \tau)\|_4 \leq C e^{2(a_0+\delta_0) \tau}\, \\
\|r^{-1} V_{\text{lin}}  D\Omega(\cdot, \tau)\|_4 &\leq C e^{a_0\tau}\|D\Omega (\cdot, \tau)\|_4 \leq C e^{(2a_0+\delta_0) \tau}\, .
\end{align}
\end{proof}

We next differentiate in $r$ \eqref{e:master-10} in order to achieve similar identities to \eqref{e:master-angular-1} and \eqref{e:master-angular-2}. This time, given the ambiguity with $V_r$ and $\Omega_r$, we write $,r$ in the subscript to denote the radial derivative of {\em any} function.
\begin{align}
& \partial_\tau \Omega_{,r} + \left(1-\frac{1}{\alpha}\right) \Omega_{,r} + \left(\left(-\frac{\xi}{\alpha} + \beta \bar V + V_r + V_{\text{lin}} + V \right)\cdot \nabla\right) \Omega_{,r}\nonumber\\ 
=  &\mathscr{G}_{,r} - (V_{\text{lin}, r}\cdot \nabla) \Omega - (V_{,r} \cdot \nabla) \Omega - \beta (\bar V_{,r} \cdot \nabla) \Omega - (V_{r,r}\cdot \nabla ) \Omega \label{e:master-radial-1}\\
& \partial_\tau r\Omega_{,r} 
+ r \Omega_{,r} + \left(\left(-\frac{\xi}{\alpha} + \beta \bar V+ V_r + V_{\text{lin}} + V \right)\cdot \nabla\right) (r \Omega_{,r})\nonumber\\
= & r \mathscr{G}_{,r} - r (V_{\text{lin}, r}\cdot \nabla) \Omega - r (V_{,r} \cdot \nabla) \Omega - r \beta (\bar V_{,r} \cdot \nabla) \Omega - r (V_{r,r}\cdot \nabla ) \Omega\nonumber\\
& + \Omega_{,r} (V_{\text{lin}} + V)\cdot \nabla r \, . \label{e:master-radial-2}
\end{align}
Multiplying by $(\Omega_{,r})^3$ and $r \Omega_{,r}$ respectively, and using the estimates \eqref{e:DG} and \eqref{e:rDG}, we achieve, in the respective cases:
\begin{align}
\frac{d}{d\tau} \|\Omega_{,r} (\cdot, \tau)\|_4 &\leq C e^{(a_0+2 \delta_0)\tau} 
+ C \|D V_{\text{lin}} D\Omega (\cdot, \tau)\|_4 + C\|D V D \Omega (\cdot, \tau)\|_4\nonumber\\
& \qquad\qquad + C \|\bar{V}_{,r}\cdot \nabla \Omega (\cdot, \tau)\|_4 + \|DV_r D \Omega (\cdot, \tau)\|_4\, ,\label{e:master-radial}\\
\frac{d}{d\tau} \|r\Omega_{,r} (\cdot, \tau)\|_2 &\leq 
C e^{(a_0+2 \delta_0)\tau} 
+ C \|r V_{\text{lin}, r} D\Omega (\cdot, \tau)\|_2 
\nonumber\\
&
+ C\|r (V_{,r}\cdot \nabla) \Omega (\cdot, \tau)\|_2
+ C \|r \bar{V}_{,r}\cdot \nabla \Omega (\cdot, \tau)\|_2 
\nonumber\\
&
+ \|r (V_{r,r} \cdot\nabla) \Omega (\cdot, \tau)\|_2
+ \|V_{\text{lin}} D\Omega \|_2 + \|V D\Omega\|_2\label{e:master-radial-r}\, .
\end{align}
Note next that 
\begin{align}
&\|D V_{\text{lin}} D\Omega (\cdot, \tau)\|_4 + \|D V D \Omega (\cdot, \tau)\|_4 \\&\leq (\|DV_{\text{lin}} (\cdot, \tau)\|_\infty +
\|DV (\cdot, \tau)\|_\infty) \|D\Omega (\cdot, \tau)\|_4\nonumber
 \leq C e^{(2a_0+\delta_0)\tau}\, ,
\end{align}
and likewise
\begin{align}
\|r D V_{\text{lin}} D\Omega (\cdot, \tau)\|_2 & + \|r D V D \Omega (\cdot, \tau)\|_2 
\nonumber\\&\leq (\|DV_{\text{lin}} (\cdot, \tau)\|_\infty +
\|DV (\cdot, \tau)\|_\infty) \|r D\Omega (\cdot, \tau)\|_2\nonumber\\
& \leq C e^{(2a_0+\delta_0)\tau}\, ,
\end{align}
The terms $(\bar V_{, r} \cdot \nabla) \Omega$ and $r(\bar V_{,r} \cdot \nabla)\Omega$ can be bounded observing that they involve only the angular derivative and that $\bar V_{,r}$ is bounded. Since the angular derivative has already been estimated (this is the reason for estimating it {\em before} estimating the radial derivative), we get
\begin{align}
\|\bar{V}_{,r}\cdot \nabla \Omega (\cdot, \tau)\|_4 &\leq C \|r^{-1} \Omega_\theta (\cdot, \tau)\|_4 \leq C e^{(a_0+2\delta_0)\tau}\, .\\
\|r \bar{V}_{,r}\cdot \nabla \Omega (\cdot, \tau)\|_2 &\leq C \|\Omega_\theta (\cdot, \tau)\|_2 \leq C e^{(a_0+2 \delta_0)\tau}\, .
\end{align}
As for $DV_r D\Omega$ and $r DV_r D\Omega$\, we observe that $\|DV_r\|_\infty \leq e^\tau$ and thus
we easily get
\begin{align}
\|DV_r D\Omega (\cdot, \tau)\|_4 &\leq Ce^{\tau} \| D\Omega (\cdot, \tau)\|_4 \leq Ce^{(a_0+\delta_0+1)\tau}\\
\|r DV_r D\Omega (\cdot, \tau)\|_2 &\leq Ce^{\tau} \|r D\Omega (\cdot, \tau)\|_2 \leq Ce^{(a_0+\delta_0+1)\tau}\, .
\end{align}
We finally need to estimate $\|V_{\text{lin}} D\Omega\|_2$ and $\|V D\Omega\|_2$, but we observe that this has already been done in the previous section, since they correspond to the terms $\mathscr{F}_1$ and $\mathscr{F}_4$ in \eqref{e:master-2}, cf. \eqref{e:F-1} and \eqref{e:F-4}.
Summarizing, we conclude 
\begin{align}
\frac{d}{d\tau} \|\Omega_{,r} (\cdot, \tau)\|_2 &\leq C e^{(a_0+\delta_0+1/2)\tau}\\
\frac{d}{d\tau} \|r\Omega_{,r} (\cdot, \tau)\|_2 &\leq C e^{(a_0+\delta_0+1/2)\tau}\, ,
\end{align}
which we then integrate between $0$ and $\bar\tau$ to achieve the desired conclusion. 

\appendix

\chapter{A more detailed spectral analysis}\label{a:better}

\addtocontents{toc}{\vspace*{\baselineskip}}
\addtocontents{toc}{\hspace*{-1.5em}\textbf{Appendix~\thechapter}}

\vspace*{-4.5pt}

\section{From Remark \ref{r:better2}(i) to Remark \ref{r:better}(c)}

Let us assume the validity of \ref{r:better2}(i) and prove Remark \ref{r:better}(c). Let $m_0\ge 2$ be the integer such that 
\begin{align}\label{z1}
    &\textrm{spec}\, (L_{\text{st}}, U_{m_0}) \cap \{\textrm{Re}\, z > 0\} \neq \emptyset  \, ,
    \\
    &\textrm{spec}\, (L_{\text{st}}, U_{m}) \cap \{\textrm{Re}\, z > 0\} = \emptyset \, 
    \quad 
    \text{for any $m>m_0$} \, .
\end{align}
We show that Remark \ref{r:better}(c) holds with $m = m_0$.

For any $z\in \textrm{spec}_{m_0}\, (L_{\text{st}}) \cap \{\textrm{Re}\, z > 0\}$ we denote by $V_z:= P_z(L^2_{m_0})$ the image of the Riesz projector 
\begin{equation*}
    P_z = \frac{1}{2\pi i} \int_\gamma (w-L_{\text{st}})^{-1} dw \, ,
\end{equation*}
where $\gamma$ parameterizes the boundary of a ball containing $z$ and no other eigenvalues of $L_{\text{st}}$.

It is enough to show that $P_z(U_{km_0}) = \{0\}$ for any $k\in \mathbb{Z}\setminus\{-1, 1\}$, $z\in \textrm{spec}_{m_0}\, (L_{\text{st}}) \cap \{\textrm{Re}\, z > 0\}$ since it gives
$$V_z = P_z(U_{m_0}\cup U_{-m_0}) \subset U_{m_0} \cup U_{-m_0}\, ,$$
where the second inclusion follows from the fact that $U_m$ is always an invariant space of $L_{\text st}$.

If $k>1$, from \eqref{z1} we know that $z\notin \textrm{spec}\, (L_{\text{st}}, U_{km_0})$, hence $P_z(U_{km_0})$ is trivial. If $k<-1$,
we reduce to the previous situation by observing that $P_z(U_{km_0}) = \overline{ P_{\bar z}(U_{-km_0})}$.

\section{Proof of Remark \ref{r:better2}(i)} In order to show this point, given Lemma \ref{l:almost-final-2}, we just need to prove the following statement.

\begin{lemma}\label{l:almost-final-1}
For every fixed $\Xi\in \mathscr{C}$ there is $M_0>0$ such that $\mathscr{U}_m$ is empty for every $m\geq M_0$.
\end{lemma}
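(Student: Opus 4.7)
The plan is to work in the operator framework of Section 4.2, exploiting the fact that the compact perturbation $\mathscr K_m$ in the decomposition $\mathcal L_m = \mathcal S_m + \mathscr K_m$ becomes small in operator norm as $m\to\infty$. A careful tracking of the $m$-dependence in the proof of Proposition \ref{p:all-m}, using the Cauchy--Schwarz bounds
\[
    r^{m-1}\int_r^\infty |\gamma(s)|\,s^{1-m}\,ds \leq \frac{\|\gamma\|_{\mathcal H}}{\sqrt{2m-2}},\quad r^{-m-1}\int_0^r |\gamma(s)|\,s^{1+m}\,ds \leq \frac{\|\gamma\|_{\mathcal H}}{\sqrt{2m+2}},
\]
combined with the formulas \eqref{e:explicit2} and \eqref{e:explicit3} (in which the prefactor $m$ of the former and the $\frac{1}{2m}$ of the latter cancel) and the finiteness of $\|g'\|_{\mathcal H}$, yields
\[
    \|\mathscr K_m\|_{\mathrm{op}} \leq C/\sqrt{m},\qquad\text{hence}\qquad \|\mathscr K_m/m\|_{\mathrm{op}} \leq C/m^{3/2}.
\]

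By Remark \ref{r:m-factor-eigenvalue}, $z \in \mathscr U_m$ if and only if $\gamma + (\zeta - z)^{-1}(\mathscr K_m/m)\gamma = 0$ has a nontrivial solution on $\mathcal H$ with $\mathrm{Im}\,z > 0$. Since the multiplication operator by $\zeta$ on $\mathcal H$ has spectrum $\overline{\zeta(\mathbb R^+)} = \overline{\Xi(\mathbb R)} \subset \mathbb R$, one has $\|(\zeta - z)^{-1}\|_{\mathrm{op}} \leq 1/\mathrm{Im}\,z$ whenever $\mathrm{Im}\,z > 0$, and a Neumann series argument gives invertibility of $I + (\zeta - z)^{-1}\mathscr K_m/m$ as soon as $\mathrm{Im}\,z > Cm^{-3/2}$. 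Combined with the uniform bound $|z| \leq \|\mathcal L_m\|_{\mathrm{op}}/m \leq \|\zeta\|_\infty + Cm^{-3/2}$, this localizes any putative element of $\mathscr U_m$ close to the compact real segment $\overline{\Xi(\mathbb R)}$.

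To upgrade these upper bounds to $\mathscr U_m = \emptyset$ for $m$ large, I would argue by contradiction: suppose $m_j \to \infty$ and $z_j \in \mathscr U_{m_j}$ with $L^2$-normalized eigenfunctions $\varphi_j$ solving \eqref{e:eigenvalue-equation-3}. After a subsequence, $z_j \to z_* = \Xi(t_*)$ for some $t_* \in [-\infty,\infty]$, and testing the real part of \eqref{e:eigenvalue-equation-3} against $\bar\varphi_j$ refines the above to $\mathrm{Im}\,z_j \leq \|A\|_\infty/(2m_j^2)$. Rescaling $\hat\varphi_j(s) := m_j^{-1/2}\varphi_j(t_j^* + s/m_j)$ around the critical layer $t_j^*$ (defined by $\Xi(t_j^*) = \mathrm{Re}\,z_j$) keeps $\hat\varphi_j$ normalized in $L^2(\mathbb R_s)$ with exponential decay at rate one at infinity (inherited from Lemma \ref{l:ODE2}), and a Plemelj-type computation in the spirit of Section 4.4 shows that the rescaled potential $A(t)/(m_j^2(\Xi - z_j))$ concentrates on a sub-scale of order $m_j^{-1}$ with vanishing total mass. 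In the limit $\hat\varphi_j$ formally solves $-\hat\varphi'' + \hat\varphi = 0$, whose only $L^2$ solution is zero, contradicting the $L^2$-normalization. The main obstacle to implementing this rigorously is to establish strong $L^2$-compactness of $\{\hat\varphi_j\}$, a delicate point since the critical layer thickness $\mathrm{Im}\,z_j$ can be much smaller than $m_j^{-2}$ (making the $H^1$-norm of $\hat\varphi_j$ a priori unbounded); I expect this to require a more refined analysis of the critical layer bifurcating the cases $\mathrm{Im}\,z_j \sim m_j^{-2}$ and $\mathrm{Im}\,z_j \ll m_j^{-2}$, along the lines of the proof of Proposition \ref{p:3+4}(D).
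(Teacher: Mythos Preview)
Your opening observation is correct and useful: tracking the $m$-dependence in Proposition~\ref{p:all-m} does yield $\|\mathscr{K}_m\|_{\mathrm{op}}\le C m^{-1/2}$ (the prefactors combine exactly as you say, and $g'\in\mathcal H$ since $g'(r)\sim r$ near $0$ and $\sim r^{-1-\bar\alpha}$ at infinity). The resulting localization $\mathrm{Im}\,z\le C m^{-3/2}$ and $|z|\le\|\zeta\|_\infty+Cm^{-3/2}$ for any $z\in\mathscr{U}_m$ is a clean replacement for Step~1 of the paper's argument.

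However, the heart of the lemma is precisely to exclude eigenvalues with very small positive imaginary part, and here your proposal is only a sketch that you yourself flag as incomplete. The rescaling $\hat\varphi_j(s)=m_j^{-1/2}\varphi_j(t_j^*+s/m_j)$ runs into a genuine obstacle, not merely a compactness technicality. In the rescaled variables the potential becomes, near the critical layer,
\[
V_j(s)\;\approx\;\frac{A(c)}{m_j\,\Xi'(c)}\cdot\frac{1}{s-i\delta_j},\qquad \delta_j:=\frac{m_j\,\mathrm{Im}\,z_j}{\Xi'(c)},
\]
so that on any fixed interval $[-R,R]$ one has $\int_{-R}^{R}|V_j|\,ds\sim m_j^{-1}\log(R/\delta_j)$. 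Your bounds give $\delta_j=O(m_j^{-1})$, but you have \emph{no lower bound} on $\mathrm{Im}\,z_j$; if $\delta_j$ decays, say, exponentially in $m_j$, then $\int|V_j|$ stays bounded away from zero and the limiting equation is not $-\hat\varphi''+\hat\varphi=0$. Thus the ``vanishing total mass'' assertion is unjustified, and the dichotomy you propose ($\mathrm{Im}\,z_j\sim m_j^{-2}$ versus $\mathrm{Im}\,z_j\ll m_j^{-2}$) does not obviously close. You also do not treat the endpoint cases $z_*=0$ and $z_*=\Xi(-\infty)$, which require separate arguments (these are the paper's Step~2).

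The paper's proof takes a completely different route. After the preliminary localization it abandons $L^2$ and normalizes $\|\psi_j\|_{C^\sigma}=1$ for a fixed $\sigma\in\,]0,1[$. The key input is Lemma~\ref{l:operator-B_z}: the operators $\mathcal{K}_{m_0}\circ(\Xi-z)^{-1}$ are bounded $C^\sigma\to C^\tau$ \emph{uniformly in $z$ with $\mathrm{Im}\,z>0$}, for any $\tau>\sigma$; this regularizing effect is insensitive to how small $\mathrm{Im}\,z$ is. Writing $\psi_j=-\mathcal{K}_{m_j}\circ(-\tfrac{d^2}{dt^2}+m_0^2)\circ\mathcal{K}_{m_0}\big(\tfrac{A}{\Xi-z_j}\psi_j\big)$ and using a Littlewood--Paley decomposition together with uniform $L^1$-bounds on the convolution kernels of $\Delta_\ell\circ\mathcal{K}_{m_j}\circ(-\tfrac{d^2}{dt^2}+m_0^2)$, the paper splits into high frequencies (controlled by the $C^\tau$-bound and the gap $\tau-\sigma$) and low frequencies (where $\mathcal{K}_{m_j}$ gains a factor $m_j^{-2}$), obtaining $\|\psi_j\|_{C^\sigma}\le\tfrac12$ for large $j$---a contradiction. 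This circumvents entirely the critical-layer compactness problem that blocks your approach.
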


Indeed, given the conclusion above we infer that $\mathscr{U}_m$ is empty for every $m\geq m_a$ and it thus suffices to select $m_0$ as the largest integer strictly smaller than $m_a$.

Before coming to the proof of Lemma \ref{l:almost-final-1} we state an auxiliary fact which will be used in the argument and which can be readily inferred from the computations in Step 1 of the proof of Lemma \ref{l:will-apply-Rouche}.

\begin{lemma}\label{l:operator-B_z}
For every $0 < \sigma < \tau < 1$ there is a constant $C$ (depending only upon $\sigma$ and $\tau$) such that $B_z := \mathcal{K}_{m_0} \circ \frac{1}{\Xi-z}$ is a bounded operator from $C^\sigma$ to $C^\tau$ for every $z$ with $\textrm{Im}\, z>0$ and 
\[
\|B_z\|_{\mathcal{L} (C^\sigma, C^\tau)} \leq C\, .
\]
\end{lemma}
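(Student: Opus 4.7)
The plan is to revisit the estimates carried out in Step~1 of the proof of Lemma~\ref{l:will-apply-Rouche} and sharpen them so that the target Hölder exponent can be any $\tau \in (\sigma,1)$ rather than just $\tau = \sigma$. The uniform $L^\infty$ bound proved there (which already yields uniform boundedness on $\mathbb R$) will be reused verbatim, so the only task is to upgrade the control of the Hölder seminorm $[B_z\psi]_\tau$ with constants that do not depend on $z$.

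Exactly as in the original argument I would fix a cutoff $\varphi \in C^\infty_c(]a-2,a+2[)$ with $\varphi \equiv 1$ on $[a-1,a+1]$ and split $B_z = B_z^1 + B_z^2$ with $B_z^1 = \mathcal{K}_{m_0} \circ ((1-\varphi)/(\Xi-z))$ and $B_z^2 = \mathcal{K}_{m_0}\circ (\varphi/(\Xi-z))$. Since the multiplier $(1-\varphi)/(\Xi-z)$ has uniform $C^k$ bounds in $z$ and $\mathcal{K}_{m_0}$ convolves with the integrable kernel $\frac{1}{2m_0} e^{-m_0|\cdot|}$, one checks directly that $B_z^1$ takes $C^\sigma$ into $C^1$ (hence into $C^\tau$) with a $z$-independent bound: this is the easy half and follows by differentiating under the integral.

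The crux is $B_z^2$. Writing $z = x + iy$ and $a' = \Xi^{-1}(x)$ (defined for $x$ close to $\Xi(a)$), I would use the same decomposition
\[
B_z^2\psi(t) = \tfrac{1}{2m_0} I_1(t) + \tfrac{\psi(a')}{2m_0} I_2(t), \qquad I_2 = -I_{2,1} - I_{2,2},
\]
as in Step~1, where $I_1$ encodes $\psi-\psi(a')$, and $I_{2,1}$, $I_{2,2}$ arise after writing $\frac{1}{\Xi-z} = \frac{1}{\Xi'}\frac{d}{ds}\log(\Xi-z)$ and integrating by parts. For $I_1$ and $I_{2,2}$, the bounds proved there already give $|I_1(t)-I_1(t')| + |I_{2,2}(t)-I_{2,2}(t')| \le C\|\psi\|_{C^\sigma}|t-t'|$, and Lipschitz is stronger than $C^\tau$ on bounded scales. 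For $I_{2,1}$ I would repeat the splitting $I_{2,1}(t)-I_{2,1}(t') = m_0(J_+(t,t')-J_-(t,t'))$ and estimate
\[
|J_\pm(t,t')| \le C|t-t'|\,\|B\|_{L^1} + |t-t'|^{\tau}\,\|B\|_{L^{1/(1-\tau)}},
\]
where $B(s) = (\Xi'(s))^{-1}\varphi(s)\log(\Xi(s)-z)$. The Hölder exponent $\tau$ arises by choosing the conjugate pair with $1-1/p = \tau$, and the freedom $\tau < 1$ is what makes $p = 1/(1-\tau)$ finite.

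The main technical point, and the one that must be verified uniformly in $z$, is that $\|B\|_{L^q([a-2,a+2])} \le C_q$ for every $q<\infty$, with $C_q$ independent of $z$ in the upper half-plane. This reduces to the same bound for $\log(\Xi-z)$. For the principal branch, $\textrm{Im}\log(\Xi-z)$ is uniformly bounded by $\pi$; for $\textrm{Re}\log(\Xi-z) = \log|\Xi-z|$, since $\Xi'(a)\ne 0$ the change of variables $u = \Xi(s) - \textrm{Re}\, z$ is a bilipschitz diffeomorphism onto its image with constants independent of $z$ (provided $x$ stays close to $\Xi(a)$), and therefore $\int |\log|\Xi(s)-z||^q\,ds$ is comparable to $\int_{-C}^C |\log\sqrt{u^2+y^2}|^q\,du$, which is bounded uniformly in $y\ge 0$ for any finite $q$. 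This is the step I expect to take the most care: one needs to handle the transition between the regime $|u|\gtrsim y$ (where $\log|\Xi-z| \sim \log|u|$) and $|u|\lesssim y$ (where it is bounded by $|\log y|$) and check that the total $L^q$ norm stays bounded as $y\downarrow 0$.

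Combining these estimates gives $[B_z^2\psi]_\tau \le C(\sigma,\tau)\|\psi\|_{C^\sigma}$ for pairs with $|t-t'|\le 1$; for $|t-t'|\ge 1$ one uses the uniform $L^\infty$ bound on $B_z^2\psi$ already established in Step~1 and writes $|B_z^2\psi(t)-B_z^2\psi(t')|\le 2\|B_z^2\psi\|_\infty \le C\|\psi\|_{C^\sigma}|t-t'|^\tau$. Together with the bound on $B_z^1$, this yields the claimed uniform estimate $\|B_z\|_{\mathcal{L}(C^\sigma,C^\tau)} \le C$.
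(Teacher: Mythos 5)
Your strategy---rerunning Step~1 of the proof of Lemma~\ref{l:will-apply-Rouche} with the same splitting $B_z=B_z^1+B_z^2$, the same decomposition into $I_1$, $I_{2,1}$, $I_{2,2}$, and upgrading the H\"older exponent by taking $p=1/\tau$, $p'=1/(1-\tau)$ in the estimate for $J_\pm$---is exactly how the paper intends the lemma to be ``readily inferred'', and that part of the argument is sound. (The phrase ``the conjugate pair with $1-1/p=\tau$'' should read ``$1/p=\tau$''; your displayed inequality is the correct one.) The uniform $L^q$ bound on $\ln(\Xi-z)$ via the bilipschitz change of variables $u=\Xi(s)-\textrm{Re}\,z$ is also fine on the support of the cutoff, where $|\Xi'|$ is bounded above and below.

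The gap is in the quantifier over $z$. Your cutoff $\varphi$ is centered at $a$, and your $a'=\Xi^{-1}(\textrm{Re}\,z)$ is ``defined for $x$ close to $\Xi(a)$''; consequently the argument only yields uniformity for $z$ in a neighborhood of $\Xi(a)$. If, say, $\textrm{Re}\,z=\Xi(c)$ for some $c$ with $|c-a|>2$ and $\textrm{Im}\,z\downarrow 0$, the multiplier $(1-\varphi)/(\Xi-z)$ blows up near $s=c$, so the ``easy half'' $B_z^1$ is no longer uniformly bounded and the singular analysis has been performed in the wrong place. This matters because the lemma is invoked in the proof of Lemma~\ref{l:almost-final-1} precisely along sequences $z_j\to\Xi(c)$ with $c$ an arbitrary finite point, not near $a$. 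The repair is to recenter the cutoff at $\Xi^{-1}(\textrm{Re}\,z)$ whenever $\textrm{Re}\,z$ lies in (a compact part of) the range of $\Xi$, noting that all constants depend only on upper and lower bounds for $|\Xi'|$ on the support of the moving cutoff, and to use the trivial bound $|(\Xi-z)^{-1}|\leq \textrm{dist}(z,\Xi(\mathbb R))^{-1}$ otherwise. Doing this honestly also exposes a defect of the statement as literally quantified: when $\textrm{Re}\,z$ approaches the endpoints $0$ or $\Xi(-\infty)$ of the range, $\Xi^{-1}(\textrm{Re}\,z)$ escapes to $\pm\infty$ where $\Xi'\to 0$, and uniformity genuinely fails---for instance, testing on $\psi\equiv 1$ gives $\textrm{Re}\,(B_{iy}1)(t)=\frac{1}{2m_0}\int e^{-m_0|t-s|}\,\Xi(s)\,(\Xi(s)^2+y^2)^{-1}\,ds\geq c/y$ at the point $t$ where $\Xi(t)=y$, so $\|B_{iy}\|_{\mathcal{L}(C^\sigma,C^\tau)}\to\infty$ as $y\downarrow 0$. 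The estimate can therefore only hold (and is only needed) for $z$ ranging over a set on which $\Xi^{-1}(\textrm{Re}\,z)$ stays bounded, or with the decaying factor $A$ absorbed into the operator as in the actual application; your proof should state such a restriction explicitly and carry the recentered cutoff through the argument.
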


\begin{proof}[Proof of Lemma \ref{l:almost-final-1}] The proof will be by contradiction and thus, assuming that the statement is false, we can select:
\begin{itemize}
    \item[(i)] a sequence $\{m_j\}\subset [1, \infty[$ with $m_j\to \infty$;
    \item[(ii)] a sequence $\{z_j\}\subset \mathbb C$ with $\textrm{Im}\, z_j >0$;
    \item[(iii)] and a sequence $\{\psi_j\}\subset L^2 (\mathbb R)$ solving the equation
    \begin{equation}\label{e:eigenvalue-equation-20}
    -\frac{d^2 \psi_j}{dt^2} + m_j^2 \psi_j + \frac{A}{\Xi -z_j} \psi_j = 0\, .
    \end{equation}
\end{itemize}

\medskip

\textbf{ Step 1.} We first prove that $\{z_j\}$ is bounded and every cluster point must be an element of $[0, \Xi (-\infty)]$. Otherwise for a subsequence, not relabeled, we get the estimate
\[
\sup \left\|\frac{A}{\Xi-z_j}\right\|_{L^\infty} =: C_0 < \infty\, .
\]
By scalar multiplying \eqref{e:eigenvalue-equation-20} by $\psi_j$ and taking the real part of the resulting equation we then conclude
\[
\int (|\psi_j'|^2 + m_j^2 |\psi_j|^2) \leq C_0 \int |\psi_j|^2\, ,
\]
which clearly it is not feasible because $C_0 < m_j^2$ for a sufficiently large $j$ (and $\psi_j$ is nontrivial).

Up to subsequences we can thus assume that $z_j$ converges to some $z_0 \in [0, \Xi (-\infty)]$.

\medskip

\textbf{ Step 2.} We next analyze the cases $z_0 =0$ and $z_0 = \Xi (-\infty)$. The argument is similar to that used in Section \ref{s:3+4} in case (C). Let us argue first for $z_0=0$. We observe that $\Xi^{-1} |A|$ belongs to $L^1 (]-\infty, N])$ for any fixed $N$ and that, likewise, $|\Xi-z_j|^{-1} |A|$ have a uniform $L^1$ bound on any $]-\infty, N]$. We can then 
use the Lemma \ref{l:ODE2} to normalize $\psi_j$ so that it is asymptotic to $e^{m_j t}$ and also to write
\[
\psi_j (t) = e^{m_j t} (1+z_j (t)) 
\]
with
\[
|z_j (t)| \leq \exp \left(\frac{1}{m_j} \int_{-\infty}^N \frac{|A|}{|\Xi-z_j|}\right) -1 \qquad 
\mbox{for all $t\leq N$.}
\]
In particular, we have $|z_j (t)|\leq \frac{C(N)}{m_j}$ on $]-\infty, N]$. We next scalar multiply \eqref{e:eigenvalue-equation-20} by $\psi_j$ and take the imaginary part to conclude
\[
- \left(\int_{-\infty}^a + \int_b^\infty\right) \frac{A}{|\Xi-z_j|^2} |\psi_j|^2\leq 
\int_a^b \frac{A}{|\Xi-z_j|^2} |\psi_j|^2\, .
\]
In particular, since $\frac{A}{|\Xi-z_j|^2}$ is bounded from above by a constant $C$ independent of $j$ on $[a,b]$ and  $-\frac{A}{|\Xi-z_j|^2}$ is bounded from below by a constant $c>0$ independent of $j$ on $[b+1, b+2]$, we conclude
\[
\int_{b+1}^{b+2} |\psi_j|^2 \leq \frac{C}{c} \int_a^b |\psi_j|^2\, .
\]
We next choose $N$ larger than $b+2$ and use the estimate $|z_j (t)|\leq \frac{C(N)}{m_j}$ to argue that, for $j$ large enough, we have $\frac{1}{2} e^{m_j t} \leq |\psi_j (t)| \leq 2 e^{m_j t}$ on $]-\infty, N]$. In particular, we infer
\[
\int_{b+1}^{b+2} e^{2m_j t} \leq C \int_a^b e^{2m_j t}
\]
provided the constant $C$ is chosen large enough (but independent of $j$) and $j$ is large enough. The latter inequality is certainly impossible for $m_j$ large enough, leading to a contradiction. 

The argument to exclude $z_0 = \Xi (-\infty)$ is entirely analogous, this time normalizing for $t\to \infty$ and reaching an inequality of type
\[
\int_{a-2}^{a-1} e^{-2m_j t} \leq C \int_a^b e^{-2m_j t}
\]
for a constant $C$ independent of $j$ and any $j$ large enough.

\medskip

\textbf{ Step 3.} We next examine the last case, that is $z_0 = \Xi (c)$. This time we fix a $\sigma \in \, ]0,1[$ and normalize $\psi_j$ so that 
\begin{equation}\label{e:normalization_psi_j}
\|\psi_j\|_{C^\sigma}=1\, .
\end{equation}
We observe that
\[
\psi_j = - \mathcal{K}_{m_j} \left(\frac{A}{\Xi-z_j} \psi_j\right)\, ,
\]
and also recall that $\mathcal{K}_{m_j} (\varphi) = \frac{1}{2m_j} e^{-m_j |\cdot|} * \varphi$.
We set $m_0=m_a$ write further
\[
\psi_j = - \mathcal{K}_{m_j} \circ \left(-\frac{d^2}{dt^2} +m_0^2\right) \left(\mathcal{K}_{m_0} \left(\frac{A}{\Xi-z_j}\right) \psi_j\right)\, .
\]
Recalling Lemma \ref{l:operator-B_z}, we can fix a $\tau \in ]\sigma,1[$ to achieve 
\[
\left\|\left(\mathcal{K}_{m_0} \left(\frac{A}{\Xi-z_j}\right) \psi_j\right)\right\|_{C^\tau}\leq C
\]
for some constant $C$ independent of $j$. 

We will show in the final step that 
\begin{itemize}
    \item[(Cl)] $\|\mathcal{K}_{m_j} \circ (-\frac{d^2}{dt^2} + m_0^2)\|_{\mathcal{L} (C^\tau, C^\tau)} \leq C$ for some constant $C$ independent of $k$.
\end{itemize}
In particular, we achieve
\begin{equation}\label{e:estimate-C-tau}
\|\psi_j\|_{C^\tau} \leq C\, .
\end{equation}
We now wish to show that indeed $\|\psi_j\|_{C^\sigma} \leq \frac{1}{2}$ for $j$ large enough, which obviously would be a contradiction. In order to achieve the latter estimate we use a Littlewood-Paley decomposition. We fix a cut-off function $\chi$ which is supported in $]\frac{1}{2}, 2[$, define $\chi_\ell (t) :=\chi (2^{-\ell} t)$ for $\ell\in \mathbb N\cup \{0\}$ and assume that $\chi$ has been chosen so that
\[
\sum_{\ell \in \mathbb N\cup \{0\}} \chi_\ell \equiv 1 \qquad \mbox{on $[1, \infty[$}.
\]
We then define 
\[
\chi_{-1} := 1 - \sum_{\ell \in \mathbb N} \chi_\ell\, 
\]
and introduce the Littlewood-Paley operator $\Delta_\ell$ as $\Delta_\ell (\varphi) = \mathscr{F}^{-1} (\chi_\ell \mathscr{F} (\varphi))$, where $\mathscr{F}$ is the Fourier transform. 
We finally recall that (see \cite[Section 1.4.2]{Grafakos}), if we define
\[
\|\varphi\|_{X^\sigma} := \sum_{\ell \geq -1} 2^{\sigma \ell} \|\Delta_\ell \varphi\|_{L^\infty}\, ,
\]
then
\[
C (\sigma)^{-1} \|\varphi\|_{X^\sigma} \leq \|\varphi\|_{C^\sigma} \leq C (\sigma) \|\varphi\|_{X^\sigma}\, .
\]
We are now ready to perform our final estimate. We fix a large $N$, which will be chosen later, and for $\ell\geq N$ we write 
\begin{align*}
\sum_{\ell \geq N} 2^{\sigma \ell} \|\Delta_\ell \psi_j\|_\infty 
&\leq 2^{-N (\tau-\sigma)} \sum_{\ell\geq N} 2^{\tau \ell} \| \Delta_\ell \psi_j\|_\infty\\
&\leq 2^{-N (\tau-\sigma)} C (\tau) \|\psi_j\|_{C^\tau} \leq C 2^{-N (\tau-\sigma)}\, ,
\end{align*}
where the constant $C$ is independent of both $N$ and $j$. Next, for any $\ell$ we observe that
\[
\Delta_\ell \psi_j = \mathcal{K}_{m_j} \circ \left(-\frac{d^2}{dt^2} + m_0^2\right) \underbrace{\left(\Delta_\ell \left(\mathcal{K}_{m_0} \left(\frac{A}{\Xi-z} \psi_j \right)\right)\right)}_{=: \Gamma_{\ell,j}}\, . 
\]
Now 
\[
\|\Gamma_{\ell,j}\|_{L^\infty} \leq C 2^{-\ell\sigma} \left\|\mathcal{K}_{m_0} \left(\frac{A}{\Xi-z} \psi_j \right)\right\|_{C^\sigma} \leq C 2^{-\ell \sigma}\, .
\]
On the other hand, because of the frequency localization, we have
\[
\Delta_\ell \psi_j = \mathcal{K}_{m_j} \circ \left(-\frac{d^2}{dt^2} + m_0^2\right) \circ (\Delta_{\ell-1}+\Delta_\ell+\Delta_{\ell+1}) (\Gamma_{\ell,j})
\]
and the estimate
\[
\left\|\mathcal{K}_{m_j} \circ \left(-\frac{d^2}{dt^2} + m_0^2\right) \circ \Delta_\ell\right\|_{\mathcal{L} (L^\infty, L^\infty)} \leq \frac{C}{m_j^2} \left(2^{2\ell} + m_0^2\right)\, .
\]
We can therefore write the estimate
\begin{align*}
    \|\psi_j\|_{C^\sigma} 
    &\leq \frac{C}{m_j^2} \sum_{\ell=-1}^{N} (2^{2 \ell} + m_0^2) + C 2^{-N (\tau-\sigma)} \\
    &\leq \frac{CN}{m_j^2} \left(2^{2 N} + m_0^2\right) + C 2^{-N (\tau-\sigma)}\, ,
\end{align*}
where the constants $C$ are independent of $N$ and $j$. In particular, we fix first $N$ large enough to get $C 2^{-N (\tau-\sigma)} \leq \frac{1}{4}$ and we then choose $m_j$ large enough so that 
\[
\frac{CN}{m_j^2} \left(2^{2 N} + m_0^2\right) \leq \frac{1}{4}\, .
\]
These two estimates imply $\|\psi_j\|_{C^\sigma} \leq \frac{1}{2}$, contradicting the normalization \eqref{e:normalization_psi_j}. 

\medskip

\textbf{ Step 4.} To complete the proof of the Lemma we need to show (Cl). We first write 
\[
T_{m, \ell} := \Delta_\ell \circ \mathcal{K}_m \circ \left(-\frac{d^2}{dt^2} + m_0^2\right)\, .
\]
The operator $T_{m, \ell}$ is the convolution with a kernel $K_{m, \ell}$ whose Fourier symbol is given by 
$\chi_\ell (\xi) \frac{|\xi|^2 + m_0^2}{|\xi|^2 +m^2}$. Hence, for $\ell \geq 0$ we have
\[
K_{m, \ell} (t) = \frac{1}{2\pi} \int \chi \left(\frac{\xi}{2^\ell}\right) \frac{|\xi|^2 + m_0^2}{|\xi|^2 + m^2} e^{i \xi t}\, d\xi\, .
\]
and 
\[
(-it)^k K_{m, \ell} (t) = \frac{1}{2\pi} \int \frac{d^k}{d\xi^k} \left(  \chi \left(\frac{\xi}{2^\ell}\right) \frac{|\xi|^2 + m_0^2}{|\xi|^2 + m^2}\right) e^{it\xi}\, d\xi\, .
\]
In particular, we easily conclude
\[
\| |t|^k K_{m, \ell}\|_{L^\infty} \leq C (k) 2^{\ell (1-k)}\, ,
\]
for a constant $C (k)$ independent of both $m\geq 1$ and $\ell$, but which depends on $k$. From the latter we can estimate
\begin{align*}
\|K_{m, \ell}\|_{L^1} &\leq \int_{|t|\leq 2^{-\ell}} |K_{m, \ell} (s)|\, ds + 
\int_{|t|\geq 2^{-\ell}} \frac{|s^2 K_{m, \ell} (s)|}{|s|^2}\, ds\\
&\leq C + C 2^{-\ell} \int_{2^{-\ell}}^\infty \frac{1}{s^2}\, ds \leq C\, .
\end{align*}
For $\ell = -1$ we just conclude, likewise
\[
\||t|^k K_{m, -1}\|_{L^\infty} \leq C (k)
\]
for a constant $C(k)$ independent of $m$, but depending of $k$. Once again using the cases $k=0$ and $2$ of the latter inequality we achieve 
\[
\|K_{m, -1}\|_{L^1} \leq C \, .
\]
We have thus bounded all $\|K_{m, \ell}\|_{L^1}$ with a universal constant $C$ independent of both $m\geq 1$ and $\ell\in \mathbb N \cup \{-1\}$. In particular, since $\|T_{m, \ell}\|_{\mathcal{L} (L^\infty, L^\infty)} = \|K_{m, \ell}\|_{L^1}$ and 
\[
\mathcal{K}_m \circ \left(-\frac{d^2}{dt^2} + m_0^2\right) = \sum_{\ell \geq -1} T_{m, \ell} 
= \sum_{\ell \ge -1} T_{m, \ell} \circ (\Delta_{\ell-1}+\Delta_\ell+\Delta_{\ell+1})\, ,
\]
we can estimate
\begin{align*}
\left\|  \mathcal{K}_m \circ \left(-\frac{d^2}{dt^2} + m_0^2\right) (\varphi)\right\|_{C^\sigma} 
&\leq C (\sigma) \sum_{\ell \geq -1} 2^{\sigma \ell} \|T_{m, \ell} (\varphi)\|_{L^\infty}\\
&= C (\sigma) \sum_{\ell \geq -1} 2^{\sigma \ell} \|T_{m, \ell} (\Delta_\ell \varphi)\|_{L^\infty}\\
&\leq C (\sigma) \sum_{\ell \geq -1} 2^{\sigma \ell} \|\Delta_\ell \varphi\|_{L^\infty}\\ 
&\leq C (\sigma) \|\varphi\|_{C^\sigma}\, .
\end{align*}
This completes the proof of (Cl) and hence of the entire Lemma.
\end{proof}

\section{Proof of Theorem \ref{thm:spectral-stronger-2}}

In \cite{Vishik2}, Vishik claims the following improved version of Theorem \ref{thm:spectral5}, which would immediately imply Theorem \ref{thm:spectral-stronger-2}.

\begin{theorem}\label{thm:Vishikversion}
There are a function $\Xi\in \mathscr{C}$ and an integer $m_0\geq 2$ such that $\mathscr{U}_{m} = \emptyset$ for any integer $m>m_0$ and  $\mathscr{U}_{m_0}$ consists of a single $z$. Moreover, the \index{algebraic multiplicity@algebraic multiplicity} algebraic multiplicity of $m_0 z$ as an eigenvalue of $\mathcal{L}_{m_0}$ is $1$. 
\end{theorem}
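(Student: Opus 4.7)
The plan is to strengthen Theorem \ref{thm:spectral5} in two directions. The first is to refine the bifurcation analysis of Section \ref{s:5-7-part-II} so as to obtain not just a single eigenvalue $z(h)$ in a neighborhood of $z_0 = \Xi(a)$ with geometric multiplicity $1$ (as in Remark \ref{r:b-also-2}), but one with algebraic multiplicity exactly $1$. The second is to choose $\Xi \in \mathscr{C}$ so that the interval $]m_b, m_a[$ contains a single integer $m_0 \geq 2$; then the constancy of the sum of the algebraic multiplicities of $z \in \mathscr{U}_m$ along this interval (Remark \ref{rmk:algebraic dim const}) will transport the simplicity from $m$ close to $m_a$ down to $m = m_0$, while $\mathscr{U}_m$ will automatically be empty for every integer $m > m_0$ thanks to Lemma \ref{l:almost-final-1} combined with Lemma \ref{l:almost-final-2}.

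For the algebraic simplicity near $m_a$, the Lyapunov-Schmidt reduction of Section \ref{s:5-7-part-II} already identifies the eigenvalue equation for $\mathcal{L}_{m_a - h}$ near $z_0$ with the scalar holomorphic equation $H(h, z) = 1$, and Lemma \ref{l:will-apply-Rouche} gives the expansion
\begin{equation*}
H(h, z) - 1 = -2 m_a h + c(a)(z - z_0) + o(|z - z_0| + |h|),
\end{equation*}
with $\textrm{Im}\, c(a) > 0$. Hence $\partial_z H(h, z(h)) = c(a) + o(1) \neq 0$ for $h > 0$ small, so that $z(h)$ is a simple zero of the reduced bifurcation function. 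A standard consequence of the Lyapunov-Schmidt theory, identifying the algebraic multiplicity of the eigenvalue $m z(h)$ of $\mathcal{L}_m$ with the order of vanishing in $z$ of the scalar reduced function at $z(h)$, then yields algebraic multiplicity $1$.

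To conclude near $m_a$, I would also need to rule out the presence of any \emph{other} unstable eigenvalue. If there existed $m_j \to m_a^-$ with $z_j^\ast \in \mathscr{U}_{m_j}$ and $z_j^\ast \to z^\ast \neq z_0$, then either $\textrm{Im}\, z^\ast = 0$, in which case Proposition \ref{p:3+4} forces $z^\ast \in \{\Xi(a), \Xi(b)\}$ and both values are incompatible with our setup (the former by the local uniqueness of Remark \ref{r:b-also-2}, the latter because $m_a > m_b$), or $\textrm{Im}\, z^\ast > 0$, in which case $z^\ast$ would be an isolated eigenvalue of $\mathcal{L}_{m_a}$ and the Riesz projector continuity argument of Step 2 of Lemma \ref{l:almost-final-2} would force a nearby eigenvalue of $\mathcal{L}_m$ for $m$ slightly larger than $m_a$, contradicting Lemma \ref{l:almost-final-1} and Lemma \ref{l:almost-final-2}. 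Hence $\mathscr{U}_{m_a - h} = \{z(h)\}$ with algebraic multiplicity $1$ for every sufficiently small $h > 0$, and Remark \ref{rmk:algebraic dim const} propagates this along the whole interval $]m_b, m_a[$. Finally, choosing $\Xi = \Xi_{\sigma_0 + h}$ as in the proof of Proposition \ref{p:final} yields $m_b < m_0 < m_a < m_0 + 1$, the last inequality by continuity of $\sigma \mapsto \lambda_a(\sigma)$ starting from $\lambda_a(\sigma_0) = m_0^2 < (m_0+1)^2$; thus $]m_b, m_a[ \cap \mathbb{Z} = \{m_0\}$, and every integer $m > m_0$ satisfies $m > m_a$, so $\mathscr{U}_m = \emptyset$ by Lemmas \ref{l:almost-final-1}--\ref{l:almost-final-2}.

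The main obstacle is the algebraic simplicity assertion above: the bifurcation analysis of Section \ref{s:5-7-part-II} was carried out at the level of existence of a single eigenfunction, and passing from ``simple zero of the Lyapunov-Schmidt reduced equation" to ``algebraic multiplicity $1$ for $\mathcal{L}_m$" requires invoking the correspondence between the orders of vanishing in the two formulations in a form sufficiently sharp for the unbounded non-self-adjoint operator $\mathcal{L}_m$ on $\mathcal{H}$. This is the delicate step where the additional argument of \cite{Vishik3} enters, and where a careful analysis of the inverse of $T + \mathcal{R}_{m,z}$ beyond first order in $z - z(h)$, or equivalently a direct computation of the Riesz projector onto the generalized eigenspace, would be required.
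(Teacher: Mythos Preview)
Your global architecture matches the paper's: reduce to algebraic simplicity of the unique eigenvalue in $\mathscr{U}_{m_a-h}\cap B_\varepsilon(\Xi(a))$ for $h$ small (Proposition \ref{prop:Vishicimproved1}), then propagate along $]m_b,m_a[$ via Remark \ref{rmk:algebraic dim const}, rule out other eigenvalues by the compactness/limit argument you describe (this is exactly how the paper proves Proposition \ref{prop:Vishicimproved2}), and finally invoke Proposition \ref{p:final}. Your treatment of the ``no other eigenvalues'' step and of the choice of $\Xi$ is essentially the paper's.

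The substantive divergence, and the gap you yourself flag, is the algebraic simplicity near $m_a$. You propose to read it off from $\partial_z H(h,z(h))=c(a)+o(1)\neq 0$ via a ``standard'' identification of the order of the zero of the reduced scalar equation with the algebraic multiplicity of $mz(h)$ for $\mathcal{L}_m$. Two issues make this nonroutine here. First, the reduction in Sections \ref{s:5-7-part-II} is set up for the stream-function ODE $-\varphi''+m^2\varphi+\tfrac{A}{\Xi-z}\varphi=0$, not directly for $\mathcal{L}_m-mz$ acting on $\gamma\in\mathcal{H}$; the passage $\gamma\leftrightarrow\varphi$ goes through the $z$-dependent singular factor $\tfrac{A}{\Xi-z}$, so the generalized eigenvector equation for $\mathcal{L}_m$ does not translate into a simple $z$-derivative of the $\varphi$-equation. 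Second, the expansion of Lemma \ref{l:will-apply-Rouche} is only established as $(h,z)\to(m_a,\Xi(a))$ inside a cone, and controlling $\partial_z H$ uniformly at $z(h)$ would require revisiting the estimates on $\mathcal{R}_{m,z}$ and $G(z)$ at that level.

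The paper bypasses both issues with a different, more direct argument (Lemma \ref{l:aggiunto}): it works in the $\mathcal{H}^e$-model for $\mathcal{L}_m$, writes down explicitly an eigenfunction $\beta_h$ of the adjoint $\mathcal{L}_{m_a-h}^\star$ at $\bar z_h$, namely $\beta_h=e^{2t}\,\overline{\varphi_h}/(\Xi-\bar z_h)$, and reduces algebraic simplicity to the nondegeneracy of the pairing
\[
\langle\alpha_h,\beta_h\rangle_{\mathcal{H}^e}=\int_{\mathbb R}\frac{A(t)}{(\Xi(t)-z_h)^2}\,\varphi_h(t)^2\,dt\, .
\]
The bulk of the work is then a concrete limit computation showing that the imaginary part of this integral tends to a nonzero multiple of $A'(a)\varphi(a)^2/(d(a)\,\Xi'(a)^2)$ as $h\to 0$, via a rescaling around $t(h)$ and the Plemelj-type analysis already present in Section \ref{s:5-7-part-II}. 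This is precisely the extra input from \cite{Vishik3}; it is elementary but specific, and it is what replaces the abstract ``Lyapunov--Schmidt multiplicity correspondence'' you invoke.
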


Vishik's suggested proof of Theorem \ref{thm:Vishikversion} builds upon Proposition \ref{p:3+4} and the following improved versions of Proposition \ref{p:5-7} and Proposition \ref{p:almost-final}.

\begin{proposition}\label{prop:Vishicimproved1}\label{PROP:VISHICIMPROVED1}
Assume $- \lambda_a < -1$ and let $m_a=\sqrt{\lambda_a}$. Then there
exist $\varepsilon >0$ and $\delta>0$ with the following property. 
For every $h\in ]0, \delta[$, $\mathscr{U}_{m_a-h} \cap B_\varepsilon (\Xi (a)) = \{z_{m_a-h}\}$, where $(m_a-h) z_{m_a-h}$ is an eigenvalue of $\mathcal{L}_{m_a-h}$ with algebraic multiplicity $1$. 
\end{proposition}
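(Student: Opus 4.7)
The plan is to sharpen the Lyapunov--Schmidt analysis of Section~\ref{s:5-7-part-II}. The proof of Proposition~\ref{p:5-7} already produces, on the cone $U_{\delta,\mu}$, a holomorphic function $H(h,\cdot)$ whose level set at $1$ coincides with $\mathscr{U}_{m_a-h}\cap U_{\delta,\mu}$, and the Taylor expansion \eqref{e:expansion} gives $\partial_z H(h,z_{m_a-h})=c(a)+o(1)\neq 0$ since $\operatorname{Im} c(a)>0$; in particular $z_{m_a-h}$ is a \emph{simple} zero of $H(h,\cdot)-1$. Both claims of the proposition will be extracted from this simplicity, combined with a compactness argument at $h=0$ that rules out additional eigenvalues.

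To translate simplicity of the zero into algebraic multiplicity $1$ for the eigenvalue $(m_a-h)z_{m_a-h}$ of $\mathcal{L}_{m_a-h}$, I would invoke the standard Lyapunov--Schmidt principle: the one-dimensional limiting kernel $\operatorname{span}\{\psi_0\}$ at $(m_a,\Xi(a))$, together with the invertibility of the complementary operator $T$ (Lemma~\ref{l:invert-T}), implies that the algebraic multiplicity of the perturbed eigenvalue equals the order of vanishing of the reduced bifurcation function $H(h,\cdot)-1$. Equivalently, using the factorization implicit in \eqref{e:Lagrange-3}, the Riesz projector of $\mathcal{L}_{m_a-h}$ at $(m_a-h)z_{m_a-h}$ can be computed by a contour integral whose rank is identified with the multiplicity of the bifurcation zero, namely $1$.

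For uniqueness in the entire ball $B_\varepsilon(\Xi(a))$---rather than only in the small disk $D_h$ furnished by Rouch\'e---I would argue by contradiction. Assume $h_j\downarrow 0$ and $z_j'\in\mathscr{U}_{m_a-h_j}\cap B_\varepsilon(\Xi(a))$ with $z_j'\neq z_{m_a-h_j}$. The uniform exponential decay estimates of Corollary~\ref{c:decay} and Lemma~\ref{l:ODE2} provide $L^2$-precompactness of the associated normalized eigenfunctions $\psi_j$, and any cluster point $(m_a,z^*,\psi^*)$ satisfies $\|\psi^*\|_2=1$. The case $\operatorname{Im} z^*>0$ is excluded for $\varepsilon$ small, since by the expansion \eqref{e:expansion} at $h=0$ the function $H(0,\cdot)-1$ has its unique zero at $\Xi(a)$, whence $\mathscr{U}_{m_a}\cap B_\varepsilon(\Xi(a))=\emptyset$. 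By Proposition~\ref{p:3+4} it follows that $z^*=\Xi(a)$ (the value $\Xi(b)$ being excluded by taking $\varepsilon$ small), and part (A) of the proof of Proposition~\ref{p:3+4} forces $\psi^*$ to be a nonzero multiple of $\psi_0$. Rescaling so that $\langle\psi_j,\psi_0\rangle=1$ places $(z_j',\psi_j)$ in the domain of the Lyapunov--Schmidt reduction of Lemma~\ref{l:solve-for-psi}, whose uniqueness forces $z_j'=z_{m_a-h_j}$, a contradiction.

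The main obstacle is the rigorous identification of algebraic multiplicities in the second paragraph, which in the non-self-adjoint infinite-dimensional setting requires care in tracking the correspondence through the Riesz-projector formalism. A cleaner alternative I would likely implement is to realize $\mathscr{U}_{m_a-h}\cap B_\varepsilon(\Xi(a))$, via the analytic Fredholm theorem in $z$, as the zero set \emph{with multiplicities} of a single scalar holomorphic function---for instance, $z\mapsto\det(I-P_N\circ(\mathcal{S}_m-mz)^{-1}\mathscr{K}_m)$, using the finite-rank approximations of Proposition~\ref{p:abstract}(ii). A contour integral on $\partial B_\varepsilon(\Xi(a))$ then counts eigenvalues with algebraic multiplicity; continuity in $h$ together with the local analysis furnished by the expansion~\eqref{e:expansion} forces this count to be $1$ for all sufficiently small $h>0$, delivering both conclusions of the proposition simultaneously. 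A secondary technical point, which I would handle by a routine argument using Lemma~\ref{l:Rmz-small}, is to extend the Lyapunov--Schmidt reduction beyond the cone $U_{\delta,\mu}$ to capture all eigenvalues in $B_\varepsilon(\Xi(a))$, exploiting that by Proposition~\ref{p:3+4} no neutral limiting mode other than $\Xi(a)$ itself lies in this ball.
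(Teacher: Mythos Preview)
Your approach is genuinely different from the paper's. The paper does \emph{not} invoke the Gohberg--Sigal correspondence between the order of the zero of $H(h,\cdot)-1$ and the algebraic multiplicity of $(m_a-h)z_{m_a-h}$. Instead it argues directly via duality: it constructs an explicit eigenfunction $\beta_h$ of the adjoint $\mathcal{L}_{m_a-h}^\star$ at $\bar z_{m_a-h}$ and proves (Lemma~\ref{l:aggiunto}) that the pairing
\[
\langle \alpha_h,\beta_h\rangle_{\mathcal{H}^e}=\int_{\mathbb R}\frac{A(t)}{(\Xi(t)-z_h)^2}\,\varphi_h(t)^2\,dt
\]
has nonzero imaginary part in the limit $h\to 0$, via a delicate Plemelj-type computation (distinct from, though analogous to, the one in Lemma~\ref{l:will-apply-Rouche}). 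This immediately rules out a Jordan block: were $\alpha_h$ in the range of $L_h=(m_a-h)^{-1}\mathcal{L}_{m_a-h}-z_h$, one would have $\langle\alpha_h,\beta_h\rangle=\langle L_h\eta_h,\beta_h\rangle=\langle\eta_h,L_h^\star\beta_h\rangle=0$.

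Your route would recycle the non-vanishing $\partial_z H(h,z_h)=c(a)+o(1)$ already established, which is attractive; but the step you flag as ``the main obstacle'' is genuinely the crux and is not merely formal. The reduction in Section~\ref{s:5-7-part-II} is performed on the \emph{stream function} $\psi\in L^2(dt)$, whereas algebraic multiplicity concerns $\mathcal{L}_m$ acting on the \emph{vorticity} $\gamma\in\mathcal{H}$; you would need to check that Jordan chains for $\mathcal{L}_m$ correspond to Keldysh chains for the Rayleigh pencil $\mathcal{A}(m,z)=-\partial_t^2+m^2+\tfrac{A}{\Xi-z}$ (they do, and the Fredholm obstruction to the rank-$2$ chain is precisely the integral above), and then that the Lyapunov--Schmidt reduction preserves the null multiplicity. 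This is standard Gohberg--Sigal, but it is work you do not supply. Finally, the contour-integral alternative in your last paragraph runs into the difficulty that $\partial B_\varepsilon(\Xi(a))$ crosses the essential spectrum of $m^{-1}\mathcal{L}_m$ on the real axis, so continuity of the count at $h=0$ is not available; one is forced back to a local argument near $z_h$.
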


In \cite{Vishik2} Vishik only gives the argument that $\mathscr{U}_{m_a-h} \cap B_\varepsilon (\Xi (a))$ contains a single element $z_{m_a-h}$ and the corresponding eigenspace of $(m_a-h)^{-1} \mathcal{L}_{m_a-h}$ has dimension $1$ \index{geometric multiplicity@geometric multiplicity} (i.e. its {\em geometric} multiplicity is $1$, cf. Remark \ref{r:b-also-2}). However it is essential to have the {\em algebraic} multiplicity equal to $1$ in order to complete his suggested argument. After we pointed out to him the gap in his paper, he suggested in \cite{Vishik3} the proof of Proposition \ref{prop:Vishicimproved1} reported below. Before coming to it, we point out that 
a spectral perturbation argument as in the proof of Lemma \ref{l:almost-final-2} (which we outline anyway below) 
easily implies the following.

\begin{proposition}\label{prop:Vishicimproved2}
Assume $- \lambda_a<-1$ and let 
\begin{align*}
m_a &:= \sqrt{\lambda_a}\, ,\\
m_b &:= \max \{1, \sqrt{\lambda_b}\}\, .
\end{align*}
Then $\mathscr{U}_m$ consists of a single element $z_m$ for every $m\in ]m_b, m_a[$ and moreover the  algebraic multiplicity of $z_m$ as an eigenvalue of $m^{-1} \mathcal{L}_m$ is $1$.
\end{proposition}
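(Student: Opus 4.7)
The plan is to combine the constancy of the total algebraic multiplicity on connected components of $]1, \infty[\,\setminus \{m_a, m_b\}$, which follows from the same resolvent--continuity argument as in Lemma \ref{l:almost-final-2}, with the local count supplied by Proposition \ref{prop:Vishicimproved1}. Write $N(m)$ for the sum of the algebraic multiplicities of the elements of $\mathscr{U}_m$, viewed as eigenvalues of $m^{-1}\mathcal{L}_m$. First I would note that on any compact subinterval $[c,d] \subset\, ]m_b, m_a[$, Proposition \ref{p:all-m} together with the characterization of neutral limiting modes in Proposition \ref{p:3+4} yield $R, \delta > 0$ such that $\mathscr{U}_m \subset \{|z| \leq R,\ \textrm{Im}\, z \geq \delta\}$ uniformly for $m \in [c, d]$; the Riesz projector around the boundary of this rectangle then varies continuously in operator norm (as in Step 2 of the proof of Lemma \ref{l:almost-final-2}), so its rank $N(m)$ is locally constant on $]m_b, m_a[$ and hence constant, in agreement with Remark \ref{rmk:algebraic dim const}. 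I will denote this common value by $N$.

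Evaluating at $m = m_a - h$ for $h > 0$ small, Proposition \ref{prop:Vishicimproved1} provides an element $z_{m_a - h} \in \mathscr{U}_{m_a - h} \cap B_\varepsilon(\Xi(a))$ of algebraic multiplicity $1$, yielding $N \geq 1$. The main work, and the principal obstacle, is to show $N \leq 1$; the subtlety is that the spectral perturbation at $m = m_a$ is degenerate because elements of $\mathscr{U}_m$ may approach the real axis at $\Xi(a)$ as $m \to m_a^-$. Suppose for contradiction that along a sequence $h_j \to 0^+$ there exists a second element $w_{h_j} \in \mathscr{U}_{m_a - h_j} \setminus \{z_{m_a - h_j}\}$; by Proposition \ref{prop:Vishicimproved1} the set $\mathscr{U}_{m_a - h} \cap B_\varepsilon(\Xi(a))$ contains only $z_{m_a - h}$, so $w_{h_j} \notin B_\varepsilon(\Xi(a))$. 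Extracting a subsequence, $w_{h_j} \to w_0$ with $|w_0| \leq R$, $\textrm{Im}\, w_0 \geq 0$, and $|w_0 - \Xi(a)| \geq \varepsilon$.

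If $\textrm{Im}\, w_0 = 0$, then $(m_a, w_0) \in \overline{\mathscr{P}}$ is a neutral limiting mode, and Proposition \ref{p:3+4} combined with Remark \ref{r:b-also} forces $w_0 \in \{\Xi(a), \Xi(b)\}$ with associated wavenumber $m_a$ or $m_b$ respectively; both options are ruled out (the first by $|w_0 - \Xi(a)| \geq \varepsilon$, the second by $m_a \neq m_b$). The harder case is $\textrm{Im}\, w_0 > 0$, which I plan to settle by a localized Riesz projector crossing $m = m_a$. The decisive extra input is that $\mathscr{U}_m = \emptyset$ for every $m > m_a$: Lemma \ref{l:almost-final-1} gives emptiness for $m$ large, and since $]m_a, \infty[$ is a single connected component of $]1, \infty[\,\setminus \{m_a, m_b\}$, Lemma \ref{l:almost-final-2} propagates this along the entire half-line. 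I would then choose a small closed disk $D \subset \{\textrm{Im}\, z > 0\}$ centered at $w_0$ whose boundary avoids $\textrm{spec}(m_a^{-1}\mathcal{L}_{m_a})$ (possible by discreteness of that spectrum in the open upper half plane), and exploit the norm continuity of
\begin{equation*}
Q_m := \frac{1}{2\pi i} \int_{\partial D} (w - m^{-1}\mathcal{L}_m)^{-1}\, dw
\end{equation*}
on a small interval $(m_a - \eta, m_a + \eta)$ to conclude that $\textrm{rank}(Q_m)$ is constant there. On the right, $\textrm{rank}(Q_m) = 0$ because $\mathscr{U}_m = \emptyset$; on the left, $w_{h_j} \in D$ for $j$ large forces $\textrm{rank}(Q_{m_a - h_j}) \geq 1$. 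This contradiction closes the proof, giving $N = 1$ throughout $]m_b, m_a[$.
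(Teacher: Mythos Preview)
Your argument is correct and follows essentially the same route as the paper's proof. Both invoke the constancy of the total algebraic multiplicity on $]m_b,m_a[$ (Remark \ref{rmk:algebraic dim const}), reduce to showing $\mathscr{U}_{m_a-h}\subset B_\varepsilon(\Xi(a))$ for small $h$, and argue by contradiction with a sequence $w_{h_j}\to w_0$ outside $B_\varepsilon(\Xi(a))$; the real-limit case is dispatched via Proposition \ref{p:3+4}, and the case $\textrm{Im}\,w_0>0$ via the emptiness of $\mathscr{U}_m$ for $m>m_a$. The only cosmetic difference is that the paper phrases the last step as ``pass to the limit in the eigenfunction equation as in Step~1 of Lemma \ref{l:almost-final-2} to get $w_0\in\mathscr{U}_{m_a}$, contradiction,'' whereas you run the Riesz projector $Q_m$ across $m=m_a$; these are two formulations of the same continuity argument.
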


Taking the previous proposition for granted, we just need a choice of $\Xi$ for which $\lambda_a >1$ and $]m_b, m_a[$ contains an integer, which is guaranteed by Lemma \ref{L:BOTTOM}, and we conclude Theorem \ref{thm:Vishikversion}.

\medskip

We now explain how to prove Proposition \ref{prop:Vishicimproved2}. 
From Proposition \ref{p:almost-final} and Lemma \ref{l:almost-final-1} we know that $\mathscr{U}_m \neq  \emptyset$, for every $m\in ]m_b, m_a[$, and $\mathscr{U}_m = \emptyset$ for $m \ge m_a$. Moreover, Remark \ref{rmk:algebraic dim const} implies that the sum of the algebraic multiplicities of $z\in \mathscr{U}_m$, as eigenvalues of $m^{-1} \mathcal{L}_m$, is constant for $m\in ]m_b, m_a[$. Hence, to conclude we just need to prove that the latter is $1$ for some $m\in ]m_b, m_a[$. 
To that aim we show that for any $\varepsilon>0$ there exists $\delta>0$ such that  $\mathscr{U}_{m_a-h} = \mathscr{U}_{m_a-h}\cap B_{\varepsilon}(\Xi(a))$ for any $h\in ]0,\delta[$. This is enough for our purposes since, together with Proposition \ref{prop:Vishicimproved1}, it gives $\mathscr{U}_{m_a-h}= \mathscr{U}_{m_a-h}\cap B_{\varepsilon}(\Xi(a))=\{ z_{m_a-h}\}$ where $z_{m_a-h}$ is an eigenvalue of $(m_a-h)^{-1} \mathcal{L}_{m_a - h}$ with algebraic multiplicity $1$.

Assume for contradiction the existence of a sequence $(m_j)_{j\in\mathbb N}$ in $]m_b,m_a[$ converging to $m_a$ such that there are $z_j\in \mathscr{U}_{m_j}$ with $|z_j - \Xi(a)|>\varepsilon$ for some $\varepsilon>0$. Up to extracting a subsequence, we may assume $z_j \to z$ for some $z\in \mathbb C$ with $|z-\Xi(a)|\ge \varepsilon$. Proposition \ref{p:3+4} implies that the imaginary part of $z$ is positive. Arguing as in the first step of proof of Proposition \ref{p:3+4} we can prove that $z\in \mathscr{U}_{m_a}$ and reach a contradiction.

\section{Proof of Proposition \ref{prop:Vishicimproved1}}

The proof of Proposition \ref{prop:Vishicimproved1} can be reduced to the following weaker version using Remark \ref{rmk:algebraic dim const} and the argument outlined in the previous paragraph.

\begin{proposition}\label{prop:Vishikimproved-weaker}
Assume $- \lambda_a < -1$ and let $m_a=\sqrt{\lambda_a}$. Let $h$ and $\varepsilon$ be sufficiently small so that Proposition \ref{p:3+4} and Remark \ref{r:b-also-2} apply, namely
$\mathscr{U}_{m_a-h} \cap B_\varepsilon (\Xi (a)) = \{z_{m_a-h}\}$, where $z_{m_a-h}$ is an eigenvalue of $(m_a-h)^{-1} \mathcal{L}_{m_a-h}$ with {\em geometric} multiplicity $1$. Then, if $h$ is chosen possibly smaller, the algebraic multiplicity of $z_{m_a-h}$ is also $1$.
\end{proposition}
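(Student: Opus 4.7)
The plan is to exploit the Lyapunov–Schmidt framework of Section~\ref{s:5-7-part-II}: the scalar holomorphic function $H$ of Lemma~\ref{l:will-apply-Rouche} is, for small $h$, precisely the bifurcation function for the eigenvalue problem near $(m_a, \Xi(a))$, and I will argue that the algebraic multiplicity of the eigenvalue $(m_a - h)\, z_{m_a - h}$ of $\mathcal{L}_{m_a - h}$ equals the order of $z_{m_a - h}$ as a zero of the holomorphic function $z \mapsto H(-h, z) - 1$. Assuming that identification, the conclusion is immediate: from the expansion in Lemma~\ref{l:will-apply-Rouche},
\[
H(-h, z) \;=\; 1 + 2 m_a h + c(a)\bigl(z - \Xi(a)\bigr) + o\bigl(|z - \Xi(a)| + |h|\bigr),
\]
so $\partial_z H(-h, z) = c(a) + o(1)$ as $(h, z) \to (0, \Xi(a))$. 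Since $\operatorname{Im} c(a) > 0$, in particular $c(a) \neq 0$, the zero $z_{m_a - h}$ of $z \mapsto H(-h, z) - 1$ is simple for every sufficiently small $h > 0$, which is exactly algebraic simplicity.

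The core step is therefore the identification between algebraic multiplicity and the order of vanishing of the bifurcation function. My plan is to establish it by a direct Riesz projector calculation. Fix $h > 0$ small and a small simple contour $\gamma \subset \{\operatorname{Im} w > 0\}$ enclosing $z_{m_a - h}$ and no other element of $\mathscr{U}_{m_a - h}$ (possible for $\varepsilon, h$ small, by the assumption of Proposition~\ref{prop:Vishikimproved-weaker}). The algebraic multiplicity of $z_{m_a - h}$ equals the rank of
\[
P_h \;:=\; \frac{1}{2\pi i}\oint_\gamma \bigl(w\, I - (m_a - h)^{-1}\mathcal{L}_{m_a - h}\bigr)^{-1}\, dw\, .
\]
Using the factorisation \eqref{e:Lagrange-3} and the invertibility of $T$ from Lemma~\ref{l:invert-T}, together with the Neumann series representation of Remark~\ref{r:Neumann-series}, the resolvent can be written in a form which isolates a $w$-regular operator-valued factor together with a scalar factor proportional to $1 - H(-h, w)$. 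The contour integral of the regular part vanishes, and what remains is the order of the zero of $H(-h, \cdot) - 1$ at $w = z_{m_a - h}$ times a rank-one projector onto the line spanned by the eigenfunction attached to $\psi_0$. This identifies $\operatorname{rank} P_h$ with the order of the zero of $H(-h, \cdot) - 1$, as claimed.

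The main obstacle is precisely this Riesz projector computation: one has to track how the rank-one modification $P_0$ built into \eqref{e:Lagrange} and the inversion of $T$ combine with the resolvent of $(m_a - h)^{-1}\mathcal{L}_{m_a - h}$, and verify that the Lyapunov–Schmidt reduction does not introduce any spurious multiplicity. This is a version of the Weinstein–Aronszajn/Keldysh principle for holomorphic Fredholm families of index zero, and in the present setup it can be checked by hand because the reduction is to a one-dimensional subspace (the line spanned by $\psi_0$), consistent with the geometric multiplicity being one as guaranteed by Remark~\ref{r:b-also-2}. Once the identification is in place, the calculation of $\partial_z H(-h, z_{m_a - h}) = c(a) + o(1) \neq 0$ above closes the argument and gives algebraic simplicity of $z_{m_a - h}$.
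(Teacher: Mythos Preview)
Your route is genuinely different from the paper's. The paper never passes through the Lyapunov--Schmidt function $H$; instead it constructs an explicit adjoint eigenfunction $\beta_h = e^{2t}\bar\varphi_h/(\Xi-\bar z_h)$ of $\mathcal{L}_{m_a-h}^\star$ and computes the pairing $\langle\alpha_h,\beta_h\rangle_{\mathcal{H}^e}=\int A\,\varphi_h^2/(\Xi-z_h)^2\,dt$, then shows by a delicate limiting analysis near the critical layer that the imaginary part of this integral tends to a nonzero constant as $h\to 0$. Non-vanishing of the pairing immediately rules out a Jordan block (if $\alpha_h=L_h\eta_h$ then $\langle\alpha_h,\beta_h\rangle=\langle\eta_h,L_h^\star\beta_h\rangle=0$). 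Interestingly, the paper's key quantity $\int A\varphi_h^2/(\Xi-z_h)^2$ is essentially $\partial_z H$ evaluated at the eigenvalue, so at bottom the two arguments target the same number; your approach would be more economical because it recycles the expansion of $H$ already obtained in Lemma~\ref{l:will-apply-Rouche}, while the paper recomputes the limit from scratch.

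That said, there is a real gap in your proposal, precisely at the step you flag as ``can be checked by hand''. The identification \emph{algebraic multiplicity of $z_h$ for $\mathcal{L}_{m_a-h}$} $=$ \emph{order of the zero of $H(-h,\cdot)-1$} is not a direct application of Weinstein--Aronszajn, because the Rayleigh problem $L(z)\varphi:=(-\partial_t^2+m^2+A/(\Xi-z))\varphi=0$ is a \emph{nonlinear} (in $z$) eigenvalue problem, while the multiplicity you care about is that of the linear operator $\mathcal{L}_m/m$ on $\mathcal{H}$. You need to verify that Jordan chains for $\mathcal{L}_m/m$ at $z_h$ correspond bijectively to Keldysh chains for the analytic family $L(z)$; concretely, $(\mathcal{L}_m/m-z_h)\alpha_j=\alpha_{j-1}$ with $\alpha_j=\sum_{i\le j}\frac{A}{(\Xi-z_h)^{j-i+1}}\varphi_i$ translates into $\sum_{i\le j}\frac{1}{i!}\partial_z^i L(z_h)\varphi_{j-i}=0$, and one must check this equivalence at every level and in the right function spaces. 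Only then can you invoke Gohberg--Sigal to match the null multiplicity of $L(z)$ with the zero order of the reduced scalar function. A secondary point: deducing $\partial_z H(-h,z_h)=c(a)+o(1)$ from the expansion $H=1-2m_ah+c(a)(z-z_0)+o(|z-z_0|+|h|)$ is not immediate, since the remainder is $o(\cdot)$ jointly in $(h,z)$; you need a Cauchy estimate on the disk $D_h$ of radius $\sim|h|$ from Section~\ref{s:5-7-part-II}, which gives $\partial_z(\text{remainder})=O(o(|h|)/|h|)=o(1)$.
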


We now come to the proof of the latter, which is the heart of the matter. First of all, we introduce a suitable transformation of the space $\mathcal{H}$ (which we recall is the domain of the operator $\mathcal{L}_m$, defined in \eqref{e:def-H}). We introduce the Hilbert space
\[
\mathcal{H}^e :=\left\{ f: \mathbb R \to \mathbb C\, :\,  \int |f (t)|^2 e^{-2t}\, dt < \infty\right\} 
\]
and the isometry $T: \mathcal{H} \to \mathcal{H}^e$ given by
\[
\gamma (r) \mapsto e^{2t} \gamma (e^t)\, .
\]
Rather than considering the operator $\mathcal{L}_m$ on $\mathcal{H}$, it turns out to be more convenient to consider the operator $T \circ \mathcal{L}_m \circ T^{-1}$ on $\mathcal{H}^e$. Since the spectra of the two operators coincide, with a slight abuse of notation we will keep writing $\mathcal{L}_m$ in place of $T \circ \mathcal{L}_m \circ T^{-1}$, and we will keep $\mathscr{U}_m$ to denote the point spectrum of ${m^{-1}}T \circ \mathcal{L}_m \circ T^{-1}$ in the upper half plane. 

Simple computations show that the operator $\mathcal{L}_m$ is given, on $\mathcal{H}^e$ by 
\[
\mathcal{L}_m (\alpha) = m \Xi \alpha -  m A \varphi
\]
where $\varphi$ is the unique $\mathcal{H}^e$ solution of 
\[
\varphi'' - m^2 \varphi = \alpha\, .
\]

We can now come to the main idea behind the simplicity of $z_{m_a-h}$, which is borrowed from \cite{Vishik3}. A prominent role is played by the adjoint of $\mathcal{L}_m$ (considered as a bounded linear operator from $\mathcal{H}^e$ into itself): for the latter we will use the notation $\mathcal{L}_m^\star$.

\begin{lemma}\label{l:aggiunto}
Suppose that $h$ and $\varepsilon$ are sufficiently small such that \[
\{z_{m_a-h}\} =\mathscr{U}_{m_a-h}\cap B_\varepsilon (\Xi (a))
\]
and $z_{m_a-h}$ has geometric multiplicity $1$ in $\textrm{spec}\, ((m_a-h)^{-1} \mathcal{L}_{m_a-h}, \mathcal{H}^e)$. Let $\alpha_h \in \mathcal{H}^e\setminus \{0\}$ be such that $(m_a-h)^{-1}\mathcal{L}_{m_a-h}(\alpha_h) - z_{m_a - h}\alpha_h=0$. If $h$ is small enough, then there is $\beta_h\in \mathcal{H}^e$ such that $(m_a-h)^{-1}\mathcal{L}_{m_a-h}^\star(\beta_h) - \bar z_{m_a-h}\beta_h =0$ and
\begin{equation}\label{e:dual-pairing}
\langle \alpha_h, \beta_h \rangle_{\mathcal{H}^e} = \int \alpha_h (t) \bar \beta_h (t)\, e^{-2t}dt \neq 0\, .
\end{equation}
\end{lemma}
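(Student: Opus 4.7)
The plan is to construct $\beta_h$ via a Lyapunov–Schmidt reduction for the adjoint eigenvalue equation, mirroring the construction of $\alpha_h$ in Section \ref{s:5-7-part-II}, and then to evaluate the pairing $\langle \alpha_h, \beta_h\rangle_{\mathcal{H}^e}$ at leading order as $h \downarrow 0$, showing that the limit is essentially a non-zero multiple of the coefficient $c(a)$ that appeared in Lemma \ref{l:will-apply-Rouche}. The existence of $\beta_h$ itself is easy: since $z_{m_a-h}$ belongs to the discrete spectrum of $\mathcal{L}_{m_a-h}$ (by Proposition \ref{p:all-m}(iii)), its complex conjugate $\bar z_{m_a-h}$ lies in the discrete spectrum of $\mathcal{L}_{m_a-h}^\star$, and the only task is to select an eigenfunction whose pairing with $\alpha_h$ is non-zero.

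First, I would compute $\mathcal{L}_m^\star$ explicitly on $\mathcal{H}^e$. Because the inner product carries the weight $e^{-2t}\,dt$ and $\mathcal{L}_m$ splits as a multiplication by $m\Xi$ plus a compact piece coming from inverting $\partial_t^2 - m^2$ against $-mA$, a direct duality calculation presents $\mathcal{L}_m^\star$ in a structurally similar form, whose eigenvalue equation at $\bar z$ reduces, after the usual change of unknown, to a second-order ODE of Rayleigh type with $z$ replaced by $\bar z$ and coefficients transposed. Then, mimicking the setup of Section \ref{s:5-7-part-II}, I would introduce an auxiliary stream function $\tilde\psi_h$ for $\beta_h$, fix the normalization by projecting onto a limiting neutral adjoint mode $\tilde\psi_0$, and run the Neumann series argument of Lemma \ref{l:solve-for-psi} in the cone $\operatorname{Im}\,z < -\mu|\operatorname{Re}(z-z_0)|$ (since $\bar z_{m_a-h}$ now lies in the lower half-plane). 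Since the limiting operator at $z_0=\Xi(a)$ is, on the relevant invariant subspace, the self-adjoint operator $L_a$ from \eqref{e:def-L_a}, the mode $\tilde\psi_0$ is proportional to the real neutral mode $\psi_0$ of Section \ref{s:5-7-part-II}, so that $\beta_h \to \beta_0$ in $C^\sigma$ and $\beta_0$ is proportional to $\alpha_0$.

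The hard step is the evaluation of the limiting pairing. The naive substitution $\langle \alpha_0,\beta_0\rangle_{\mathcal{H}^e}$ would be a real integral of $|\alpha_0|^2$ against a smooth weight, but this piece is balanced by an equal and opposite real contribution coming from the first Neumann correction, and the genuinely non-trivial part of the pairing is produced by a Plemelj residue at $t=a$ as $z_{m_a-h}\to\Xi(a)$ from the upper half-plane (and $\bar z_{m_a-h}\to\Xi(a)$ from below). Carrying out this Plemelj computation in the same spirit as Step~2 of the proof of Lemma \ref{l:will-apply-Rouche} — in particular reusing the formula~\eqref{eq:limitexists} — will yield an expansion of the form
\begin{equation*}
\langle \alpha_h,\beta_h\rangle_{\mathcal{H}^e} = \kappa\, c(a) + o(1) \qquad \text{as } h \downarrow 0
\end{equation*}
for some explicit non-zero normalization constant $\kappa$. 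Because we already know $\operatorname{Im}\, c(a) > 0$, the pairing is bounded away from zero for all sufficiently small $h>0$. The main technical obstacle will be keeping careful track of the one-sided Plemelj limits on the two sides of the pairing (the upper-half-plane limit driving $\alpha_h$ and the lower-half-plane limit driving $\beta_h$), ensuring that the resulting imaginary contributions add rather than cancel; the algebraic bookkeeping is essentially that of Lemma \ref{l:will-apply-Rouche}, but applied symmetrically to the primal and dual Neumann expansions.
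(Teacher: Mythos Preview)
Your approach diverges from the paper's at the key step: you propose to construct $\beta_h$ by running a second Lyapunov--Schmidt reduction for the adjoint problem, whereas the paper observes that the adjoint eigenvector is given by an \emph{explicit closed-form formula} in terms of the primal stream function. Namely, if $\varphi_h$ is the unique $\mathcal{H}^e$ solution of $\varphi_h'' - m_h^2 \varphi_h = \alpha_h$, then
\[
\beta_h := e^{2t}\,\frac{\bar\varphi_h}{\Xi - \bar z_h}
\]
is checked directly to satisfy $m_h^{-1}\mathcal{L}_{m_h}^\star(\beta_h) = \bar z_h\,\beta_h$. This collapses the pairing to the single explicit integral
\[
\langle \alpha_h, \beta_h \rangle_{\mathcal{H}^e} = \int_{\mathbb{R}} \frac{A(t)}{(\Xi(t) - z_h)^2}\,\varphi_h(t)^2\, dt,
\]
and the remainder of the proof is a Plemelj-type analysis of this integral alone: after showing $\varphi_h \to \varphi$ in $C^{1,\alpha}$ with $\varphi(a)\neq 0$, a rescaling $t = t(h) + sh$ shows that the imaginary part converges to an explicit nonzero constant built from $A'(a)$, $\Xi'(a)$, $\varphi(a)$ and $d(a) = \textrm{Im}\,c(a)$. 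No dual Neumann expansion is ever needed.

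Your sketch also has a substantive gap. You assert that $\beta_h \to \beta_0$ in $C^\sigma$ with $\beta_0$ proportional to $\alpha_0$, but in the explicit parametrization above the adjoint eigenfunction blows up at $t=a$ as $h \to 0$ (because $\varphi_h(a) \to \varphi(a) \neq 0$ while $\Xi - \bar z_h \to \Xi - \Xi(a)$ vanishes there). Since the adjoint eigenspace is one-dimensional, any $\beta_h$ you produce by a Neumann series is a scalar multiple of this one, so under any fixed normalization it either blows up or tends to zero; there is no nontrivial limiting adjoint vorticity in $\mathcal{H}^e$, and the naive evaluation of the pairing at $h=0$ cannot work. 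Your claimed expansion $\langle \alpha_h,\beta_h\rangle_{\mathcal{H}^e} = \kappa\,c(a) + o(1)$ is also not right: the pairing carries $(\Xi - z_h)^{-2}$ rather than $(\Xi - z_h)^{-1}(\Xi - \Xi(a))^{-1}$, and its limiting imaginary part is a different Plemelj integral from the one defining $c(a)$ (though it uses the same ingredients $A'(a)$, $\Xi'(a)$, $\varphi(a)$, $d(a)$). The concern you flag --- that the two one-sided contributions might cancel --- is exactly the point, and your proposal does not resolve it; the explicit formula for $\beta_h$ is what makes the sign transparent.
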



Let us show how the latter implies Proposition \ref{prop:Vishikimproved-weaker}. Assume $z_{m_a-h}$ were an element of $\textrm{spec}\, ((m_a-h)^{-1}\mathcal{L}_{m_a-h}, \mathcal{H}^e)\cap B_\varepsilon (\Xi (a))$ with geometric multiplicity $1$ and algebraic multiplicity larger than $1$: our goal is to show that $h$ cannot be too small. 
The properties just listed mean that the following bounded operator on $\mathcal{H}^e$,
\[
L_h := (m_a-h)^{-1}\mathcal{L}_{m_a-h} - z_{m_a-h} \, ,
\]
has a 1-dimensional kernel, $0$ is in its point spectrum, and $0$ has algebraic multiplicity strictly larger than $1$. These properties imply that any element $\alpha_h$ in the kernel of $L_h$ (i.e. any eigenfunction of $(m_a-h)^{-1}\mathcal{L}_{m_a-h}$ with eigenvalue $z_{m_a-h}$) is in the image of $L_h$. Fix one such element $\alpha_h$ and let $\eta_h$ be such that $L_h (\eta_h) = \alpha_h$. If $h$ is small enough, we can fix $\beta_h$ as in Lemma \ref{l:aggiunto}, and observe that it is in the kernel of the adjoint operator $L^\star_h$. We then must have
\[
0 \neq \int \alpha_h \bar\beta_h \, e^{-2t}dt= \int L_h (\eta_h) \bar\beta_h \, e^{-2t}dt
= \int \eta_h \overline{L_h^\star (\beta_h)} \, e^{-2t}dt = 0\, ,
\]
which is not possible.

\begin{proof}[Proof of Lemma \ref{l:aggiunto}]

We begin by proving the following claim:
\begin{itemize}
\item[(Cl)] For any $z\in \mathcal{U}_m$, with $m>1$, such that 
$m^{-1}\mathcal{L}_m(\alpha_z) - z \alpha_z = 0$,
there exists $\beta_z\in \mathcal{H}^e$ such that
\begin{equation}
     m^{-1}\mathcal{L}_m^\star(\beta_z) - \bar z \beta_z = 0 \, ,
\end{equation}
and
\begin{equation}\label{eq:keyfunction}
    \left\langle \alpha_z, \beta_z \right\rangle_{\mathcal{H}^e}
    =
    \int_{\mathbb{R}} \frac{A(t)}{(\Xi(t) - z)^2}\varphi_z(t)^2\, d t\, ,
\end{equation}
where $\varphi_z$ is the unique solution in $\mathcal{H}^e$ of $\varphi_z'' - m^2\varphi_z = \alpha_z$.
\end{itemize}
To that aim we first observe that the adjoint of $\mathcal{L}_m$ in $\mathcal{H}^e$ is given by
\begin{equation}
    \mathcal{L}_m^\star (\alpha) =  m( \Xi \alpha + e^{2t} \mathcal{K}_m(A\alpha e^{-2t})) \, ,
\end{equation}
where $\mathcal{K}_m$ is the inverse of $-\frac{d^2}{dt^2} + m^2$ as a closed unbounded self-adjoint operator in $L^2(\mathbb{R})$. 
Notice that $\mathcal{L}^\star$ is well defined because $e^{-t}\alpha \in L^2(\mathbb{R})$ and $A\sim e^{2t}$ as $t\to -\infty$.

We now observe that, if $z\in \mathcal{U}_m$, $m^{-1}\mathcal{L}_m (\alpha_z) = z \alpha_z$, and $\beta_z$ is defined by
\[
\beta_z:= e^{2t}\frac{\bar \varphi_z}{\Xi - \bar z}
    = -e^{2t}\frac{\mathcal{K}_m(\bar \alpha_z)}{\Xi - \bar z}\, ,
\]
then 
\begin{equation}\label{eq:adj}
     m^{-1}\mathcal{L}_m^\star (\beta_z) = \bar z \beta_z \, .
\end{equation}
Notice that $\beta_z\in W^{2,2}_\textrm{loc}\cap \mathcal{H}^e$ decays exponentially fast at $\infty$ thanks to the bound $| \varphi_z(t)|\le C e^{-m|t|}$, for every $t\in \mathbb{R}$, proven in Lemma \ref{c:decay}.
Let us now verify \eqref{eq:adj}: 
We first observe that
\begin{equation}
    \alpha_z = \frac{A\varphi_z}{\Xi - z}  \, ,
\end{equation}
hence
\begin{align*}
    m^{-1}\mathcal{L}_m^\star (\beta_z) & = \Xi \beta_z + e^{2t}\mathcal{K}_m \left(\frac{A \bar \varphi_z}{\Xi - \bar z}\right) =
    \Xi \beta_z + e^{2t}\mathcal{K}_m(\bar \alpha_z)
    \\& = \Xi \beta_z -(\Xi - \bar z) \beta_z = \bar z \beta_z \, .
\end{align*}
It is now immediate to conclude \eqref{eq:keyfunction}.

\medskip

In order to simplify our notation we use $\mathcal{L}_h$, $z_h$ and $m_h$ in place of $\mathcal{L}_{m_a-h}$, $z_{a-h}$ and $m_a-h$.
Given $\alpha_h\in \mathcal{H}^e$ as in the statement of the Lemma we denote by $\varphi_h$ the unique $L^2$ solution of $\varphi_h'' - m_h^2 \varphi_h = \alpha_h$. We now can apply (Cl) above to find $\beta_h\in \mathcal{H}^e$ which solves $m_h^{-1}\mathcal{L}_h^\star (\beta_h) = \bar z_h \beta_h$ and such that
\begin{equation}\label{eq:keyfunction1}
    \left\langle \alpha_h, \beta_h \right\rangle_{\mathcal{H}^e}
    =
    \int_{\mathbb{R}} \frac{A(t)}{(\Xi(t) - z_h)^2}\varphi_h(t)^2\, d t\, .
\end{equation}
To conclude the proof it suffices to show that, after appropriately normalizing the functions $\alpha_h$ (i.e. after multiplying them by an appropriate constant factor, which might depend on $h$) we have 
\begin{equation}\label{e:limit-nonzero}
    \lim_{h\to 0} \left\langle \alpha_h, \beta_h \right\rangle_{\mathcal{H}^e}
    = c \neq 0 \, .
\end{equation}
Note that for the latter conclusion, which we will prove in the next two steps, we will use the assumption that $\alpha_h\neq 0$.

\bigskip

\textbf{ Step 1:} We show that, up to multiplication of $\alpha_h$ by a suitable constant factor (which might vary with $h$), $\varphi_h \to \varphi$ in $W^{1,2}$ and in $C^{1,\alpha}$, as $h\to 0$, where $\varphi \in W^{2,\infty}$ is a nontrivial solution to
\begin{equation}
- \frac{d^2 \varphi}{dt^2} + m_a^2 \varphi + \frac{A}{\Xi - \Xi (a)} \varphi = 0\, .
\end{equation}
By Remark \ref{r:phi(a)-nonzero}, the nontriviality of $\varphi$ implies $\varphi (a) \neq 0$, and hence, up to multiplication by another constant factor, we will assume, without loss of generality, that $\varphi (a)=1$.

Recall that $\varphi_h$ solves the equation
\begin{equation}\label{e:ODE-again-100}
- \frac{d^2 \varphi_h}{dt^2} + m_h^2 \varphi_h + \frac{A}{\Xi - z_h} \varphi_h
= 0 \, .
\end{equation}
For the moment, let us normalize the functions so that 
\begin{equation}\label{e:normalization-2}
\int (|\varphi_h'|^2 + m_h^2 |\varphi_h|^2) = 1\, ,
\end{equation}
as in \eqref{e:L2-normalization}. We then can argue as for the bounds \eqref{e:exp-bound-1} and \eqref{e:exp-bound-2} to derive the existence of constants $C$ and $\beta>2$ (independent of $h$) such that
\begin{equation}\label{e:exp-bound-3}
\left|\varphi_h (t)\right| \leq C e^{- \beta |t|} \quad \forall t\, .
\end{equation}
Recalling Section \ref{s:5-7-part-II}, we know that $z_h = \Xi (a) + c(a) h + o (h)$, where $c(a)$ is a complex number with positive imaginary part, which we denote by $d(a)$. Using the monotonicity of $\Xi$ we can write $z_h = \Xi (t (h)) + i (d(a) h + o (h))$ for some $t(h)$ which satisfies the bound $|t (h) -a|\leq C h$ for some positive constant $C$. In particular, using the mean value theorem and the fact that the derivative of $\Xi$ does not vanish on $[a-1, a+1]$, we get
\[
|\Xi (t) - z_h|^2\geq C^{-1} (|t-t(h)|^2 + |h|^2)\qquad \forall t\in [a-1,a+1]\, ,
\]
where $C$ is some positive constant. Next, using that $|t(h)-a|\leq C h$, we conclude the estimate
\[
|\Xi (t) - z_h|\geq C^{-1} |t-a| \qquad \forall t\in [a-1, a+1]\, ,
\]
with a constant $C$ independent of $h$. Since $a$ is a zero of $A$, we finally conclude that the functions
\[
\frac{A(t)}{\Xi (t) - z_h}
\]
are in fact uniformly bounded, independently of $h$. Using the latter estimate and \eqref{e:exp-bound-3} we thus infer that 
\begin{equation}\label{e:exp-bound-4}
|\varphi_h'' (t)|\leq C e^{-\beta |t|}\, .
\end{equation}
In particular, upon extraction of a subsequence, we can assume that the $\varphi_h$ converge to a function $\varphi$ strongly in $W^{1,2}$, weakly in $W^{2,\infty}$, and hence strongly in $C^{1, \alpha}$ for every $\alpha<1$. In particular, because of the normalization \eqref{e:normalization-2}, $\varphi$ is a nontrivial $W^{2, \infty}$ function, satisfying the same exponential decay as in \eqref{e:exp-bound-3} and \eqref{e:exp-bound-4}. Moreover, given the bound on the functions $\frac{A(t)}{\Xi (t) -z_h}$, $\varphi$ is in fact a solution of 
\begin{equation}\label{e:ODE-again-101}
- \frac{d^2 \varphi}{dt^2} + m_a^2 \varphi + \frac{A}{\Xi - \Xi (a)} \varphi = 0\, .
\end{equation}
Recalling Remark \ref{r:phi(a)-nonzero}, $\varphi (a)\neq 0$ and $\varphi$ is unique up to a constant factor. In particular, we must have 
\begin{equation}\label{e:comoda}
\liminf_{h\downarrow 0} |\varphi_h (a)|>0, 
\end{equation}
otherwise for a suitable subsequence we would have convergence to a nontrivial solution $\varphi$ for which $\varphi (a)=0$. Because of \eqref{e:comoda} we can use the different normalization $\varphi_h (a) = 1$, which in turn implies that $\varphi_h$ converges (without extracting subsequences) to the unique $W^{2,2}$ solution $\varphi$ of \eqref{e:ODE-again-101} which satisfies $\varphi (a)=1$.

\medskip

\textbf{ Step 2:} We prove that
\begin{equation}
    \lim_{h\to 0} \, \textrm{Im} \int_{\mathbb{R}} \frac{A(t)}{(\Xi(t) - z_h)^2}\varphi_h(t)^2\, d t
    =
    \frac{2A'(a)\varphi(a)^2}{d(a)\Xi'(a)^2}\int_{\mathbb{R}} \frac{s^2}{(1+s^2)^2} ds \, .
\end{equation}

Recalling that $z_h = \Xi(t(h)) + i (d(a)h + o(h))$, we write
 \begin{align*}
 	&\quad\; \textrm{Im}\, \left[\frac{A}{(\Xi-z_h)^2} \varphi_h^2\right]\\
 	 & = \textrm{Im}\,  \left[\frac{A((\Xi-\Xi(t(h)))+ i(d(a)h + o(h)))^2}{((\Xi-\Xi(t(h)))^2 + (d(a)h + o(h))^2)^2}(\textrm{Re}\, \varphi_h + i \textrm{Im}\, \varphi_h)^2\right]
	 \\& 
	 = \frac{2(d(a)h + o(h)) A(\Xi - \Xi(t(h)))}{((\Xi-\Xi(t(h)))^2 + (d(a)h + o(h))^2)^2}(\mathrm{Re}\, \varphi_h^2 - \textrm{Im}\, \varphi_h^2)
	\\ & \qquad + 
	 \frac{2A }{(\Xi-\Xi(t(h)))^2 + (d(a)h + o(h))^2}\textrm{Re}\, \varphi_h \textrm{Im}\, \varphi_h 
 	 \\  & \qquad -\frac{ 4(d(a)h + o(h))^2 A}{((\Xi-\Xi(t(h)))^2 + (d(a)h + o(h))^2)^2}\textrm{Re}\, \varphi_h \textrm{Im}\, \varphi_h
	 \\ & =: I_h + II_h + III_h \, .
\end{align*}
To ease notation we set
 \begin{equation}
 	f_h := \textrm{Re}\, \varphi_h^2 - \textrm{Im}\, \varphi_h^2 \, ,
 	\quad
 	g_h := \textrm{Re}\, \varphi_h\, \textrm{Im}\varphi_h \, ,
 \end{equation}
 and observe that $f_h \to \varphi^2$, $g_h \to 0$ as $h\to 0$, where the convergence is, in both cases, in the strong topologies of $L^2$ and of $C^\alpha$, for every $\alpha <1$.
We will show below that:
\begin{align}
\lim_{h\to 0} \int I_h &=  \frac{2A'(a)\varphi(a)^2}{d(a) \Xi'(a)^2}\int_{\mathbb{R}} \frac{s^2}{(1+s^2)^2} ds=: L (a)\label{e:limit-I}\, ,\\   
\lim_{h\to 0} \int II_h &= 0\label{e:limit-II}\, ,\\
\lim_{h\to 0} \int III_h &=0\label{e:limit-III}\, .
\end{align}
Considering that none of the numbers $\Xi' (a)$, $A'(a)$, $\varphi (a)$, and $d(a)$ vanish, $L(a)\neq 0$. This implies \eqref{e:limit-nonzero} and concludes the proof. We next study separately the three limits above. 

\medskip

\textbf{ Proof of \eqref{e:limit-I}.}
There exists $\delta>0$ and $r>0$ such that for any $h$ sufficiently small one has $|\Xi(t) - \Xi(t(h))|>\delta$ for all $t\in \mathbb{R}\setminus (a-r/2,a+r/2)$. This implies that
\begin{equation}
	\lim_{h \to 0} \int_{\mathbb{R}\setminus (t(h) - r, t(h) + r)} I_h = 0 \, ,
\end{equation}
hence, we are left with
\begin{equation}
	\lim_{h \to 0} 2h d(a)\int_{t(h) - r}^{t(h) + r} \frac{A(t)(\Xi(t)- \Xi(t(h))}{((\Xi(t) - \Xi(t(h)))^2 + (d(a)h + o(h))^2)^2} f_h(t)
	\, d t \, .
\end{equation}
We change variables according to $t= t(h) + sh$:
\begin{align*}
	 C(h)&:=2\int_{-\frac{r}{h}}^{\frac{r}{h}}  s \left(\frac{A(t(h) + sh)}{h} \frac{\Xi(t(h) + sh )- \Xi(t(h))}{sh}\right) \times 
	\\&
	\left( s^2\left(\frac{\Xi(t(h) + sh)- \Xi(t(h))}{sh}\right)^2  + (d(a) + o(h)/h)^2 \right)^{-2}
	 f_h(t(h) + sh)\, d s
	 \, .
\end{align*}
Notice that, for any $s\in \mathbb{R}$, we have
\begin{equation}
	\lim_{h \to 0}\frac{\Xi(t(h) + sh)- \Xi(t(h))}{sh} = \Xi'(a) \, .
\end{equation}
Moreover the monotonicity of $\Xi$ implies that
\begin{equation}
	1/C \le \left| \frac{\Xi(t(h) + sh)- \Xi(t(h))}{sh} \right| \le C
	\quad \text{for any $s\in (-r/h, r/h)$} \, .
\end{equation}
Notice that
\begin{equation}
	\frac{A(t(h) + sh)}{h}
	=
	\frac{A(t(h) + sh)-A(t(h))}{h} + \frac{A(t(h)) - A(a)}{h} \, ,
\end{equation}
hence, up to extracting a subsequence $h_i \to 0$, we have
\begin{equation}
	\frac{A(t(h_i) +  sh_i)}{h_i} \to A'(a)s + x
\end{equation}
for some $x\in \mathbb{R}$ (recall that $|t(h)-a|\leq C h$ and note that $x$ might depend on the subsequence).

Collecting all the estimates above, and using the dominated convergence theorem we deduce that, along the subsequence $h_i$, 
\begin{equation}
	\lim_{i\to\infty} C(h_i)= 2\int_{\mathbb{R}} \frac{s(A'(a)s + x)\Xi'(a)}{(s^2 \Xi'(a)^2 + d(a)^2)^2} \varphi(a)^2\,  ds
	=
	\frac{2A'(a)\varphi(a)^2}{d(a) \Xi'(a)^2}\int_{\mathbb{R}} \frac{s^2}{(1+s^2)^2} ds \, .
\end{equation}
Observe that the limit does not depend on $x$ and hence does not depend on the chosen subsequence.

\medskip

\textbf{ Proof of \eqref{e:limit-III}.} Arguing as we did for $I_h$ and using that $g_h\to 0$ in $C^{1/2}$, as $h\to 0$, we easily deduce that
\begin{equation}
\lim_{h\to 0} \int III_h = 0 \, .
\end{equation}

\medskip

\textbf{ Proof of \eqref{e:limit-II}.}
We need to show that
\begin{equation}
	\lim_{h \to 0} \int_{\mathbb{R}} \frac{A(t)}{(\Xi(t) - \Xi(t(h))^2 + (d(a)h + o(h))^2} g_h(t)\,  dt = 0 \, .
\end{equation}
Observe that $G_h := g_h/|\varphi_h|^2 = |\varphi_h|^{-2} \textrm{Re}\, \varphi_h \textrm{Im}\, \varphi_h $, and in particular, $|G_h|\leq \frac{1}{2}$. Moreover there exists $r>0$ such that $G_h \to 0$ in $(a-r, a+r)$ in the $C^\alpha$ topology for every $\alpha<1$ (here, we are using that $|\varphi_h(a)|^2 \to |\varphi(a)|^2\neq 0$ as $h\to 0$).
We write
\begin{align}
	\int_\mathbb{R} & \frac{A(t)|\varphi_h(t)|^2 }{(\Xi(t) - \Xi(t(h)))^2 + (d(a)h + o(h))^2} G_h(t)\, dt 
   \\&	=
	\int_\mathbb{R} \frac{A(t)|\varphi_h(t)|^2 }{(\Xi(t) - \Xi(t(h))^2 + (d(a)h + o(h))^2} (G_h(t) - G_h(t(h))) dt \, ,
\end{align}
where we took advantage of the identity
\begin{equation}
		\int_\mathbb{R} \frac{A(t)|\varphi_h(t)|^2 }{(\Xi(t) - \Xi(t(h))^2 + (d(a)h + o(h))^2}\, d t = 0 \, ,
\end{equation}
proven in \eqref{e:imaginary-trick}.

Arguing as we did for $I_h$ we can reduce the problem to show
\begin{align}
	\lim_{h \to 0} & \int_{t(h) - r}^{t(h) + r} \frac{A(t)|\varphi_h(t)|^2 }{(\Xi(t) - \Xi(t(h)))^2 + (d(a)h + o(h))^2} (G_h(t) - G_h(t(h)))\, dt = 0\, .
\end{align}
We split the integral to the sum of 
\begin{align*}
J_1 (h) &:= \int_{t(h)-r}^{t(h)+r} \frac{(A(t)- A(t (h))|\varphi_h(t)|^2 }{(\Xi(t) - \Xi(t(h)))^2 + (d(a)h + o(h))^2} (G_h(t) - G_h(t(h)))\, dt\\
J_2 (h) &:= A(t (h))  \int_{t(h)-r}^{t(h)+r} \frac{|\varphi_h(t)|^2}{(\Xi(t) - \Xi(t(h)))^2 + (d(a)h + o(h))^2} (G_h(t) - G_h(t(h)))\, dt\, .
\end{align*}
Next observe that, in the interval that interests us, the following inequalities hold provided $r$ and $h$ are sufficiently small:
\begin{align*}
|A (t) - A(t(h))|&\leq C |t- t(h)|\\
|A(t(h)| &= |A (t (h))-A(a)|\leq C|t(h)-a|\leq C h\\
|G_h (t) - G_h (t (h))|&\leq \|G_h\|_{C^{1/2} (a-r, a+r)} |t-t(h)|^{1/2}\\
|\Xi (t) - \Xi (t(h))| &\geq C^{-1} |t-t(h)|\\
(d(a)h + o (h))^2 &\geq C^{-1} h^2\, .
\end{align*}
Since $\|\varphi_h\|_{L^\infty} \leq C$, we can change variable in the integrals to $\sigma =t-t(h)$ and estimate them as follows:
\begin{align*}
|J_1 (h)|&\leq C \|G_h\|_{C^{1/2} (a-r,a+r)} \int_{-r/2}^{r/2} \sigma^{-1/2}\, d\sigma \leq C\|G_h\|_{C^{1/2}} r^{1/2}\, ,\\
|J_2 (h)| &\leq C h \int_{-\infty}^\infty \frac{\sigma^{1/2}}{\sigma^2 + C^{-1} h^2}\, d\sigma 
= C h^{1/2} \int \frac{\tau^{1/2}}{\tau^2 + C^{-1}} d\tau \leq C h^{1/2}\, .
\end{align*}
Clearly $J_2 (h)\to 0$, while $J_1 (h) \to 0$ because $\|G_h\|_{C^{1/2} (a-r,a+r)} \to 0$.
\end{proof}

\chapter{Proofs of technical statements}
\section{Proof of Remark \ref{r:bounded}}
More generally, we will show here that, for any $q_0\in[1,2[$ and $q_1\in]2,\infty]$, it is true that \begin{equation*}\lVert K_2*\omega\rVert_{L^\infty(\R^2;\R^2)}\le C(q_0, q_1) (\lVert \omega\rVert_{L^{q_0}(\R^2)}+\lVert \omega\rVert_{L^{q_1}(\R^2)})\end{equation*} for all $\omega\in L^{q_0}\cap L^{q_1}$.
Indeed, passing to polar coordinates, one sees that $K_2\vert_{B_1}\in L^{q_1^*}(B_1; \R^2)$ and $K_2\vert_{\R^2\setminus B_1}\in L^{q_0^*}(\R^2\setminus B_1;\R^2)$, where $q_0^*, q_1^*$ are given by $\frac1{q_i}+\frac1{q_i^*}=1$ for $i\in\{0,1\}$. Hölder's inequality implies that for any $x\in\R^2$, 
	\begin{equation*}
	\begin{split}
		\abs{(K_2*\omega)(x)} &= \abs{((K_2 \mathbf 1_{B_1})*\omega)(x)+((K_2 (1-\mathbf 1_{B_1}))*\omega)(x)} \\
		&\le\norm{K_2}_{L^{q_1^*}(B_1)}\norm{\omega}_{L^{q_1}(\R^2)} + \norm{K_2}_{L^{q_0^*}(\R^2\setminus B_1)}\norm{\omega}_{L^{q_0}(\R^2)} \\
		&\le C(q_0, q_1) (\lVert \omega\rVert_{L^{q_0}(\R^2)}+\lVert \omega\rVert_{L^{q_1}(\R^2)}).
	\end{split}
	\end{equation*}
Since $x$ is arbitrary, this achieves a proof of the claim above.

\section{Proof of Theorem \ref{thm:Yudo}}\label{a:Yudo}
\textbf{Existence.} The existence argument is a classical density argument. Take any sequence $(\omega_0^{(n)})_{n\in\mathbb N}$ of functions in $L^1\cap C^\infty_c$ that converges strongly in $L^1$ to $\omega_0$. Analogously, pick a sequence of smooth functions $(f_n)_{n\in\mathbb N}$ in $C^\infty_c (\R^2\times[0, T])$ converging in $L^1(\R^2\times[0,T])$ to $f$ and satisfying the bound $\|f_n (\cdot, t)\|_{L^\infty} \leq \|f (\cdot, t)\|_{L^\infty}$ for a.e. $t$. Then, let $\omega^{(n)}$ denote the solution of the corresponding Cauchy problem of the Euler equations in vorticity form. The existence of such solutions is a classical well-known fact, see for instance \cite[Theorem A]{McGrath}. Following Remark \ref{r:A-priori-estimates}, these solutions satisfy all the \`a priori estimates needed in Proposition \ref{p:convergence}. Therefore, following the proof of Proposition \ref{p:convergence}, one obtains, in the limit $n\to\infty$, a solution $\omega\in L^\infty([0,T]; L^1\cap L^\infty)$ of the given Cauchy problem. Furthermore, since the à priori estimates of Remark \ref{r:A-priori-estimates} are uniform in $n$, one gets $K_2*\omega\in L^\infty([0,T]; L^2)$.

\begin{remark}
    The proof of Proposition \ref{p:convergence} has a fixed force $f$ but a straightforward adaptation of the arguments handles the case above, namely with a sequence of forces $(f_n)_{n\in\mathbb N}$ that converges in $L^1(\R^2\times[0,T])$ to a given $f$. More precisely the only difference occurs in the term $I_4 (k)$ of \eqref{e:term-I4}, which anyway enjoys convergence to the same limit.  
\end{remark}

\textbf{Uniqueness.} The uniqueness proof needs two important facts. The first is a well-known ODE inequality, whose short proof is given, for the reader's convenience, at the end of the section. 

\begin{lemma}\label{l:ODE lemma}
    Let $T>0$ and let $E:[0,T]\to[0,\infty[$ be a differentiable function satisfying
    \begin{equation}\label{e:ODE inequality for E}
        \dot E(t)\le p M E(t)^{1-1/p} \quad \text{ and }\quad E(0)=0
    \end{equation}
    for some fixed $M>0$. Then $E(t)\le (Mt)^p$ for all $t\in[0,T]$.
\end{lemma}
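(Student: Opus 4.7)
The plan is to exploit the fact that $(Mt)^p$ is the maximal solution of the associated ODE $\dot y = pM y^{1-1/p}$ with $y(0)=0$, as can be checked directly: $\frac{d}{dt}(Mt)^p = pM^p t^{p-1} = pM \cdot ((Mt)^p)^{1-1/p}$. The natural idea is to set $F = E^{1/p}$, since then $\dot F = p^{-1} E^{1/p-1} \dot E \le M$ would give $F(t) \le Mt$ and hence the claim. However, this substitution is not legitimate on intervals where $E$ vanishes: the function $x \mapsto x^{1/p}$ is not differentiable at the origin, and moreover the ODE $\dot y = pM y^{1-1/p}$ admits both $y \equiv 0$ and $y = (Mt)^p$ as solutions, so some care is required to pin down the correct bound.

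The natural fix is a standard regularization. I would fix $\varepsilon > 0$ and set $E_\varepsilon(t) := E(t) + \varepsilon$, so that $E_\varepsilon$ stays strictly positive. Since the map $x \mapsto x^{1-1/p}$ is non-decreasing on $[0,\infty[$, the differential inequality is preserved:
\begin{equation*}
\dot E_\varepsilon(t) = \dot E(t) \le pM\, E(t)^{1-1/p} \le pM\, E_\varepsilon(t)^{1-1/p}.
\end{equation*}
Now $F_\varepsilon := E_\varepsilon^{1/p}$ is well-defined and differentiable on $[0,T]$, and a direct computation gives
\begin{equation*}
\dot F_\varepsilon(t) = \tfrac{1}{p} E_\varepsilon(t)^{1/p-1} \dot E_\varepsilon(t) \le M.
\end{equation*}

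Integrating from $0$ to $t$ yields $F_\varepsilon(t) \le F_\varepsilon(0) + Mt = \varepsilon^{1/p} + Mt$, which rearranges to
\begin{equation*}
E(t) \le (\varepsilon^{1/p} + Mt)^p - \varepsilon.
\end{equation*}
Letting $\varepsilon \downarrow 0$ gives the desired inequality $E(t) \le (Mt)^p$ for every $t \in [0,T]$. The only subtle point in the argument is the degeneracy at $E=0$, which the $\varepsilon$-perturbation handles in one stroke by both making the substitution $E^{1/p}$ smooth and selecting, in the limit, the maximal solution of the comparison ODE.
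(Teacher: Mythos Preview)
Your proof is correct. The paper takes a slightly different route to handle the degeneracy at $E=0$: rather than regularizing, it fixes $t_0$ with $E(t_0)>0$, sets $a:=\sup\{t\le t_0: E(t)=0\}$, and integrates the inequality $\dot E\,E^{1/p-1}\le pM$ directly on $]a,t_0]$, where $E$ is strictly positive; this yields $pE^{1/p}(t_0)\le pM(t_0-a)\le pMt_0$. Both arguments are equally elementary and hinge on the same substitution $F=E^{1/p}$; the paper avoids a limiting step by localizing away from the zero set, while your $\varepsilon$-perturbation handles all times at once and is arguably more robust (it does not rely on the structure of the zero set of $E$).
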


The second is the classical Calderón-Zygmund $L^p$ estimate, where we need the sharp $p$-dependence of the corresponding constant. This fact is also well known, cf. for instance \cite[Formula (8.45), page 322]{MajdaBertozzi}).

\begin{lemma}\label{l:Estimate on Lp norm of gradient of velocity}
For every $p_0>1$ there is a constant $c (p_0)$ with the following property. 
    If $v=K_2*\omega$ for some $\omega\in L^1 \cap L^p(\R^2)$ with $p\in [p_0, \infty[$, then $\norm{D v}_{L^p}\le p c \norm{\omega}_{L^p}$.
\end{lemma}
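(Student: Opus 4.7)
The plan is to reduce the estimate to classical Calder\'on--Zygmund theory for singular integrals, keeping careful track of the linear growth in $p$ of the operator norm. Since the statement is of a purely harmonic-analysis nature and independent of the Euler-equations context, the argument consists essentially of recognizing $Dv$ as a Calder\'on--Zygmund operator acting on $\omega$ and then quoting the quantitative $L^p$ theory.

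First, I would show that $Dv$ is a vector of Calder\'on--Zygmund operators applied to $\omega$. Since $\omega \in L^1\cap L^p$, the convolution $v=K_2*\omega$ is classically well-defined (cf.\ Remark~\ref{r:well-defined}), and one has $v=\nabla^\perp \psi$ with $\psi$ the Newtonian potential of $\omega$. A direct computation of the second derivatives of $\psi$ shows that each component $\partial_j v_i$ equals a principal-value singular integral of $\omega$ against a kernel that is smooth on $\R^2\setminus\{0\}$, homogeneous of degree $-2$, and has vanishing mean on the unit circle, plus a bounded multiple of $\omega$. The resulting maps $T_{ij}\colon \omega\mapsto \partial_j v_i$ therefore fit into the standard Calder\'on--Zygmund framework.

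Next, I would invoke the classical $L^p$ theory. On the one hand, each $T_{ij}$ is bounded on $L^2$ with norm independent of $p$, since its Fourier multiplier (essentially $\xi_i\xi_j/|\xi|^2$) is bounded. On the other hand, by the Calder\'on--Zygmund decomposition together with the H\"ormander regularity of the kernel, $T_{ij}$ is of weak type $(1,1)$ with constant independent of $p$. Marcinkiewicz interpolation between these two endpoints yields $\|T_{ij}\|_{L^p\to L^p}\leq C/(p-1)$ for $p\in\,]1,2]$. Since the formal adjoint of $T_{ij}$ is again a singular integral of the same form, duality gives $\|T_{ij}\|_{L^p\to L^p}\leq Cp$ for $p\in[2,\infty[$. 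Combining the two ranges and using that $1/(p-1)\leq 1/(p_0-1)$ on $[p_0,2]$ produces the bound $\|T_{ij}\|_{L^p\to L^p}\leq c(p_0)\,p$ for all $p\geq p_0$; the claim follows by summing over $i,j$.

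The only point that requires quantitative care is the linear-in-$p$ growth of the constant: the Marcinkiewicz bound blows up like $1/(p-1)$ as $p\to 1^+$ and is then transferred to a factor of $p$ near $\infty$ through duality. This is standard harmonic analysis and is already recorded in the Euler-equations literature in the form used here, so a direct appeal to \cite[Formula (8.45), page 322]{MajdaBertozzi} would in fact dispense with the argument entirely.
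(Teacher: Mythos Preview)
Your proposal is correct and aligns with the paper's treatment: the paper does not give a proof of this lemma but simply records it as a classical Calder\'on--Zygmund fact, referring to \cite[Formula (8.45), page 322]{MajdaBertozzi}. Your outline of the standard argument (weak $(1,1)$ plus $L^2$ boundedness, Marcinkiewicz interpolation, duality) is exactly the content behind that citation, so there is nothing to add.
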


Now, let $v_1=K_2*\omega_1, v_2=K_2*\omega_2$ be two solutions of \eqref{e:Euler} satisfying the assumptions of Theorem \ref{thm:Yudo} and note that $w:=v_1-v_2$ solves
\begin{equation}\label{e:Gleichung fuer die Differenz}
    \partial_t w +(v_1\cdot\nabla)w +(w\cdot\nabla)v_2=-\nabla(p_1-p_2)
\end{equation}
(where $p_1, p_2$ are the pressures corresponding to $v_1$ and $v_2$). Clearly
\[
E(t):=\int_{\R^2} |w(x,t)|^2\,\mathrm dx \leq 2 \int_{\R^2} |v_1 (x,t)|^2\,\mathrm dx + 2 \int_{\R^2} |v_2 (x,t)|^2\,\mathrm dx <\infty 
\]
is a bounded function on $[0,T]$.

We scalar multiply \eqref{e:Gleichung fuer die Differenz} with $w$, integrate by parts and use the divergence free conditions of $v_1, v_2$, and $w$ to conclude and 
\begin{align*}
\dot E(t)  =  & - 2\int_{\R^2}((w\cdot\nabla)v_2)w\,\mathrm dx 
 \le 2\int_{\R^2}|w(x,t)|^2 \abs{D v_2(x,t)}\,\mathrm dx\\
 \le & 2\norm{\nabla v_2(\cdot, t)}_{L^p}\norm{w(\cdot, t)}_{L^\infty}^{2/p}\norm{w(\cdot,t)}_{L^2}^{2-2/p}\,.
\end{align*}
Using Remark \ref{r:bounded}, we also have
\begin{equation*}
\begin{split}
    &\sup_{t\in[0,T]} \norm{w(\cdot, t)}_{L^\infty} \le \sup_{t\in[0,T]}(\norm{v_1(\cdot, t)}_{L^\infty}+\norm{v_2(\cdot, t)}_{L^\infty}) \\
    &\hspace{0.7cm}\le C \sup_{t\in[0, T]}(\norm{\omega_1(\cdot, t)}_{L^1}+\norm{\omega_1(\cdot, t)}_{L^\infty}+\norm{\omega_2(\cdot, t)}_{L^1}+\norm{\omega_2(\cdot, t)}_{L^\infty}) <\infty\, .
\end{split}
\end{equation*}
Next fix any $p\geq 2$. From Lemma \ref{l:Estimate on Lp norm of gradient of velocity} and the classical $L^p$ interpolation we conclude
\begin{equation*}
    \norm{D v_2(\cdot, t)}_{L^p}\le p c \norm{\omega_2 (\cdot, t)}_{L^p}\le p c \norm{\omega_2 }_{L^\infty([0,T]; L^1)}^{1/p}\norm{\omega_2 (\cdot, t)}_{L^\infty([0,T]; L^\infty)}^{1-1/p}.
\end{equation*}
Therefore, $\dot E(t)\le p M_p E(t)^{1-1/p}$
with
\begin{align*}
    M_p &= 2\norm{w}_{L^\infty([0,T];L^\infty)}^{2/p} c \norm{\omega_2}_{L^\infty([0,T]; L^1)}^{1/p}\norm{\omega_2 }_{L^\infty([0,T]; L^\infty)}^{1-1/p}\\
    &\le 2c \left({\textstyle{\frac{1}{p}}}\norm{w}_{L^\infty([0,T];L^\infty)}^2\norm{\omega_2}_{L^\infty([0,T];L^1)}+\left(1-{\textstyle{\frac{1}{p}}}\right)\norm{\omega_2}_{L^\infty([0,T];L^\infty)}\right) \\
    &\le 2c (\norm{w}_{L^\infty([0,T];L^\infty)}^2\norm{\omega_2}_{L^\infty([0,T];L^1)}+\norm{\omega_2}_{L^\infty([0,T];L^\infty)})=: M<\infty\, .
\end{align*}
We can thus apply
Lemma \ref{l:ODE lemma} to obtain that $E(t)\le (M_p t)^p \leq (Mt)^p$.
In particular, for any $t\leq \frac{1}{2M}$ we have $E(t)\le \frac 1{2^p}$ and we can let $p$ tend to $\infty$ to infer $E(t)=0$. Since the same estimates apply to any translation $\tilde{E} (t):= E (t+t_0)$ of the function $E$, we immediately conclude that $E$ vanishes identically, namely that $v_1=v_2$ on $\mathbb R^2\times [0,T]$.

\begin{proof}[Proof of Lemma \ref{l:ODE lemma}] Fix an arbitrary $t_0\leq T$ and note that if $E(t_0)=0$ there is nothing to show. Hence assume $E(t_0)> 0$ and set $a:=\sup\{t:E(t)=0\text{ and }t\le t_0\}$ (note that the set is nonempty because $E (0)=0$). $E(a)=0$ by continuity of $E$ and clearly $E(t)>0$ for all $t\in]a,t_0]$. Therefore, we can divide \eqref{e:ODE inequality for E} by $E(t)^{1-1/p}$ to obtain that $\dot E(t) E^{1/p-1}(t)\le p M$
    for all $t\in]a, t_0]$. Integrating both sides gives
    \begin{equation}\label{e:Integral bound on E}
        \int_{a}^{t_0} \dot E(t) E^{1/p-1}(t)\le p M (t_0-a)\, .
    \end{equation} 
     But the left hand side equals $p E^{1/p}(t_0)-pE^{1/p}(a)=p E^{1/p}(t_0)$, from which we infer
    $E^{1/p}(t_0)\le M (t_0-a) \le M t_0$.
\end{proof}

\section{Proof of Proposition \ref{p:convergence}}

Recall first the following classical metrizability result of weak${}^*$ topologies of separable Banach spaces.
\begin{lemma}[Metrizability Lemma]\label{l:Metrizability}
    Let $X$ be a separable Banach space and let $K\subset X^*$ be weakly${}^*$-compact. Then $K$ is metrizable in the weak${}^*$ topology inherited from $X^*$ and a metric that induces this topology is given by 
    \begin{equation}\label{e:Metrization-of-weak-star-topology}
        d(l, \tilde l)=\sum_{n=1}^\infty 2^{-n}\min\{1, \vert l(x_n)-\tilde l(x_n)\vert\},
    \end{equation}
    where $(x_n)_{n\in \mathbb N}$ is any sequence in $X$ such that $\{x_n:n\in\mathbb N\}$ is dense in $X$.
\end{lemma}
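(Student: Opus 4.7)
The plan is to proceed in three steps: first check that the formula \eqref{e:Metrization-of-weak-star-topology} defines a metric on all of $X^*$; second, show that the identity map from $(K,\mathrm{weak}^*)$ to $(K,d)$ is continuous; third, invoke a standard compactness-versus-Hausdorff argument to upgrade this to a homeomorphism.

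For the first step, non-negativity and symmetry are immediate, and the triangle inequality follows from the elementary subadditivity $\min\{1,a+b\}\le \min\{1,a\}+\min\{1,b\}$ for $a,b\ge 0$, applied termwise and summed. The only point requiring care is positive definiteness: if $d(l,\tilde l)=0$, then $l(x_n)=\tilde l(x_n)$ for every $n$; since $\{x_n:n\in\mathbb N\}$ is dense in $X$ and both $l$ and $\tilde l$ are continuous linear functionals, this forces $l=\tilde l$. Note that convergence of the series is automatic from the dominating bound $2^{-n}$.

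For the second step, I claim the identity $\iota\colon(K,\mathrm{weak}^*)\to(K,d)$ is continuous. Fix $l\in K$ and a net $(l_\alpha)\subset K$ with $l_\alpha\to l$ in the weak${}^*$ topology; I show $d(l_\alpha,l)\to 0$. Given $\varepsilon>0$, choose $N$ so large that $\sum_{n>N}2^{-n}<\varepsilon/2$. Then, since $l_\alpha(x_n)\to l(x_n)$ for each of the finitely many indices $n=1,\dots,N$, eventually $\min\{1,|l_\alpha(x_n)-l(x_n)|\}<\varepsilon/2$ for all such $n$; summing yields $d(l_\alpha,l)<\varepsilon$. (The truncation by $\min\{1,\cdot\}$ and the geometric weights ensure that no boundedness hypothesis beyond what is given is needed, though in fact weak${}^*$-compactness of $K$ implies norm-boundedness via Banach--Steinhaus.) Consequently, every $d$-open subset of $K$ is weak${}^*$-open, i.e.\ the weak${}^*$ topology is at least as fine as the $d$-topology.

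For the third step, observe that $(K,\mathrm{weak}^*)$ is compact by hypothesis and $(K,d)$ is Hausdorff because $d$ is a metric. The identity map $\iota$ is a continuous bijection from a compact space onto a Hausdorff space, hence a homeomorphism: any closed set in $(K,\mathrm{weak}^*)$ is compact, its image under $\iota$ is compact in $(K,d)$, and therefore closed. This shows the two topologies on $K$ coincide, so $d$ metrizes the weak${}^*$ topology on $K$, completing the proof. The main conceptual point — and the only step where the hypotheses are really used — is the pairing of continuity from direction two with the compact/Hausdorff homeomorphism argument; there is no genuine obstacle beyond bookkeeping.
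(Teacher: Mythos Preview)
Your proof is correct and is the standard argument for this classical fact. Note, however, that the paper does not actually supply a proof of this lemma: it is stated as a well-known result (``Recall first the following classical metrizability result\ldots'') and then used without further comment in the proof of Proposition~\ref{p:convergence}. So there is nothing to compare against; your three-step argument (verify $d$ is a metric using density for positive definiteness, show continuity of the identity from the weak${}^*$ topology to the $d$-topology via a tail-truncation, then invoke the compact-to-Hausdorff homeomorphism principle) is exactly the textbook proof and would serve perfectly well as a self-contained justification.
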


Now on to the proof of Proposition \ref{p:convergence}. We will prove convergence of the $\omega_{\varepsilon, k}$ to a $\omega_\varepsilon$ for fixed $\varepsilon$ and $k\to\infty$ in the space $C([0, T]; K)$, where $K:=\{u\in L^q(\R^2):\lVert u\rVert_{L^q}\le R\}$ is equipped with the weak${}^*$ topology inherited from $L^q_{\text w}$. (We will talk about the choice of $q$ later.) Here, $R$ is the uniform bound obtained in \eqref{e:uniform_bound} of Corollary \ref{c:omega_k_epsilon}. Note that since every $L^q$ space is reflexive, one can work just as well with the weak topology on $K$. Let $(\phi_n)_{n\in\mathbb N}$ be a sequence of smooth functions such that $\{\phi_n:n\in\mathbb N\}$ is dense in every $L^q$. The metric given by \eqref{e:Metrization-of-weak-star-topology} now induces the topology of $K$, and it does not depend on $q$. Therefore, using the uniform bound \eqref{e:uniform_bound}, we conclude that \emph{the choice of $q$ does not matter}. It is sufficient to prove the statement of Proposition \ref{p:convergence} for one fixed $q\in]1, p]$ in order to prove it for all $q\in]1, p]$. 
\begin{claim}
    $\omega_{\varepsilon, k}$, seen as functions from $[0, T]$ to $K$, are equicontinuous (for simplicity we define each $\omega_{\varepsilon, k} (\cdot, t)$ on the interval $[0,t_k]$ as constantly equal to $\omega_{\varepsilon, k} (\cdot, t_k)$).
\end{claim}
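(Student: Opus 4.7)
The approach is to exploit the explicit metric from Lemma \ref{l:Metrizability} to reduce equicontinuity in $(K,d)$ to a countable family of one-dimensional estimates. First, I would choose the dense countable collection $\{\phi_n\}_{n\in\mathbb N}$ to lie inside $C^\infty_c(\mathbb R^2)$, which is legitimate since $C^\infty_c$ is dense and separable in every $L^q$ with $q<\infty$. Given any $\varepsilon_0>0$, the tail of the series defining $d$ satisfies $\sum_{n>N}2^{-n}<\varepsilon_0/2$ once $N=N(\varepsilon_0)$ is large, so it is enough to show that for each fixed $n$ the increments $\int(\omega_{\varepsilon,k}(\cdot,t)-\omega_{\varepsilon,k}(\cdot,s))\phi_n\,dx$ admit a modulus of continuity in $|t-s|$ which is independent of $k$.

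The next step is to estimate these increments using the weak formulation \eqref{e:distrib}. For $\phi\in C^\infty_c(\mathbb R^2)$ and $t_k\le s\le t\le T$, one has
\begin{equation*}
\int\big(\omega_{\varepsilon,k}(\cdot,t)-\omega_{\varepsilon,k}(\cdot,s)\big)\phi\,dx=\int_s^t\!\!\int\omega_{\varepsilon,k}\,(v_{\varepsilon,k}\cdot\nabla\phi)\,dx\,d\tau+\int_s^t\!\!\int f\,\phi\,dx\,d\tau\,.
\end{equation*}
The forcing contribution is bounded by $\|\phi\|_{L^\infty}\int_s^t\|f(\cdot,\tau)\|_{L^1}\,d\tau$, which tends to zero as $|t-s|\to 0$ by absolute continuity of the integral, given that $f\in L^1([0,T];L^1)$. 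The transport term is where the key uniform estimate enters: the uniform $L^1\cap L^p$ bound \eqref{e:uniform_bound} on $\omega_{\varepsilon,k}$ combined with \eqref{e:bound-on-Biot-Savart} in Remark \ref{r:bounded} yields $\sup_{k,\tau}\|v_{\varepsilon,k}(\cdot,\tau)\|_{L^\infty}<\infty$, whence
\begin{equation*}
\left|\int\omega_{\varepsilon,k}(v_{\varepsilon,k}\cdot\nabla\phi)\,dx\right|\le\|v_{\varepsilon,k}\|_{L^\infty}\|\omega_{\varepsilon,k}\|_{L^1}\|\nabla\phi\|_{L^\infty}\le C(\phi)\,,
\end{equation*}
so the transport term is bounded by $C(\phi)|t-s|$, with $C(\phi)$ independent of $k$.

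Extending $\omega_{\varepsilon,k}$ by constancy on $[0,t_k]$ is harmless: if $s<t_k\le t$ the estimate applied on the shorter interval $[t_k,t]\subset[s,t]$ gives the same modulus, while for $s,t\le t_k$ the increment vanishes identically. Assembling the two bounds therefore produces a modulus of continuity uniform in $k$ for each $\phi_n$; combined with the tail truncation on the metric series, this delivers the equicontinuity of $\{\omega_{\varepsilon,k}\}_k$ in $C([0,T];(K,d))$.

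The argument contains no essential obstacle beyond ensuring that the $L^\infty$ bound on $v_{\varepsilon,k}$ is uniform in $k$. This is the one point where some structure of the problem is really used, and it is precisely supplied by Remark \ref{r:bounded} together with the $L^1\cap L^p$ uniform bound of Corollary \ref{c:omega_k_epsilon}; the remaining manipulations are routine integrations of the distributional Euler equation against a fixed smooth compactly supported test function.
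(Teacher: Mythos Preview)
Your proof is correct and follows essentially the same route as the paper: reduce to finitely many test functions via the tail truncation of the metric, then use the weak form of the vorticity equation together with the uniform $L^\infty$ bound on $v_{\varepsilon,k}$ from Remark~\ref{r:bounded} and the uniform $L^1$ bound on $\omega_{\varepsilon,k}$ to control the transport term, and the integrability of $f$ to control the force term. The only cosmetic difference is that the paper invokes $f\in L^r_tL^1_x$ for some $r>1$ (extracted from the proof of Lemma~\ref{lem:Curl of tilde v}) to obtain a H\"older modulus $|t-s|^\gamma$, whereas you use absolute continuity of $\int_s^t\|f(\cdot,\tau)\|_{L^1}\,d\tau$ directly; either yields a $k$-independent modulus, which is all that equicontinuity requires.
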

\begin{proof}
    For $\tilde\omega, \hat\omega \in L^q(\R^2)$, let
    \begin{equation*}
        d_i(\tilde\omega, \hat \omega) \overset{\text{Def.}}=\left\lvert\int_{\R^2} (\tilde\omega-\hat\omega) \phi_i \mathrm dx\right\rvert.
    \end{equation*}
   Since each $\omega_{\varepsilon, k}$ solves the Euler equations in vorticity form, we can estimate
    \begin{align}
        &\;\;d_i(\omega_{\varepsilon, k}(t, \cdot), \omega_{\varepsilon, k}(s, \cdot))\nonumber\\
        &= \left\lvert\int_{\R^2}\int_s^t\partial_\tau\omega_{\varepsilon, k}(\sigma, x)\phi_i(x)\,d\sigma\, dx\right\rvert\nonumber\\
        &=\left\lvert\int_{\R^2}\int_s^t ((K_2*\omega_{\varepsilon, k})\cdot\nabla)\omega_{\varepsilon, k}(x, \sigma) + f (x, \sigma)\phi_i(x) \, d\sigma\,dx\right\rvert\nonumber\\
        &\le\lVert\nabla\phi_i\rVert_{L^\infty(\R^2)}\int_{\R^2}\int_s^t\lvert K_2*\omega_{\varepsilon, k}\rvert\lvert\omega_{\varepsilon, k}\rvert\, d\sigma\, dx\nonumber\\
        & \qquad + \lVert\phi_i\rVert_{L^\infty(\R^2)} \int_{\R^2}\int_s^t |f(x, \sigma)|\, d\sigma\, dx\nonumber\\
        &\le C(\lVert\nabla\phi_i\rVert_\infty + \|\phi_i\|_\infty) \lvert s-t\rvert^{\gamma}  \label{e:bound-on-ith-distance}
    \end{align}
    whenever $t\geq s \geq t_k$, where $\gamma = \gamma(\alpha,  \al)$. Indeed, the last inequality follows from $f\in L^r_tL^1_x$ for some $r=r(\alpha,\al)>1$ (see the proof of Lemma \ref{lem:Curl of tilde v}). Let $\tilde\varepsilon>0$. We can find a $N\in\mathbb N$ (depending on $\tilde\varepsilon$) such that $\sum_{n=N^+1}^\infty 2^{-n}\le\frac{\tilde\varepsilon}2$. If \begin{equation*}\lvert t-s\rvert\le \left(\frac{\tilde\varepsilon}{2NC\max_{i\in\{1,\dots,N\}}(\lVert\nabla\phi_i\rVert_{\infty} + \|\phi_i\|_\infty)}\right)^{\frac 1{\gamma}},\end{equation*} where $C$ is the constant from \eqref{e:bound-on-ith-distance}, then, by the bound in \eqref{e:bound-on-ith-distance}, we get
    \begin{equation*}
        d(\omega_{\varepsilon, k}(t, \cdot),\omega_{\varepsilon, k}(s, \cdot))\le\frac{\tilde\varepsilon}2+\sum_{i=1}^N \frac{\tilde\varepsilon}{2N}=\tilde\varepsilon.\qedhere
    \end{equation*}
\end{proof}

By the Banach-Alaoglu theorem, bounded subsets are relatively compact in the weak${}^*$ topology. Therefore, using reflexivity, for every $t\in[0,T]$, the bounded set $\{\omega_{\varepsilon, k}(\cdot, t): k\in\mathbb N\}$ is also relatively compact in $L^q_{\text w}$.

Therefore, using Arzelà-Ascoli, we can conclude that there exists a subsequence of $(\omega_{\varepsilon, k})_{k\in\mathbb N}$, not relabeled, that converges in $C([0, T]; L_{\text w}^q)$, for every $q$, to the same $\omega_\varepsilon\in C([0, T]; L_{\text w}^q)$.

\begin{claim}
    The $\omega_\varepsilon$ is a solution of the Euler equations in vorticity formulation.
\end{claim}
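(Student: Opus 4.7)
Fix $\phi \in C^\infty_c(\R^2 \times [0,T[)$. Since each $\omega_{\varepsilon,k}$ is a Yudovich solution on $[t_k,T]$ with forcing $f$ and initial datum $\omega_{\varepsilon,k}(\cdot,t_k)$, it satisfies the distributional identity
\begin{equation*}
\int_{t_k}^T\!\!\int_{\R^2}\!\!\big[\omega_{\varepsilon,k}(\partial_t\phi + v_{\varepsilon,k}\!\cdot\!\nabla\phi) + f\phi\big]\,dx\,dt
= -\int_{\R^2}\!\phi(x,t_k)\omega_{\varepsilon,k}(x,t_k)\,dx,
\end{equation*}
where $v_{\varepsilon,k} = K_2*\omega_{\varepsilon,k}$. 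The plan is to pass to the limit $k\to\infty$ term by term (along the subsequence produced above) and obtain \eqref{e:distrib} for $\omega_\varepsilon$.

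The linear terms on the left are handled by the weak convergence. Indeed, the $C([0,T];L^q_{\mathrm w})$ convergence pairs $\omega_{\varepsilon,k}$ against $\partial_t\phi \in L^1([0,T];L^{q'}(\R^2))$, giving the limit $\int\!\!\int \omega_\varepsilon\,\partial_t\phi$. The forcing term is $k$-independent, and the interval of integration can be enlarged from $[t_k,T]$ to $[0,T]$ thanks to $\omega_{\varepsilon,k}=0$ outside (or, equivalently, by absolute continuity of the integral since $t_k\to 0$ and the integrand is in $L^1$). For the boundary term I will show $\omega_{\varepsilon,k}(\cdot,t_k)\to\omega_0$ in $L^q_{\mathrm w}$ for some $q\in]1,p]$: by Lemma~\ref{lem:Curl of tilde v}, $\tilde\omega(\cdot,t_k)\to \omega_0$ in $L^q$; for the eigenfunction piece, rescaling gives $\|t_k^{a_0-1}\,\mathrm{Re}(t_k^{ib_0}\eta(t_k^{-1/\alpha}\cdot))\|_{L^q} = t_k^{a_0-1+2/(\alpha q)}\|\mathrm{Re}(t_k^{ib_0}\eta)\|_{L^q}$, which tends to $0$ when $q$ is close to $1$ (since $a_0>0$ and $\alpha<1$). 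Therefore the right-hand side converges to $-\int \phi(x,0)\omega_0(x)\,dx$.

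The main obstacle is the nonlinear term $\int\!\!\int \omega_{\varepsilon,k}\,v_{\varepsilon,k}\cdot\nabla\phi$, which is bilinear in the weakly converging sequence. To handle it, I will establish \emph{strong} local convergence of the velocities $v_{\varepsilon,k}\to v_\varepsilon:=K_2*\omega_\varepsilon$ in $L^r_{\mathrm{loc}}(\R^2\times[0,T])$ for some $r\in]1,\infty[$, then couple it with weak convergence of $\omega_{\varepsilon,k}$. First, on any ball $B_R$, Calder{\'o}n-Zygmund together with \eqref{e:bound-on-Biot-Savart} and the uniform bound \eqref{e:uniform_bound} give a uniform estimate of $v_{\varepsilon,k}(\cdot,t)$ in $W^{1,p}(B_R)$ (after subtracting its mean, if needed, which is controlled by \eqref{e:uniform_bound}); hence $\{v_{\varepsilon,k}(\cdot,t)\}_k$ is relatively compact in $L^r(B_R)$ for every $r<\infty$, by Rellich-Kondrachov. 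Second, the equicontinuity estimate \eqref{e:bound-on-ith-distance} implies time-equicontinuity of $v_{\varepsilon,k}$ tested against any $\psi\in C^\infty_c(B_R)$, hence also when tested against a dense countable family in $L^{r'}(B_R)$. An Aubin-Lions/Arzel{\`a}-Ascoli argument on $B_R$ then yields strong convergence of $v_{\varepsilon,k}\to v_\varepsilon$ in $C([0,T];L^r_{\mathrm w}(B_R))$ and, combined with the spatial compactness, in $L^{r'}([0,T];L^r(B_R))$ up to a subsequence; by a diagonal argument we obtain strong convergence in $L^r_{\mathrm{loc}}(\R^2\times[0,T])$.

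With strong convergence of $v_{\varepsilon,k}$ in $L^{p'}_{\mathrm{loc}}$ and weak convergence of $\omega_{\varepsilon,k}$ in $L^\infty([0,T];L^p)$, and since $\nabla\phi$ has compact support, one concludes
\begin{equation*}
\int_0^T\!\!\int_{\R^2}\omega_{\varepsilon,k}\, v_{\varepsilon,k}\cdot\nabla\phi\,dx\,dt \;\longrightarrow\; \int_0^T\!\!\int_{\R^2}\omega_\varepsilon\, v_\varepsilon\cdot\nabla\phi\,dx\,dt.
\end{equation*}
Here $v_\varepsilon$ coincides with $K_2*\omega_\varepsilon$ in the sense of Remark~\ref{r:well-defined} because $\omega_\varepsilon\in L^\infty([0,T];L^1\cap L^p)$ with $p>2$. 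Combining all limits produces \eqref{e:distrib} for $(\omega_\varepsilon,v_\varepsilon)$ with initial datum $\omega_0$. The only delicate point in the above scheme is the compactness step leading to the strong convergence of $v_{\varepsilon,k}$; the rest is a routine passage to the limit.
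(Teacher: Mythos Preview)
Your proof is correct and follows essentially the same route as the paper: write the distributional identity for each $\omega_{\varepsilon,k}$, pass the linear and forcing terms to the limit by weak convergence, handle the boundary term by showing $\omega_{\varepsilon,k}(\cdot,t_k)\to\omega_0$, and resolve the nonlinear term via strong local compactness of the velocities (Calder\'on--Zygmund plus Rellich) paired with weak convergence of the vorticities.

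The one place where you work harder than necessary is the velocity compactness. You invoke time-equicontinuity of $v_{\varepsilon,k}$ and an Aubin--Lions/Arzel\`a--Ascoli argument to get strong convergence in $L^r_{\mathrm{loc}}(\R^2\times[0,T])$. The paper avoids this: since the convergence $\omega_{\varepsilon,k}\to\omega_\varepsilon$ in $C([0,T];L^q_{\mathrm w})$ has already been established, for each fixed $t$ the weak limit of $\omega_{\varepsilon,k}(\cdot,t)$ is identified, and hence the Rellich-compact family $\{v_{\varepsilon,k}(\cdot,t)\}_k$ converges (without passing to a further subsequence) strongly in $L^r(B_R)$ to $v_\varepsilon(\cdot,t)$. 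Dominated convergence in $t$ then gives $v_{\varepsilon,k}\to v_\varepsilon$ in $L^1([0,T];L^r(B_R))$, which suffices. This is simpler because it recycles the pointwise-in-time weak convergence rather than rebuilding time-compactness from scratch; your Aubin--Lions route works too, but the transfer of time-equicontinuity from $\omega$ to $v$ via Biot--Savart (which you sketch) requires a little care about the integrability of $K_2*\psi$ at infinity.
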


\begin{proof}
We have, for every $k\in\mathbb N$ and $\phi\in C_{\text c}^\infty(\R^2\times[0, T])$ with $\phi(\cdot, T)=0$, (cf. \eqref{e:distrib})
\begin{align}
0 = &\underbrace{\int_{\R^2} \omega_{\varepsilon, k}(x, t_k)\phi(x, t_k)\,\mathrm dx}_{=:I_1 (k)} + \underbrace{\int_{t_k}^T \int_{\R^2}\ \omega_{\varepsilon, k}(x,t)\partial_t\phi(x,t)\,\mathrm dx\,\mathrm dt}_{=:I_2 (k)}\nonumber\\
& +\underbrace{\int_{t_k}^T \int_{\R^2} \omega_{\varepsilon, k}(x,t)((K_2*_x\omega_{\varepsilon, k})(x,t)\cdot\nabla)\phi(x,t)\,\mathrm dx\,\mathrm dt}_{=:I_3 (k)}\nonumber\\
&+\underbrace{\int_{t_k}^T \int_{\R^2} f(x, t)\phi(x, t)\,\mathrm dx\,\mathrm dt}_{=:I_4 (k)} = 0\label{e:term-I4}\, .
\end{align}
The term $I_4(k)$ converges to 
\begin{equation*}
\int_{\R^2\times[0, T]} f(x, t)\phi(x,t)\,\mathrm dx\,\mathrm dt\, .
\end{equation*} 
By the convergence of the $\omega_{\varepsilon, k}$, \begin{equation*}\lim_{k\to\infty} I_2(k)=\int_{\R^2\times[0, T]} \omega_\varepsilon(x, t)\partial_t\phi(x,t)\,\mathrm dx\,\mathrm dt.\end{equation*} By the definition of the initial condition of $\omega_{\varepsilon, k}$ (cf. \eqref{e:Euler-later-times}), $\omega_{\varepsilon, k}(\cdot, t_k)$ converges strongly in $L^1(\R^2)$ to $\tilde\omega(\cdot, 0)=\omega_0=\omega_\varepsilon(\cdot, 0)$. Therefore, \begin{equation*}\lim_{k\to\infty} I_1(k)=\int_{\R^2}\omega_\varepsilon(x, 0)\phi(x, 0)\,\mathrm dx.\end{equation*}

It therefore only remains to prove the convergence of $I_3$, for which we will require yet another claim.

\begin{claim}
    For every $r\in[2, \infty[$ and every $t\in[0,T]$, the set $\{v_{\varepsilon, k}(\cdot, t): k\in\mathbb N\}$ is compact in $L^r (B_R)$ for every $R>0$.
\end{claim}
\begin{proof}
    From \eqref{e:uniform_bound}, we know that $\|v_{\varepsilon, k}(\cdot, t)\|_{L^2(\R^2)}\le C$ for some constant $C$ that is independent of $t$. Recall that $v_{\varepsilon, k}=\nabla^\bot\psi_{\varepsilon, k}$, where $\psi_{\varepsilon, k}$ solves $\Delta\psi_{\varepsilon, k} = \omega_{\varepsilon, k}$. Therefore, using the Calder\'{o}n-Zygmund inequality, one gets
    \begin{equation*}
        \norm{\nabla v_{\varepsilon, k}(\cdot, t)}_{L^2}\le C\norm{\omega_{\varepsilon, k}(\cdot, t)}_{L^2}.
    \end{equation*}
    Since the $L^2$ norms of the $\omega_{\varepsilon, k}(\cdot, t)$ are uniformly bounded, we can conclude that 
    \begin{equation*}
        \sup_{k\in\infty} \norm{\nabla v_{\varepsilon, k}(\cdot, t)}_{L^2}<\infty.
    \end{equation*}
Hence we conclude the compactness in $L^r (B_R)$ from Rellich's Theorem.    
\end{proof}

Therefore, the $v_{\varepsilon, k}(\cdot, t)$ converge to $v_\varepsilon(\cdot, t)$ strongly in every $L^r (B_R)$ with $r\in[2,\infty[$. Moreover, thanks to \eqref{e:uniform_bound}, we can apply the dominated convergence theorem by Lebesgue to conclude that $v_{\varepsilon, k}\to v_\varepsilon$ as $k\to\infty$ in the space $L^1([0, T]; L^r (B_R))$ for every $r\in[2,\infty[$.

By definition,
\begin{equation*}
    \omega_{\varepsilon, k} (v_{\varepsilon, k}\cdot\nabla)\phi-\omega_{\varepsilon} (v_{\varepsilon}\cdot\nabla)\phi = \omega_{\varepsilon, k} (v_{\varepsilon, k}-v_{\varepsilon})\cdot \nabla \phi +
    (\omega_{\varepsilon,k} - \omega_\varepsilon) v_\varepsilon \cdot \nabla \phi\, .
\end{equation*}
We thus rewrite 
\begin{align}
I_3 (k) &= \int_0^T \int_{B_R} \omega_{\varepsilon, k} (v_{\varepsilon,k}-v_\varepsilon)\cdot \nabla \phi\, dx\, dt
+ \int_0^T \underbrace{\int_{B_R} (\omega_{\varepsilon,k} - \omega_\varepsilon) v_\varepsilon \cdot \nabla \phi\, dx}_{=:J_k(t)}\, dt\, .\label{e:I3-converges-to-0}
\end{align}
Observe first that, for each fixed $t$,
\[
\lim_{k\to\infty} J_k (t) = 0\, ,
\]
since $\omega_{\varepsilon, k} (\cdot, t) - \omega_\varepsilon (\cdot, t)$ converges weakly to $0$ in $L^2$, while $v_\varepsilon (\cdot, t)\cdot \nabla \phi (\cdot, t)$ is a fixed $L^2$ function. On the other hand
\[
|J_k (t)|\leq \|\nabla \phi (\cdot, t)\|_{L^\infty} (\|\omega_{\varepsilon, k} (\cdot, t)\|_{L^2} + \|\omega_\varepsilon (\cdot, t)\|_{L^2}) \|v_\varepsilon (\cdot, t)\|_{L^2}\, .
\]
Therefore the second integral in \eqref{e:I3-converges-to-0} converges to $0$. The first integral can be bounded by
\[
\|\nabla \phi\|_{L^\infty} \|v_{\varepsilon, k} - v_\varepsilon\|_{L^1 ([0,T], L^2 (B_R))} \|\omega_{\varepsilon, k}\|_{L^\infty ([0,T], L^2 (B_R))}
\]
and converges to $0$ as well.
\end{proof}

\section{Proof of Lemma \ref{l:extension}}

Consider $\vartheta \in L^2_m\cap \mathscr{S}$ for $m\geq 2$ and let $v:= K_2*\vartheta$. We first claim that
\begin{equation}\label{e:average}
\int_{B_R} v = 0 \qquad \qquad \mbox{for every $R>0$.}
\end{equation}
With \eqref{e:average} at our disposal, since $\|Dv\|_{L^2 (\mathbb R^2)} = \|\vartheta\|_{L^2 (\mathbb R^2)}$, we use the Poincar\'e inequality to conclude
\begin{equation}
R^{-1} \|v\|_{L^2 (B_R)} + \|Dv\|_{L^2 (B_R)} \leq C \|\vartheta\|_{L^2 (\mathbb R^2)}   
\end{equation}
for a geometric constant $C$. This is then enough to infer the remaining conclusions of the lemma.

In order to achieve \eqref{e:average} observe first that $v = \nabla^\perp h$, where $h$ is the unique potential-theoretic solution of $\Delta h = \vartheta$, given by $h = K * \vartheta$ with $K (x) = \frac{1}{2\pi} \log |x|$. Since $K(R_\theta x) = K (x)$ and $\vartheta (x) = \vartheta (R_{2\pi/m} x)$, it follows that $h (R_{2\pi/m} x) = h (x)$, i.e. $h$ is $m$-fold symmetric. Therefore $R_{-2\pi/m} \nabla h (R_{2\pi/m} x) = \nabla h (x)$. In particular, integrating in $x$ and using that the rotation is a measure-preserving transformation of the disk, we conclude
\[
\int_{B_R} \nabla h = R_{2\pi/m} \int_{B_R} \nabla h \, ,
\]
and thus,
\[
\int_{B_R} \nabla h = \frac{1}{m} \sum_{k=0}^{m-1} R_{2k\pi/m} \int_{B_R} \nabla h \, .
\]
However, since $m\ge 2$, $\sum_{k=0}^{m-1} R_{2k\pi/m} = 0$, showing that $\int_{B_R} \nabla h = 0$.

\begin{remark}\label{r:Camillo_dumb}
We next show that it is not possible to find a continuous extension of the operator $L^2\cap \mathscr{S} \ni \vartheta \mapsto K_2* \vartheta \in \mathscr{S}'$ to the whole $L^2$. First of all we observe that, if such an extension exists, it then needs to coincide with $K_2* \vartheta$ when $\vartheta \in L^1 \cap L^2$. We next exhibit a sequence of divergence free vector fields $\{v_k\}\subset W^{1,1}\cap W^{1,2}$ with the property that $\omega_k = \curl v_k$ converge to $0$ strongly in $L^2$ but $v_k$ converge locally to a constant vector field $v_0\neq 0$. In order to do this, we first define the following functions $\phi_k$ on the positive real axis:
\[
\phi_k (r) :=
\left\{
\begin{array}{ll}
1 + \frac{3}{4\ln k} \qquad &\mbox{for $r\leq \frac{k}{2}$}\\ \\
1 + \frac{3}{4\ln k} - \frac{1}{k^2 \ln k} \left(r-\frac{k}{2}\right)^2\qquad &\mbox{for $\frac{k}{2} \leq r \leq k$}\\ \\
1 + \frac{1}{2 \ln k}- \frac{1}{\ln k} \ln \frac{r}{k} \qquad  & \mbox{for $k\leq r \leq k^2$}\\ \\
\frac{1}{2 k^4 \ln k} (r-2k^2)^2\qquad & \mbox{for $k^2 \leq r\leq 2k^2$}\\ \\
0 \qquad &\mbox{for $r\geq 2 k^2$}\, .
\end{array}
\right.
\]
Observe that $\phi_k$ is $C^1$ and its derivative is Lipschitz. Next we define the stream functions 
\[
\psi_k (x) = - \phi_k (|x|) v_0^\perp \cdot x 
\]
and the vector field $v_k (x) = \nabla^\perp \psi_k (x)$. By construction $v_k$ is divergence free, compactly supported, and Lipschitz. In particular, it belongs to $W^{1,p}$ for every $p$. Moreover, $v_k$ equals $(1+\frac{3}{4\ln k}) v_0$ on $B_{k/2}$ and it thus follows that, as $k\to \infty$, $v_k$ converges locally to the constant vector field $v_0$. It remains to check that $\curl v_k = \Delta \psi_k$ converges to $0$ strongly in $L^2$. We compute 
\[
\Delta \psi_k = - \underbrace{v_0^\perp \cdot x\, \Delta (\phi_k (|x|))}_{=:f_k} - \underbrace{\nabla (\phi_k (|x|))\cdot v_0^\perp}_{=: g_k}\, 
\]
and we seek to bound $f_k$ and $g_k$ pointwise. For what concerns $f_k$ observe that $\Delta\phi_k$ vanishes on $|x|\leq \frac{k}{2}$, $k\leq |x| \leq k^2$, and $2k^2 \leq |x|$. On the remaining regions, using the formula for the Laplacian in polar coordinates, we can estimate
\[
|f_k (x)|\leq |v_0| |x| (|\phi'' (|x|)| + |x|^{-1} |\phi' (|x|)|)\, .
\]
In particular, we conclude
\[
|f_k (|x|)| \leq \frac{C}{|x| \ln k}\, , 
\]
for a constant $C$ independent of $k$.
As for $g_k$, it vanishes for $|x|\leq \frac{k}{2}$ and $|x|\geq k^2$, and where it does not vanish we have the estimate
\[
|g_k (x)|\leq |v_0| |\psi' (|x|)| \leq \frac{C}{|x|\ln k}\, ,
\]
again for a constant $C$ independent of $k$. Passing to polar coordinates, we can thus estimate
\begin{align*}
\|\Delta \psi_k\|^2_{L^2 (\mathbb R^2)} \leq & \frac{C}{(\ln k)^2} \int_{k/2}^{2k^2} \frac{1}{r} \, dr = 
\frac{C}{(\ln k)^2} \left(\ln (2k^2) - \ln {\textstyle{\frac{k}{2}}}\right)
= \frac{C \ln (4k)}{(\ln k)^2}\, .
\end{align*}
\end{remark}

\printindex
\printbibliography

\end{document}